\definecolor{bred}{rgb}{0.8,0,0}
\setlist[enumerate]{label={\upshape(\roman*)}}
\numberwithin{equation}{section}
\newcommand{\multiline}[1]{%
  \begin{tabularx}{\dimexpr\linewidth-\ALG@thistlm}[t]{@{}X@{}}
    #1
  \end{tabularx}
}
\newcommand\reallywidehat[1]{%
	\savestack{\tmpbox}{\stretchto{%
			\scaleto{%
				\scalerel*[\widthof{\ensuremath{#1}}]{\kern.1pt\mathchar"0362\kern.1pt}%
				{\rule{0ex}{\textheight}}
			}{\textheight}%
		}{2.4ex}}%
	\stackon[-6.9pt]{#1}{\tmpbox}%
}
\newcommand\reallywidetilde[1]{%
	\savestack{\tmpbox}{\stretchto{%
			\scaleto{%
				\scalerel*[\widthof{\ensuremath{#1}}]{\kern.1pt\mathchar"0366\kern.1pt}%
				{\rule{0ex}{\textheight}}
			}{\textheight}%
		}{2.4ex}}%
	\stackon[-6.9pt]{#1}{\tmpbox}%
}
\newtheorem{theorem}{Theorem}[section]
\newtheorem{proposition}[theorem]{Proposition}
\newtheorem{lemma}[theorem]{Lemma}
\newtheorem{corollary}[theorem]{Corollary}
\newtheorem{remark}[theorem]{Remark}
\newtheorem{definition}[theorem]{Definition}
\newtheorem{assumption}[theorem]{Assumption}
\newcommand{\R}{\mathbb{{R}}}
\newcommand\bE{\mathbb{E}}
\newcommand\bF{\mathbb{F}}
\newcommand\bN{\mathbb{N}}
\newcommand\bR{\mathbb{R}}
\newcommand\bP{\mathbb{P}}
\newcommand\bV{\mathbb{V}}
\newcommand\cA{\mathcal{A}}
\newcommand\cB{\mathcal{B}}
\newcommand\cF{\mathcal{F}}
\newcommand\cL{\mathcal{L}}
\newcommand\cM{\mathcal{M}}
\newcommand\cV{\mathcal{V}}
\newcommand\cX{\mathcal{X}}
\newcommand{\cal}{\curvearrowleft}
\DeclareMathOperator*{\diag}{diag}
\DeclareMathOperator{\re}{Re}
\DeclareMathOperator{\argmin}{arg\,min}
\DeclareMathOperator{\err}{err}
 \newcommand{\sumstar}
 {\operatornamewithlimits{\sum@\kern-.2em\raise1ex\hbox{*}}}
\renewenvironment{cases}[1][l]{\matrix@check\cases\env@cases{#1}}{\endarray\right.}
\def\env@cases#1{%
	\let\@ifnextchar\new@ifnextchar
	\left\lbrace\def\arraystretch{1.2}%
	\array{@{}#1@{\quad}l@{}}}
\newcolumntype{R}[1]{>{\raggedleft\let\newline\\\arraybackslash\hspace{0pt}}m{#1}}
\newcommand{\<}{\langle}
\renewcommand{\>}{\rangle}
\begin{document}

\title[]{Full error analysis of the random deep splitting method\\for nonlinear parabolic PDEs and PIDEs}

\author[A. Neufeld]{Ariel Neufeld}
\address{Nanyang Technological University, Division of Mathematical Sciences, Singapore}
\email{ariel.neufeld@ntu.edu.sg}

\author[P. Schmocker] {Philipp Schmocker}
\address{Nanyang Technological University, Division of Mathematical Sciences, Singapore}
\email{philippt001@e.ntu.edu.sg}

\author[S. Wu]{Sizhou Wu}
\address{Shanghai University of Finance and Economics, School of Mathematics, Shanghai}
\email{sizhou.wu@sufe.edu.cn}

\date{}
\thanks{Financial support by the Nanyang Assistant Professorship Grant (NAP Grant) \emph{Machine Learning based Algorithms in Finance and Insurance} is gratefully acknowledged.}
\keywords{}

\subjclass[2010]{}

\begin{abstract}
	In this paper, we present a randomized extension of the deep splitting algorithm introduced in \cite{beck2021deep} using \textit{random} neural networks suitable to approximately solve both high-dimensional nonlinear parabolic PDEs and PIDEs with jumps having (possibly) infinite activity. We provide a full error analysis of our so-called \textit{random deep splitting} method. In particular, we prove that our random deep splitting method converges to the (unique viscosity) solution of the nonlinear PDE or PIDE under consideration. Moreover, we empirically analyze our random deep splitting method by considering several numerical examples including both nonlinear PDEs and nonlinear PIDEs relevant in the context of pricing of financial derivatives under default risk. In particular, we empirically demonstrate in all examples that our random deep splitting method can approximately solve nonlinear PDEs and PIDEs in 10'000 dimensions \textit{within seconds}.
\end{abstract}

\maketitle

\section{\textbf{Introduction}}

In this paper, we develop a deep learning based numerical algorithm using \textit{random} neural networks to solve high-dimensional nonlinear parabolic partial differential equations (PDEs) and partial integro-differential equations (PIDEs) with jumps having possibly infinite activity. Moreover, we provide a full error analysis of our algorithm which guarantees its convergence to the corresponding (unique viscosity) solution of the PDE or PIDE under consideration.

PDEs and PIDEs have many important applications in biology, physics, engineering, economics, and finance; see, e.g., \cite{Boussange2023,ContTankov2004,Cont2005,Delong2013,Oksendal2006} and the references therein.
In general, nonlinear PDEs and PIDEs cannot be solved analytically and hence need to be numerically approximated. However, while classical numerical methods such as, e.g., finite difference or finite elements methods (see, e.g., \cite{LeRoux1989,Pani1992,Sloan1986,Yanik1988} and the references therein) typically provide convergence guarantee, they cannot be applied to numerically solve high-dimensional nonlinear PDEs or PIDEs. Multilevel Monte Carlo methods, especially multilevel Picard approximation algorithms have demonstrated to be able to solve certain high-dimensional nonlinear PDEs \cite{BGJ2020,beck2020overcoming,hutzenthaler2019multilevel,hutzenthaler2021multilevel,giles2019generalised,HJK2022,HJKN2020,HJKNW2020,hutzenthaler2020overcoming,HK2020,hutzenthaler2022multilevel1,NNW2023,neufeld2023multilevel} and PIDEs \cite{NW2022} with theoretical convergence guanratee. However, these methods can only approximate the corresponding PDE or PIDE at one single deterministic time-space point given as input of the algorithm.

In recent years, starting with the seminal work by \cite{han2018solving}, deep learning numerical methods using neural networks have been introduced to solve nonlinear PDEs in up to 10'000 dimensions; see, e.g., \cite{beck2020deep,beck2021deep,BBG+2021,beck2019machine,beck2020overview,berner2020numerically,weinan2021algorithms, EYu2018,FTT2019,han2018solving,han2020convergence,han2019solving,Hen2017,hure2020deep,ito2021neural,jacquier2023deep, KLY2021,lu2021deepxde,Mis2019,NM2019,nguwi2022deep,nguwi2022numerical,nguwi2023deep,Rai2023,raissi2019physics,reisinger2020rectified,sirignano2018dgm,zhang2020learning}. 

Numerical methods to solve high-dimensional nonlinear PIDEs however are still at its infancy, due to the additional non-local term in form of an integral which makes it significantly harder to approximately solving them numerically. In \cite{georgoulis2024deep} a deep learning method to solve linear PIDEs has been presented. In \cite{NW2022} a multilevel Picard approximation algorithm to solve nonlinear PIDEs has been derived, together with its convergence and complexity analysis. Moreover, the deep Galerkin algorithm developed by \cite{sirignano2018dgm} has been generalized to PIDEs by \cite{AlAradi2019}. Furthermore, physics informed neural networks (PINNs) to approximately solve nonlinear PIDEs have been employed in \cite{Yuan2022}. In addition, a deep learning algorithm for solving high-dimensional parabolic PIDEs and forward-backward stochastic differential equations with jumps has been proposed
in \cite{wang2023deep}.

Deep learning algorithms using neural networks for solving nonlinear PIDEs have been developed in \cite{AlAradi2019,Boussange2023,Castro2022,frey2022deep,Gnoatto2022}. However, compared to deep learning algorithms for nonlinear PDEs, the existing deep learning methods have not yet empirically demonstrated to be able to efficiently solve  nonlinear PIDEs in thousands of space dimensions.

Furthermore, for no deep learning based algorithms involving fully trained neural networks to solve either nonlinear PDEs or nonlinear PIDEs one has been able to provide a full error analysis of the algorithm, in particular to prove that the algorithm converges to the PDE or PIDE under consideration. This is mainly caused by the non-convex optimization problem one needs to solve to train the involved neural networks in order to approximate the PDE or PIDE under consideration, for which there is no convergence guarantee \cite{goodfellow2016deep}.

We also refer to the following results proving that deep neural networks can approximate the solutions of PDEs  \cite{AJK+2023, BGJ2020,CHW2022,EGJS2022,GHJVW2023, HJKN2020a, JSW2021, neufeld2024rectified} and PIDEs \cite{GononOptionPrices2021,gonon2023deep, NNW2023}, typically without the curse of dimensionality. However, these results are abstract and only prove the existence of a neural network which can well approximate the given solution of the PDE or PIDE, however it is left open how to construct it.

To overcome these difficulties, we first extend the deep splitting algorithm developed in \cite{beck2021deep} to nonlinear PIDEs with jumps having \emph{infinite} activity and provide an a priori estimate for our algorithm, see Theorem~\ref{ThmMain}. This extends the work \cite{frey2022deep} which in turn has extended the deep splitting algorithm developed in \cite{beck2021deep} to nonlinear PIDEs but with jumps of \emph{finite} activity, as well as extends the a priori estimates for the deep splitting method obtained for nonlinear PDEs \cite{GPW2022} and PIDEs of \emph{finite} activity \cite{frey2022convergence} to the \emph{infinite} activity case.

Second, we then employ \textit{random} neural networks, instead of classical (i.e., deterministic, fully trained) neural networks in order to approximate nonlinear PDEs and PIDEs, which we then call the \textit{random deep splitting method}. We also refer to the following literature where random neural networks have been employed to approximately solve linear and nonlinear PDEs \cite{dong2021local,jacquier2023random,NeufeldSchmocker2023,Wang2023} as well as linear PIDEs \cite{hutzenthaler2021multilevel,GononBlackScholesPDE2021}.

Random neural networks are single-hidden-layer feed-forward neural networks whose weights and biases inside the activation function are randomly initialized. Hence, only the linear readouts need to be trained, which can be performed efficiently, e.g., by the least squares method (see, e.g., \cite{bjorck1996numerical}). This approach is inspired by the seminal works on extreme learning machines (see \cite{Huang2006}), random feature regression (see \cite{Rahimi2007,RahimiUniform2008,Rahimi2008}), and reservoir computing (see \cite{Grigoryeva2018,Herbert2004,Maass2002}). We also refer to \cite{Gonon2023,NeufeldSchmocker2023} for universal approximation theorems for random neural networks.

Random neural networks have the following three crucial advantages compared to classical (i.e., deterministic, fully trained) neural networks. First, the amount of parameters which needs to be trained in a random neural network is significantly lower. As a consequence, their training is significantly faster than deterministic ones. Second, as only the linear readouts are trained, the corresponding optimization problem is \textit{convex} and thus always admits a minimizer. Third, the least squares method directly outputs a minimizer of the optimization problem, which therefore leads to no additional optimization error (as opposed to classical neural networks trained, e.g., via stochastic gradient descent). Hence, the overall generalization error consists only of the prediction error and can therefore be controlled (see \cite{Gonon2023,NeufeldSchmocker2023}).

We employ these advantages of random neural networks to derive a full error analysis of our random deep splitting method both for nonlinear PDEs and PIDEs (with jumps having possibly infinite activity). These results form the main theoretical contribution of the paper, see Theorem~\ref{ThmMainRN} and Corollary~\ref{CorMainRN}. We highlight that in the literature, a full error analysis together with its convergence guarantee has been established in \cite{GononBlackScholesPDE2021} for solving linear Black-Scholes PIDEs using random neural networks. Moreover, \cite{jacquier2023random} derived a convergence analysis for solving path-dependent PDEs by random neural networks in the context of rough volatility. In addition, \cite{NeufeldSchmocker2023} obtained a full error analysis for learning the heat equation using random neural networks. Furthermore, \cite{NeufeldSchmocker2022} applied random neural networks for pricing and hedging financial derivatives via Wiener-Ito chaos expansion. Moreover, \cite{dong2021local,Wang2023} used random neural networks to solve various PDEs in mathematical physics.

Furthermore, we empirically analyze our random deep splitting method by considering several numerical examples including both nonlinear PDEs  and nonlinear PIDEs relevant in the context of pricing of financial derivatives under default risk. We compare the performance of our random deep splitting algorithm to the classical deep learning algorithm involving deterministic neural networks, as well as a multilevel Picard approximation algorithm. In particular, we empirically demonstrate in all examples that our random deep splitting method can approximately solve nonlinear PDEs and PIDEs in 10'000 dimensions \textit{within seconds}, which is significantly faster than the other analyzed methods.

\subsection{Outline}

In Section~\ref{section deri}, we outline the (random) deep splitting algorithm, while we introduce the precise setting and assumptions together with our main theoretical results in Section~\ref{section setting main}. In Section~\ref{section numerics}, we present our numerical results, while we prove all the mathematical results of the paper in Section~\ref{section proof}.

\subsection{Notation}

As usual, we denote by $\mathbb{N} := \lbrace 1, 2, 3, \dots \rbrace$ and $\mathbb{N}_0 := \mathbb{N} \cup \lbrace 0 \rbrace$ the sets of natural numbers, whereas $\mathbb{R}$ and $\mathbb{C}$ represent the sets of real and complex numbers (with imaginary unit $\mathbf{i} := \sqrt{-1} \in \mathbb{C}$), respectively. Hereby, we use the notation $s \wedge t := \min(s,t)$ for $s,t \in \mathbb{R}$.

Moreover, for any $d \in \mathbb{N}$, we denote by $\mathbb{R}^d$ the $d$-dimensional Euclidean space equipped with the norm $\Vert x \Vert = \big( \sum_{i=1}^d \vert x_i \vert^2 \big)^{1/2}$. Hereby, we define the vector $\mathbf{1}_d := (1,\dots,1)^\top \in \mathbb{R}^d$, the componentwise exponential $\mathbb{R}^d \ni x := (x_1,\dots,x_d)^\top \mapsto \exp_d(x) := (\exp(x_1),\dots,\exp(x_d))^\top \in \mathbb{R}^d$, and the componentwise product $x \odot y := (x_1 y_1,\dots,x_d y_d)^\top \in \mathbb{R}^d$ of $x := (x_1,\dots,x_d)^\top \in \mathbb{R}^d$ and $y := (y_1,\dots,y_d)^\top \in \mathbb{R}^d$.

In addition, for any $d,e \in \mathbb{N}$, we denote by $\mathbb{R}^{d \times e}$ the vector space of matrices $A = (a_{i,j})_{i=1,\dots,d}^{j=1,\dots,e} \in \mathbb{R}^{d \times e}$, which is equipped with the Frobenius norm $\Vert A \Vert_F = \big( \sum_{i=1}^d \sum_{j=1}^e \vert a_{i,j} \vert^2 \big)^{1/2}$. Moreover, for $d=e$, we denote by $I_d \in \mathbb{R}^{d \times d}$ the identity matrix.

Furthermore, for any $d \in \mathbb{N}$ and topological space $S$, 
we denote by $C(S;\mathbb{R}^d)$ the vector space of continuous functions 
$f: S \rightarrow \mathbb{R}^d$. If $d = 1$, we abbreviate $C(S) := C(S;\mathbb{R})$. Moreover, we denote by $C_b(\mathbb{R}^d) \subseteq C(\mathbb{R}^d)$ the vector subspace of continuous and bounded functions $f: \mathbb{R}^d \rightarrow \mathbb{R}$, which is a Banach space under the supremum norm $\Vert f \Vert_{C_b(\mathbb{R}^d)} := \sup_{x \in \mathbb{R}^d} \vert f(x) \vert$. In addition, for $\gamma \in (0,\infty)$, we define $\overline{C_b(\mathbb{R}^d)}^\gamma$ as the closure of $C_b(\mathbb{R}^d)$ with respect to the norm
\begin{equation*}
	\Vert f \Vert_{C_{pol,\gamma}(\mathbb{R}^d)} := \sup_{x \in \mathbb{R}^d} \frac{\vert f(x) \vert}{(1 + \Vert x \Vert)^\gamma} < \infty.
\end{equation*}
Then, $(\overline{C_b(\mathbb{R}^d)}^\gamma,\Vert \cdot \Vert_{C_{pol,\gamma}(\mathbb{R}^d)})$ is by definition a Banach space and it holds that $f \in \overline{C_b(\mathbb{R}^d)}^\gamma$ if and only if $f \in C(\mathbb{R}^d)$ and $\lim_{R \rightarrow \infty} \sup_{x \in \mathbb{R}^d, \, \Vert x \Vert > R} \frac{\vert f(x) \vert}{(1+\Vert x \Vert)^\gamma} = 0$ (see \cite[Notation~(v)]{NeufeldSchmocker2024}).

Moreover, we denote by $\mathcal{B}(\mathbb{R}^d)$ the Borel $\sigma$-algebra of $\mathbb{R}^d$ and by $\mathds{1}_E: \mathbb{R}^d \rightarrow \lbrace 0,1 \rbrace$ the indicator function of $E \in \mathcal{B}(\mathbb{R}^d)$. In addition, for a Borel measure $\mu: \mathcal{B}(\mathbb{R}^d) \rightarrow [0,\infty]$, we define $L_2(\mathbb{R}^d,\mathcal{B}(\mathbb{R}^d),\mu)$ as the vector space of (equivalence classes of) Borel-measurable functions $f: \mathbb{R}^d \rightarrow \mathbb{R}$ such that
\begin{equation*}
	\Vert f \Vert_{L_2(\mathbb{R}^d,\mathcal{B}(\mathbb{R}^d),\mu)} := \left( \int_{\mathbb{R}^d} \vert f(x) \vert^2 \mu(dx) \right)^\frac{1}{2} < \infty.
\end{equation*}
Then, $(L_2(\mathbb{R}^d,\mathcal{B}(\mathbb{R}^d),\mu),\Vert \cdot \Vert_{L_2(\mathbb{R}^d,\mathcal{B}(\mathbb{R}^d),\mu)})$ is a Banach space. Similarly, for a given probability space $(\Omega,\mathcal{F},\mathbb{P})$, we denote by $(L_2(\mathbb{P}),\Vert \cdot \Vert_{L_2(\mathbb{P})})$ the Banach space of $\mathcal{F}$-measurable random variables $X: \Omega \rightarrow \mathbb{R}$ such that the norm $\Vert X \Vert_{L^2(\mathbb{P})} := \mathbb{E}\left[ \vert X \vert^2 \right]^{1/2}$ is finite.

Furthermore, we give an overview of the most important quantities used throughout the paper in Appendix~\ref{App}.

\section{\textbf{Derivation of the algorithm}}
\label{section deri}
The deep splitting method has been introduced in \cite{beck2021deep} to solve nonlinear PDEs and later has been extended in \cite{frey2022deep} to nonlinear PIDEs with jumps having \emph{finite} activity. In this section we first outline the extension of the deep splitting method to nonlinear PIDEs with jumps having \emph{infinite} activity and then present its randomized version, where we employ \emph{random} neural networks instead of classical (i.e.\ deterministic, fully trained) ones. The mathematical results for both versions are presented in Section~\ref{section main results}.

Let $T\in(0,\infty)$. For each $d\in \bN$, let $\nu^{d}(dz)$ be a L\'evy measure\footnote{A
L\'evy measure $\nu^d: \mathcal{B}(\mathbb{R}^d) \rightarrow [0,\infty)$ is a Borel measure 
satisfying $\int_{\bR^d}1\wedge\|z\|^2\,\nu^d(dz)<\infty$ and $\nu^d(\{0\})=0$.}
on $\bR^{d}$,
let $\mu^d=(\mu^{d,1},\dots,\mu^{d,d})
\in C([0,T]\times\bR^d;\bR^d)$,
$\sigma^{d}=(\sigma^{d,ij})_{i,j\in\{1,2,\dots,d\}}
\in C([0,T]\times\bR^d;\bR^{d\times d})
$, and
$\eta^{d}=(\eta^{d,1},\dots,\eta^{d,d})\in C([0,T]\times\bR^d\times \bR^{d};\bR^d)$.
Let $g^d\in C(\bR^d)$ and $f^d\in C([0,T]\times \bR^d \times\bR)$.
Moreover, we assume here that all functions 
$\mu^d$, $\sigma^d$, $\eta^d$, $g^d$, and $f^d$
are sufficiently regular.
Then for each $d\in\bN$, 
let $u^d: [0,T] \times \mathbb{R}^d \rightarrow \mathbb{R}$ satisfy for all
$(t,x)\in[0,T)\times\bR^d$ that $u^d(T,x)=g^d(x)$ and
\begin{equation}
	\label{PDE}
	\begin{aligned}
		\frac{\partial}{\partial t} u^d(t,x)
		&+\langle\nabla_x u^d(t,x),\mu^d(t,x)\rangle
		+\frac{1}{2}\operatorname{Trace}
		\Big( \sigma^{d}(t,x)[\sigma^{d}(t,x)]^T\operatorname{Hess}_x u^d(t,x) \Big)
		\\
		& +f^d\big( t,x,u^d(t,x)\big)
		\\
		&
		+\int_{\R^{d}}\left(u^d(t,x+\eta^{d}_t(z,x))-u^d(t,x)
		-\langle\nabla_x u^d(t,x),\eta^{d}_t(x,z)\rangle\right)\,\nu^{d}(dz)
		=0.
	\end{aligned}
\end{equation}

\subsection{Temporal discretization}
Let $d,N\in \bN$, and let $\{t_n\}_{n=0}^N\subseteq [0,T]$ 
be the partition of $[0,T]$ satisfying
$$
t_n=nT/N, \quad n\in\{N,N-1,\dots,1,0\}.
$$
Then by \eqref{PDE}, 
we notice for all $n\in\{N-1,N-2,\dots,1,0\}$, $x\in\bR^d$, 
and $t\in[t_n,t_{n+1})$ that
\begin{equation}
	\begin{aligned}
		u^d(t,x) & = u^d(t_{n+1},x)
		+\int_t^{t_{n+1}}
		f^d\big(s,x,u^d(s,x)\big)
		\,ds \\
		& \quad\quad
		+\int_t^{t_{n+1}}\left[\langle\nabla_x u^d(s,x),\mu^d(s,x)\rangle
		+\frac{1}{2}\operatorname{Trace}
		\left(\sigma^{d}(s,x)[\sigma^{d}(s,x)]^T\operatorname{Hess}_x u^d(s,x)\right)\right]ds \\
		& \quad\quad +\int_t^{t_{n+1}}\int_{\R^{d}}\left(u^d(s,x+\eta^{d}_s(x,z))-u^d(s,x)
		-\langle\nabla_x u^d(s,x),\eta^{d}_s(x,z)\rangle\right)\,\nu^{d}(dz)\,ds.
	\end{aligned}
	\label{approx u 1}
\end{equation}
This suggests for $N$ large enough that for all $n\in\{N-1,N-2,\dots,1,0\}$, $x\in\bR^d$, 
and $t\in[t_n,t_{n+1})$,
\begin{equation}
	\begin{aligned}
		u^d(t,x) & \approx u^d(t_{n+1},x)
		+(t_{n+1}-t_n)
		f^d\big(t_{n+1},x,u^d(t_{n+1},x)\big) \\
		& \quad\quad +\int_t^{t_{n+1}}\left[\langle\nabla_x u^d(s,x),\mu^d(s,x)\rangle
		+\frac{1}{2}\operatorname{Trace}
		\big(\sigma^{d}(s,x)[\sigma^{d}(s,x)]^T\operatorname{Hess}_x u^d(s,x)\big)\right]ds \\
		& \quad\quad +\int_t^{t_{n+1}}\int_{\R^{d}}\left(u^d(s,x+\eta^{d}_s(x,z))-u^d(s,x)
		-\langle\nabla_x u^d(s,x),\eta^{d}_s(x,z)\rangle\right)\,\nu^{d}(dz)\,ds.
	\end{aligned}
	\label{approx u 2}
\end{equation}
Furthermore, let $V^d_N(T,x)=g^d(x)$ for all $x\in\bR^d$, 
and for each $n\in\{N-1,N-2,\dots,1,0\}$ 
let $V^d_n\in C^{1,2}([t_n,t_{n+1})\times\bR^d)$ satisfy 
for all $x\in\bR^d$ and $t\in[t_n,t_{n+1})$ that 
\begin{equation}
	\begin{aligned}
		V^d_n(t,x) = & V^d_{n+1}(t_{n+1},x) 
		 +(t_{n+1}-t_n) f^d\big(t_{n+1},x,V^d_{n+1}(t_{n+1},x)\big) \\
		& +\int_t^{t_{n+1}}\left[\langle\nabla_x V^d_n(s,x),\mu^d(s,x)\rangle
		+\frac{1}{2}\operatorname{Trace}
		\big(\sigma^{d}(s,x)[\sigma^{d}(s,x)]^T
		\operatorname{Hess}_x V^d_n(s,x)\big)\right]ds \\
		& + \int_t^{t_{n+1}}\int_{\R^{d}}\left(V^d_n(s,x+\eta^{d}_s(x,z))-V^d_n(s,x)
		-\langle\nabla_x V^d_n(s,x),\eta^{d}_s(x,z)\rangle\right)\,\nu^{d}(dz)\,ds.
	\end{aligned}
	\label{approx u 4}
\end{equation}
For the existence, uniqueness, and regularity results of PIDEs in the form of
\eqref{approx u 4}, we refer to \cite{GW2021,NW2022}.
Then \eqref{approx u 1}--\eqref{approx u 4} suggest that for $N$ large enough
it holds for all $n\in\{N-1,N-2,\dots,1,0\}$ that
\begin{equation*}
	u^d(t_n,x)\approx V^d_n(t_n,x), \quad x\in\bR^d.
\end{equation*}

\subsection{Feynman-Kac representation}
Let $(\Omega,\cF,\bF,\bP)$ be a filtered probability space equipped with filtration 
$\bF:=(\mathcal{F}_t)_{t\in[0,T]}$
satisfying the usual conditions.
For each $d\in\bN$, let 
$W^{d,m}=(W^{d,m,1},\dots,W^{d,m,d})
:[0,T]\times \Omega\to \bR^d$, $m\in\bN_0$, be independent 
$\bR^d$-valued standard $\bF$-Brownian motions, 
and let $\pi^{d,m}(dz,ds)$, $m\in\bN_0$, be independent
$\bF$-Poisson random measures on $\bR^{d}\times [0,T]$ 
with corresponding identical compensator $\nu^{d}(dz)\otimes ds$, 
and denote by
$
\tilde{\pi}^{d,m}(dz,ds)
:=\pi^{d}(dz,ds)-\nu^{d}(dz)\otimes ds
$ 
the compensated Poisson random measures of $\pi^{d,m}(dz,ds)$.
In addition, for each $d\in\bN$ let $\xi^{d,m}$, $m\in\bN_0$,
be independent $\cF_0$-measurable 
square-integrable random variables. 
Then for each $d\in\bN$, let $(X^{d,m}_t)_{t\in[0,T]}:\Omega\times[0,T]\to\bR^d$, $m\in\bN_0$,
be independent c\`adl\`ag $\bF$-adapted stochastic processes 
satisfying for all $m\in\bN_0$
and, $\mathbb{P}$-a.s., for all $t\in[0,T]$ that
\begin{equation}                                                                             
	\label{SDE deri}
	X^{d,m}_{t}
	=\xi^{d,m}+\int_0^t\mu^d\left(s,X^{d,m}_{s-}\right)\,ds
	+\int_0^t\sigma^{d}\left(s,X^{d,m}_{s-}\right)\,dW^{d,m}_s
	+\int_0^t\int_{\bR^{d}}\eta^{d}_s\left(X^{d,m}_{s-},z\right)
	\,\tilde{\pi}^{d,m}(dz,ds).
\end{equation}
For the existence, uniqueness, and regularity results of SDE \eqref{SDE deri}, 
we refer to \cite[Section~3]{kunita2004stochastic}, 
and \cite[Section~3.1]{rong2006theory}.

To ease notation, for every $d\in\bN$ we define a operator
$F^d:C(\bR^d) \to C([0,T]\times\bR^d)$ by 
\begin{equation*}
	[0,T]\times\bR^d\ni (t,x) \quad \mapsto \quad (F^d\circ v)(t,x):=f^d\big(t,x,v(x)\big),\quad
	v\in C(\bR^d).
\end{equation*}
Then by the Feynman-Kac formula 
(see, e.g., \cite[Theorem~17.4.10]{CE}, or \cite[Theorem~285]{rong2006theory} 
together with the Markov property of
$(X^{d,0}_t)_{t \in [0,T]}$ (see, e.g., \cite[Theorem~17.2.3]{CE}),
one can show for all $d,N\in\bN$ and $n\in\{N-1,N-2,\dots,1,0\}$ that
\begin{equation}
	\label{Markov deri}
	\begin{aligned}
		& \bE\left[
		V^d_{n+1}\left(t_{n+1},X^{d,0}_{t_{n+1}}\right)
		+(t_{n+1}-t_n)\left(F^d\circ V^d_{n+1}\right)
		\left(t_{n+1},X^{d,0}_{t_{n+1}}\right)
		\Big|\cF_{t_n}
		\right] \\
		& \quad\quad =\bE\left[
		V^d_{n+1}\left(t_{n+1},X^{d,0}_{t_{n+1}}\right)
		+(t_{n+1}-t_n)\left(F^d\circ V^d_{n+1}\right)
		\left(t_{n+1},X^{d,0}_{t_{n+1}}\right)
		\Big|X^{d,0}_{t_n}
		\right] \\
		& \quad\quad = V^d_n\left(t_n,X^{d,0}_{t_n}\right).
	\end{aligned}
\end{equation}

\subsection{Time discretization of SDEs with jumps}
\label{subsection time dis}

Since we aim to obtain an implementable numerical algorithm
to solve high dimensional PIDEs, we need an appropriate approximation
of $(X^{d,0}_t)_{t\in[0,T]}$ which can be explicitly simulated. 
To this end, we recall the following approximation procedures in
\cite[Sections 4.2 and 4.4]{gonon2021deep}.
For each $d,N\in\bN$,
let $(\cX^{d,m,N}_s)_{s\in[0,T]}: 
[0,T]\times\Omega\to \bR^d$, $m\in\bN_0$, satisfy for all 
$n\in\{N-1,N-2,\dots,1,0\}$, $s\in (t_n,t_{n+1}]$ that
$\cX^{d,m,N}_{0}=\xi^{d,m}$ and
\begin{equation}
	\label{discrete Euler}
	\begin{aligned}
		\cX^{d,m,N}_{s} & = \cX^{d,m,N}_{t_n}
		+\mu^d\Big(t_n,\cX^{d,m,N}_{t_n}\Big)
		\cdot(s-t_n)
		+\sigma^{d}\Big(t_n,\cX^{d,m,N}_{t_n}\Big)
		\left(W^{d,m}_s-W^{d,m}_{t_n}\right)\nonumber\\
		& \quad\quad
		+\int_{t_n}^s\int_{\bR^{d}}
		\eta^{d}_{t_n}\Big(\cX^{d,m,N}_{t_n},z\Big)
		\,\tilde{\pi}^{d,m}(dz,dr).
	\end{aligned}
\end{equation}
Note that the last term of the stochastic integral with respect to 
the compensated Poisson random measure in \eqref{discrete Euler} 
typically cannot be simulated directly. 
Hence, by following \cite[Sections 4.2 and 4.4]{gonon2021deep}, 
we propose the following truncation.
For each $d\in\bN$ and $\delta\in(0,1)$ we define the set $A^{d}_\delta$
by
\begin{equation*}
	A^{d}_\delta:=\{z\in\bR^{d}:\|z\|\geq \delta\},
\end{equation*}
and also define a probability measure $\nu_{\delta}^{d}$ 
on $(\bR^{d},\cB(\bR^{d}))$ by
\begin{equation*}
	\nu_{\delta}^{d}(B)
	:=\frac{\nu^{d}(B\cap A_\delta^{d})}{\nu^{d}(A_\delta^{d})},
	\quad B\in\cB(\bR^{d}).
\end{equation*}
Then for each $d,N,\cM\in\bN$ and $\delta\in(0,1)$ 
let $V^{d,m,N,\delta,\cM}_{i,j}$, $m\in\bN_0$,
$i=0,1,2,\dots,N$, $j=1,2,\dots,\cM$, 
be independent $\bR^{d}$-valued random variables with
identical distribution $\nu_\delta^{d}$, independent of 
$(W^{d,m},\pi^{d,m})_{(d,m)\in \bN\times\bN_0}$.
For each $d,N,\cM\in\bN$, and $\delta\in(0,1)$, let
$(\cX^{d,m,N,\delta,\cM}_{s})_{s\in[0,T]}: 
[0,T]\times\Omega\to \bR^d$, $m\in\bN_0$,  
be measurable functions satisfying that $\cX^{d,m,N,\delta,\cM}_{0}=\xi^{d,m}$ 
and, $\mathbb{P}$-a.s., for all $n\in\{N-1,N-2,\dots,1,0\}$ and $s\in(t_n,t_{n+1}]$ that
\begin{equation}
	\label{def time discretization}
	\begin{aligned}
		\cX^{d,m,N,\delta,\cM}_{s} & = \cX^{d,m,N,\delta,\cM}_{t_n} +
		\mu^d\left( {t_n,\cX^{d,m,N,\delta,\cM}_{t_n}} \right)(s-t_n) + \sigma^{d}\left( {t_n,\cX^{d,m,N,\delta,\cM}_{t_n}} \right)
		(W^{d,m}_s-W^{d,m}_{t_n}) \\
		& \quad\quad +\int_{A_\delta^{d}}
		\eta^{d}_{t_n}\left( \cX^{d,m,N,\delta,\cM}_{t_n}, z \right)
		\,\pi^{d,m}(dz,(s,t_n]) \\
		& \quad\quad -\frac{(s-t_n)\nu^{d}(A_\delta^{d})}{\cM}\sum_{j=1}^{\cM}
		\eta^{d}_{t_n}\left( \cX^{d,m,N,\delta,\cM}_{t_n},
		V^{d,m,N,\delta,\cM}_{n,j} \right).
	\end{aligned}
\end{equation}
Next, for every $d,N,\cM\in\bN$ and $\delta\in(0,1)$ 
we define Borel functions $\cV^{d}_n:\bR^d\to\bR$, 
$n\in\{N,N-1,\dots,1,0\}$, recursively, 
by setting for all $x\in\bR^d$ that $\cV^{d}_N(x)=g^d(x)$ and
\begin{equation}
	\label{def cV argmin}
	\cV^{d}_n(x)
	:=
	\bE\Big[
	\cV^{d}_{n+1}\big(\cX^{d,0,N,\delta,\cM}_{t_{n+1}}\big)
	+(t_{n+1}-t_n)f^d\Big(t_{n+1},\cX^{d,0,N,\delta,\cM}_{t_{n+1}},
	\cV^{d}_{n+1}\big(\cX^{d,0,N,\delta,\cM}_{t_{n+1}}\big)\Big)
	\Big|\cX^{d,0,N,\delta,\cM}_{t_{n}}=x
	\Big]
\end{equation}
for $n\in\{N-1,N-2,\dots,1,0\}$.
This implies for all $d,N,\cM\in\bN$, $\delta\in(0,1)$, 
and $n\in\{0,1,\dots,N-1\}$ that
\begin{equation}
	\begin{aligned}
	    &
		\cV^{d}_n\big(\cX^{d,0,N,\delta,\cM}_{t_{n}}\big)
		\\
		&
		= \bE\left[
		\cV^{d}_{n+1}\big(\cX^{d,0,N,\delta,\cM}_{t_{n+1}}\big)
		+(t_{n+1}-t_n)f^d\Big(t_{n+1},\cX^{d,0,N,\delta,\cM}_{t_{n+1}},
		\cV^{d}_{n+1}\big(\cX^{d,0,N,\delta,\cM}_{t_{n+1}}\big)\Big)
		\Big|\cX^{d,0,N,\delta,\cM}_{t_{n}}
		\right] \\
		&
		= \bE\left[
		\cV^{d}_{n+1}\big(\cX^{d,0,N,\delta,\cM}_{t_{n+1}}\big)
		+(t_{n+1}-t_n)f^d\Big(t_{n+1},\cX^{d,0,N,\delta,\cM}_{t_{n+1}},
		\cV^{d}_{n+1}\big(\cX^{d,0,N,\delta,\cM}_{t_{n+1}}\big)\Big)
		\Big|\cF_{t_n}
		\right].
	\end{aligned}
	\label{markov Euler intro}
\end{equation}
Combining \eqref{Markov deri} and \eqref{markov Euler intro} suggests for large enough $\cM,N \in \mathbb{N}$ and small enough $\delta \in (0,1)$ that for all $d \in \bN$ and $n\in\{N,N-1,...,1,0\}$ it holds that
\begin{equation*}
	V^d_n\left(t_n,X^{d,0}_{t_n}\right) \approx 
	\cV^{d}_n\left(\cX^{d,0,N,\delta,\cM}_{t_{n}}\right).
\end{equation*}
Moreover, for every $d,N,\cM\in\bN$, $\delta\in(0,1)$ and $n\in\{N-1,N-2,\dots,1,0\}$, 
the $L_2$-projection property of conditional expectations 
(see, e.g., \cite[Theorem~3.1.14]{Kry2002}) and as $\cV^d_n(\cdot)$ is continuous imply that
\begin{equation}
\label{def cV}
	\begin{aligned}
		\cV^d_n(\cdot) 
		& = \mathop{\arg\min}_{v \in C(\mathbb{R}^d)}
		\bE\bigg[
		\Big\vert
		\cV^d_{n+1}\left(\cX^{d,0,N,\delta,\cM}_{t_{n+1}}\right)
		\\
		& \quad\quad
		+(t_{n+1}-t_n)\left(F^d\circ \cV^d_{n+1}\right)
		\left(t_{n+1},\cX^{d,0,N,\delta,\cM}_{t_{n+1}}\right)
		-v\left(\cX^{d,0,N,\delta,\cM}_{t_n}\right)
		\Big\vert^2
		\bigg].
	\end{aligned}
\end{equation}

\subsection{Deep Splitting Algorithm with Deterministic Neural Networks}
\label{section Deter NN}
In order to introduce the deep splitting algorithm with deterministic neural networks, we fix some large $\cM\in\bN$, small $\delta\in(0,1)$, 
$d,N,K \in \mathbb{N}$,
$k \in \mathbb{N} \cap [3,\infty)$, a weakly differentiable function $\rho: \mathbb{R} \rightarrow \mathbb{R}$, and let $r=(1+kK-K)(K+1)+K(d+1)$. Then, for each $\theta=(\theta_1,\dots,\theta_r)\in\bR^r$, $i,j\in\bN$, $v\in\bN_0$
with $v+i(j+1)\leq r$, we define the function
$A^{\theta,v}_{i,j}:\bR^i\to\bR^j$ for every $x=(x_1,\dots,x_i)\in\bR^i$ by
\begin{equation*}
	A^{\theta,v}_{i,j}(x):=
	\begin{pmatrix}
		\theta_{v+1} & \theta_{v+2} & \dots & \theta_{v+i}\\
		\theta_{v+i+1} & \theta_{v+i+2} & \dots & \theta_{v+2i}\\
		\theta_{v+2i+1} & \theta_{v+2i+2} & \dots & \theta_{v+3i}\\
		\vdots & \vdots & \ddots & \vdots\\
		\theta_{v+i(j-1)+1} & \theta_{v+i(j-1)+2} & \dots & \theta_{v+ij}
	\end{pmatrix}
	\begin{pmatrix}
		x_1 \\ x_2\\ x_3\\ \vdots \\ x_i
	\end{pmatrix}
	+\begin{pmatrix}
		\theta_{v+ij+1} \\ 
		\theta_{v+ij+2} \\ 
		\theta_{v+ij+3} \\
		\vdots\\
		\theta_{v+ij+j}
	\end{pmatrix}.
\end{equation*}
Moreover, let
$\bV^d_n:\bR^r\times\bR^d\to\bR$, $n\in\{N,N-1,\dots,1,0\}$, 
be the functions satisfying
for every $n\in\{N-1,N-2,\dots,1,0\}$, $\theta\in\bR^r$, and $x\in\bR^d$ that
$\bV^d_N(\theta,x)=g^d(x)$ and
\begin{equation}
\bV^d_n(\theta,x):=
\Big( A^{\theta,(k-1)K(K+1)+K(d+1)}_{K,1}\circ \rho \circ 
A^{\theta,(k-2)K(K+1)+l(d+1)}_{K,K} \circ \cdots \circ
\rho \circ A^{\theta,K(d+1)}_{K,K} \circ 
\rho \circ A^{\theta,0}_{d,K}
\Big)(x),
\label{function NN deri}
\end{equation}
where $\rho: \mathbb{R} \rightarrow \mathbb{R}$ is applied componentwise.
Note that for each $n\in\{N-1,N-2,\dots,1,0\}$, 
the function $\bV^d_n$ defined by \eqref{function NN deri}
is the network function of a deterministic (fully trained)
neural network with $k+1$ layers ($1$ input layer with $d$ neurons, 
$k-1$ hidden layers with $K$ neurons respectively,
and $1$ output layer with $1$ neuron) and 
$\rho: \mathbb{R} \rightarrow \mathbb{R}$ as activation function.

To find a suitable parameter $\theta$, we apply 
the following stochastic gradient descent algorithm. Let
$\{\Theta^m_n\}_{m=0}^\infty:\bN_0\times\Omega\to\bR^r$,
$n\in\{0,1,\dots,N\}$, be discrete-time stochastic processes 
defined for every $n\in\{0,1,\dots,N\}$, $m\in\bN_0$, and $\omega \in \Omega$
by\footnote{
	Instead of the plain vanilla gradient descent, one could also consider
	any suitable stochastic gradient descent method
	such as, e.g., the Adam algorithm (see \cite{KingmaBa2015}).}
\begin{equation*}
	\Theta^{m+1}_n(\omega)=\Theta^m_n(\omega)
	-\beta \cdot \nabla_\theta\phi^m_n\left(\Theta^m_n(\omega),\omega\right),
\end{equation*}
where
\begin{equation}
	\begin{aligned}
		\phi^m_n(\theta,\omega) & := \frac{1}{J} \sum_{j=1}^J \Bigg|\bV^d_{n}
		\left(\theta,\cX^{d,Jm+j,N,\delta,\cM}_{t_n}(\omega)\right) 
		- \bigg[
		\bV^d_{n+1}\left( \Theta^m_{n+1}(\omega),
		\cX^{d,Jm+j,N,\delta,\cM}_{t_{n+1}}(\omega) \right) \\
		& \quad\quad\quad\quad\quad 
		+(t_{n+1}-t_n)
		\left( F^d\circ \bV^d_{n+1}\left(\Theta^M_{n+1}(\omega),\cdot\right)\right)
		\left(t_{n+1},\cX^{d,Jm+j,N,\delta,\cM}_{t_{n+1}}(\omega)\right) 
		\bigg] \Bigg|^2,
	\end{aligned}
	\label{error theta deri}
\end{equation}
for some $J \in \mathbb{N}$ denoting the sample size and some $\beta \in (0,\infty)$ representing the learning rate. Then, 
for sufficiently large $J,N,M,\cM\in\bN$ and sufficiently small $\beta \in (0,\infty)$ and $\delta \in (0,1)$, we obtain for every $n\in\{N,N-1,\dots,1,0\}$ the following approximations: $\bV^d_N(\cdot) = g^d(\cdot)$ and
\begin{equation*}
	\bV^d_n\left(\Theta^M_n,\cX^{d,0,N,\delta,\cM}_{t_n}\right) \approx u^d\left(t_n,X^{d,0}_{t_n}\right).
\end{equation*}

\subsection{Deep Splitting Algorithm with Random Neural Networks}
\label{section RNN}
Fixing a large $\cM,N\in\bN$, a small $\delta\in(0,1)$, and some $d \in \mathbb{N}$, we aim to find suitable approximations of the functions $V^d(t_n,\cdot)$, $n\in\{N,N-1,\dots,1,0\}$, by using \emph{random} neural networks instead of deterministic (fully trained) neural networks. Roughly speaking, a random neural network is a single-hidden-layer feedforward neural network whose weights and biases inside the activation function are randomly initialized and only the linear readouts need to be trained. To this end, we follow \cite{NeufeldSchmocker2023} and introduce random neural networks as $\overline{C_b(\mathbb{R}^d)}^\gamma$-valued random variables (see \cite[Section~1.2]{Hytoenen2016} for an introduction to Banach space-valued random variables).

More precisely, we assume that $(\Omega,\mathcal{F},\mathbb{P})$ supports two i.i.d.~sequences of random variables $(A_k)_{k \in \mathbb{N}}: \Omega \rightarrow \mathbb{R}^d$ and $(B_k)_{k \in \mathbb{N}}: \Omega \rightarrow \mathbb{R}$, which are independent of the Brownian motions $(W^{d,m})_{m\in\bN_0}$, 
the componensated Poisson random measures $(\tilde{\pi}^{d,m})_{m\in\bN_0}$, and the jumps $(V^{d,m,N,\delta,\mathcal{M}}_{i,j})_{m \in \mathbb{N}_0, \, i = 0,\dots,N, \, j = 1,\dots,\mathcal{M}}$, and thus also from the time discretization $(\cX^{d,j,N,\cM,\delta})_{m \in \mathbb{N}_0, \, i = 0,\dots,N, \, j = 1,\dots,\mathcal{M}}$.

\begin{definition}
	\label{DefRN}
	For $\gamma \in (0,\infty)$, $\rho \in \overline{C_b(\mathbb{R})}^\gamma$, $K \in \mathbb{N}$, and $y := (y_1,\dots,y_k)^\top \in \mathbb{R}^K$, we call a map of the form\footnote{The notation $y_k \rho\left( A_k(\omega)^\top \cdot - B_k(\omega) \right)$ refers to the function $\mathbb{R}^d \ni x \mapsto y_k \rho\left( A_k(\omega)^\top x - B_k(\omega) \right) \in \mathbb{R}$.}
	\begin{equation}
		\label{EqDefRN}
		\Omega \ni \omega \quad \mapsto \quad \mathscr{V}^d(y,\cdot) := \sum_{k=1}^K y_k \rho\left( A_k(\omega)^\top \cdot - B_k(\omega) \right) \in \overline{C_b(\mathbb{R}^d)}^\gamma
	\end{equation}
	a \emph{random neural network}, where $K \in \mathbb{N}$ denotes the number of neurons and $\rho \in \overline{C_b(\mathbb{R})}^\gamma$ represents the activation function. Moreover, $A_1,\dots,A_K: \Omega \rightarrow \mathbb{R}^d$ are the \emph{random weights}, $B_1,\dots,B_K: \Omega \rightarrow \mathbb{R}$ are the \emph{random biases}, whereas $y_1,\dots,y_K \in \mathbb{R}$ are the \emph{linear readouts} that need to be trained.
\end{definition}

For the implementation of random neural networks, we fix some $K \in \mathbb{N}$ and randomly initialize the weights and biases $(A_k,B_k)_{k=1,\dots,K}$. Then, by following \cite[Section~5.1]{NeufeldSchmocker2023}, we use the randomized least square method recursively to find the optimal linear readouts, i.e.~we set $\mathscr{V}^d_N(y,\cdot) := g^d(\cdot)$ for all $y \in \mathbb{R}^K$, and for every $n\in\{N-1,N-2,\dots,1,0\}$ that
\begin{equation}
	\label{EqLeastSquares}
	\begin{aligned}
		\Upsilon_n(\omega) & := \argmin_{y \in \mathbb{R}^K} \frac{1}{J} \sum_{j=1}^J \Bigg\vert \mathscr{V}^d_n\left( y, \cX^{d,j,N,\cM,\delta}_{t_n}(\omega) \right) - \bigg[ \mathscr{V}^d_{n+1}\left( \Upsilon_{n+1}(\omega), \cX^{d,j,N,\cM,\delta}_{t_{n+1}}(\omega) \right) \\
		& \quad\quad\quad\quad\quad\quad\quad\quad\quad\quad\quad + (t_{n+1} - t_n) \left( F^d \circ \mathscr{V}^d_{n+1}\left( \Upsilon_{n+1}(\omega), \cdot \right) \right)\left(t_{n+1}, \cX^{d,j,N,\cM,\delta}_{t_{n+1}}(\omega) \right) \bigg] \Bigg\vert^2.
	\end{aligned}
\end{equation}
Then, for sufficiently large $K \in \mathbb{N}$, we obtain for every $n\in\{N,N-1,\dots,1,0\}$ the following approximations: $\mathscr{V}^d_N(y,\cdot) := g^d(\cdot)$ for all $y \in \mathbb{R}^K$ and
\begin{equation*}
	\mathscr{V}^d_n\left(\Upsilon_n,\cX^{d,0,N,\cM,\delta}_{t_n}\right) \approx u^d\left(t_n,X^{d,0}_{t_n}\right).
\end{equation*}

\subsection{Pseudocode}
\label{section time discr}
In this subsection, we summarize the deep splitting algorithm 
with both deterministic (fully trained) 
neural networks and random neural networks in Algorithm~\ref{AlgDS} 
in a pseudocode form.

\begin{algorithm}
	\scriptsize{
		\caption{Deep Splitting Algorithm}
		\label{AlgDS}
		\begin{algorithmic}[1]	
			\State Fix an error tolerance $\widetilde{\varepsilon} \in (0,1)$;
			\State Choose $\vartheta > 0$ large enough such that $\frac{\overline{C}}{\vartheta^{q-2}} \bE\big[\big(d^p+\|\xi^{d,0}\|^2\big)^{q/2}\big] \leq \widetilde{\varepsilon}/4$, see \eqref{EqDefCBar} for $\overline{C} > 0$;
			\State Choose $N \in \mathbb{N}$ large enough such that $\frac{1}{N} \cdot 64 \widehat{C} d^p \left( d^p + \bE\left[ \Vert \xi^{d,0} \Vert^2 \right] \right) \leq \widetilde{\varepsilon}/12$, see \eqref{EqDefCHat} for $\widehat{C} > 0$;
			\State Choose $\delta > 0$ small enough such that $\delta^q d^p \cdot 64 \widehat{C} d^p \left( d^p + \bE\left[ \Vert \xi^{d,0} \Vert^2 \right] \right) \leq \widetilde{\varepsilon}/12$, see \eqref{EqDefCHat} for $\widehat{C} > 0$;
			\State Choose $\cM \in \mathbb{N}$ large enough such that $\frac{d^p}{\delta^2 \cM} \cdot 64 \widehat{C} d^p \left( d^p + \bE\left[ \Vert \xi^{d,0} \Vert^2 \right] \right) \leq \widetilde{\varepsilon}/12$, see \eqref{EqDefCHat} for $\widehat{C} > 0$;
			\State Let $K \in \mathbb{N}$ be large enough such that there exists $Y_n: \Omega \rightarrow \mathbb{R}^K$ with $64 \mathbb{E}\Big[ \big\vert \mathcal{V}^d_n(\cX^{d,0,N,\delta,\cM}_{t_n}) - \mathscr{V}^d_n(Y_n, \cX^{d,0,N,\delta,\cM}_{t_n}) \big\vert^2 \Big] \leq \widetilde{\varepsilon}/4$;
			\State \% Note that such $K \in \mathbb{N}$ exists according to Theorem~\ref{ThmMainRN}~\ref{ThmMainRN2}.
			\State Choose $J \in \mathbb{N}$ large enough such that $2 C_0 \vartheta^2 \frac{(\ln(J)+1) K}{J} \leq \widetilde{\varepsilon}/4$, see \eqref{EqDefC0} for $C_0 > 0$;\\
					
			\Function{SDE}{$n,J,N,t$}			
			\State \% Generate paths of the numerical solution of SDE \eqref{SDE deri}
			\State \% Input: simulation time steps $n$; batchsize $J$; total time steps $N$; initial time $t$; initial value $x$
			
			\State Generate $J$ realizations $\xi(j)$, $j \in \lbrace 1,...,J \rbrace$, of i.i.d.\,random variables distributed as the (random) initial value $\xi^{d,0}: \Omega \rightarrow \mathbb{R}^d$;
			
			\For{$j\leftarrow 1$ to $J$}
			
			\State $\mathcal{X}(j,0) \leftarrow \xi(j)$;
			\State \multiline{Generate $n$ realizations $W(k)\in\bR^d$,
				$k \in \lbrace 0,\dots,n-1 \rbrace$, of i.i.d.\,standard normal random vectors;}
			\State \multiline{Generate $n$ realizations $P(k)\in\bN_0$,
				$k \in \lbrace 0,\dots,n-1 \rbrace$, of i.i.d.\,Poisson random variables with
				parameter $\nu^d(A^d_\delta) T/N$;}
			\For{$k\leftarrow 0$ to $n-1$}
			\State \multiline{Generate $P(k)$ realizations $Z(l)\in\bR^d$,
				$l \in \lbrace 1,\dots,P(k) \rbrace$,
				of i.i.d.\,random vectors with distribution 
				$\nu^d_\delta(dz)$;}
			\State \multiline{Generate $\cM$ realizations $V(l)\in\bR^d$, 
				$l \in \lbrace 1,\dots,\cM \rbrace$, of i.i.d.\,random vectors 
				with distribution $\nu^d_\delta(dz)$;}
			\State \multiline{
				$
				\mathcal{X}(j,k+1)\leftarrow \mathcal{X}(j,k)+\mu^d(t+kT/N,\mathcal{X}(j,k)) T/N
				+\sigma^d(t+kT/N,\mathcal{X}(j,k))
				\sqrt{T/N} \cdot W(k)
				$
				\\
				\qquad\quad
				$
				\quad\quad\quad\quad\;\;+\sum_{i=1}^{P(k)}\eta^d_{t+kT/N}(\mathcal{X}(j,k),Z(i))
				-(N \cM)^{-1} T \nu^d(A^d_\delta)
				\sum_{i=1}^{\cM}\eta^d_{t+kT/N}(\mathcal{X}(j,k),V(i)),
				$
				\\
				cf.~\eqref{def time discretization};
			}
			\EndFor
			
			\EndFor\\
			
			\quad\, \Return $\mathcal{X}$
			
			\EndFunction\\
			
			\Function{DS\_NN}{$J,N,M,t$}
			\State \% Deep Splitting Algorithm with Deterministic Neural Networks
			\State \% Input: batchsize $J$; total time steps $N$; learning steps $M$; initial time $t$; initial value $x$
			
			\State Set $\mathbb{V}_N(\theta,\cdot) = g^d(\cdot)$ for all $\theta \in \mathbb{R}^r$;			
			\For{$n \leftarrow N-1$ to $0$}
			\State Set $\Theta(N,m) \leftarrow 0$ for all $m = 1,\dots,M$;
			\State Generate paths $\mathcal{X} \leftarrow \text{SDE}(n,J,N,t)$;
			\For{$m \leftarrow 0$ to $M-1$}
			\State \multiline{$\text{Grad} \leftarrow J^{-1} \sum_{j=1}^J \Big( 2\nabla_{\theta}\bV^d_n(\Theta(n,m),\mathcal{X}(j,n)) \cdot \big\{\bV^d_n(\Theta(n,m),\mathcal{X}(j,n))$\\
				\quad\quad\quad\quad\quad $- \big[\bV^d_{n+1}\big(\Theta(n+1,M),\mathcal{X}(j,n+1)\big)
				+T/N \big(F^d\circ \bV^d_{n+1}\big(\Theta(n+1,M),\cdot\big)\big)
				\big(t_{n+1},\mathcal{X}(j,n+1)\big)\big]\big\} \Big);
				$}
			
			\State $\Theta(n,m+1) \leftarrow \Theta(n,m) - \beta \cdot \text{Grad}$;
			\EndFor
			\EndFor\\
			
			\quad\, \Return Networks at intermediate time steps $\bV^d_n\left(\Theta(n,M),\cdot\right)$ for $n\in\{N,N-1,\dots,1,0\}$;\\
			
			\quad\, \Return Approximation $\bV^d_0\left(\Theta(n,M),\xi^{d,0}\right)$ of the solution $u^d(0,\xi^{d,0})$;
			
			\EndFunction\\
			
			\Function{DS\_RN}{$J,N,t$}
			\State \% Deep Splitting Algorithm with Random Neural Networks
			\State \% Input: batchsize $J$; total time steps $N$; initial time $t$; initial value $x$
			
			\State Generate paths $\mathcal{X} \leftarrow \text{SDE}(N,J,N,t)$;
			\State Generate $K$ realizations $A(k) \in \mathbb{R}^d$, $k \in \lbrace 1,\dots,K \rbrace$, of random weights;
			\State Generate $K$ realizations $B(k) \in \mathbb{R}$, $k \in \lbrace 1,\dots,K \rbrace$, of random biases;
			\State Set $\mathscr{V}^d_N(y,\cdot) = g^d(\cdot)$ for all $y \in \mathbb{R}^K$;
			\State Set $\Upsilon(N) = 0$;	
			\For{$n \leftarrow N-1$ to $0$}
			\State Set $R(j,k) \leftarrow \rho\left( A(k)^\top \mathcal{X}(j,n) - B(k) \right)$ for $j=1,\dots,J$ and $k = 1,\dots,K$;
			\State \multiline{Set $Q(j) \leftarrow \mathscr{V}^d_{n+1}\big(\Upsilon(n+1),\mathcal{X}(j,n+1)\big) +T/N \big(F^d\circ \mathscr{V}^d_{n+1}\big(\Upsilon(n+1),\cdot\big)\big) \big(t_{n+1},\mathcal{X}(j,n+1)\big)$ for $j=1,\dots,J$;}
			\State \multiline{Solve the least squares problem $\Upsilon(n) \leftarrow \argmin_{y \in \mathbb{R}^K} \Vert R y - Q \Vert^2$, i.e.~solve the corresponding normal equation \\
				\qquad $R^\top R \Upsilon(n) = R^\top Q$ via Cholesky decomposition and forward/backward substitution (see \cite[Section~2.2.2]{bjorck1996numerical});}
			\EndFor\\
						
			\quad\, \Return Networks at intermediate time steps $\mathscr{V}^d_n(\Upsilon(n),\cdot)$ for $n\in\{N,N-1,\dots,1,0\}$;\\
			
			\quad\, \Return Approximation $\mathscr{V}^d_n(\Upsilon(n),\xi^{d,0})$ of the solution $u^d(0,\xi^{d,0})$;
			
			\EndFunction\\
			
			\State \% In this case, the random neural networks $\mathscr{V}^d_n(\Upsilon(n),\cdot)$, $n\in\{N,N-1,\dots,1,0\}$, satisfy $\mathscr{V}^d_N(\Upsilon(N),\cdot) = g^d(\cdot)$ and
			\begin{equation*}
				\sup_{n\in\{N-1,N-2,\dots,1,0\}} \mathbb{E}\left[ \left\vert u^d\left(t_n,X^{d,0}_{t_n}\right) - \mathscr{V}^d_n\left(\Upsilon_n,\cX^{d,0,N,\delta,\cM}_{t_n}\right) \right\vert^2 \right] \leq \widetilde{\varepsilon},
			\end{equation*}		
			\State \% see also Corollary~\ref{CorMainRN}.	
		\end{algorithmic}
	}
\end{algorithm}

\section{\textbf{Setting and main results}}
\label{section setting main}

\subsection{Assumptions and Setting}
\label{section assump}
In the section, we present our setting and the main
mathematical results for the deep splitting algorithms, see 
Theorem \ref{ThmMain} for fully trained deterministic neural 
networks, and Theorem \ref{ThmMainRN} as well as Corollary 
\ref{CorMainRN} for random neural networks. To this end, we consider the PIDE of the form \eqref{PDE} with coefficients $\mu^d$, $\sigma^d$, and $\eta^d$, functions $f^d$ and $g^d$, as well as L\'evy measure $\nu^d$ introduced in Section~\ref{section deri}, where we now impose the following conditions. 

\begin{assumption}[Lipschitz and linear growth conditions]
	\label{assumption Lip and growth}
	There exists constants $L,p\in(0,\infty)$ satisfying 
	for all $d\in\bN$, $x,y\in\bR^d$, $v,w\in\bR$, and $t,s\in[0,T]$ that
	\begin{equation}                                    \label{assumption Lip f g}
		T|f^d(t,x,v)-f^d(s,y,w)|+|g^d(x)-g^d(y)|
		\leq L^{1/2}\big(T|t-s|^{1/2}+T|v-w|+T^{-1/2}\|x-y\|\big),
	\end{equation}
	\begin{equation}                              \label{assumption Lip mu sigma eta}
		\|\mu^d(t,x)-\mu^d(t,y)\|^2
		+\|\sigma^{d}(t,x)-\sigma^{d}(t,y)\|_F^2
		+\int_{\bR^{d}}\big\|\eta^{d}_t(x,z)-\eta^{d}_t(y,z)\big\|^2
		\,\nu^{d}(dz)
		\leq L\|x-y\|^2,
	\end{equation}
	\begin{equation}                                        
	\label{assumption growth}
		\|\mu^d(t,x)\|^2+\|\sigma^{d}(t,x)\|_F^2
		+\int_{\bR^{d}}\big\|\eta^{d}_t(x,z)\big\|^2\,\nu^{d}(dz)\leq L(d^p+\|x\|^2),
	\end{equation}
	and
	\begin{equation}                                  \label{assumption growth f g}
		|T\cdot f^d(t,x,0)|^2+|g^d(x)|^2\leq L(d^p+\|x\|^2).
	\end{equation}
\end{assumption}

\begin{assumption}[Pointwise Lipschitz and integrability conditions]
	\label{assumption pointwise}
	For each $d\in \bN$ there exits a constant $C_{d}\in(0,\infty)$ such that
	for all $x,x'\in \bR^d$, $t\in[0,T]$, and $z\in\bR^{d}$ 
	\begin{equation}                                        \label{assumption eta}                                     
		\|\eta^{d}_t(x,z)\|^2\leq Ld^p(1\wedge \|z\|^2), 
		\quad \|\eta^{d}_t(x,z)-\eta^{d}_t(x',z)\|^2
		\leq C_{d}\|x-x'\|^2(1\wedge \|z\|^2),
	\end{equation} 
	where $L,p\in(0,\infty)$ are the constants introduced 
	in Assumption \ref{assumption Lip and growth}.
\end{assumption}

\begin{assumption}                                     \label{assumption jacobian}
	For all $d\in\bN$, $(t,x)\in[0,T]\times\bR^d$ and $z\in\bR^d$ we assume that
	$D_x \eta^d_t(x,z)$ exists,
	where $D_x\eta^d_t(x,z)$ denotes the Jacobian matrix of $\eta^d_t(x,z)$
	with respect to $x$.
	Moreover, we assume that there exists a constant
	$\lambda>0$ such that for all $(t,x)\in[0,T]\times\bR^d$, $z\in\bR^d$,
	and $\theta\in[0,1]$  
	\begin{equation}                                     \label{jacobian cond}
		\lambda\leq |\det(I_d+\theta D_x \eta^d_t(x,z))|.
	\end{equation}
\end{assumption}

\begin{assumption}
	There exist constants $C_\eta,q\in(0,\infty)$ satisfying
	for all $d\in \bN$, $\delta\in(0,1)$, and $(t,x)\in[0,T]\times\bR^d$ that
	\begin{equation}                                       \label{int z & q bound}
		\int_{\bR^{d}}(1\wedge\|z\|^2)\,\nu^{d}(dz)\leq C_\eta d^p
		\quad \text{and} \quad
		\int_{\|z\|\leq \delta}\|\eta^{d}_t(x,z)\|^2
		\,\nu^{d}(dz)\leq C_\eta d^p\delta^q(1+\|x\|^2).
	\end{equation}
\end{assumption}

\begin{assumption}[Temporal $1/2$-H\"older condition]
	\label{assumption time Holder}
	There exist constants $L_1,L_2>0$ satisfying 
	for all $d\in \bN$,
	$x\in\bR^d$, $z\in\bR^{d}$, and $(t,t')\in[0,T]^2$ that 
	\begin{equation}                                        \label{Holder mu sigma d}
		\|\mu^d(t,x)-\mu^d(t',x)\|^2+\|\sigma^{d}(t,x)-\sigma^{d}(t',x)\|_F^2
		\leq L_1|t-t'|,
	\end{equation}
	and
	\begin{equation}                                           \label{Holder eta d}
		\int_{\bR^{d}}\|\eta^{d}_t(x,z)-\eta^{d}_{t'}(x,z)\|^2\,\nu^{d}(dz)
		\leq L_2|t-t'|.
	\end{equation}
\end{assumption}

Let $(\Omega,\cF,\bF,\bP)$ be a filtered probability space equipped with filtration 
$\bF:=(\mathcal{F}_t)_{t\in[0,T]}$
satisfying the usual conditions. For each $d\in\bN$, let 
$W^{d}=(W^{d,1},\dots,W^{d,d})
:[0,T]\times \Omega\to \bR^d$, be a 
$\bR^d$-valued standard $\bF$-Brownian motions, 
and let $\pi^{d}(dz,dt)$ be an
$\bF$-Poisson random measure on $\bR^{d}\times [0,T]$ 
with compensator $\nu^{d}(dz)\otimes dt$, 
and denote by
$
\tilde{\pi}^{d}(dz,dt)
:=\pi^{d}(dz,dt)-\nu^{d}(dz)\otimes dt
$ 
the compensated Poisson random measures of $\pi^{d}(dz,dt)$.
For each $d\in\bN$ and $(t,x)\in[0,T]\times\bR^d$, let
$(X^{d,t,x}_{s})_{s\in[t,T]}: 
[t,T]\times\Omega\to \bR^d$ be 
$\cB([t,T])\otimes\cF/\cB(\bR^d)$-measurable 
c\`adl\`ag stochastic processes satisfying for all $s\in[t,T]$, $\mathbb{P}$-a.s., that
\begin{equation}                                               \label{SDE}                              
	X^{d,t,x}_{s}
	=x+\int_t^s\mu^d(r,X^{d,t,x}_{r-})\,dr
	+\int_t^s\sigma^{d}(r,X^{d,t,x}_{r-})\,dW^{d}_r
	+\int_t^s\int_{\bR^{d}}\eta^{d}_r(X^{d,t,x}_{r-},z)
	\,\tilde{\pi}^{d}(dz,dr).
\end{equation}
Note that Assumptions \ref{assumption Lip and growth} and 
\ref{assumption pointwise} guarantee that for each $d\in\bN$,
the SDE in \eqref{SDE} has a unique solution satisfying 
$\bE\Big[\sup_{s\in[t,T]}\big\|X^{d,t,x}_{s}\big\|^2\Big] < \infty$
(see, e.g., \cite[Theorem 3.1+3.2]{kunita2004stochastic} or \cite[Lemma~114+117]{rong2006theory}). 
Moreover, \cite[Equation~(5.111) and Theorem 2.9~(ii)-(iv)]{NW2022} ensure that 
for each $d\in\bN$, PIDE \eqref{PDE} 
has a unique viscosity solution $u^d\in C([0,T]\times\bR^d)$ satisfying for all 
$(t,x)\in[0,T]\times\bR^d$ that
\begin{equation}                                       \label{E integrability}
	\bE\Big[\big|g^d(X^{d,t,x}_{T})\big|\Big]
	+\int_t^T\bE\Big[
	\big|
	f^d(s,X^{d,t,x}_{s},u^d(s,X^{d,t,x}_{s}))
	\big|
	\Big]\,ds<\infty,
\end{equation}
\begin{equation}                                           \label{Feynman Kac u}
	u^d(t,x)=\bE\Big[g^d(X^{d,t,x}_{T})\Big]
	+\int_t^T\bE\Big[f^d(s,X^{d,t,x}_{s},u^d(s,X^{d,t,x}_{s}))\Big]\,ds,
\end{equation}
and
\begin{equation}                                               \label{u est}
	\sup_{x\in\bR^d}\frac{|u^d(t,x)|}{\sqrt{d^p+\|x\|^2}}\leq 2L^{1/2}
	\exp\big\{L^{1/2}(T-t)\big\}.
\end{equation}

Moreover, we assume that the initial values $(\xi^{d,m})_{m\in\bN_0}$ of the SDE \eqref{SDE deri} are sufficiently integrable.

\begin{assumption}
	\label{AssXiLq}
	For some $q\in(2,\infty)$ and every $d \in \mathbb{N}$, let $\xi^{d,m}:\Omega\to\bR^d$, $m\in\bN_0$, be i.i.d.\,$\mathcal{F}_0/\cB(\bR^d)$-measurable random variables such that $\bE\left[\big\|\xi^{d,m}\big\|^q\right]<\infty$.
\end{assumption}

For each $d\in\bN$, 
let $W^{d,m}=(W^{d,m,1},\dots,W^{d,m,d})
:[0,T]\times \Omega\to \bR^d, m\in\bN_0$, be independent 
$\bR^d$-valued standard $\bF$-Brownian motions, let $\pi^{d,m}(dz,dt),m\in \bN_0$, be
independent $\bF$-Poisson random measures on $\bR^{d}\times [0,T]$ 
with identical corresponding compensator $\nu^{d}(dz)\otimes dt$, 
and denote by $\tilde{\pi}^{d,m}(dz,dt) :=\pi^{d,m}(dz,dt)-\nu^{d}(dz)\otimes dt$
the compensated Poisson random measure of $\pi^{d,m}(dz,dt)$ for every $m\in \bN_0$.
For each $d\in\bN$, let $(X^{d,m}_s)_{s\in[0,T]}: 
[0,T]\times\Omega\to \bR^d$, $m\in\bN_0$, be 
$\cB([0,T])\otimes\cF/\cB(\bR^d)$-measurable 
c\`adl\`ag stochastic processes satisfying for all $m\in\bN_0$ and $s\in[0,T]$, $\mathbb{P}$-a.s., that
\begin{equation}                                               \label{SDE d}                              
	X^{d,m}_{s}
	=\xi^{d,m}+\int_0^s\mu^d(r,X^{d,m}_{r-})\,dr
	+\int_0^s\sigma^{d}(r,X^{d,m}_{r-})\,dW^{d,m}_r
	+\int_0^s\int_{\bR^{d}}\eta^{d}_r(X^{d,m}_{r-},z)
	\,\tilde{\pi}^{d,m}(dz,dr).
\end{equation}
If $\bE\left[\|\xi^{d,m}\|^2\right]<\infty$, then Assumptions \ref{assumption Lip and growth} and 
\ref{assumption pointwise} guarantee that for each $d\in\bN$ and $m \in \bN_0$,
the SDE in \eqref{SDE d} has a unique solution satisfying $\bE\Big[\sup_{s\in[0,T]}\big\|X^{d,m}_s\big\|^2\Big] < \infty$ (see, e.g., \cite[Theorem 3.1+3.2]{kunita2004stochastic} or \cite[Lemma~114+117]{rong2006theory}). Moreover, Assumption~\ref{AssXiLq} (with respect to some fixed $q \in (2,\infty)$) additionally ensures that there exists a constant $C > 0$ (depending only on $q$, $T$, and $L$) such that for all $d \in \bN$, $m\in\bN_0$, and $s \in [0,T]$ it holds that
\begin{equation*}
	\bE\Big[ \big\|X^{d,m}_s\big\|^q \Big] \leq C \bE\left[\big(d^p+\big\|\xi^{d,m}\big\|^2\big)^{q/2}\right],
\end{equation*}
see also Corollary~\ref{corollary Lyaponov q} for more details.

In addition, recall that we denote by $(\cX^{d,m,N,\delta,\cM}_s)_{s\in[0,T]}$
the time discritization of SDE \eqref{SDE d}
(see \eqref{def time discretization} in Section \ref{subsection time dis}). Then, we 
notice that by \cite[Lemma 3.6-3.8]{NW2022}, we have the following result.

\begin{remark}
	\label{RemTimeDiscr}
	Note that if Assumptions~\ref{assumption Lip and growth}--\ref{assumption time Holder} hold, then there exists by \cite[Lemma 3.6-3.8]{NW2022} some constant $\widetilde{C} > 0$
	(depending only on $T$, $L$, $L_1$, $L_2$, and $C_\eta$, and explicitly given in \eqref{EqDefCTilde}) 
	such that for all $d,N\in\bN$, $m \in \bN_0$,
	$\delta\in(0,1)$, and $\cM\in\bN$ with $\cM\geq \delta^{-2} C_\eta d^p$
	it holds that 
	\begin{equation}                             
		\label{error Euler}
		\bE\left[
		\sup_{t\in[0,T]}\big\|
		X^{d,m}_t-\cX^{d,m,N,\delta,\cM}_t
		\big\|^2
		\right]
		\leq \widetilde{C} \left( d^p+\bE\left[ \big\|\xi^{d,m}\big\|^2 \right] \right)
		 e^{d,N,\delta,\cM},
	\end{equation}
	where 
	\begin{equation*}
		e^{d,N,\delta,\cM}:=N^{-1}+\delta^qd^p+\delta^{-2}d^p\cM^{-1}.
	\end{equation*}
	Indeed, though in \cite[Lemma 3.6-3.8]{NW2022} 
	only SDEs with deterministic initial
	value are considered, one can just follow the proof of 
	\cite[Lemma 3.6-3.8]{NW2022} and replace the initial
	value by $\xi^{d,m}$ to obtain \eqref{error Euler}. 
\end{remark}

Furthermore, we impose the following condition on the moments of $\eta^d$ under $\nu^d$.

\begin{assumption}[Moments of $\eta^d$ under $\nu^d$]
	\label{AssEtaMoments}
	Let $c \in \mathbb{N}_0$, $d \in \mathbb{N}$, and $\delta \in (0,1)$ be given. Then for every $(t,x) \in [0,T]\times\bR^d$ 
	we have $\int_{A^d_\delta} \max\{ 1, \Vert \eta^d_t(x,z) \Vert^{2c} \}\, \nu^d(dz) < \infty$, where
	$A^{d}_\delta:=\{z\in\bR^{d}:\|z\|\geq \delta\}$.
\end{assumption}

\subsection{Main Results}
\label{section main results}

In this section, we present the main results of this paper, 
which provides us with a full error analysis of the deep splitting algorithm.
First, we start with the original deep splitting method extended to PIDEs
using deterministic neural networks, followed by its randomized version.

First, we analyze the convergence of the deep splitting method with deterministic fully trained neural network. To this end, we assume that the activation function $\rho \in \overline{C_b(\mathbb{R})}^\gamma$ is non-polynomial, i.e.~algebraically not a polynomial over $\mathbb{R}$. Since $\rho \in \overline{C_b(\mathbb{R})}^\gamma$ induces the tempered distribution $\left( g \mapsto T_\rho(g) := \int_{\mathbb{R}} \rho(s) g(s) ds \right) \in \mathcal{S}'(\mathbb{R};\mathbb{C})$ (see e.g.~\cite[Equation~9.26]{Folland1992}), it follows that $\rho \in \overline{C_b(\mathbb{R})}^\gamma$ is non-polynomial if and only if the Fourier transform $\widehat{T_\rho} \in \mathcal{S}'(\mathbb{R};\mathbb{C})$ of $T_\rho \in \mathcal{S}'(\mathbb{R};\mathbb{C})$ in the sense of distribution is supported\footnote{For the definition of the support of $\widehat{T_\rho} \in \mathcal{S}'(\mathbb{R};\mathbb{C})$, we refer, e.g., to \cite[Notation~(viii)]{NeufeldSchmocker2024}.} at a non-zero point (see \cite[Examples~7.16]{Rudin1991}).

Then, by splitting up the approximation error into the time discretization error, the universal approximation error, and the generalization error, we are able to present our main result on the convergence of the original deep splitting algorithm with deterministic (fully trained) neural networks.

\begin{theorem}
	\label{ThmMain}
	For $\gamma\in(0,\infty)$, let $\rho \in \overline{C_b(\mathbb{R})}^\gamma$ be weakly differentiable and non-polynomial. Moreover, let Assumptions \ref{assumption Lip and growth}--\ref{assumption time Holder} hold, and let Assumption~\ref{AssEtaMoments} hold with some $c \in \mathbb{N}_0 \cap [\gamma,\infty)$. Then, there exists a constant $\widehat{C} > 0$ (depending only on $T$, $L$, $L_1$, $L_2$, and $C_\eta$, and explicitly given in \eqref{EqDefCHat}) such that for every $d,N,\cM \in \bN$, $\delta,\varepsilon \in (0,1)$ with $\cM\geq \delta^{-2} C_\eta d^p$, and $n\in\{N-1,N-2,\dots,1,0\}$, the following holds true:
	\begin{enumerate}
		\item\label{ThmMain1} The \emph{time discretization error} is bounded by 
		\begin{flalign*}
			\quad\quad\quad\,\, \err^{d,N,\delta,\mathcal{M}}_{discr,n} := \mathbb{E}\left[ \left\vert u^d\left( t_n, X^{d,0}_{t_n} \right) - \mathcal{V}^d_n\left( \cX^{d,0,N,\delta,\cM}_{t_n} \right) \right\vert^2 \right] \leq \widehat{C} d^p \left( d^p+\bE\left[ \big\|\xi^{d,0}\big\|^2 \right] \right) e^{d,N,\delta,\cM}. & &
		\end{flalign*}
		\item\label{ThmMain2} There exist network parameters $\theta_n \in \mathbb{R}^r$ such that the \emph{universal approximation error} satisfies
		\begin{flalign*}
			\quad\quad\quad\,\, \err^{d,N,\delta,\mathcal{M}}_{UAT,n} := \mathbb{E}\left[ \left\vert \mathcal{V}^d_n\left( \cX^{d,0,N,\delta,\cM}_{t_n} \right) - \mathbb{V}^d_n\left( \theta_n, \cX^{d,0,N,\delta,\cM}_{t_n} \right) \right\vert^2 \right] \leq \varepsilon. & &
		\end{flalign*}
	\end{enumerate}
	As a consquence, we therefore conclude for every $d,N,M,\cM\in \bN$ and $\delta,\varepsilon \in (0,1)$ with $\cM\geq \delta^{-2} C_\eta d^p$ that
	\begin{equation}
		\label{EqThmMain1}
		\begin{aligned}
			& \sup_{n\in\{N-1,N-2,\dots,1,0\}} \mathbb{E}\left[ \left\vert u^d\left( t_n, \cX^{d,0,N,\delta,\cM}_{t_n} \right) - \mathbb{V}^d_n\left( \Theta^M_n, \cX^{d,0,N,\delta,\cM}_{t_n} \right) \right\vert^2 \right] \\
			& \quad\quad \leq 3 \widehat{C} d^p \left( d^p+\bE\left[ \big\|\xi^{d,0}\big\|^2 \right] \right) e^{d,N,\delta,\cM} + 3 \varepsilon + 3 \sup_{n\in\{N-1,N-2,\dots,1,0\}} \err^{d,N,\delta,\mathcal{M}}_{gen,n},
		\end{aligned}
	\end{equation}
	where $e^{d,N,\delta,\cM} := N^{-1}+\delta^qd^p+\delta^{-2}d^p\cM^{-1} > 0$ was defined in Remark~\ref{RemTimeDiscr}, and where
	\begin{flalign*}
		\quad\quad\quad\,\, \err^{d,N,\delta,\mathcal{M}}_{gen,n} := \mathbb{E}\left[ \left\vert \mathbb{V}^d_n\left( \theta_n, \cX^{d,0,N,\delta,\cM}_{t_n} \right) - \mathbb{V}^d_n\left( \Theta^M_n, \cX^{d,0,N,\delta,\cM}_{t_n} \right) \right\vert^2 \right] \geq 0 & &
	\end{flalign*}
	denotes the \emph{generalization error}.
\end{theorem}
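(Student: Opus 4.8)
The plan is to prove the two bounds in \ref{ThmMain1} and \ref{ThmMain2} separately and then deduce \eqref{EqThmMain1} from them by a triangle inequality. Throughout I rely on ingredients from Section~\ref{section setting main}: the $L^q$-moment bounds for $X^{d,m}$ and $\cX^{d,m,N,\delta,\cM}$ (Corollary~\ref{corollary Lyaponov q} and Assumption~\ref{AssEtaMoments}), the Euler estimate in Remark~\ref{RemTimeDiscr}, the growth bound \eqref{u est} together with the spatial Lipschitz continuity of $u^d(t,\cdot)$, and backward inductions showing that $V^d_n(t_n,\cdot)$ and $\cV^d_n(\cdot)$ are continuous, Lipschitz and of linear growth with constants \emph{uniform in $N$} (each frozen-$f^d$ step perturbs the terminal data additively by a term of size $O(h)$ with $h=T/N$, so $N$ such steps contribute only a factor $(1+Ch)^{N}\le e^{CT}$).

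For \ref{ThmMain1}: by \eqref{Markov deri} we have $V^d_n(t_n,X^{d,0}_{t_n})=\bE\big[V^d_{n+1}(t_{n+1},X^{d,0}_{t_{n+1}})+h\,(F^d\circ V^d_{n+1})(t_{n+1},X^{d,0}_{t_{n+1}})\,\big|\,\cF_{t_n}\big]$, and by \eqref{markov Euler intro} an analogous one-step conditional-expectation recursion holds for $\cV^d_n(\cX^{d,0,N,\delta,\cM}_{t_n})$ (with $\cX^{d,0,N,\delta,\cM}$ in place of $X^{d,0}$). I split $u^d(t_n,X^{d,0}_{t_n})-\cV^d_n(\cX^{d,0,N,\delta,\cM}_{t_n})=\big(u^d(t_n,X^{d,0}_{t_n})-V^d_n(t_n,X^{d,0}_{t_n})\big)+\big(V^d_n(t_n,X^{d,0}_{t_n})-\cV^d_n(\cX^{d,0,N,\delta,\cM}_{t_n})\big)$ and estimate each summand by a discrete Gr\"onwall argument run backwards from $n=N$, where both differences vanish since $u^d(T,\cdot)=g^d=V^d_N(T,\cdot)=\cV^d_N$. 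For the first summand I subtract the Feynman--Kac representation \eqref{Feynman Kac u} restricted to $[t_n,t_{n+1}]$ (via the Markov property of $X^{d,0}$) from the recursion for $V^d_n(t_n,\cdot)$: the propagated part is $\bE[(u^d-V^d_{n+1})(t_{n+1},X^{d,0}_{t_{n+1}})\mid\cF_{t_n}]$ and the local truncation part, after adding and subtracting $h\,f^d(t_{n+1},X^{d,0}_{t_{n+1}},u^d(t_{n+1},X^{d,0}_{t_{n+1}}))$, is controlled via \eqref{assumption Lip f g}, the estimate $\bE[\|X^{d,0}_s-X^{d,0}_{t_{n+1}}\|^2]\le C h(d^p+\bE[\|\xi^{d,0}\|^2])$ implied by \eqref{assumption growth}, and the time-regularity of $s\mapsto u^d(s,X^{d,0}_s)$; conditional Jensen, Young's inequality and summation over the $N$ steps yield the $N^{-1}$-contribution. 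For the second summand, using that $V^d_{n+1}(t_{n+1},\cdot)$ and $\cV^d_{n+1}$ are Lipschitz with $N$-uniform constant, I subtract the two one-step recursions and use \eqref{assumption Lip f g}, the Lipschitz bound for $g^d$ at $n=N$, and $\bE[\sup_t\|X^{d,0}_t-\cX^{d,0,N,\delta,\cM}_t\|^2]\le\widetilde C(d^p+\bE[\|\xi^{d,0}\|^2])e^{d,N,\delta,\cM}$ from Remark~\ref{RemTimeDiscr}; a second discrete Gr\"onwall gives the $e^{d,N,\delta,\cM}$-contribution, and collecting the powers of $d$ coming from the growth/Lipschitz constants and from $\widetilde C$ produces the claimed bound with $\widehat C$ as in \eqref{EqDefCHat}.

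For \ref{ThmMain2}: a backward induction using \eqref{assumption growth f g} and the $v$-Lipschitz part of \eqref{assumption Lip f g} shows each $\cV^d_n$ is continuous with linear growth, hence lies in $L_2(\bR^d,\cB(\bR^d),\mu_n)$ where $\mu_n$ denotes the law of $\cX^{d,0,N,\delta,\cM}_{t_n}$ (whose relevant moments are finite by the moment bounds for $\cX^{d,0,N,\delta,\cM}_{t_n}$; here the hypothesis $c\ge\gamma$ provides the moment of order $2\gamma$ needed to place everything in the $\gamma$-weighted approximation framework). Since $\rho\in\overline{C_b(\mathbb{R})}^\gamma$ is non-polynomial and weakly differentiable, a universal approximation theorem for feed-forward networks with non-polynomial activation, applied to the architecture \eqref{function NN deri} with the width $K$ (hence $r$) chosen large enough in terms of $\varepsilon$, yields parameters $\theta_n\in\bR^r$ with $\bE\big[\big|\cV^d_n(\cX^{d,0,N,\delta,\cM}_{t_n})-\bV^d_n(\theta_n,\cX^{d,0,N,\delta,\cM}_{t_n})\big|^2\big]\le\varepsilon$, i.e.~$\err^{d,N,\delta,\cM}_{UAT,n}\le\varepsilon$. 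Finally, \eqref{EqThmMain1} follows by inserting $\cV^d_n(\cX^{d,0,N,\delta,\cM}_{t_n})$ and $\bV^d_n(\theta_n,\cX^{d,0,N,\delta,\cM}_{t_n})$ and applying $(a+b+c)^2\le 3(a^2+b^2+c^2)$: the three resulting terms are, respectively, the time-discretization error of \ref{ThmMain1} (after, if needed, passing from $u^d(t_n,X^{d,0}_{t_n})$ to $u^d(t_n,\cX^{d,0,N,\delta,\cM}_{t_n})$ via spatial Lipschitz continuity of $u^d(t_n,\cdot)$ and Remark~\ref{RemTimeDiscr}, a further contribution of order $e^{d,N,\delta,\cM}$ absorbed into the constant), the universal approximation error of \ref{ThmMain2}, and the generalization error $\err^{d,N,\delta,\cM}_{gen,n}$; taking the supremum over $n$ finishes the proof. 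The main obstacle is \ref{ThmMain1}: the local truncation estimate requires the BSDE-type time-regularity of $s\mapsto u^d(s,X^{d,0}_s)$ in the infinite-activity jump regime, and one must carefully verify that every constant entering the two Gr\"onwall arguments is uniform in $N$ and carries the correct powers of $d$, so that the final constant has the explicit form \eqref{EqDefCHat}.
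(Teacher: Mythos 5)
Your proposal is correct and matches the paper's approach: part~(i) is proved exactly as in Proposition~\ref{PropTimeDiscr} by splitting $u^d-\cV^d_n$ into $(u^d-V^d_n)+(V^d_n-\cV^d_n)$ and running two backward discrete Gr\"onwall arguments (a local-truncation estimate for the first piece, Remark~\ref{RemTimeDiscr} plus Lipschitz propagation for the second), part~(ii) follows from Lemma~\ref{LemmaL2Y} and a universal approximation theorem in $L_2(\bR^d,\cB(\bR^d),\mathbb{P}\circ(\cX^{d,0,N,\delta,\cM}_{t_n})^{-1})$, and \eqref{EqThmMain1} is $(a+b+c)^2\le 3(a^2+b^2+c^2)$. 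You are also right that the appearance of $u^d(t_n,\cX^{d,0,N,\delta,\cM}_{t_n})$ rather than $u^d(t_n,X^{d,0}_{t_n})$ on the left-hand side of \eqref{EqThmMain1} is inconsistent with what the three-term split actually produces (compare \eqref{EqThmMainRN1}), so no extra Lipschitz-in-space step is in fact needed.
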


Theorem~\ref{ThmMain} shows an a priori estimate for the approximation error of the deep splitting algorithm with deterministic neural networks. Let us point out the following remarks concerning Theorem~\ref{ThmMain}.

\begin{remark}
	\hfill
	\begin{enumerate}
		\item The time discretization error $\err^{d,N,\delta,\mathcal{M}}_{discr,n}$ and the universal approximation error $\err^{d,N,\delta,\mathcal{M}}_{UAT,n}$ can be made arbitrarily small thanks to Remark~\ref{RemTimeDiscr} and the universal approximation property of deterministic neural networks (see e.g.~\cite{Cybenko1989,Hornik1991,Hornik1989,NeufeldSchmocker2024}).
		\item However, the generalization error $\err^{d,N,\delta,\mathcal{M}}_{gen,n}$ consists of the prediction error (how well the neural networks predicts outside the training set) and the optimization error (how well the stochastic gradient descent algorithm learns the optimal neural network), where the latter is in general difficult to estimate due to the non-convexity of the network's loss function and the stochastic nature of the SGD algorithm (see also \cite{Bartlett2002,Neyshabur2017,Vapnik2000,Zhang2021}).
		\item The a priori estimate in Theorem~\ref{ThmMain} extends the a priori estimate in \cite{GPW2022} obtained for PDEs (without non-local term) and the a priori estimate in \cite{frey2022convergence} obtained for PIDEs (with non-local term) but with finite activity.
	\end{enumerate}
\end{remark}

Next, we derive the convergence result for the random deep splitting method, where the deterministic neural networks are now replaced by random neural networks introduced in Definition~\ref{DefRN}. Compared to Theorem~\ref{ThmMain}, the use of random neural networks allows us to also control the generalization error and hence to obtain a full error analysis for our algorithm.

To this end, we impose the following condition on the i.i.d.~random initialization $(A_k,B_k)_{k \in \mathbb{N}}$.

\begin{assumption}[Full Support]
	\label{AssCDF}
	For every $d \in \mathbb{N}$, the random vector $(A_1,B_1): \Omega \rightarrow \mathbb{R}^d \times \mathbb{R}$ satisfies for every $a \in \mathbb{R}^d$, $b \in \mathbb{R}$, and $r > 0$ that $\mathbb{P}\left[ \left\lbrace \omega \in \Omega: \Vert (A_1(\omega),B_1(\omega))-(a,b) \Vert < r \right\rbrace \right] > 0$.
\end{assumption}

Now, we present our second main result about the random deep splitting algorithm. To this end, we use a result on non-parametric function regression (see \cite{Gyoerfi2002}) to estimate the generalization error, where $\mathbb{R} \ni s \mapsto T_\vartheta(s) := \min(\max(s,-\vartheta),\vartheta) \in \mathbb{R}$ denotes the truncation at level $\vartheta > 0$.

\begin{theorem}
	\label{ThmMainRN}
	For $\gamma\in(0,\infty)$, let $\rho \in \overline{C_b(\mathbb{R})}^\gamma$ be non-polynomial. Moreover, let Assumptions~\ref{assumption Lip and growth}--\ref{assumption time Holder} hold, let Assumption~\ref{AssXiLq} hold with some $q \in (2,\infty)$, let Assumption~\ref{AssEtaMoments} hold with some $c \in \mathbb{N}_0 \cap [\gamma,\infty)$, and let $(A_n,B_n)_{n \in \mathbb{N}}$ satisfy Assumption~\ref{AssCDF}. Then, there exist some constants $\widehat{C},\overline{C} > 0$ (depending only on $q$, $T$, $L$, $L_1$, $L_2$, and $C_\eta$, and explicitly given in \eqref{EqDefCHat} and \eqref{EqDefCBar}, respectively) and a universal constant $C_0 > 0$ (explicitly given in \eqref{EqDefC0}) such that for every $d,N,\cM \in \bN$, $\delta,\varepsilon \in(0,1)$ with $\cM\geq \delta^{-2} C_\eta d^p$, and $n\in\{N-1,N-2,\dots,1,0\}$, the following holds true:
	\begin{enumerate}
		\item\label{ThmMainRN1} The \emph{time discretization error} is bounded by 
		\begin{flalign*}
			\quad\quad\quad\,\, \err^{d,N,\delta,\mathcal{M}}_{discr,n} := \mathbb{E}\left[ \left\vert u^d\left( t_n, X^{d,0}_{t_n} \right) - \mathcal{V}^d_n\left( \cX^{d,0,N,\delta,\cM}_{t_n} \right) \right\vert^2 \right] \leq \widehat{C} d^p \left( d^p+\bE\left[ \big\|\xi^{d,0}\big\|^2 \right] \right) e^{d,N,\delta,\cM}. & &
		\end{flalign*}
		\item\label{ThmMainRN2} There exists some $K_\varepsilon \in \mathbb{N}$ and a random variable $Y_n: \Omega \rightarrow \mathbb{R}^{K_\varepsilon}$ such that the \emph{universal approximation error} satisfies
		\begin{flalign*}
			\quad\quad\quad\,\, \err^{d,N,\delta,\mathcal{M}}_{UAT,n} := \mathbb{E}\left[ \left\vert \mathcal{V}^d_n\left( \cX^{d,0,N,\delta,\cM}_{t_n} \right) - \mathscr{V}^d_n\left( Y_n, \cX^{d,0,N,\delta,\cM}_{t_n} \right) \right\vert^2 \right] \leq \varepsilon. & &
		\end{flalign*}
		\item\label{ThmMainRN3} For every $J \in \mathbb{N}$ and $\vartheta > 0$ the \emph{generalization error}\footnote{Hereby, we assume that $\Upsilon_n$ is learned with truncation, i.e.~$\Upsilon_n(\omega) := \argmin_{y \in \mathbb{R}^{K_\varepsilon}} \frac{1}{J} \sum_{j=1}^J \big\vert \mathscr{V}^d_n(y, \cX^{d,j,N,\delta,\cM}_{t_n}(\omega)) - T_\vartheta\big( \mathscr{V}^d_{n+1}(\Upsilon_{n+1}(\omega),\cX^{d,j,N,\delta,\cM}_{t_{n+1}}(\omega)) + (t_{n+1} - t_n) (F^d \circ \mathscr{V}^d_{n+1}(\Upsilon_{n+1}(\omega), \cdot))(t_{n+1},\cX^{d,j,N,\delta,\cM}_{t_{n+1}}(\omega)) \big) \big\vert^2$ instead of \eqref{EqLeastSquares}.} is bounded by
		\begin{flalign*}
			\quad\quad\quad\,\, \err^{d,N,\delta,\mathcal{M}}_{gen,n} := \mathbb{E}\left[ \left\vert T_\vartheta\left( \mathscr{V}^d_n\left( Y_n, \cX^{d,0,N,\delta,\cM}_{t_n} \right) \right) - T_\vartheta \left( \mathscr{V}^d_n\left( \Upsilon_n, \cX^{d,0,N,\delta,\cM}_{t_n} \right) \right) \right\vert^2 \right] & &
		\end{flalign*}
		\begin{flalign*}
			\quad\quad\quad\quad\quad\quad\quad\quad \leq C_0 \vartheta^2 \frac{(\ln(J)+1) K_\varepsilon}{J} + 8 \mathbb{E}\left[ \mathds{1}_{\lbrace \vert \mathscr{V}^d_n(Y_n,\cX^{d,0,N,\delta,\cM}_{t_n}) \vert > \vartheta \rbrace} \left\vert \mathscr{V}^d_n\left(Y_n,\cX^{d,0,N,\delta,\cM}_{t_n}\right) \right\vert^2 \right]. & &
		\end{flalign*}
	\end{enumerate}
	As a consquence, we therefore conclude that for every $d,J,N,\cM \in \bN$ and $\delta,\varepsilon \in (0,1)$ with $\cM\geq \delta^{-2} C_\eta d^p$, and $\vartheta > 0$ there exists some $K_\varepsilon \in \mathbb{N}$ such that
	\begin{equation}
		\label{EqThmMainRN1}
		\begin{aligned}
			& \sup_{n\in\{N-1,N-2,\dots,1,0\}} \mathbb{E}\left[ \left\vert u^d\left( t_n, X^{d,0}_{t_n} \right) - \mathscr{V}^d_n\left( \Upsilon_n, \cX^{d,0,N,\delta,\cM}_{t_n} \right) \right\vert^2 \right] \\
			& \quad\quad \leq \frac{\overline{C}}{\vartheta^{q-2}} \bE\left[\big(d^p+\big\|\xi^{d,0}\big\|^2\big)^{q/2}\right] + 64 \widehat{C} d^p \left( d^p+\bE\left[ \big\|\xi^{d,0}\big\|^2 \right] \right) e^{d,N,\delta,\cM}\\
			& \quad\quad\quad\quad + 2 C_0 \vartheta^2 \frac{(\ln(J)+1) K_\varepsilon}{J} + 64 \varepsilon,
		\end{aligned}
	\end{equation}
	where $e^{d,N,\delta,\cM} := N^{-1}+\delta^qd^p+\delta^{-2}d^p\cM^{-1} > 0$ was defined in Remark~\ref{RemTimeDiscr}.
\end{theorem}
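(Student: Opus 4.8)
The plan is to establish items \ref{ThmMainRN1}--\ref{ThmMainRN3} separately and then assemble \eqref{EqThmMainRN1} from them by a chain of triangle inequalities, absorbing the truncation remainders into a single $q$-th-moment tail term; throughout I abbreviate $\cX_{t_n}:=\cX^{d,0,N,\delta,\cM}_{t_n}$. Item \ref{ThmMainRN1} is word for word Theorem~\ref{ThmMain}~\ref{ThmMain1} (it involves no neural network), and I would obtain it exactly as there: from the Feynman--Kac representation \eqref{Feynman Kac u} of $u^d$, the recursion \eqref{def cV argmin} defining $\cV^d_n$, the strong error estimate \eqref{error Euler} of Remark~\ref{RemTimeDiscr}, the Lipschitz and growth bounds of Assumption~\ref{assumption Lip and growth}, and a backward discrete Gr\"onwall inequality over $n\in\{N-1,N-2,\dots,0\}$, which produces the constant $\widehat{C}$ of \eqref{EqDefCHat}.

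For item \ref{ThmMainRN2}, the relevant facts are that $\cV^d_n\in C(\bR^d)$ (as used in the text around \eqref{def cV}) with at most polynomial growth of a fixed degree, so that $\cV^d_n\in\overline{C_b(\bR^d)}^{\gamma}$ after possibly enlarging $\gamma$ (which is compatible with the hypothesis $c\in\bN_0\cap[\gamma,\infty)$ controlling the tails of $\rho$), and that the law of $\cX_{t_n}$ has a finite $q$-th moment, by Assumption~\ref{AssXiLq}, the moment estimate recorded after \eqref{SDE d}, and \eqref{error Euler}. Given this, the universal approximation theorem for random neural networks of \cite{NeufeldSchmocker2023} --- whose standing hypothesis is precisely the full-support property of Assumption~\ref{AssCDF} --- supplies $K_\varepsilon\in\bN$ and an $\bR^{K_\varepsilon}$-valued random variable $Y_n$ with $\mathbb{E}[\,|\cV^d_n(\cX_{t_n})-\mathscr{V}^d_n(Y_n,\cX_{t_n})|^2\,]\le\varepsilon$.

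Item \ref{ThmMainRN3} is the heart of the matter. I would freeze the random weights and biases $(A_k,B_k)_{k=1}^{K_\varepsilon}$ together with the readout $\Upsilon_{n+1}$ computed at the previous time step; conditionally on these, the step-$n$ problem in the truncated variant of \eqref{EqLeastSquares} is ordinary least squares, with i.i.d.\ design $\cX^{d,j,N,\delta,\cM}_{t_n}$, $j=1,\dots,J$, and response $T_\vartheta\big(\mathscr{V}^d_{n+1}(\Upsilon_{n+1},\cX^{d,j,N,\delta,\cM}_{t_{n+1}})+(t_{n+1}-t_n)\,(F^d\circ\mathscr{V}^d_{n+1}(\Upsilon_{n+1},\cdot))(t_{n+1},\cX^{d,j,N,\delta,\cM}_{t_{n+1}})\big)$, which by construction is bounded by $\vartheta$ --- this is exactly why the truncated loss is used --- over the finite-dimensional linear class $\cH:=\operatorname{span}\{x\mapsto\rho(A_k^\top x-B_k):k=1,\dots,K_\varepsilon\}$, of dimension at most $K_\varepsilon$. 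The estimate for truncated least-squares estimators over linear function classes (e.g.\ \cite[Theorem~11.3]{Gyoerfi2002}, with truncation level $\vartheta$) then bounds $\mathbb{E}[\,|T_\vartheta(\mathscr{V}^d_n(\Upsilon_n,\cX_{t_n}))-m_n(\cX_{t_n})|^2\,]$ by $C_0\vartheta^2(\ln(J)+1)K_\varepsilon/J$ plus $8\inf_{f\in\cH}\mathbb{E}[\,|f(\cX_{t_n})-m_n(\cX_{t_n})|^2\,]$, where $m_n$ is the conditional regression function (so $|m_n|\le\vartheta$ and $T_\vartheta(m_n)=m_n$). Bounding the infimum by $f=\mathscr{V}^d_n(Y_n,\cdot)$, using that $T_\vartheta$ is $1$-Lipschitz and that $\|\mathscr{V}^d_n(Y_n,\cdot)-T_\vartheta(\mathscr{V}^d_n(Y_n,\cdot))\|_{L_2}^2\le\mathbb{E}[\mathds{1}_{\{|\mathscr{V}^d_n(Y_n,\cX_{t_n})|>\vartheta\}}|\mathscr{V}^d_n(Y_n,\cX_{t_n})|^2]$, then applying a triangle inequality and finally taking the outer expectation over $(A_k,B_k)_{k\le K_\varepsilon}$ and $\Upsilon_{n+1}$, yields item \ref{ThmMainRN3}.

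Finally, for \eqref{EqThmMainRN1}, I would split, for each fixed $n$, $u^d(t_n,X^{d,0}_{t_n})-\mathscr{V}^d_n(\Upsilon_n,\cX_{t_n})$ along the chain $u^d(t_n,X^{d,0}_{t_n})\to\cV^d_n(\cX_{t_n})\to\mathscr{V}^d_n(Y_n,\cX_{t_n})\to T_\vartheta(\mathscr{V}^d_n(Y_n,\cX_{t_n}))\to T_\vartheta(\mathscr{V}^d_n(\Upsilon_n,\cX_{t_n}))\to\mathscr{V}^d_n(\Upsilon_n,\cX_{t_n})$, apply the elementary bound $(\sum_{i=1}^{5}a_i)^2\le 5\sum_{i=1}^{5}a_i^2$, and insert items \ref{ThmMainRN1}--\ref{ThmMainRN3}; the precise tracking of the factor $64$ is then routine. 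The truncation remainders so produced, together with the tail term in item \ref{ThmMainRN3}, are each dominated --- via Markov's inequality at order $q$, the triangle inequality, and items \ref{ThmMainRN1}--\ref{ThmMainRN2} --- by a multiple of $\vartheta^{2-q}\mathbb{E}[|u^d(t_n,X^{d,0}_{t_n})|^q]$ plus the errors already estimated, and $\mathbb{E}[|u^d(t_n,X^{d,0}_{t_n})|^q]\le\overline{C}\,\mathbb{E}[(d^p+\|\xi^{d,0}\|^2)^{q/2}]$ follows from \eqref{u est} together with the $q$-th moment bound on $X^{d,0}_{t_n}$ recorded after \eqref{SDE d}. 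Taking the supremum over $n$ gives \eqref{EqThmMainRN1}. I expect item \ref{ThmMainRN3} to be the main obstacle: reducing the randomized least-squares step to a textbook regression estimate requires care about the measurability of the $\argmin$, about the fact that the regression target is built from the data-dependent readout $\Upsilon_{n+1}$ --- handled by conditioning on the data used at the later time steps, or by using independent fresh samples per time step --- and about preventing the per-step estimation error from compounding over the $N$ steps; the tail book-keeping in the passage to \eqref{EqThmMainRN1} is a secondary technical point.
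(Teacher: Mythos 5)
Your proposal takes essentially the same route as the paper: Proposition~\ref{PropTimeDiscr} for item~\ref{ThmMainRN1}, the random-neural-network universal approximation result from \cite{NeufeldSchmocker2023} for item~\ref{ThmMainRN2}, \cite[Theorem~11.3]{Gyoerfi2002} for item~\ref{ThmMainRN3}, and an assembly via triangle inequalities together with Markov's inequality at order $q$ for the tail term in \eqref{EqThmMainRN1}. That much is correct, and the paper indeed also performs the assembly of \eqref{EqThmMainRN1} by a second application of \cite[Theorem~11.3]{Gyoerfi2002} (now with regression target $T_\vartheta(u^d(t_n,\cdot))$) together with the bound $\mathbb{E}\big[\mathds{1}_{\{|u^d(t_n,X^{d,0}_{t_n})|>\vartheta\}}|u^d(t_n,X^{d,0}_{t_n})|^2\big]\leq\frac{\overline{C}}{34\vartheta^{q-2}}\mathbb{E}[(d^p+\|\xi^{d,0}\|^2)^{q/2}]$ derived from \eqref{u est} and Corollary~\ref{corollary Lyaponov q}, which matches the rough outline in your last paragraph.

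There is, however, a genuine gap in your argument for item~\ref{ThmMainRN2}. You claim that $\cV^d_n\in\overline{C_b(\bR^d)}^{\gamma}$ ``after possibly enlarging $\gamma$,'' and you treat this enlargement as free. It is not: $\gamma$ is fixed by the activation function $\rho\in\overline{C_b(\bR)}^\gamma$ and constrained by Assumption~\ref{AssEtaMoments} through $c\in\mathbb{N}_0\cap[\gamma,\infty)$, so you cannot increase $\gamma$ without changing the hypotheses. Moreover, the growth estimate on $\cV^d_n$ that is available (inherited from \eqref{u est} and the Lipschitz/growth bounds) is linear, of order $(d^p+\|x\|^2)^{1/2}$, so membership in $\overline{C_b(\bR^d)}^\gamma$ would require $\gamma>1$ and the vanishing of $|\cV^d_n(x)|/(1+\|x\|)^\gamma$ at infinity, neither of which is guaranteed for the given $\gamma$. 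The paper avoids this entirely by working in the weighted Hilbert space $L_2(\mathbb{R}^d,\mathcal{B}(\mathbb{R}^d),\mathbb{P}\circ(\cX^{d,0}_{t_n})^{-1})$: Lemma~\ref{LemmaL2Y}~\ref{LemmaL2Y1} shows $\overline{C_b(\bR^d)}^\gamma$ embeds continuously and densely into this $L_2$ space, Lemma~\ref{LemmaL2Y}~\ref{LemmaL2Y2} shows $\cV^d_n$ belongs to it, and then \cite[Corollary~3.8]{NeufeldSchmocker2023} is invoked \emph{in the $L_2$ space}, not in $\overline{C_b}^\gamma$. This in turn requires a $2\gamma$-th moment bound for the \emph{Euler-discretized} process $\cX^{d,0,N,\delta,\cM}_{t_n}$ (Lemma~\ref{LemmaYMom}), which is obtained from Assumption~\ref{AssEtaMoments} via a characteristic-function argument (Lemma~\ref{LemmaFinMom}); the ingredients you list --- Assumption~\ref{AssXiLq}, the moment bound for $X^{d,0}$, and the $L_2$ strong-error estimate \eqref{error Euler} --- only yield second moments of $\cX^{d,0,N,\delta,\cM}_{t_n}$ and hence do not suffice here.

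A secondary point concerns your five-term chain for \eqref{EqThmMainRN1}: the last link $T_\vartheta(\mathscr{V}^d_n(\Upsilon_n,\cX_{t_n}))\to\mathscr{V}^d_n(\Upsilon_n,\cX_{t_n})$ introduces $\mathbb{E}[\mathds{1}_{\{|\mathscr{V}^d_n(\Upsilon_n,\cX_{t_n})|>\vartheta\}}(|\mathscr{V}^d_n(\Upsilon_n,\cX_{t_n})|-\vartheta)^2]$, i.e.\ a tail moment of the \emph{data-dependent estimator itself}, for which no a priori bound of the form appearing in items~\ref{ThmMainRN1}--\ref{ThmMainRN3} or Corollary~\ref{corollary Lyaponov q} is available. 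The paper's decomposition avoids producing this term by applying \cite[Theorem~11.3]{Gyoerfi2002} directly to bound $\mathbb{E}[|T_\vartheta(u^d(t_n,X^{d,0}_{t_n}))-\mathscr{V}^d_n(\Upsilon_n,\cX_{t_n})|^2]$ and then inserting $\mathscr{V}^d_n(Y_n,\cdot)$ only inside the resulting infimum, so that the only truncation remainder is the one on $u^d$ itself, which Corollary~\ref{corollary Lyaponov q} and Markov do control.
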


Theorem~\ref{ThmMainRN} provides us with a full error analysis of the random splitting method. By using random neural networks, we are in particular able to control the generalization error in \ref{ThmMainRN3}, which cannot be controlled for the splitting method with deterministic neural networks.

\begin{corollary}
	\label{CorMainRN}
	For $\gamma\in(0,\infty)$, let $\rho \in \overline{C_b(\mathbb{R})}^\gamma$ be non-polynomial. Moreover, let Assumptions~\ref{assumption Lip and growth}--\ref{assumption time Holder} hold, let Assumption~\ref{AssXiLq} hold with some $q \in (2,\infty)$, let Assumption~\ref{AssEtaMoments} hold with some $c \in \mathbb{N}_0 \cap [\gamma,\infty)$, and let $(A_n,B_n)_{n \in \mathbb{N}}$ satisfy Assumptions~\ref{AssCDF}. Then, for every $d \in \mathbb{N}$ and $\widetilde{\varepsilon} \in (0,1)$, there exists some $K \in \mathbb{N}$ (see lines~6-7 of Algorithm~\ref{AlgDS}) such that Algorithm~\ref{AlgDS} outputs random neural networks $\mathscr{V}^d_n(\Upsilon_n,\cdot)$, $n \in \lbrace N,N-1,\dots,1,0 \rbrace$ satisfying $\mathscr{V}^d_N(\Upsilon_N,\cdot) = g^d(\cdot)$ and
	\begin{equation*}
		\sup_{n\in\{N-1,N-2,\dots,1,0\}} \mathbb{E}\left[ \left\vert u^d\left(t_n,X^{d,0}_{t_n}\right) - \mathscr{V}^d_n\left(\Upsilon_n,\cX^{d,0,N,\delta,\cM}_{t_n}\right) \right\vert^2 \right] \leq \widetilde{\varepsilon}.
	\end{equation*}
\end{corollary}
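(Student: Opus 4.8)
The plan is to obtain Corollary~\ref{CorMainRN} as a direct consequence of Theorem~\ref{ThmMainRN}, following exactly the choices of constants laid out in lines~1--8 of Algorithm~\ref{AlgDS}. Fix $d \in \mathbb{N}$ and $\widetilde{\varepsilon} \in (0,1)$. The starting point is the a priori bound \eqref{EqThmMainRN1}, which already decomposes the left-hand side of the desired estimate into four terms: a truncation term $\frac{\overline{C}}{\vartheta^{q-2}} \bE[(d^p+\|\xi^{d,0}\|^2)^{q/2}]$, a time-discretization term $64 \widehat{C} d^p(d^p+\bE[\|\xi^{d,0}\|^2]) e^{d,N,\delta,\cM}$, a generalization term $2 C_0 \vartheta^2 \frac{(\ln(J)+1)K_\varepsilon}{J}$, and a universal-approximation term $64\varepsilon$. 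The strategy is simply to choose the free parameters $\vartheta$, $N$, $\delta$, $\cM$, $K$ (hence $K_\varepsilon$, $\varepsilon$, $Y_n$), and $J$ so that each of these four contributions is at most $\widetilde{\varepsilon}/4$, whence the sum is at most $\widetilde{\varepsilon}$.

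Concretely, I would proceed in the order of the algorithm. First, using Assumption~\ref{AssXiLq} (so that $\bE[(d^p+\|\xi^{d,0}\|^2)^{q/2}] < \infty$) and $q > 2$, pick $\vartheta > 0$ large enough that $\frac{\overline{C}}{\vartheta^{q-2}} \bE[(d^p+\|\xi^{d,0}\|^2)^{q/2}] \leq \widetilde{\varepsilon}/4$. Next, since $e^{d,N,\delta,\cM} = N^{-1} + \delta^q d^p + \delta^{-2} d^p \cM^{-1}$ is a sum of three terms, choose $N$ large, then $\delta$ small, then $\cM$ large (with $\cM \geq \delta^{-2} C_\eta d^p$ to meet the hypothesis of Theorem~\ref{ThmMainRN}) so that each of $N^{-1}$, $\delta^q d^p$, $\delta^{-2} d^p \cM^{-1}$, after multiplication by $64\widehat{C}d^p(d^p + \bE[\|\xi^{d,0}\|^2])$, is at most $\widetilde{\varepsilon}/12$; this bounds the time-discretization term by $\widetilde{\varepsilon}/4$. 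Then, invoking Theorem~\ref{ThmMainRN}~\ref{ThmMainRN2} with the choice $\varepsilon := \widetilde{\varepsilon}/256$, obtain $K_\varepsilon \in \mathbb{N}$ and random variables $Y_n$ with $\err^{d,N,\delta,\cM}_{UAT,n} \leq \varepsilon$, so $64\varepsilon \leq \widetilde{\varepsilon}/4$; set $K := K_\varepsilon$ (this is the $K$ referenced in lines~6--7 of the algorithm, and the condition $64\mathbb{E}[|\mathcal{V}^d_n - \mathscr{V}^d_n(Y_n,\cdot)|^2] \leq \widetilde{\varepsilon}/4$ there is exactly $64\varepsilon \leq \widetilde{\varepsilon}/4$). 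Finally, with $\vartheta$ and $K_\varepsilon$ now fixed, choose $J \in \mathbb{N}$ large enough that $2 C_0 \vartheta^2 \frac{(\ln(J)+1)K_\varepsilon}{J} \leq \widetilde{\varepsilon}/4$, which is possible because $(\ln(J)+1)/J \to 0$ as $J \to \infty$. Summing the four bounds in \eqref{EqThmMainRN1} yields $\sup_{n} \mathbb{E}[|u^d(t_n,X^{d,0}_{t_n}) - \mathscr{V}^d_n(\Upsilon_n,\cX^{d,0,N,\delta,\cM}_{t_n})|^2] \leq \widetilde{\varepsilon}$, and the boundary identity $\mathscr{V}^d_N(\Upsilon_N,\cdot) = g^d(\cdot)$ holds by the definition of the algorithm.

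One point that needs a little care is the logical order of the parameter choices: $\vartheta$ must be selected before $J$ (since the generalization bound scales like $\vartheta^2$), and $K_\varepsilon$ — which also enters the generalization term — is determined by the universal-approximation step, which in turn does not depend on $J$ or $\vartheta$; thus the order $\vartheta \to (N,\delta,\cM) \to (K_\varepsilon, Y_n) \to J$ is consistent and contains no circular dependence. A second subtlety is that Theorem~\ref{ThmMainRN}~\ref{ThmMainRN3} bounds the \emph{truncated} generalization error and carries the extra tail term $8\mathbb{E}[\mathds{1}_{\{|\mathscr{V}^d_n(Y_n,\cdot)|>\vartheta\}}|\mathscr{V}^d_n(Y_n,\cdot)|^2]$; however, in the assembled estimate \eqref{EqThmMainRN1} this tail term has already been absorbed (via the $L^q$-moment control of $X^{d,m}$ and the growth bound \eqref{u est} on $u^d$, combined with the choice of $\vartheta$) into the term $\frac{\overline{C}}{\vartheta^{q-2}}\bE[(d^p+\|\xi^{d,0}\|^2)^{q/2}]$, so at the level of this corollary no further work is required — one only needs \eqref{EqThmMainRN1} as a black box. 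Since the corollary is essentially a bookkeeping consequence of the already-established Theorem~\ref{ThmMainRN}, there is no genuine obstacle; the only thing to verify is that each of the four terms can indeed be driven below $\widetilde{\varepsilon}/4$ with an admissible (in particular, with $\cM \geq \delta^{-2} C_\eta d^p$) choice of parameters, which the monotonicity of each term in its respective parameter makes straightforward.
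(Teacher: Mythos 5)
Your proposal is correct and follows exactly the same route as the paper: the paper's proof of Corollary~\ref{CorMainRN} consists of a single sentence stating that it follows directly from inserting the parameter choices in lines~2--8 of Algorithm~\ref{AlgDS} into inequality~\eqref{EqThmMainRN1}, and your argument simply spells out those choices ($\vartheta$, then $N,\delta,\cM$, then $K_\varepsilon$, then $J$) and checks that each of the four contributions is driven below $\widetilde{\varepsilon}/4$. Your remarks on the logical ordering of the parameter selection and on the tail term already being absorbed into \eqref{EqThmMainRN1} are accurate and appropriate, though not written out in the paper.
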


Corollary~\ref{CorMainRN} emphasizes the main result of this paper, namely that the random deep splitting algorithm can \emph{provably} converge to the true solution $u^d(t,x)$ of the PIDE in \eqref{PDE}.

\section{\textbf{Numerical examples}}
\label{section numerics}

In this section, we illustrate in four numerical examples\footnote{All numerical experiments have been implemented in \texttt{Python} on an average laptop with GPU (Lenovo ThinkPad X13 Gen2a with Processor AMD Ryzen 7 PRO 5850U and Radeon Graphics, 1901 Mhz, 8 Cores, 16 Logical Processors). The code for the (random) deep splitting method can be found here: \url{https://github.com/psc25/RandomDeepSplitting}. On the other hand, the code for the MLP algorithm can be found here: \url{https://github.com/psc25/MLPJumps}. Note that GPU only accelerates the training of deterministic neural networks via the \texttt{TensorFlow} package. However, for MLP and random neural networks which are trained with the packages \texttt{NumPy} and \texttt{SciPy}, GPU offers no advantage over CPU.} how to apply the deep splitting method with deterministic (fully trained) and random neural networks to solve different nonlinear PDEs and PIDEs. 

More precisely, in each of the following numerical examples, we learn for different dimensions $d \in \mathbb{N}$ deterministic (fully trained) neural networks and random neural networks (both with one hidden layer of size $K := \min(d,2000)$ and the tangens hyperbolicus as activation function) on $J = 500$ samples of the Euler discretization $(\cX^{d,0}_{t_n})_{n=0,\dots,N}$, where $N = 12$. The deterministic neural networks are trained using the Adam optimizer (see \cite{KingmaBa2015}) over $M = 2000$ learning steps (epochs) with variable learning rate $\beta_m = 10^{-2} \mathds{1}_{[0,500)}(m) + 10^{-3} \mathds{1}_{[500,1000)}(m) + 10^{-4} \mathds{1}_{[1000,2000]}(m)$, whereas the random neural networks (with random initialization $(A_1,B_1) \sim \mathcal{N}_{K+1}(0,\mathbf{I}_{K+1})$) are learned with the least squares method described in \cite[Algorithm~2]{NeufeldSchmocker2023} and some batch normalization in front of the activation function.

Both methods are then compared to the reference solution obtained via the MLP algorithm in \cite{NW2022} (with Euler discretization $N = 12$, number of samples $M = 5$, and number of levels $l = 5$).

\subsection{Pricing under default risk without jumps}
\label{SecDefaultBS}

In the first example, we aim to price a financial derivative within the nonlinear Black-Scholes model under default risk (see \cite{beck2021deep,BSZ2017,Duffie1996,EHJK2019,han2018solving}). For this purpose, we suppose that the stock prices follow geometric Brownian motions, i.e.~for every $t \in [0,T]$, we assume that
\begin{equation}
	\label{EqBlackScholes}
	X^{d,m}_{t} = x \odot \exp_d\left( \mu_0 \mathbf{1}_d t + \sigma_0 W^{d,m}_t \right).
\end{equation}
Hereby, the parameters are $\mu_0 \in \mathbb{R}$ and $\sigma_0 > 0$, and the initial value is $x \in (0,\infty)^d$. By applying Ito's formula, it follows for every $t \in [0,T]$ that
\begin{equation*}
	dX^{d,m}_{t} = \left( \mu_0 + \tfrac{\sigma_0^2}{2} \right) X^{d,m}_{t} \, dt + \sigma_0 \diag(X^{d,m}_{t}) \, dW^{d,m}_{t}.
\end{equation*}
Hence, the stochastic process $(X^{d,m}_t)_{t \in [0,T]}$ is of the form \eqref{SDE deri} with $\mu^d(s,x) = (\mu_0 + \sigma_0^2/2) x$, $\sigma^d(s,x) = \sigma_0 \diag(x)$, $\eta^d_s(x,z) = 0$, and $\nu^d(dz) = 0$.

In order to include the default risk, we assume that one only gets $\alpha \in [0,1)$ of the contract's current value if a default occurs. Hereby, the first default time is modelled as a Poisson process with intensity $Q(v) = \mathds{1}_{(-\infty,v^h)}(v) \gamma^h + \mathds{1}_{[v^l,\infty)}(v) \gamma^l + \mathds{1}_{[v^h,v^l)}(v) \big( \frac{\gamma^h-\gamma^l}{v^h-v^l} \left( v-v^h \right) + \gamma^h \big)$. Hence, the price of the contract with respect to the payoff function $g^d(x) = \min_{i=1,\dots,d} x_i$ is described by the PIDE \eqref{PDE} with $f^d(t,x,v) = -(1-\alpha) Q(v) v - R v$. For the numerical simulations, we choose the parameters $\mu_0 = -0.01$, $\sigma_0 = 0.15$, $\alpha = 2/3$, $\gamma^h = 0.2$, $\gamma^l = 0.02$, $v^h = 25$, $v^l = 50$, $R = 0.02$, and $x := (30,\dots,30)^\top \in \mathbb{R}^d$. Moreover, note that the coefficients $\mu^d$, $\sigma^d$, and $\eta^d$ as well as the functions $f^d$ and $g^d$ satisfy Assumptions~\ref{assumption Lip and growth}--\ref{AssXiLq}+\ref{AssEtaMoments}. 

For different dimensions $d \in \mathbb{N}$, we run the algorithm $10$ times either with deterministic neural networks or random neural networks to compute an approximation of $u^d(T,30,\dots,30)$ with $T = 1/3$, whereas $\mathcal{M} \in \mathbb{N}$ and $\delta > 0$ do not need to be specified as $\nu^d(dz) = 0$ (for the other parameters $K,J,N \in \mathbb{N}$, we refer to the beginning of Section~\ref{section numerics}). The results are reported in Table~\ref{FigDefaultBS}, which are compared to the reference solution obtained via the MLP method in \cite{NW2022}.

\begin{table}[h!]
	\begin{minipage}[t][][t]{0.49\textwidth}
		\includegraphics[height = 6.0cm]{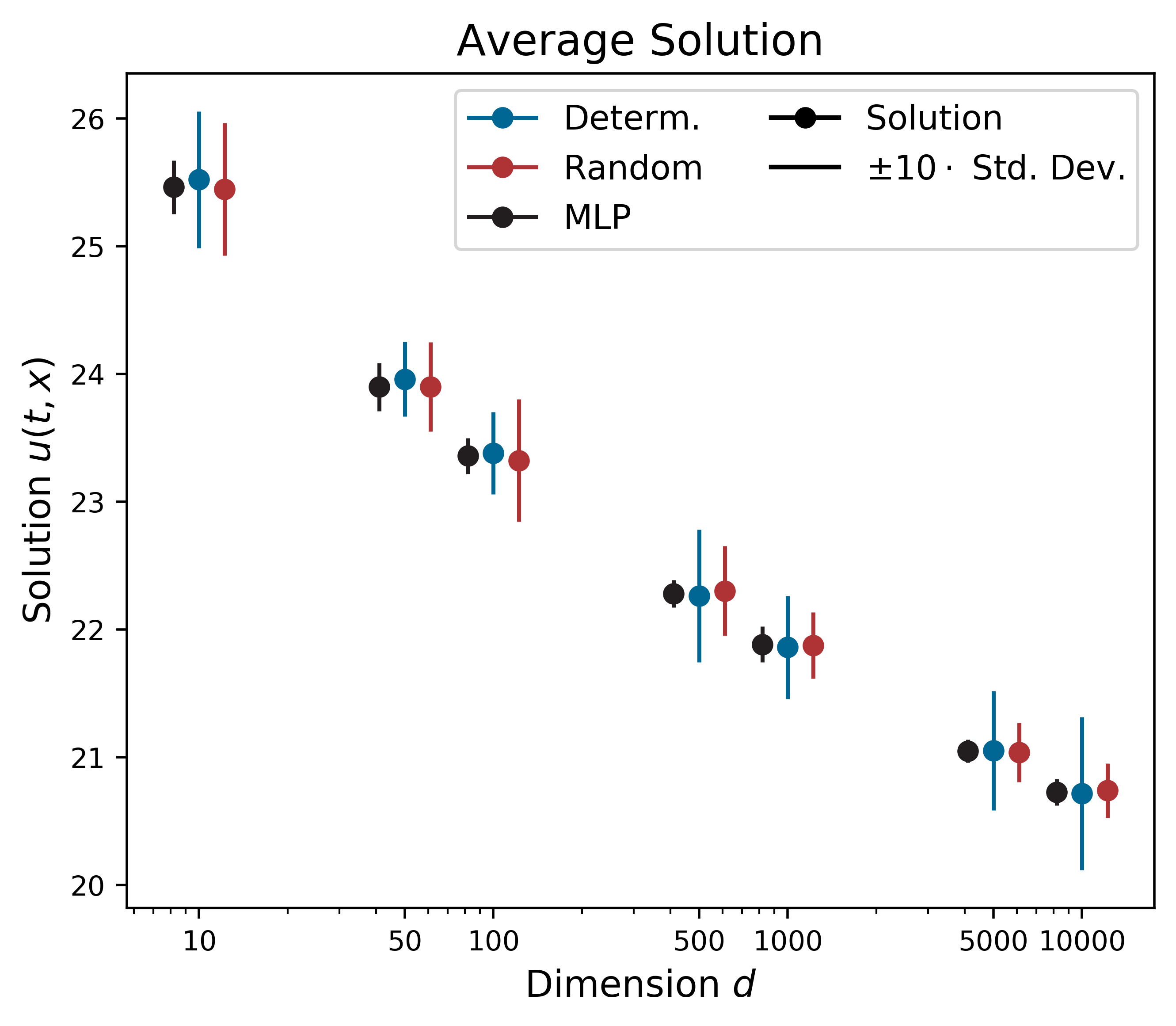}
		\vspace{0.2cm}
	\end{minipage}
	\begin{minipage}[t][][t]{0.49\textwidth}
		\includegraphics[height = 6.0cm]{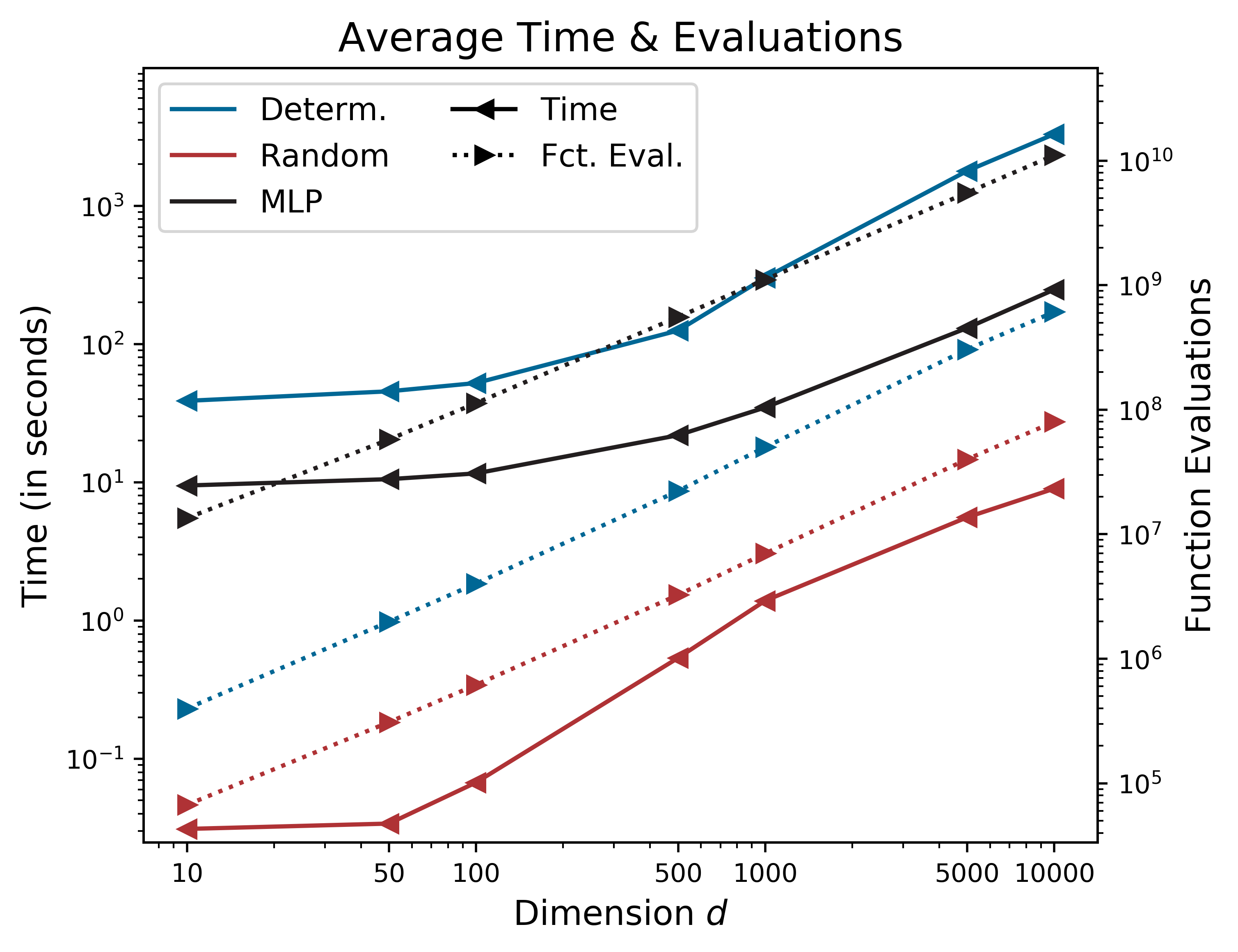}
		\vspace{0.2cm}
	\end{minipage}
	\begin{tabular}{r|R{1.7cm}R{1.7cm}R{1.7cm}|R{1.7cm}R{1.7cm}R{1.7cm}|}
		& \multicolumn{3}{c|}{\textbf{Average Solution}} & \multicolumn{3}{c|}{\textbf{Average Time \& Evaluations}} \\
		$d$ & \textcolor{MidnightBlue}{Determ.} & \textcolor{Maroon}{Random} & MLP & \textcolor{MidnightBlue}{Determ.} & \textcolor{Maroon}{Random} & MLP \\
		\hline
		10 & \textcolor{MidnightBlue}{25.5224} & \textcolor{Maroon}{25.4464} & \textcolor{Black}{25.4633} & \textcolor{MidnightBlue}{38.80} & \textcolor{Maroon}{0.03} & \textcolor{Black}{9.48} \\ 
 & \textcolor{MidnightBlue}{0.0534} & \textcolor{Maroon}{0.0520} & \textcolor{Black}{0.0210} & \textcolor{MidnightBlue}{$3.98 \cdot 10^{5}$} & \textcolor{Maroon}{$6.72 \cdot 10^{4}$} & \textcolor{Black}{$1.35 \cdot 10^{7}$} \\ 
\hline 
50 & \textcolor{MidnightBlue}{23.9589} & \textcolor{Maroon}{23.8994} & \textcolor{Black}{23.8985} & \textcolor{MidnightBlue}{45.46} & \textcolor{Maroon}{0.03} & \textcolor{Black}{10.53} \\ 
 & \textcolor{MidnightBlue}{0.0292} & \textcolor{Maroon}{0.0350} & \textcolor{Black}{0.0189} & \textcolor{MidnightBlue}{$1.99 \cdot 10^{6}$} & \textcolor{Maroon}{$3.10 \cdot 10^{5}$} & \textcolor{Black}{$5.77 \cdot 10^{7}$} \\ 
\hline 
100 & \textcolor{MidnightBlue}{23.3821} & \textcolor{Maroon}{23.3233} & \textcolor{Black}{23.3587} & \textcolor{MidnightBlue}{52.17} & \textcolor{Maroon}{0.07} & \textcolor{Black}{11.59} \\ 
 & \textcolor{MidnightBlue}{0.0322} & \textcolor{Maroon}{0.0480} & \textcolor{Black}{0.0140} & \textcolor{MidnightBlue}{$4.02 \cdot 10^{6}$} & \textcolor{Maroon}{$6.17 \cdot 10^{5}$} & \textcolor{Black}{$1.13 \cdot 10^{8}$} \\ 
\hline 
500 & \textcolor{MidnightBlue}{22.2624} & \textcolor{Maroon}{22.3021} & \textcolor{Black}{22.2800} & \textcolor{MidnightBlue}{125.13} & \textcolor{Maroon}{0.54} & \textcolor{Black}{21.89} \\ 
 & \textcolor{MidnightBlue}{0.0520} & \textcolor{Maroon}{0.0351} & \textcolor{Black}{0.0106} & \textcolor{MidnightBlue}{$2.23 \cdot 10^{7}$} & \textcolor{Maroon}{$3.26 \cdot 10^{6}$} & \textcolor{Black}{$5.55 \cdot 10^{8}$} \\ 
\hline 
1000 & \textcolor{MidnightBlue}{21.8595} & \textcolor{Maroon}{21.8746} & \textcolor{Black}{21.8822} & \textcolor{MidnightBlue}{301.72} & \textcolor{Maroon}{1.39} & \textcolor{Black}{34.73} \\ 
 & \textcolor{MidnightBlue}{0.0402} & \textcolor{Maroon}{0.0259} & \textcolor{Black}{0.0141} & \textcolor{MidnightBlue}{$5.00 \cdot 10^{7}$} & \textcolor{Maroon}{$7.01 \cdot 10^{6}$} & \textcolor{Black}{$1.11 \cdot 10^{9}$} \\ 
\hline 
5000 & \textcolor{MidnightBlue}{21.0511} & \textcolor{Maroon}{21.0381} & \textcolor{Black}{21.0463} & \textcolor{MidnightBlue}{1785.30} & \textcolor{Maroon}{5.58} & \textcolor{Black}{130.62} \\ 
 & \textcolor{MidnightBlue}{0.0468} & \textcolor{Maroon}{0.0231} & \textcolor{Black}{0.0090} & \textcolor{MidnightBlue}{$3.05 \cdot 10^{8}$} & \textcolor{Maroon}{$4.00 \cdot 10^{7}$} & \textcolor{Black}{$5.53 \cdot 10^{9}$} \\ 
\hline 
10000 & \textcolor{MidnightBlue}{20.7160} & \textcolor{Maroon}{20.7384} & \textcolor{Black}{20.7260} & \textcolor{MidnightBlue}{3294.22} & \textcolor{Maroon}{8.99} & \textcolor{Black}{247.45} \\ 
 & \textcolor{MidnightBlue}{0.0599} & \textcolor{Maroon}{0.0214} & \textcolor{Black}{0.0103} & \textcolor{MidnightBlue}{$6.10 \cdot 10^{8}$} & \textcolor{Maroon}{$8.00 \cdot 10^{7}$} & \textcolor{Black}{$1.11 \cdot 10^{10}$}
	\end{tabular}
	\caption{(Random) deep splitting approximation of the pricing problem under default risk with geometric Brownian motions \eqref{EqBlackScholes} for different $d \in \mathbb{N}$, by either using deterministic neural networks (label ``Determ.'') or random neural networks (label ``Random''). The average solutions over $10$ runs together with their standard deviations (italic font) are compared to the MLP algorithm (label ``MLP''). On the right-hand side, the average running time on a laptop with GPU (upper row, in seconds) and the average number of function evaluations (lower row, in scientific format\textsuperscript{\ref{footnote1}}) are displayed.}
	\label{FigDefaultBS}
\end{table}

\subsection{Pricing under default risk with jumps}

In the second example, we consider the same pricing problem as in Section~\ref{SecDefaultBS} consisting of the same nonlinearity $f^d$ and payoff function $g^d$ as in Section~\ref{SecDefaultBS}, but now with jumps in the underlying process $X^{d,m}$. For this purpose, we consider Merton's jump-diffusion model (see \cite{Merton1976} and \cite[Section~4.3]{ContTankov2004}), i.e.~for every $t \in [0,T]$, we assume that
\begin{equation}
	\label{EqMertonModel}
	X^{d,m}_{t} = x \odot \exp_d\left( L^{d,m}_t \right), \quad\quad L^{d,m}_t := \mu_0 \mathbf{1}_d t + \sigma_0 W^{d,m}_t + \sum_{n=1}^{N^{d,m}_t} Z^{d,m}_n - \lambda \mu_z \mathbf{1}_d t,
\end{equation}
where $\mu_0 \in \mathbb{R}$ and $\sigma_0 > 0$ are parameters, and where $x \in (0,\infty)^d$ is the initial value. Moreover, $(N^{d,m}_t)_{t \in [0,\infty)}$ is a Poisson process with intensity $\lambda > 0$ and $(Z^{d,m}_n)_{n \in \mathbb{N}} \sim \mathcal{N}_d(\mu_z \mathbf{1}_d,\sigma_z^2 \, I_d)$ is an i.i.d.~sequence of multivariate normal random variables. Define the Poisson random measure
\begin{equation*}
	\pi^{d,m}((0,t] \times E) := \#\left\lbrace s \in (0,t]: Z^{d,m}_s - Z^{d,m}_{s-} \in E \right\rbrace, \quad\quad t \in (0,T], \quad E \in \mathcal{B}(\mathbb{R}^d \setminus \lbrace 0 \rbrace),
\end{equation*}
and its compensated Poisson random measure $\tilde{\pi}^{d,m}(dz,dt) := \pi^{d,m}(dz,dt) - \nu^d(dz) \otimes dt$, where $\nu^d(dz) = \frac{\lambda}{\left( 2\pi \sigma_z^2 \right)^{d/2}} \exp\left( \frac{\Vert z - \mu_z \mathbf{1}_d \Vert^2}{2\sigma_z^2} \right) dz$. Then, for every $t \in [0,T]$, we have
\begin{equation*}
	X^{d,m}_{t} = x \odot \exp_d\left( \mu_0 \mathbf{1}_d t + \sigma_0 W^{d,m}_t + \int_0^t \int_{\mathbb{R}^d} z \, \tilde{\pi}^{d,m}(dz,ds) \right).
\end{equation*}
\pagebreak
Moreover, by using that $\nu^d$ is a finite measure together with
\begin{equation*}
	\mathbb{E}\left[ \int_0^t \int_{\mathbb{R}^d} \left\Vert X^{d,m}_{s} \odot \left( \exp_d(z) - \mathbf{1}_d - z \right) \right\Vert^2 \, \nu^d(dz) \, ds \right] < \infty,
\end{equation*}
we conclude for every $t \in [0,T]$ that
\begin{equation}
	\label{EqMertonSplit}
	\begin{aligned}
		& \int_0^t \int_{\mathbb{R}^d} X^{d,m}_{s-} \odot \left( \exp_d(z) - \mathbf{1}_d - z \right) \, \pi^{d,m}(dz,ds) \\
		& \quad\quad = \int_0^t \int_{\mathbb{R}^d} X^{d,m}_{s-} \odot \left( \exp_d(z) - \mathbf{1}_d - z \right) \, \tilde{\pi}^{d,m}(dz,ds) \\
		& \quad\quad\quad\quad + \int_0^t \int_{\mathbb{R}^d} X^{d,m}_{s-} \odot \left( \exp_d(z) - \mathbf{1}_d - z \right) \, \nu^d(dz) \, ds
	\end{aligned}
\end{equation}
Hence, by using Ito's formula in \cite[Theorem~3.1]{GW2021} and \eqref{EqMertonSplit}, it follows for every $t \in [0,T]$ that
\begin{equation*}
	\begin{aligned}
		dX^{d,m}_{t} & = \left( \mu_0 + \frac{\sigma_0^2}{2} \right) X^{d,m}_{t-} \, dt + \sigma_0 \diag(X^{d,m}_{t-}) \, dW^{d,m}_{t} \\
		& \quad\quad + \int_{\mathbb{R}^d} X^{d,m}_{t-} \odot z \, \tilde{\pi}^{d,m}(dz,dt) + \int_{\mathbb{R}^d} X^{d,m}_{t-} \odot \left( \exp_d(z) - \mathbf{1}_d - z \right) \, \pi^{d,m}(dz,dt) \\
		& = \left( \mu_0 + \frac{\sigma_0^2}{2} \right) X^{d,m}_{t-} \, dt + \sigma_0 \diag(X^{d,m}_{t-}) \, dW^{d,m}_{t} \\
		& \quad\quad + \int_{\mathbb{R}^d} X^{d,m}_{t-} \odot \left( \exp_d(z) - \mathbf{1}_d \right) \, \tilde{\pi}^{d,m}(dz,dt) + \int_{\mathbb{R}^d} X^{d,m}_{t-} \odot \left( \exp_d(z) - \mathbf{1}_d - z \right) \, \nu^d(dz) \\
		& = \left( \mu_0 + \frac{\sigma_0^2}{2} + \lambda \left( e^{\mu_z+\frac{\sigma_z^2}{2}} - 1 - \mu_z \right) \right) X^{d,m}_{t-} \, dt + \sigma_0 \diag(X^{d,m}_{t-}) \, dW^{d,m}_{t} \\
		& \quad\quad + \int_{\mathbb{R}^d} X^{d,m}_{t-} \odot \left( \exp_d(z) - \mathbf{1}_d \right) \, \tilde{\pi}^{d,m}(dz,dt),
	\end{aligned}
\end{equation*}
Thus, the stochastic process $(X^{d,0,x}_t)_{t \in [0,T]} := (X^{d,m}_t)_{t \in [0,T]}$ is of the form \eqref{SDE deri} with $\mu^d(s,x) = \left( \mu_0 + \frac{\sigma_z^2}{2} + \lambda \left( \exp\left( \mu_z + \frac{\sigma_z^2}{2} \right) - 1 - \mu_z \right) \right) x$, $\sigma^d(s,x) = \sigma_0 \diag(x)$, $\eta^d_s(x,z) = x \odot \left( \exp_d(z) - \mathbf{1}_d \right)$, and $\nu^d(dz) = \frac{\lambda}{\left( 2\pi \sigma_z^2 \right)^{d/2}} \exp\left( \frac{\Vert z - \mu_z \mathbf{1}_d \Vert^2}{2\sigma_z^2} \right) dz$.

Now, we consider the same pricing problem as in Section~\ref{SecDefaultBS} consisting of the nonlinearity $f^d(t,x,v) = -(1-\alpha) Q(v) v - R v$ and payoff function $g^d(x) = \min_{i=1,\dots,d} x_i$, but with the additional integro-part in the PIDE \eqref{PDE}, where we choose the same parameters as in Section~\ref{SecDefaultBS}, except $\lambda = 0.2$, $\mu_z = -0.05$, and $\sigma_z = 0.1$. For different dimensions $d \in \mathbb{N}$, we run the algorithm $10$ times either with deterministic neural networks or random neural networks to compute an approximation of $u^d(T,30,\dots,30)$ with $T = 1/3$, $\mathcal{M} = 200$, and $\delta = 0.1$ (for the other parameters $K,J,N \in \mathbb{N}$, we refer to the beginning of Section~\ref{section numerics}). The results are reported in Table~\ref{FigDefaultMerton}, which are compared to the reference solution obtained via the MLP method in \cite{NW2022}.

\footnotetext{\label{footnote1}By scientific format, we refer to numbers of the form $1.00 \cdot 10^0$.}

\begin{table}[h!]
	\begin{minipage}[t][][t]{0.49\textwidth}
		\includegraphics[height = 6.0cm]{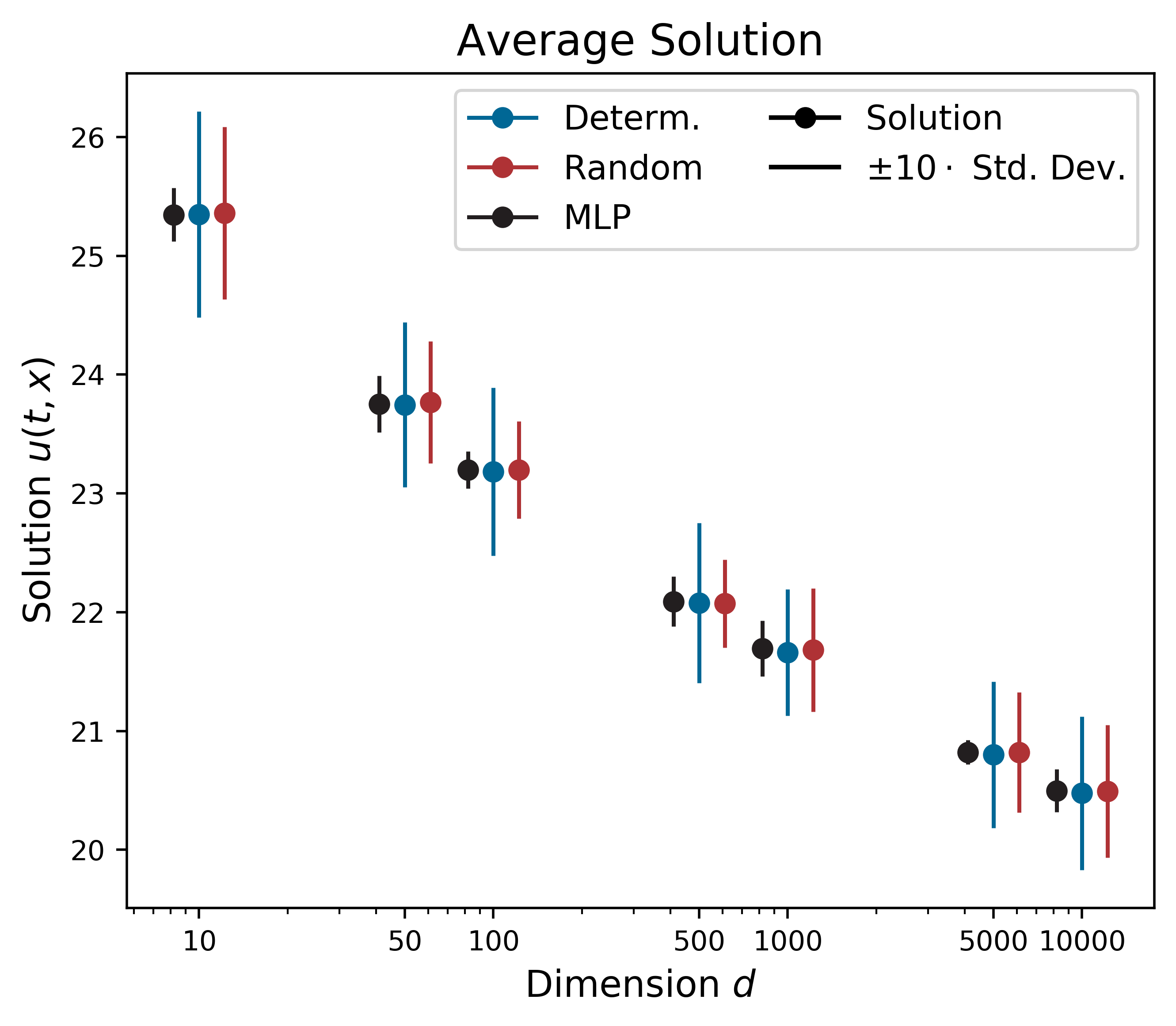}
		\vspace{0.2cm}
	\end{minipage}
	\begin{minipage}[t][][t]{0.49\textwidth}
		\includegraphics[height = 6.0cm]{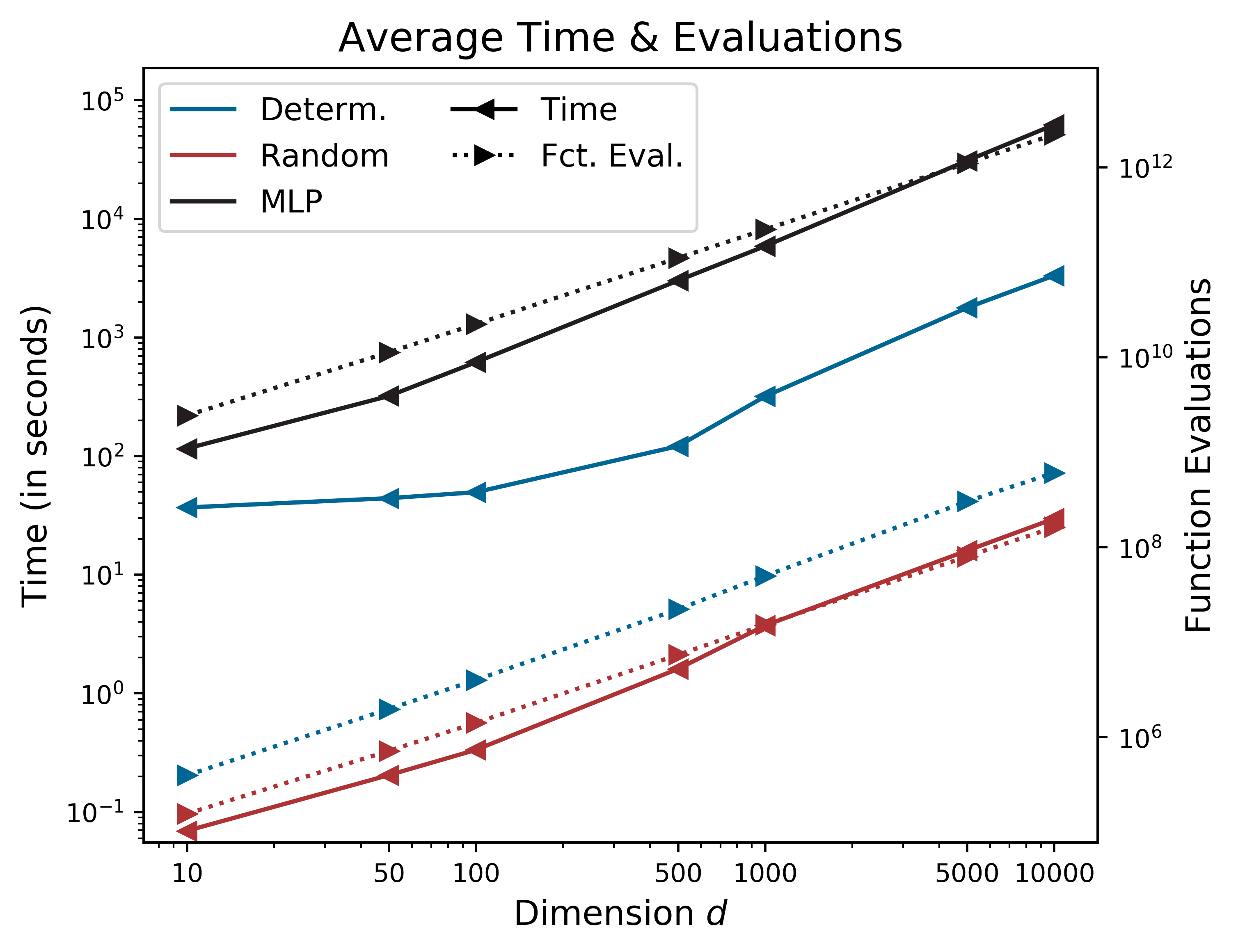}
		\vspace{0.2cm}
	\end{minipage}
	\begin{tabular}{r|R{1.7cm}R{1.7cm}R{1.7cm}|R{1.7cm}R{1.7cm}R{1.7cm}|}
		& \multicolumn{3}{c|}{\textbf{Average Solution}} & \multicolumn{3}{c|}{\textbf{Average Time \& Evaluations}} \\
		$d$ & \textcolor{MidnightBlue}{Determ.} & \textcolor{Maroon}{Random} & MLP & \textcolor{MidnightBlue}{Determ.} & \textcolor{Maroon}{Random} & MLP \\
		\hline
		10 & \textcolor{MidnightBlue}{25.3496} & \textcolor{Maroon}{25.3606} & \textcolor{Black}{25.3464} & \textcolor{MidnightBlue}{36.91} & \textcolor{Maroon}{0.07} & \textcolor{Black}{115.89} \\ 
 & \textcolor{MidnightBlue}{0.0866} & \textcolor{Maroon}{0.0725} & \textcolor{Black}{0.0224} & \textcolor{MidnightBlue}{$3.98 \cdot 10^{5}$} & \textcolor{Maroon}{$1.57 \cdot 10^{5}$} & \textcolor{Black}{$2.45 \cdot 10^{9}$} \\ 
\hline 
50 & \textcolor{MidnightBlue}{23.7462} & \textcolor{Maroon}{23.7667} & \textcolor{Black}{23.7526} & \textcolor{MidnightBlue}{44.21} & \textcolor{Maroon}{0.20} & \textcolor{Black}{321.51} \\ 
 & \textcolor{MidnightBlue}{0.0694} & \textcolor{Maroon}{0.0515} & \textcolor{Black}{0.0239} & \textcolor{MidnightBlue}{$1.99 \cdot 10^{6}$} & \textcolor{Maroon}{$7.20 \cdot 10^{5}$} & \textcolor{Black}{$1.13 \cdot 10^{10}$} \\ 
\hline 
100 & \textcolor{MidnightBlue}{23.1829} & \textcolor{Maroon}{23.1973} & \textcolor{Black}{23.1982} & \textcolor{MidnightBlue}{49.64} & \textcolor{Maroon}{0.33} & \textcolor{Black}{619.23} \\ 
 & \textcolor{MidnightBlue}{0.0708} & \textcolor{Maroon}{0.0410} & \textcolor{Black}{0.0157} & \textcolor{MidnightBlue}{$4.02 \cdot 10^{6}$} & \textcolor{Maroon}{$1.42 \cdot 10^{6}$} & \textcolor{Black}{$2.24 \cdot 10^{10}$} \\ 
\hline 
500 & \textcolor{MidnightBlue}{22.0774} & \textcolor{Maroon}{22.0723} & \textcolor{Black}{22.0899} & \textcolor{MidnightBlue}{121.45} & \textcolor{Maroon}{1.61} & \textcolor{Black}{3022.03} \\ 
 & \textcolor{MidnightBlue}{0.0673} & \textcolor{Maroon}{0.0370} & \textcolor{Black}{0.0211} & \textcolor{MidnightBlue}{$2.23 \cdot 10^{7}$} & \textcolor{Maroon}{$7.38 \cdot 10^{6}$} & \textcolor{Black}{$1.11 \cdot 10^{11}$} \\ 
\hline 
1000 & \textcolor{MidnightBlue}{21.6606} & \textcolor{Maroon}{21.6815} & \textcolor{Black}{21.6945} & \textcolor{MidnightBlue}{319.06} & \textcolor{Maroon}{3.73} & \textcolor{Black}{5894.59} \\ 
 & \textcolor{MidnightBlue}{0.0530} & \textcolor{Maroon}{0.0520} & \textcolor{Black}{0.0235} & \textcolor{MidnightBlue}{$5.00 \cdot 10^{7}$} & \textcolor{Maroon}{$1.53 \cdot 10^{7}$} & \textcolor{Black}{$2.22 \cdot 10^{11}$} \\ 
\hline 
5000 & \textcolor{MidnightBlue}{20.8000} & \textcolor{Maroon}{20.8193} & \textcolor{Black}{20.8211} & \textcolor{MidnightBlue}{1783.06} & \textcolor{Maroon}{15.99} & \textcolor{Black}{30977.20} \\ 
 & \textcolor{MidnightBlue}{0.0616} & \textcolor{Maroon}{0.0507} & \textcolor{Black}{0.0102} & \textcolor{MidnightBlue}{$3.05 \cdot 10^{8}$} & \textcolor{Maroon}{$8.02 \cdot 10^{7}$} & \textcolor{Black}{$1.11 \cdot 10^{12}$} \\ 
\hline 
10000 & \textcolor{MidnightBlue}{20.4759} & \textcolor{Maroon}{20.4922} & \textcolor{Black}{20.4966} & \textcolor{MidnightBlue}{3330.89} & \textcolor{Maroon}{29.74} & \textcolor{Black}{62182.71} \\ 
 & \textcolor{MidnightBlue}{0.0645} & \textcolor{Maroon}{0.0560} & \textcolor{Black}{0.0181} & \textcolor{MidnightBlue}{$6.10 \cdot 10^{8}$} & \textcolor{Maroon}{$1.63 \cdot 10^{8}$} & \textcolor{Black}{$2.22 \cdot 10^{12}$}
	\end{tabular}
	\caption{(Random) deep splitting approximation of the pricing problem under default risk with Merton's jump-diffusion model \eqref{EqMertonModel} for different $d \in \mathbb{N}$, by either using deterministic neural networks (label ``Determ.'') or random neural networks (label ``Random''). The average solutions over $10$ runs together with their standard deviations (italic font) are compared to the MLP algorithm (label ``MLP''). On the right-hand side, the average running time on a laptop with GPU (upper row, in seconds) and the average number of function evaluations (lower row, in scientific format\textsuperscript{\ref{footnote1}}) are displayed.}
	\label{FigDefaultMerton}
\end{table}

\subsection{Pricing with counterparty credit risk in Vasi\v{c}ek jump model}
\label{SecCounterpartyVasicek}

In the third example, we consider the problem of pricing a financial derivative with counterparty credit risk, which was already considered in \cite{EHJK2019} (see also \cite{BurgardKjaer2011,HernyLabordere2012} for the corresponding PDE). For this purpose, we assume that the stock prices follow a Vasi\v{c}ek jump model (see also \cite{WuLiang2018}), i.e.~for every $t \in [0,T]$, we assume that
\begin{equation}
	\label{EqJVasicek}
	X^{d,m}_{t} = x + \alpha \int_0^t \left( \mu_0 \mathbf{1}_d - X^{d,m}_{s} \right) \, ds + \sigma_0 W^{d,m}_t + \sum_{n=1}^{N_t^{d,m}} Z^{d,m}_n - \frac{\lambda t}{2} \mathbf{1}_d.
\end{equation}
where $\alpha, \mu_0, \sigma_0 > 0$ are parameters, and where $x \in \mathbb{R}^d$ is the initial value. Moreover, $(N^{d,m}_t)_{t \in [0,\infty)}$ is a Poisson process with intensity $\lambda > 0$ and $(Z^{d,m}_n)_{n \in \mathbb{N}} \sim \mathcal{U}_d([0,1]^d)$ is an i.i.d.~sequence of multivariate uniformly distributed random variables. Define the Poisson random measure
\begin{equation*}
	\pi^{d,m}((0,t] \times E) := \#\left\lbrace s \in (0,t]: X_s - X_{s-} \in E \right\rbrace, \quad\quad t \in (0,T], \quad E \in \mathcal{B}(\mathbb{R}^d \setminus \lbrace 0 \rbrace),
\end{equation*}
and its compensated Poisson random measure $\tilde{\pi}^{d,m}(dz,dt) := \pi^{d,m}(dz,dt) - \nu^d(dz) \otimes dt$, where $\nu^d(dz) := \lambda \mathbf{1}_{[0,1]^d}(z) dz$. Then, for every $t \in [0,T]$, we have
\begin{equation*}
	X^{d,m}_{t} = x + \alpha \int_0^t \left( \mu_0 - X^{d,m}_{s} \right) \, ds + \sigma_0 \int_0^t \, dW^{d,m}_{s} + \int_{\mathbb{R}^d} z \, \tilde{\pi}^{d,m}(dz,dt).
\end{equation*}
Hence, the stochastic process $(X^{d,0,x}_t)_{t \in [0,T]} := (X^{d,m}_t)_{t \in [0,T]}$ is of the form \eqref{SDE deri} with $\mu^d(s,x) := \alpha (\mu_0 - x)$, $\sigma^d(s,x) := \sigma_0 I_d$, $\eta^d_s(x,z) := z \mathbf{1}_{[0,1]^d}(z)$, and $\nu^d(dz) := \lambda \mathbf{1}_{[0,1]^d}(z) dz$.

Then, by following the derivations in \cite{BurgardKjaer2011,HernyLabordere2012}, the pricing problem with respect to the payoff function $g^d(x) = \max\left( \min_{i=1,\dots,d} x_i - K_1, 0 \right) - \max\left( \min_{i=1,\dots,d} x_i - K_2, 0 \right) - L$ is given by the PIDE \eqref{PDE} with $f^d(t,x,v) = -\zeta \min(v,0)$. For the numerical simulations, we choose the parameters $\alpha = 0.01$, $\mu_0 = 100$, $\sigma_0 = 2$, $\zeta = 0.03$, $K_1 = 80$, $K_2 = 100$, and $L = 5$. Moreover, note that the coefficients $\mu^d$, $\sigma^d$, and $\eta^d$ as well as the functions $f^d$ and $g^d$ satisfy Assumptions~\ref{assumption Lip and growth}--\ref{AssXiLq}+\ref{AssEtaMoments}. 

For different dimensions $d \in \mathbb{N}$, we run the algorithm $10$ times either with deterministic neural networks or random neural networks to compute an approximation of $u^d(T,100,\dots,100)$ with $T = 1/2$, $\mathcal{M} = 200$, and $\delta = 0.1$ (for the other parameters $K,J,N \in \mathbb{N}$, we refer to the beginning of Section~\ref{section numerics}). The results are reported in Table~\ref{FigCounterpartyVasicek}, which are compared to the reference solution obtained via the MLP method in \cite{NW2022}.

\begin{table}[h!]
	\begin{minipage}[t][][t]{0.49\textwidth}
		\includegraphics[height = 6.0cm]{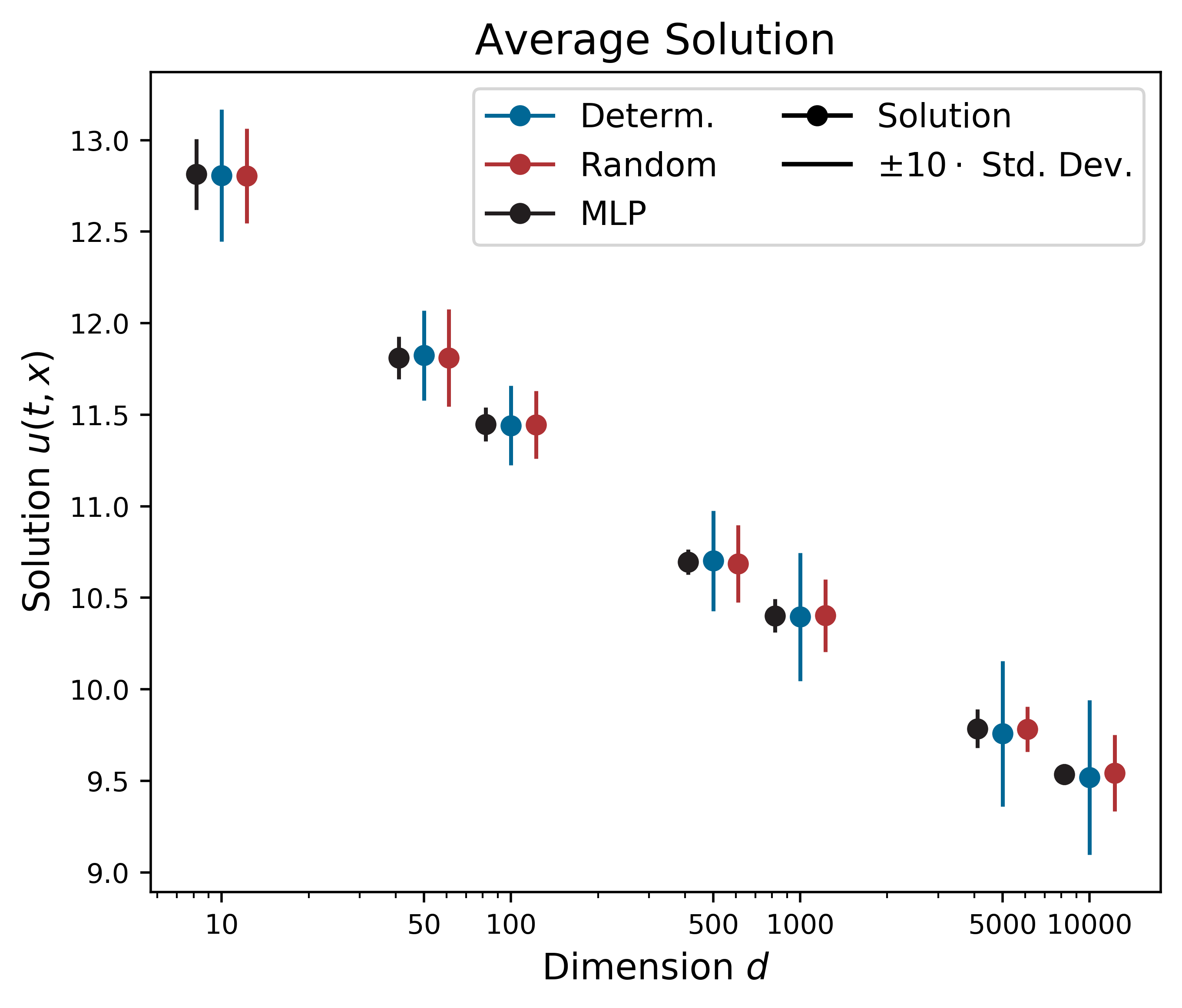}
		\vspace{0.2cm}
	\end{minipage}
	\begin{minipage}[t][][t]{0.49\textwidth}
		\includegraphics[height = 6.0cm]{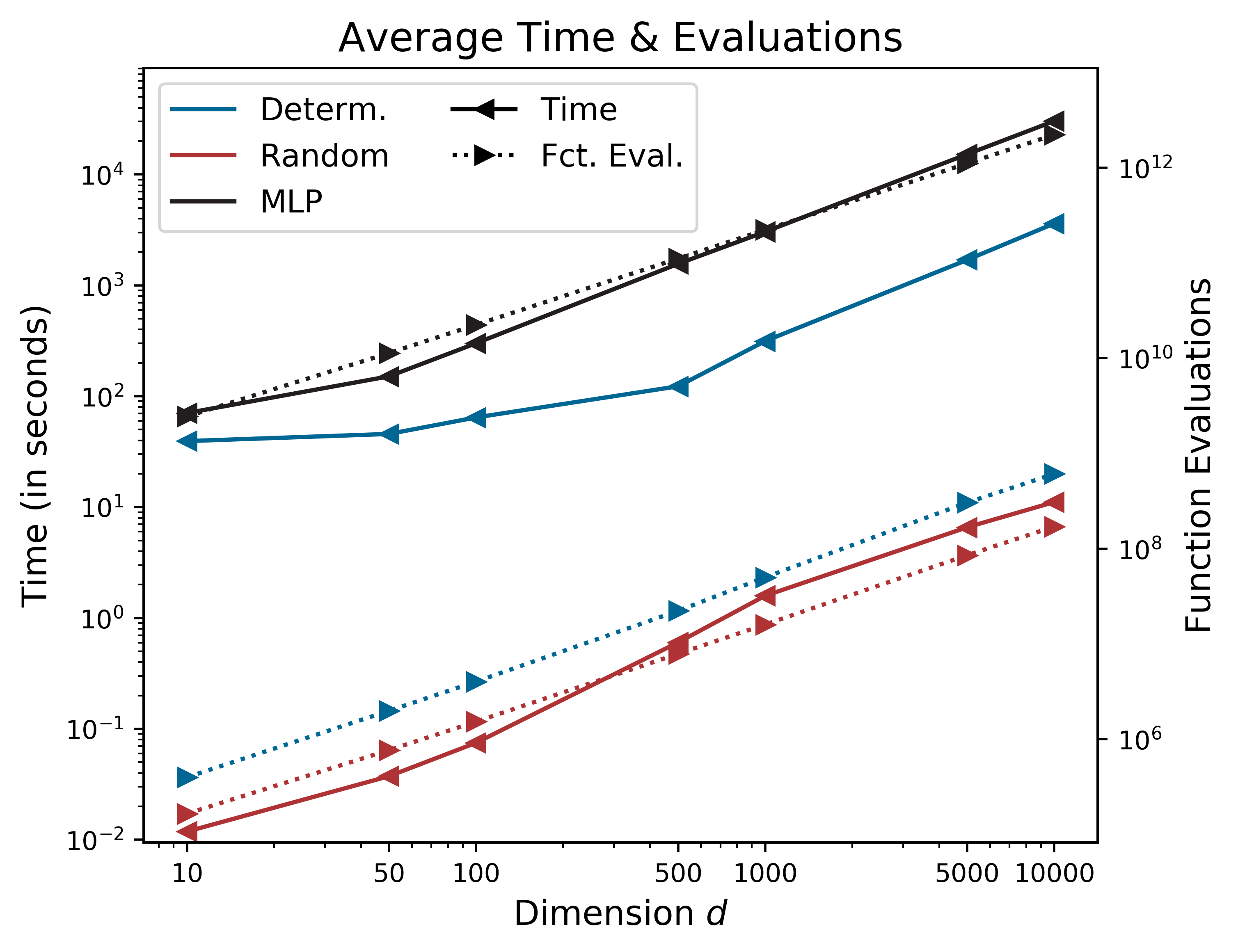}
		\vspace{0.2cm}
	\end{minipage}
	\begin{tabular}{r|R{1.7cm}R{1.7cm}R{1.7cm}|R{1.7cm}R{1.7cm}R{1.7cm}|}
		& \multicolumn{3}{c|}{\textbf{Average Solution}} & \multicolumn{3}{c|}{\textbf{Average Time \& Evaluations}} \\
		$d$ & \textcolor{MidnightBlue}{Determ.} & \textcolor{Maroon}{Random} & MLP & \textcolor{MidnightBlue}{Determ.} & \textcolor{Maroon}{Random} & MLP \\
		\hline
		10 & \textcolor{MidnightBlue}{12.8074} & \textcolor{Maroon}{12.8043} & \textcolor{Black}{12.8130} & \textcolor{MidnightBlue}{39.38} & \textcolor{Maroon}{0.01} & \textcolor{Black}{70.52} \\ 
 & \textcolor{MidnightBlue}{0.0360} & \textcolor{Maroon}{0.0259} & \textcolor{Black}{0.0193} & \textcolor{MidnightBlue}{$3.98 \cdot 10^{5}$} & \textcolor{Maroon}{$1.65 \cdot 10^{5}$} & \textcolor{Black}{$2.45 \cdot 10^{9}$} \\ 
\hline 
50 & \textcolor{MidnightBlue}{11.8245} & \textcolor{Maroon}{11.8108} & \textcolor{Black}{11.8102} & \textcolor{MidnightBlue}{45.69} & \textcolor{Maroon}{0.04} & \textcolor{Black}{151.15} \\ 
 & \textcolor{MidnightBlue}{0.0246} & \textcolor{Maroon}{0.0266} & \textcolor{Black}{0.0115} & \textcolor{MidnightBlue}{$1.99 \cdot 10^{6}$} & \textcolor{Maroon}{$7.67 \cdot 10^{5}$} & \textcolor{Black}{$1.13 \cdot 10^{10}$} \\ 
\hline 
100 & \textcolor{MidnightBlue}{11.4409} & \textcolor{Maroon}{11.4444} & \textcolor{Black}{11.4468} & \textcolor{MidnightBlue}{64.36} & \textcolor{Maroon}{0.07} & \textcolor{Black}{300.37} \\ 
 & \textcolor{MidnightBlue}{0.0217} & \textcolor{Maroon}{0.0185} & \textcolor{Black}{0.0092} & \textcolor{MidnightBlue}{$4.02 \cdot 10^{6}$} & \textcolor{Maroon}{$1.53 \cdot 10^{6}$} & \textcolor{Black}{$2.24 \cdot 10^{10}$} \\ 
\hline 
500 & \textcolor{MidnightBlue}{10.7014} & \textcolor{Maroon}{10.6857} & \textcolor{Black}{10.6960} & \textcolor{MidnightBlue}{122.91} & \textcolor{Maroon}{0.60} & \textcolor{Black}{1563.30} \\ 
 & \textcolor{MidnightBlue}{0.0274} & \textcolor{Maroon}{0.0211} & \textcolor{Black}{0.0069} & \textcolor{MidnightBlue}{$2.23 \cdot 10^{7}$} & \textcolor{Maroon}{$7.88 \cdot 10^{6}$} & \textcolor{Black}{$1.11 \cdot 10^{11}$} \\ 
\hline 
1000 & \textcolor{MidnightBlue}{10.3957} & \textcolor{Maroon}{10.4027} & \textcolor{Black}{10.4021} & \textcolor{MidnightBlue}{314.18} & \textcolor{Maroon}{1.59} & \textcolor{Black}{3039.66} \\ 
 & \textcolor{MidnightBlue}{0.0350} & \textcolor{Maroon}{0.0198} & \textcolor{Black}{0.0092} & \textcolor{MidnightBlue}{$5.00 \cdot 10^{7}$} & \textcolor{Maroon}{$1.60 \cdot 10^{7}$} & \textcolor{Black}{$2.23 \cdot 10^{11}$} \\ 
\hline 
5000 & \textcolor{MidnightBlue}{9.7573} & \textcolor{Maroon}{9.7819} & \textcolor{Black}{9.7854} & \textcolor{MidnightBlue}{1698.46} & \textcolor{Maroon}{6.58} & \textcolor{Black}{15233.95} \\ 
 & \textcolor{MidnightBlue}{0.0397} & \textcolor{Maroon}{0.0123} & \textcolor{Black}{0.0105} & \textcolor{MidnightBlue}{$3.05 \cdot 10^{8}$} & \textcolor{Maroon}{$8.54 \cdot 10^{7}$} & \textcolor{Black}{$1.11 \cdot 10^{12}$} \\ 
\hline 
10000 & \textcolor{MidnightBlue}{9.5182} & \textcolor{Maroon}{9.5419} & \textcolor{Black}{9.5347} & \textcolor{MidnightBlue}{3613.56} & \textcolor{Maroon}{11.11} & \textcolor{Black}{30334.40} \\ 
 & \textcolor{MidnightBlue}{0.0422} & \textcolor{Maroon}{0.0208} & \textcolor{Black}{0.0057} & \textcolor{MidnightBlue}{$6.10 \cdot 10^{8}$} & \textcolor{Maroon}{$1.71 \cdot 10^{8}$} & \textcolor{Black}{$2.22 \cdot 10^{12}$}
	\end{tabular}
	\caption{(Random) deep splitting approximation of the pricing problem under counterparty credit risk with Vasi\v{c}ek jump model \eqref{EqJVasicek} for different $d \in \mathbb{N}$, by either using deterministic neural networks (label ``Determ.'') or random neural networks (label ``Random''). The average solutions over $10$ runs together with their standard deviations (italic font) are compared to the MLP algorithm (label ``MLP''). On the right-hand side, the average running time on a laptop with GPU (upper row, in seconds) and the average number of function evaluations (lower row, in scientific format\textsuperscript{\ref{footnote1}}) are displayed.}
	\label{FigCounterpartyVasicek}
\end{table}

\subsection{Pricing with counterparty credit risk in exponential Variance-Gamma model}
\label{SecCounterpartyExpVG}

In the fourth example, we consider the same pricing problem as in Section~\ref{SecCounterpartyVasicek} consisting of the same nonlinearity $f^d$ and payoff function $g^d$ as in Section~\ref{SecCounterpartyVasicek}, but now with a different underlying process $X^{d,m}$. For this purpose, we consider an exponential Variance-Gamma model (see \cite{Buchmann2017} and also \cite{Madan1998,Madan1990}), i.e.~for every $t \in [0,T]$, we assume that
\begin{equation}
	\label{EqExpVGModel}
	X^{d,m}_{t} = x \odot \exp_d\left( L^{d,m}_t \right), \quad\quad L^{d,m}_t := \mu_0 \mathbf{1}_d t + \sigma_0 W^{d,m}_t + Z^{d,m}_t,
\end{equation}
where $\mu_0 \in \mathbb{R}$ and $\sigma_0 > 0$ are parameters, and where $x \in (0,\infty)^d$ is the initial value. Hereby, $Z^{d,m} := (Z^{d,m}_t)_{t \in [0,\infty)}$ is a $d$-dimensional Madan-Seneta Variance-Gamma process (see \cite[p.~2213]{Buchmann2017} and also \cite{Madan1990}), which is obtained from a $d$-dimensional Brownian motion $B^{d,m} := (B^{d,m}_t)_{t \in [0,\infty)}$ (with zero drift and covariane matrix $\Sigma := \kappa \, \mathbf{I}_d \in \mathbb{R}^{d \times d}$, for some $\kappa > 0$) subordinated by a standard Gamma process $\tau^{d,m} := (\tau^{d,m}_t)_{t \in [0,\infty)}$ with parameter $\alpha > 0$, i.e. $Z^{d,m}_t = B^{d,m}_{\tau^{d,m}_t}$ for all $t \in [0,\infty)$. Then, $Z^{d,m}$ is a pure jump L\'evy process of infinite activity with L\'evy measure $\nu^d(dz) = \frac{(2\alpha)^{d/4+1}}{(2\pi)^{d/2} \kappa^{d/4} \Vert z \Vert^{d/2}} K_{d/2}\left( \sqrt{2\alpha/\kappa} \Vert z \Vert \right) dz$ (see \cite[Equation~2.11]{Buchmann2017}), where $K_{d/2}(\cdot)$ denotes the modified Bessel function of the second kind with parameter $d/2 > 0$ (see \cite[Section~9.6]{Abramowitz1970}). Define the Poisson random measure
\begin{equation*}
	\pi^{d,m}((0,t] \times E) := \#\left\lbrace s \in (0,t]: Z^{d,m}_s - Z^{d,m}_{s-} \in E \right\rbrace, \quad\quad t \in (0,T], \quad E \in \mathcal{B}(\mathbb{R}^d \setminus \lbrace 0 \rbrace),
\end{equation*}
and its compensated Poisson random measure $\tilde{\pi}^{d,m}(dz,dt) := \pi^{d,m}(dz,dt) - \nu^d(dz) \otimes dt$, where $\nu^d(dz) = \frac{(2\alpha)^{d/4+1}}{(2\pi)^{d/2} \kappa^{d/4} \Vert z \Vert^{d/2}} K_{d/2}\left( \sqrt{\frac{2\alpha}{\kappa}} \Vert z \Vert \right) dz$. Then, for every $t \in [0,T]$, we have
\begin{equation*}
	X^{d,m}_{t} = x \odot \exp_d\left( \mu_0 \mathbf{1}_d t + \sigma_0 W^{d,m}_t + \int_0^t \int_{\mathbb{R}^d} z \, \pi^{d,m}(dz,ds) \right).
\end{equation*}
In the following, we now always assume that $\kappa < \alpha/2$. Then, by using that for every $r > 0$ there exists some $C_{d,r} > 0$ such that for every $z \in B_r(0) := \left\lbrace z \in \mathbb{R}^d: \Vert z \Vert \leq r \right\rbrace$ it holds that $\Vert \exp_d(z) - \mathbf{1}_d \Vert \leq C_{d,r} \Vert z \Vert$ and for every $z \in B_r(0)^c := \mathbb{R}^d \setminus B_r(0)$ it holds that $\Vert \exp_d(z) - \mathbf{1}_d \Vert \leq C_{d,r} \exp(\Vert z \Vert)$, the substitution into spherical coordinates (with Jacobi determinant $dz = \frac{2\pi^{d/2} s^{d-1}}{\Gamma(d/2)} ds$, where $\Gamma$ denotes the Gamma function, see \cite[Equation~6.1.1]{Abramowitz1970}), the substitution $t \mapsto \sqrt{2\alpha/\kappa} s$, that $1 \leq e^t$ for any $t \in [0,r]$, that $K_{d/2}(t) \leq e^{r-t} K_{d/2}(r)$ for any $t \in [r,\infty)$ (see \cite[Equation~3.2]{Baricz2010}), that $\int_0^r e^t t^{d/2} K_{d/2}(t) dt < \infty$ (see \cite[Equation~11.3.15]{Abramowitz1970}), and that $\kappa < \alpha/2 < 2\alpha$, we observe that
\begin{equation}
	\label{EqExpVGInt1}
	\begin{aligned}
		& \int_{\mathbb{R}^d} \left\Vert \exp_d(z) - \mathbf{1}_d \right\Vert \, \nu^d(dz) = \int_{B_r(0)} \left\Vert \exp_d(z) - \mathbf{1}_d \right\Vert \, \nu^d(dz) + \int_{B_r(0)^c} \left\Vert \exp_d(z) - \mathbf{1}_d \right\Vert \, \nu^d(dz) \\
		& \quad \leq C_{d,r} \int_{B_r(0)} \frac{(2\alpha)^{\frac{d}{4}+1} \Vert z \Vert}{(2\pi)^\frac{d}{2} \kappa^\frac{d}{4} \Vert z \Vert^\frac{d}{2}} K_\frac{d}{2}\left( \sqrt{\frac{2\alpha}{\kappa}} \Vert z \Vert \right) dz + C_{d,r} \int_{B_r(0)^c} \frac{(2\alpha)^{\frac{d}{4}+1} e^{\Vert z \Vert}}{(2\pi)^\frac{d}{2} \kappa^\frac{d}{4} \Vert z \Vert^\frac{d}{2}} K_\frac{d}{2}\left( \sqrt{\frac{2\alpha}{\kappa}} \Vert z \Vert \right) dz \\
		& \quad \leq C_{d,r} \frac{(2\alpha)^{\frac{d}{4}+1}}{(2\pi)^\frac{d}{2} \kappa^\frac{d}{4}} \int_0^r s^{1-\frac{d}{2}} K_\frac{d}{2}\left( \sqrt{\frac{2\alpha}{\kappa}} s \right) \frac{2\pi^\frac{d}{2} s^{d-1}}{\Gamma(d/2)} ds + C_{d,r} \frac{(2\alpha)^{\frac{d}{4}+1}}{(2\pi)^\frac{d}{2} \kappa^\frac{d}{4}} \int_r^\infty \frac{e^s}{s^\frac{d}{2}} K_\frac{d}{2}\left( \sqrt{\frac{2\alpha}{\kappa}} s \right) \frac{2\pi^\frac{d}{2} s^{d-1}}{\Gamma(d/2)} ds \\
		& \quad \leq C_{d,r} \frac{2 \kappa}{2^\frac{d}{2} \Gamma(d/2)} \int_0^{\sqrt{\frac{2\alpha}{\kappa}} r} t^\frac{d}{2} K_\frac{d}{2}(t) dt + C_{d,r} \frac{2\alpha}{2^\frac{d}{2} \Gamma(d/2)} \int_{\sqrt{\frac{2\alpha}{\kappa}} r}^\infty \exp\left( \sqrt{\frac{\kappa}{2\alpha}} t \right) t^{\frac{d}{2}-1} K_\frac{d}{2}(t) dt \\
		& \quad \leq C_{d,r} \frac{2 \kappa}{2^\frac{d}{2} \Gamma(d/2)} \int_0^{\sqrt{\frac{2\alpha}{\kappa}} r} e^t t^\frac{d}{2} K_\frac{d}{2}(t) dt + C_{d,r} \frac{2\alpha K_{d/2}(r) e^r}{2^\frac{d}{2} \Gamma(d/2)} \int_{\sqrt{\frac{2\alpha}{\kappa}} r}^\infty t^{\frac{d}{2}-1} \exp\left( \left( \sqrt{\frac{\kappa}{2\alpha}} - 1 \right) t \right) dt < \infty.
	\end{aligned}
\end{equation}
Moreover, by using the same arguments as for \eqref{EqExpVGInt1} together with $\int_0^r t^{d/2+1} K_{d/2}(t) dt < \infty$ (see \cite[Equation~11.3.27]{Abramowitz1970}) and $\kappa < \alpha/2$, we conclude that
\begin{equation}
	\label{EqExpVGInt2}
	\begin{aligned}
		& \int_{\mathbb{R}^d} \left\Vert \exp_d(z) - \mathbf{1}_d \right\Vert^2 \, \nu^d(dz) = \int_{B_r(0)} \left\Vert \exp_d(z) - \mathbf{1}_d \right\Vert^2 \, \nu^d(dz) + \int_{B_r(0)^c} \left\Vert \exp_d(z) - \mathbf{1}_d \right\Vert^2 \, \nu^d(dz) \\
		& \quad \leq C_{d,r}^2 \int_{B_r(0)} \frac{(2\alpha)^{\frac{d}{4}+1} \Vert z \Vert^2}{(2\pi)^\frac{d}{2} \kappa^\frac{d}{4} \Vert z \Vert^\frac{d}{2}} K_\frac{d}{2}\left( \sqrt{\frac{2\alpha}{\kappa}} \Vert z \Vert \right) dz + C_{d,r} \int_{B_r(0)^c} \frac{(2\alpha)^{\frac{d}{4}+1} e^{2 \Vert z \Vert}}{(2\pi)^\frac{d}{2} \kappa^\frac{d}{4} \Vert z \Vert^\frac{d}{2}} K_\frac{d}{2}\left( \sqrt{\frac{2\alpha}{\kappa}} \Vert z \Vert \right) dz \\
		& \quad \leq C_{d,r}^2 \frac{2 \kappa}{2^\frac{d}{2} \Gamma(d/2)} \int_0^{\sqrt{\frac{2\alpha}{\kappa}} r} t^{\frac{d}{2}+1} K_\frac{d}{2}(t) dt + C_{d,r}^2 \frac{2\alpha}{2^\frac{d}{2} \Gamma(d/2)} \int_{\sqrt{\frac{2\alpha}{\kappa}} r}^\infty \exp\left( 2 \sqrt{\frac{\kappa}{2\alpha}} t \right) t^{\frac{d}{2}-1} K_\frac{d}{2}(t) dt \\
		& \quad \leq C_{d,r}^2 \frac{2 \kappa}{2^\frac{d}{2} \Gamma(d/2)} \int_0^{\sqrt{\frac{2\alpha}{\kappa}} r} t^{\frac{d}{2}+1} K_\frac{d}{2}(t) dt + C_{d,r}^2 \frac{2\alpha K_{d/2}(r) e^r}{2^\frac{d}{2} \Gamma(d/2)} \int_{\sqrt{\frac{2\alpha}{\kappa}} r}^\infty t^{\frac{d}{2}-1} \exp\left( \left( \sqrt{\frac{\kappa}{\alpha/2}} - 1 \right) t \right) dt < \infty.
	\end{aligned}
\end{equation} 
Hence, by using the localizing sequence $(\tau_k)_{k \in \mathbb{N}}$ of stopping times $\tau_k := \inf\big\lbrace t \in [0,T]: \big\Vert X^{d,m}_{s-} \big\Vert \geq k \big\rbrace$ $\wedge T$, $k \in \mathbb{N}$, we observe that for every $k \in \mathbb{N}$ the stopped integrand $[0,\infty) \times \mathbb{R}^d \ni (s,z) \mapsto X^{d,m}_{s- \wedge \tau_k} \odot \left( \exp_d(z) - \mathbf{1}_d \right) \in \mathbb{R}^d$ is $\mathbb{F}$-predictable, while the inequalities \eqref{EqExpVGInt1} and \eqref{EqExpVGInt2} ensure for every $n \in \lbrace 1,2 \rbrace$ and $t \in [0,\infty)$ that
\begin{equation*}
	\begin{aligned}
		& \mathbb{E}\left[ \int_0^{t \wedge \tau_k} \int_{\mathbb{R}^d} \left\Vert X^{d,m}_{s-} \odot \left( \exp_d(z) - \mathbf{1}_d \right) \right\Vert^n \nu^d(dz) ds \right] \\
		& \quad\quad \leq \mathbb{E}\left[ \int_0^t \int_{\mathbb{R}^d} \left\Vert X^{d,m}_{s- \wedge \tau_k} \right\Vert^n \left\Vert \exp_d(z) - \mathbf{1}_d \right\Vert^n \nu^d(dz) ds \right] \\
		& \quad\quad \leq k^n t \int_{\mathbb{R}^d} \left\Vert \exp_d(z) - \mathbf{1}_d \right\Vert^n \, \nu^d(dz) < \infty.
	\end{aligned}
\end{equation*}
Therefore, \cite[Equation~3.8, p.~62]{IW2011} (see also \cite[Equation~1.14]{rong2006theory}) show for every $t \in [0,T]$ that
\begin{equation}
	\label{EqVGItoSplit}
	\begin{aligned}
		& \int_0^t \int_{\mathbb{R}^d} X^{d,m}_{s-} \odot \left( \exp_d(z) - \mathbf{1}_d \right) \, \pi^{d,m}(dz,ds) \\
		& \quad = \int_0^t \int_{\mathbb{R}^d} X^{d,m}_{s-} \odot \left( \exp_d(z) - \mathbf{1}_d \right) \, \tilde{\pi}^{d,m}(dz,ds) + \int_0^t \int_{\mathbb{R}^d} X^{d,m}_{s-} \odot \left( \exp_d(z) - \mathbf{1}_d \right) \, \nu^d(dz) \, ds.
	\end{aligned}
\end{equation}
Thus, we can apply Ito's formula in \cite[Theorem~3.1]{GW2021}, use \eqref{EqVGItoSplit}, and define the vector-valued integral $I_\nu := \int_{\mathbb{R}^d} \left( \exp_d(z) - \mathbf{1}_d \right) \nu^d(dz) \in \mathbb{R}^d$ (which exists by \eqref{EqExpVGInt1}) to conclude for every $t \in [0,T]$ that
\begin{equation}
	\label{EqVGIto}
	\begin{aligned}
		dX^{d,m}_{t} & = \left( \mu_0 + \frac{\sigma_0^2}{2} \right) X^{d,m}_{t-} \, dt + \sigma_0 \diag(X^{d,m}_{t-}) \, dW^{d,m}_{t} + \int_{\mathbb{R}^d} X^{d,m}_{t-} \odot \left( \exp_d(z) - \mathbf{1}_d \right) \, \pi^{d,m}(dz,dt) \\
		& = \left( \mu_0 + \frac{\sigma_0^2}{2} \right) X^{d,m}_{t-} \, dt + \sigma_0 \diag(X^{d,m}_{t-}) \, dW^{d,m}_{t} + \int_{\mathbb{R}^d} X^{d,m}_{t-} \odot \left( \exp_d(z) - \mathbf{1}_d \right) \, \tilde{\pi}^{d,m}(dz,dt) \\
		& \quad\quad + \int_{\mathbb{R}^d} X^{d,m}_{t-} \odot \left( \exp_d(z) - \mathbf{1}_d \right) \, \nu^d(dz) \\
		& = \left( \left( \mu_0 + \frac{\sigma_0^2}{2} \right) \mathbf{1}_d + I_\nu \right) \odot X^{d,m}_{t-} \, dt + \sigma_0 \diag(X^{d,m}_{t-}) \, dW^{d,m}_{t} \\
		& \quad\quad + \int_{\mathbb{R}^d} X^{d,m}_{t-} \odot \left( \exp_d(z) - \mathbf{1}_d \right) \, \tilde{\pi}^{d,m}(dz,dt).
	\end{aligned}
\end{equation}
Altogether, if we assume that $\kappa < \alpha/2$, the stochastic process $(X^{d,0,x}_t)_{t \in [0,T]} := (X^{d,m}_t)_{t \in [0,T]}$ is of the form \eqref{SDE deri} with $\mu^d(s,x) = \left( \mu_0 \mathbf{1}_d + I_\nu \right) \odot x$ (where $I_\nu := \int_{\mathbb{R}^d} \left( \exp_d(z) - \mathbf{1}_d \right) \nu^d(dz) \in \mathbb{R}^d$), $\sigma^d(s,x) = \sigma_0 \diag(x)$, $\eta^d_s(x,z) = x \odot \left( \exp_d(z) - \mathbf{1}_d \right)$, and $\nu^d(dz) = \frac{(2\alpha)^{d/4+1}}{(2\pi)^{d/2} \kappa^{d/4} \Vert z \Vert^{d/2}} K_{d/2}(\sqrt{2\alpha/\kappa} \, \Vert z \Vert) dz$. 

Now, we consider the same pricing problem as in Section~\ref{SecCounterpartyVasicek} consisting of the nonlinearity $f^d(t,x,v) = -\zeta \min(v,0)$ and payoff function $g^d(x) = \max\left( \min_{i=1,\dots,d} x_i - K_1, 0 \right) - \max\left( \min_{i=1,\dots,d} x_i - K_2, 0 \right) - L$, but now with the exponential Variance-Gamma model \eqref{EqExpVGModel} instead of the Vasi\v{c}ek jump model. For $f^d$ and $g^d$, we use the same parameters as in Section~\ref{SecCounterpartyVasicek}, whereas for the underlying process $X^{d,m}$, we choose $\mu_0 = -0.0001$, $\sigma_0 = 0.01$, $\alpha = 0.1$, and $\kappa = 0.0001$. For different dimensions $d \in \mathbb{N}$, we run the algorithm $10$ times either with deterministic neural networks or random neural networks to compute an approximation of $u^d(T,100,\dots,100)$ with $T = 1/2$, $\mathcal{M} = 200$, and $\delta = 0.1$ (for the other parameters $K,J,N \in \mathbb{N}$, we refer to the beginning of Section~\ref{section numerics}). The results are reported in Table~\ref{FigCounterpartyExpVG}, which are compared to the reference solution obtained via the MLP method in \cite{NW2022}.

\begin{table}[h!]
	\begin{minipage}[t][][t]{0.49\textwidth}
		\includegraphics[height = 6.0cm]{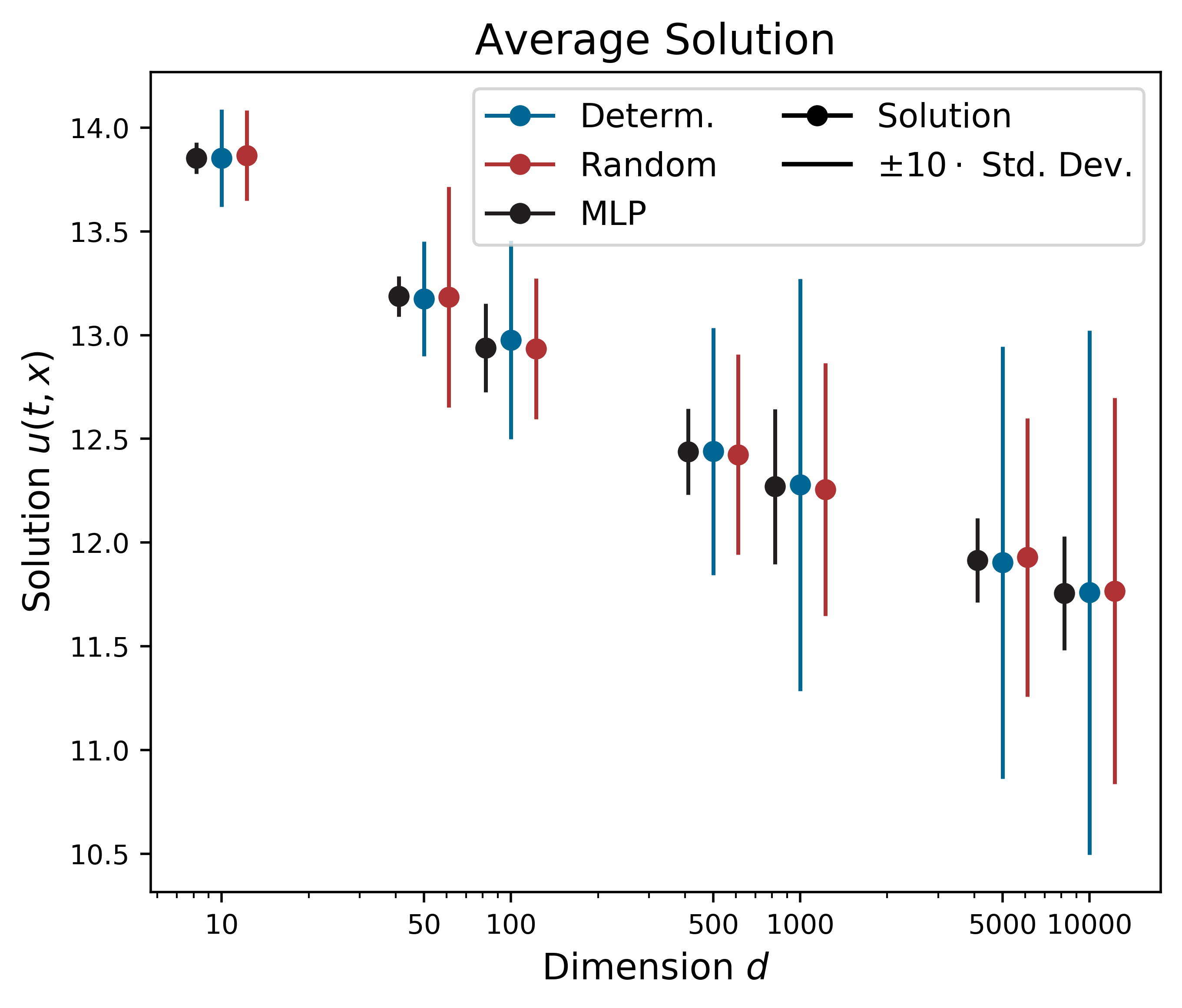}
		\vspace{0.1cm}
	\end{minipage}
	\begin{minipage}[t][][t]{0.49\textwidth}
		\includegraphics[height = 6.0cm]{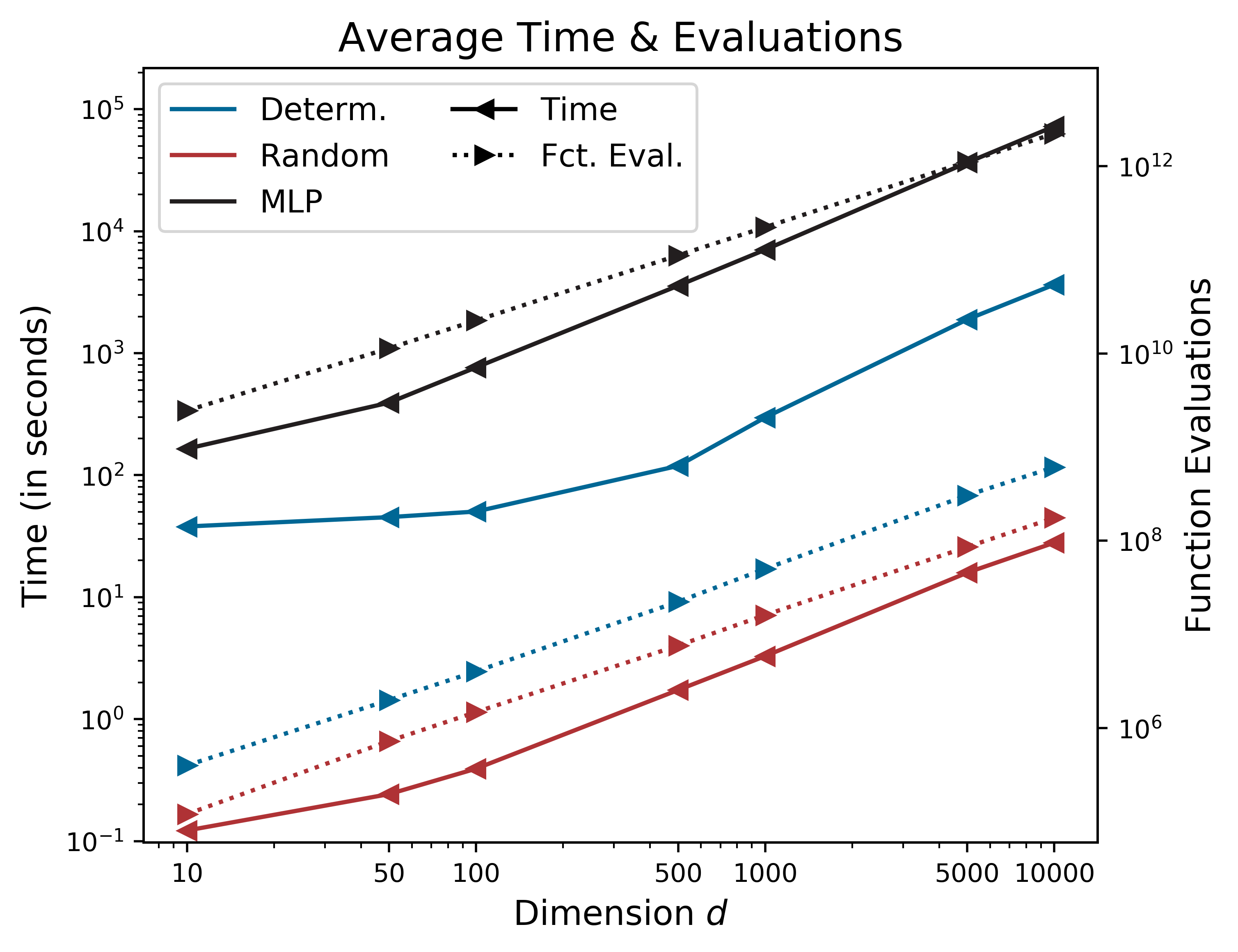}
		\vspace{0.1cm}
	\end{minipage}
	\begin{tabular}{r|R{1.7cm}R{1.7cm}R{1.7cm}|R{1.7cm}R{1.7cm}R{1.7cm}|}
		& \multicolumn{3}{c|}{\textbf{Average Solution}} & \multicolumn{3}{c|}{\textbf{Average Time \& Evaluations}} \\
		$d$ & \textcolor{MidnightBlue}{Determ.} & \textcolor{Maroon}{Random} & MLP & \textcolor{MidnightBlue}{Determ.} & \textcolor{Maroon}{Random} & MLP \\
		\hline
		10 & \textcolor{MidnightBlue}{13.8538} & \textcolor{Maroon}{13.8654} & \textcolor{Black}{13.8536} & \textcolor{MidnightBlue}{37.96} & \textcolor{Maroon}{0.12} & \textcolor{Black}{164.68} \\ 
 & \textcolor{MidnightBlue}{0.0234} & \textcolor{Maroon}{0.0218} & \textcolor{Black}{0.0076} & \textcolor{MidnightBlue}{$3.98 \cdot 10^{5}$} & \textcolor{Maroon}{$1.20 \cdot 10^{5}$} & \textcolor{Black}{$2.45 \cdot 10^{9}$} \\ 
\hline 
50 & \textcolor{MidnightBlue}{13.1757} & \textcolor{Maroon}{13.1835} & \textcolor{Black}{13.1868} & \textcolor{MidnightBlue}{45.28} & \textcolor{Maroon}{0.24} & \textcolor{Black}{392.89} \\ 
 & \textcolor{MidnightBlue}{0.0276} & \textcolor{Maroon}{0.0532} & \textcolor{Black}{0.0098} & \textcolor{MidnightBlue}{$1.99 \cdot 10^{6}$} & \textcolor{Maroon}{$7.25 \cdot 10^{5}$} & \textcolor{Black}{$1.13 \cdot 10^{10}$} \\ 
\hline 
100 & \textcolor{MidnightBlue}{12.9772} & \textcolor{Maroon}{12.9341} & \textcolor{Black}{12.9387} & \textcolor{MidnightBlue}{50.33} & \textcolor{Maroon}{0.39} & \textcolor{Black}{764.19} \\ 
 & \textcolor{MidnightBlue}{0.0478} & \textcolor{Maroon}{0.0339} & \textcolor{Black}{0.0214} & \textcolor{MidnightBlue}{$4.02 \cdot 10^{6}$} & \textcolor{Maroon}{$1.48 \cdot 10^{6}$} & \textcolor{Black}{$2.25 \cdot 10^{10}$} \\ 
\hline 
500 & \textcolor{MidnightBlue}{12.4399} & \textcolor{Maroon}{12.4241} & \textcolor{Black}{12.4385} & \textcolor{MidnightBlue}{119.17} & \textcolor{Maroon}{1.73} & \textcolor{Black}{3573.26} \\ 
 & \textcolor{MidnightBlue}{0.0596} & \textcolor{Maroon}{0.0483} & \textcolor{Black}{0.0207} & \textcolor{MidnightBlue}{$2.23 \cdot 10^{7}$} & \textcolor{Maroon}{$7.58 \cdot 10^{6}$} & \textcolor{Black}{$1.11 \cdot 10^{11}$} \\ 
\hline 
1000 & \textcolor{MidnightBlue}{12.2784} & \textcolor{Maroon}{12.2563} & \textcolor{Black}{12.2705} & \textcolor{MidnightBlue}{296.88} & \textcolor{Maroon}{3.29} & \textcolor{Black}{7047.26} \\ 
 & \textcolor{MidnightBlue}{0.0993} & \textcolor{Maroon}{0.0610} & \textcolor{Black}{0.0373} & \textcolor{MidnightBlue}{$5.00 \cdot 10^{7}$} & \textcolor{Maroon}{$1.60 \cdot 10^{7}$} & \textcolor{Black}{$2.23 \cdot 10^{11}$} \\ 
\hline 
5000 & \textcolor{MidnightBlue}{11.9042} & \textcolor{Maroon}{11.9291} & \textcolor{Black}{11.9159} & \textcolor{MidnightBlue}{1889.58} & \textcolor{Maroon}{15.88} & \textcolor{Black}{36502.57} \\ 
 & \textcolor{MidnightBlue}{0.1041} & \textcolor{Maroon}{0.0671} & \textcolor{Black}{0.0203} & \textcolor{MidnightBlue}{$3.05 \cdot 10^{8}$} & \textcolor{Maroon}{$8.57 \cdot 10^{7}$} & \textcolor{Black}{$1.11 \cdot 10^{12}$} \\ 
\hline 
10000 & \textcolor{MidnightBlue}{11.7593} & \textcolor{Maroon}{11.7670} & \textcolor{Black}{11.7559} & \textcolor{MidnightBlue}{3649.25} & \textcolor{Maroon}{27.94} & \textcolor{Black}{72363.96} \\ 
 & \textcolor{MidnightBlue}{0.1263} & \textcolor{Maroon}{0.0931} & \textcolor{Black}{0.0275} & \textcolor{MidnightBlue}{$6.10 \cdot 10^{8}$} & \textcolor{Maroon}{$1.75 \cdot 10^{8}$} & \textcolor{Black}{$2.22 \cdot 10^{12}$}
	\end{tabular}
	\caption{(Random) deep splitting approximation of the pricing problem under counterparty credit risk with exponential Variance-Gamma model \eqref{EqExpVGModel} for different $d \in \mathbb{N}$, by either using deterministic neural networks (label ``Determ.'') or random neural networks (label ``Random''). The average solutions over $10$ runs together with their standard deviations (italic font) are compared to the MLP algorithm (label ``MLP''). On the right-hand side, the average running time on a laptop with GPU (upper row, in seconds) and the average number of function evaluations (lower row, in scientific format\textsuperscript{\ref{footnote1}}) are displayed.}
	\label{FigCounterpartyExpVG}
\end{table}

\subsection{Conclusion}

The numerical experiments in Section~\ref{SecDefaultBS}-\ref{SecCounterpartyExpVG} show that different nonlinear PDEs and PIDEs can be learned by the deep splitting method, either with deterministic neural networks or random neural networks. However, the random deep splitting algorithm significantly outperforms the original version in terms of computational efficiency. In particular, in high dimensions, the classical deep splitting algorithm with deterministic networks needs a few hours to solve a given PDE or PIDE numerically, whereas its randomized version returns a solution within a couple of seconds. This remarkable speed advantage stems from the fact that random neural networks are much faster to train than deterministic networks since only the linear readout needs to be trained, which can be performed efficiently, e.g., by the least squares method. For a detailed comparison of the empirical performance between deterministic neural networks and random neural networks, we refer to \cite[Section~6]{NeufeldSchmocker2023}.

Moreover, the deep splitting method (deterministic and random) outperforms the MLP algorithm (see \cite{NW2022}) in case of PIDEs with jumps. The reason therefore lies in the high number of simulations in the MLP algorithm, in which the jumps of every simulated path of the underlying process have to be randomly generated. The latter can be very time-consuming on an average computer.

Altogether, we conclude that the random deep splitting algorithm can solve nonlinear PDEs and PIDEs in dimensions up to 10'000 within a few seconds.

\pagebreak

\section{Proof of results}
\label{section proof}

\subsection{Auxiliary results for SDEs and PIDEs}

In this section, we present some lemmas which will be used to prove our
main results, Theorems \ref{ThmMain} and \ref{ThmMainRN},
in Section \ref{section proof main result}.
\begin{lemma}                                               \label{lemma Lyaponov}
Let Assumptions \ref{assumption Lip and growth} and \ref{assumption pointwise} hold.
Let $d\in\bN$, 
$F\in C^2(\bR^d,[0,\infty))$, $\alpha:[0,T]\to[0,\infty)$ be a Borel
function with $\int_0^T\alpha(t)\,dt<\infty$,
$\tau:\Omega\to[0,T]$ be a $\bF$-stopping time, and 
$X:[0,T]\times\Omega\to\bR^d$ be a $\bF$-adapted 
c\`adl\`ag stochastic process satisfying
$\bE[F(X_0)]<\infty$ and, $\mathbb{P}$-a.s., for all $t\in[0,T]$
$$
X_t=X_0+\int_0^t\mu^d(s,X_{s-})\,ds+\int_0^t\sigma^d(s,X_{s-})\,dW^d_s
+\int_0^t\int_{\bR^d}\eta^d_s(X_{s-},z)\,\tilde{\pi}^d(dz,ds).
$$
Moreover, assume that it holds $\mathbb{P}$-a.s.~for all $t\in[0,T]$ that
\begin{align}
&
\<\nabla F(X_{t\wedge \tau}),\mu^d(t\wedge\tau,X_{t\wedge\tau})\>
+\frac{1}{2}\operatorname{Trace}
\big(\sigma^d(t\wedge\tau,X_{t\wedge\tau})
\big[\sigma^d(t\wedge\tau,X_{t\wedge\tau})\big]^T
\operatorname{Hess}_xF(X_{t\wedge\tau})\big)
\nonumber\\
&
+\int_{\bR^d}\Big(F(X_{t\wedge\tau}+\eta^d_{t\wedge\tau}(X_{t\wedge \tau},z))
-F(X_{t\wedge\tau})
-\<\nabla F(X_{t\wedge \tau}),\eta^d_{t\wedge\tau}(X_{t\wedge\tau},z)\>
\Big)\nu(dz)
\leq \alpha(t\wedge\tau)F(X_{t\wedge\tau}).
\label{est operator SDE}
\end{align}
Then we have
\begin{equation}
\label{est exp}
\bE[F(X_\tau)]\leq e^{\int_0^T\alpha(s)\,ds}\bE[F(X_0)].
\end{equation}
\end{lemma}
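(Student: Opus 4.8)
The natural approach is to apply Itô's formula to the process $t \mapsto e^{-\int_0^{t\wedge\tau}\alpha(s)\,ds}F(X_{t\wedge\tau})$ and show it is a supermartingale, then evaluate at $t=T$ and take expectations. Concretely, set $Y_t := e^{-A_{t\wedge\tau}}F(X_{t\wedge\tau})$ where $A_t := \int_0^t \alpha(s)\,ds$. First I would apply the Itô formula for semimartingales with jumps (as quoted earlier from \cite{GW2021} or \cite[Theorem~3.1]{GW2021}) to $F(X_{t\wedge\tau})$, using the SDE representation of $X$; the stopping at $\tau$ just replaces all integrands by their values at $s\wedge\tau$ and restricts the integrators to $[0,t\wedge\tau]$. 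This produces a drift term equal to the left-hand side of the operator inequality \eqref{est operator SDE}, a Brownian stochastic integral $\int_0^{t\wedge\tau}\langle\nabla F(X_{s-}),\sigma^d(s,X_{s-})\,dW^d_s\rangle$, and a compensated Poisson integral $\int_0^{t\wedge\tau}\int_{\bR^d}\big(F(X_{s-}+\eta^d_s(X_{s-},z))-F(X_{s-})\big)\,\tilde\pi^d(dz,ds)$.

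Next I would apply the integration-by-parts/product formula to $Y_t = e^{-A_{t\wedge\tau}}F(X_{t\wedge\tau})$. Since $t\mapsto e^{-A_{t\wedge\tau}}$ is continuous and of finite variation with derivative $-\alpha(t\wedge\tau)e^{-A_{t\wedge\tau}}$ on $\{t<\tau\}$, the product rule gives
\[
dY_t = e^{-A_{t\wedge\tau}}\,d\big(F(X_{t\wedge\tau})\big) - \alpha(t\wedge\tau)e^{-A_{t\wedge\tau}}F(X_{t\wedge\tau})\,dt,
\]
so the finite-variation (drift) part of $Y$ is $e^{-A_{t\wedge\tau}}\big[(\text{drift of }F(X_{t\wedge\tau})) - \alpha(t\wedge\tau)F(X_{t\wedge\tau})\big]\,dt$, which by hypothesis \eqref{est operator SDE} is $\leq 0$. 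Hence $Y_t = Y_0 + M_t + (\text{nonincreasing FV part})$, where $M$ is the sum of the two stochastic integrals above, weighted by the bounded factor $e^{-A_{s\wedge\tau}}\in(0,1]$.

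The remaining point is to argue that $M$ is a genuine martingale (or at least that $\bE[M_T]\le 0$, e.g.\ via a localization and Fatou argument), so that $\bE[Y_T]\le \bE[Y_0] = \bE[F(X_0)]$. Since $F\ge 0$ and $A_{T\wedge\tau}\le A_T = \int_0^T\alpha(s)\,ds < \infty$, we have $F(X_\tau) = F(X_{T\wedge\tau}) = e^{A_{T\wedge\tau}}Y_T \le e^{\int_0^T\alpha(s)\,ds}Y_T$, and taking expectations yields exactly \eqref{est exp}. I expect the main obstacle to be the integrability needed to justify that the local martingale $M$ has nonpositive expectation at time $T$: a priori one only knows $\bE[F(X_0)]<\infty$, not $\bE[\sup_{t}F(X_t)]<\infty$. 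The standard fix is to introduce a localizing sequence of stopping times $\tau_k\uparrow T$ (reducing $M$ to a true martingale), conclude $\bE[Y_{T\wedge\tau_k}]\le\bE[F(X_0)]$, and then pass to the limit via Fatou's lemma using $Y\ge 0$; one should check that the operator inequality \eqref{est operator SDE} together with nonnegativity of $F$ makes the drift part monotone so that $Y_{t\wedge\tau_k}$ still has the supermartingale-type decomposition after localization. Care is also needed that the jump integral term appearing in the Itô expansion matches the $\nu(dz)$-integral in \eqref{est operator SDE} (i.e.\ the compensator bookkeeping is consistent), but this is routine given the SDE is written with the compensated measure $\tilde\pi^d$.
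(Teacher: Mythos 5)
Your plan is correct and leads to the result, but you should be aware that it differs slightly from the paper's proof in how the exponential factor is produced. You apply the product rule to the discounted process $Y_t := e^{-\int_0^{t\wedge\tau}\alpha(s)\,ds}F(X_{t\wedge\tau})$ and then use that the drift of $Y$ is pathwise nonpositive (which follows from \eqref{est operator SDE}) to conclude $Y$ is a local supermartingale, then localize and use Fatou. The paper instead applies It\^{o}'s formula directly to $F(X_{t\wedge\tau_n})$ with a localizing sequence $\tau_n \le \tau$, takes expectations so the stochastic integrals drop out, uses \eqref{est operator SDE} to obtain the integral inequality $\bE[F(X_{t\wedge\tau_n})]\le\bE[F(X_0)]+\int_0^t\alpha(s)\bE[F(X_{s\wedge\tau_n})]\,ds$, and then closes with Gr\"onwall's lemma before applying Fatou. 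These two bookkeeping devices (exponential discounting + supermartingale property versus Gr\"onwall after taking expectations) are mathematically equivalent, and the core technical steps (It\^{o}'s formula with jumps via \cite[Theorem 3.1]{GW2021}, a localizing sequence controlling both the sup of $\|X\|$ and the stochastic-integral integrands, verification that $\bE[F(X_{t\wedge\tau_n})]<\infty$ using the bounded-jump estimate from Assumption~\ref{assumption pointwise}, and Fatou to pass to the limit) are identical.

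Two small points worth tightening if you write this up. First, your formula for $dY_t$ containing the term $-\alpha(t\wedge\tau)e^{-A_{t\wedge\tau}}F(X_{t\wedge\tau})\,dt$ is only valid on $\{t<\tau\}$, since for $t>\tau$ the map $t\mapsto A_{t\wedge\tau}$ is constant; the drift of $Y$ vanishes there, so the sign conclusion is unaffected, but the displayed differential should carry an indicator $\mathds{1}_{\{t<\tau\}}$. Second, in the localization-and-Fatou step you should choose the $\tau_k$ explicitly (e.g.\ exit times of $\|X\|$ and of $F(X)$ from $[0,k]$, together with cutoffs on the quadratic-variation and jump-integral integrands), so that $\bE[F(X_{t\wedge\tau_k})]<\infty$ can be verified via Assumption~\ref{assumption pointwise} as the paper does; otherwise the application of the supermartingale/Gr\"onwall step is not justified because the expectation might be $+\infty$.
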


\begin{proof}
The proof of this lemma is analogous to that of \cite[Lemma 2.2]{CHJ2021}.
We first define a sequence of $\bF$-stopping times $\{\tau_n\}_{n=1}^\infty$
by
\begin{align*}
\tau_n:=\inf\bigg\{
&
t\in[0,T]:
\sup_{s\in[0,t]}\|X_s\|+
\sup_{s\in[0,t]}F(X_s)+\int_0^t\sum_{i,j=1}^d
\Big|\frac{\partial}{\partial x_i}F(X_s)\sigma^{d,ij}(s,X_s)
\Big|^2\,ds
\\
&
+\int_0^t\int_{\bR^d}\big|\<\nabla F(X_s),\eta^d_s(X_s,z)\>\big|^2\,\nu(dz)\,ds
\geq n\bigg\}\wedge \tau
\end{align*}
for every $n\in\bN$. Then applying It\^o's formula (see, e.g., \cite[Theorem 3.1]{GW2021})
to $F(X_t)$ yields that $\mathbb{P}$-a.s.~for all $t\in[0,T]$
and $n\in\bN$
\begin{align*}
&
F(X_{t\wedge\tau_n})
\\
&
=
F(X_0)
+\int_0^{t\wedge\tau_n}\Big[\<\nabla F(X_s),\mu^d(s,X_s)\>
+\frac{1}{2}\operatorname{Trace}\big(\sigma^d(s,X_s)[\sigma^d(s,X_s)]^T
\operatorname{Hess}_xF(X_s)\big)\Big]\,ds
\\
& \quad
+\int_0^{t\wedge\tau_n}\sum_{i,j=1}^d\frac{\partial}{\partial x_i}F(X_{s-})
\sigma^{d,ij}(s,X_{s-})\,dW^{d,j}_s
+\int_0^{t\wedge \tau_n}\int_{\bR^d}\<\nabla F(X_{s-}),\eta^d_s(X_{s-},z))\>
\,\tilde{\pi}^d(dz,ds)
\\
& \quad
+\int_0^{t\wedge \tau_n}\int_{\bR^d}\left[
F(X_{s-}+\eta^d_s(X_{s-},z))-F(X_{s-})
-\<\nabla F(X_{s-}),\eta^d_s(X_{s-},z)\>
\right]\pi^d(dz,ds).
\end{align*}
Taking expectations to the above equation shows for all $t\in[0,T]$ 
and $n\in\bN$ that
\begin{align*}
&
\bE[F(X_{t\wedge \tau_n})]
\\
&
=\bE[F(X_0)]
+\bE\Bigg[\int_0^{t\wedge\tau_n}\Big(
\<\nabla F(X_s),\mu^d(s,X_s)\>
+\frac{1}{2}\operatorname{Trace}\big(\sigma^d(s,X_s)[\sigma^d(s,X_s)]^T
\operatorname{Hess}_xF(X_s)\big)
\\
& \quad
+\int_{\bR^d}\left[
F(X_{s}+\eta^d_s(X_{s},z))-F(X_{s})
-\<\nabla F(X_{s}),\eta^d_s(X_{s},z)\>
\right]\nu^d(dz)
\Big)\,ds
\Bigg].
\end{align*}
Therefore, by \eqref{est operator SDE} and the fact that
$\tau_n(\omega)\leq \tau(\omega)$ for all $n\in\bN$ 
and $\omega\in\Omega$,
we have for all $t\in[0,T]$ and $n\in\bN$ that 
\begin{align}
\bE[F(X_{t\wedge\tau_n})]
&
\leq \bE[F(X_0)]
+\bE\left[\int_0^{t\wedge\tau_n}\alpha(s)F(X_s)\,ds\right]
\leq \bE[F(X_0)]+\int_0^t\alpha(s)\bE[F(X_{s\wedge\tau_n})]\,ds.
\label{est alpha}
\end{align}
Furthermore, by Assumption \ref{assumption pointwise} 
we notice for all $n\in\bN$ and $\omega\in\Omega$ that
$$
\big\|X_{\tau_n}-X_{\tau_n-}\big\|
=\left\|\int_{\bR^d}\eta^d_{\tau_n}(X_{\tau_n},z)\,\pi^d(dz,\{\tau_n\})\right\| 
\leq Ld^p\int_{\bR^d}1\wedge\|z\|^2\,\pi^d(dz,\{\tau_n\})
\leq Ld^p.
$$
This implies for all $t\in[0,T]$ and $n\in\bN$ that
$$
\bE[F(X_{t\wedge\tau_n})]\leq n+\sup_{\|x\|\leq n+C_d}F(x)<\infty.
$$
Hence, applying \eqref{est alpha} and Gr\"onwall's lemma yields
for all $t\in[0,T]$ and $n\in\bN$ that
$$
\bE[F(X_{t\wedge\tau_n})]\leq e^{\int_0^t\alpha(s)\,ds}\bE[F(X_0)].
$$
This together with Fatou's lemma and by choosing $t=T$ imply \eqref{est exp}.
Therefore, the proof of this lemma is completed.
\end{proof}

\begin{corollary}                                  \label{corollary Lyaponov}
	Let Assumptions \ref{assumption Lip and growth} and \ref{assumption pointwise} hold.
	For every $d \in \bN$, let 
	$\xi^{d,0}:\Omega\to\bR^d$ be an $\mathcal{F}_0/\cB(\bR^d)$-measurable random variable such that $\bE\left[\big\|\xi^{d,0}\big\|^2\right]<\infty$, and let
	$(X^{d,0}_t)_{t\in[0,T]}: 
	[0,T]\times\Omega\to \bR^d$ be the stochastic process defined in \eqref{SDE d}.
	Then it holds for all $d\in\bN$ and $t\in[0,T]$ that
	\begin{equation}                                         \label{est SDE 0 x}
		\bE\left[d^p+\big\|X^{d,0}_t\big\|^2\right]
		\leq e^{2(L^{1/2}+L)t}\left(d^p+\bE\left[\big\|\xi^{d,0}\big\|^2\right]\right).
	\end{equation}
\end{corollary}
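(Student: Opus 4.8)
The plan is to apply Lemma~\ref{lemma Lyaponov} with the Lyapunov function $F(x):=d^p+\|x\|^2$, the process $X^{d,0}$ solving \eqref{SDE d} (with $m=0$), a constant rate $\alpha$, and a deterministic stopping time. First I would record the elementary facts that $F\in C^2(\bR^d,[0,\infty))$ with $\nabla F(x)=2x$ and $\operatorname{Hess}_x F(x)=2I_d$, and that for all $x,\eta\in\bR^d$ one has the exact second-order identity $F(x+\eta)-F(x)-\langle\nabla F(x),\eta\rangle=\|x+\eta\|^2-\|x\|^2-2\langle x,\eta\rangle=\|\eta\|^2$. Plugging these into the left-hand side of \eqref{est operator SDE} (for a deterministic time, so that $t\wedge\tau=t$ and $X_{t\wedge\tau}$ is replaced by an arbitrary $x\in\bR^d$), the quantity to be controlled becomes
\[
2\langle x,\mu^d(t,x)\rangle+\|\sigma^d(t,x)\|_F^2+\int_{\bR^d}\|\eta^d_t(x,z)\|^2\,\nu^d(dz).
\]

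Next I would bound this expression by a multiple of $F(x)$ using only the linear growth condition \eqref{assumption growth}. Since \eqref{assumption growth} gives $\|\mu^d(t,x)\|\le L^{1/2}(d^p+\|x\|^2)^{1/2}$ and $\|x\|\le (d^p+\|x\|^2)^{1/2}$, Cauchy--Schwarz yields $2\langle x,\mu^d(t,x)\rangle\le 2\|x\|\,\|\mu^d(t,x)\|\le 2L^{1/2}(d^p+\|x\|^2)=2L^{1/2}F(x)$, while \eqref{assumption growth} also directly bounds $\|\sigma^d(t,x)\|_F^2+\int_{\bR^d}\|\eta^d_t(x,z)\|^2\,\nu^d(dz)\le L(d^p+\|x\|^2)=L\,F(x)$. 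Hence the displayed quantity is at most $(2L^{1/2}+L)F(x)\le 2(L^{1/2}+L)F(x)$, so the Lyapunov inequality \eqref{est operator SDE} holds with the constant Borel function $\alpha(\cdot)\equiv 2(L^{1/2}+L)$, which trivially has finite integral over $[0,T]$.

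Finally, since $\bE[F(X^{d,0}_0)]=d^p+\bE[\|\xi^{d,0}\|^2]<\infty$ by hypothesis, all assumptions of Lemma~\ref{lemma Lyaponov} are satisfied, and applying \eqref{est exp} gives $\bE[F(X^{d,0}_t)]\le e^{\int_0^t 2(L^{1/2}+L)\,ds}\,\bE[F(X^{d,0}_0)]=e^{2(L^{1/2}+L)t}\big(d^p+\bE[\|\xi^{d,0}\|^2]\big)$, which is exactly the claimed estimate \eqref{est SDE 0 x}. The only point needing a little care is obtaining the $t$-dependent exponential $e^{2(L^{1/2}+L)t}$ rather than $e^{2(L^{1/2}+L)T}$: this is handled by invoking Lemma~\ref{lemma Lyaponov} with the time horizon $T$ replaced by $t$ (the restriction of $X^{d,0}$ to $[0,t]$ still solves an SDE of the same form under the same assumptions), or equivalently by noting that the Grönwall step inside the proof of Lemma~\ref{lemma Lyaponov} already produces the bound at every intermediate time before $t=T$ is fixed. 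Beyond this, the argument is entirely routine, the only genuine inputs being the exact Taylor identity for $F$ and the growth bound \eqref{assumption growth}.
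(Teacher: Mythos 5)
Your proof is correct and follows essentially the same route as the paper: apply Lemma~\ref{lemma Lyaponov} to the Lyapunov function $F(x)=d^p+\|x\|^2$, compute the generator exactly via $\nabla F(x)=2x$, $\operatorname{Hess}F=2I_d$, and the identity $F(x+\eta)-F(x)-\langle\nabla F(x),\eta\rangle=\|\eta\|^2$, then bound it by $2(L^{1/2}+L)F(x)$ using Cauchy--Schwarz and \eqref{assumption growth}. Your write-up is marginally sharper in two places the paper elides: you get the tighter intermediate bound $(2L^{1/2}+L)F(x)$ before relaxing to $2(L^{1/2}+L)F(x)$, and you explicitly flag that Lemma~\ref{lemma Lyaponov}'s stated conclusion \eqref{est exp} carries $e^{\int_0^T\alpha}$ rather than $e^{\int_0^t\alpha}$, which requires either restricting the time horizon to $[0,t]$ or observing (as the proof of Lemma~\ref{lemma Lyaponov} in fact shows via Gr\"onwall before fixing $t=T$) that the bound holds at every intermediate time; the paper simply cites the lemma without remarking on this.
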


\begin{proof}
For each $d\in\bN$, we denote by $G^d:\bR^d\to\bR$ the function 
$G^d(x):=d^p+\|x\|^2$, $x\in\bR^d$.
Then by Cauchy-Schwarz inequality and \eqref{assumption growth}, 
we notice for all $d\in\bN$, $t\in[0,T]$, and $x\in\bR^d$ that
\begin{align*}
&
\<\nabla G^d(x),\mu^d(t,x)\>
+\frac{1}{2}\operatorname{Trace}\big(\sigma^d(t,x)[\sigma^d(t,x)]^T
\operatorname{Hess}_xG^d(x)\big)
\\
& \quad
+\int_{\bR^d}\left(
G^d(x+\eta^d_t(x,z))-G^d(x)-\<\nabla G^d(x),\eta^d_t(x,z)\>
\right)\nu^d(dz)
\\
&
=2\<x,\mu^d(t,x)\>+\|\sigma^d(t,x)\|_F^2
+\int_{\bR^d}\|\eta^d_t(x,z)\|^2\,\nu^d(dz)
\\
&
\leq 2\|x\|L^{1/2}(d^p+\|x\|^2)^{1/2}+2L(d^p+\|x\|^2)
\leq 2(L^{1/2}+L)(d^p+\|x\|^2).
\end{align*}
Hence, by Lemma \ref{lemma Lyaponov} we obtain \eqref{est SDE 0 x}.
\end{proof}

\begin{corollary}
\label{corollary Lyaponov q}
Let Assumptions \ref{assumption Lip and growth} and \ref{assumption pointwise} hold, and let Assumption~\ref{AssXiLq} hold with some $q \in (2,\infty)$. Moreover,
for every $d\in\bN$, let 
$(X^{d,0}_t)_{t\in[0,T]}: 
[0,T]\times\Omega\to \bR^d$ be the stochastic process defined in \eqref{SDE d}.
Then we have for all $d\in\bN$ and $t\in[0,T]$ that
\begin{equation}                                         \label{est SDE 0 x q}
\bE\left[\big(d^p+\big\|X^{d,0}_t\big\|^2\big)^{q/2}\right]
\leq \exp\left\{[2(L+1)]^{\frac{q-2}{2}}2(L+L^{1/2})q(q-1)t\right\}
\bE\left[\big(d^p+\big\|\xi^{d,0}\big\|^2\big)^{q/2}\right].
\end{equation}
\end{corollary}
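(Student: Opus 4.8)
The plan is to mimic the proof of Corollary~\ref{corollary Lyaponov}, but now with the Lyapunov function $F(x):=(G^d(x))^{q/2}$, where $G^d(x):=d^p+\|x\|^2$, i.e.\ raising the weight from power $1$ to power $q/2\ge 1$. Since $d\ge 1$ and $p>0$ give $G^d(x)\ge d^p>0$ for all $x\in\bR^d$, the function $F$ is smooth, hence $F\in C^2(\bR^d,[0,\infty))$; moreover Assumption~\ref{AssXiLq} together with the elementary inequality $(a+b)^{q/2}\le 2^{q/2}(a^{q/2}+b^{q/2})$ yields $\bE[F(\xi^{d,0})]=\bE\big[(d^p+\|\xi^{d,0}\|^2)^{q/2}\big]<\infty$, so all hypotheses of Lemma~\ref{lemma Lyaponov} are met once the drift inequality \eqref{est operator SDE} is verified. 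A direct computation gives $\nabla F(x)=q\,(G^d(x))^{q/2-1}x$ and $\operatorname{Hess}_x F(x)=q(q-2)(G^d(x))^{q/2-2}xx^\top+q(G^d(x))^{q/2-1}I_d$.

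Next I would bound, at a fixed $(t,x)$ and writing $G:=G^d(x)$, the three contributions to the left-hand side of \eqref{est operator SDE}. For the first-order term, Cauchy--Schwarz, $\|\mu^d(t,x)\|\le L^{1/2}G^{1/2}$ from \eqref{assumption growth}, and $\|x\|\le G^{1/2}$ give $\langle\nabla F(x),\mu^d(t,x)\rangle\le qL^{1/2}G^{q/2}$. For the second-order term, using $x^\top\sigma^d(\sigma^d)^\top x=\|(\sigma^d)^\top x\|^2\le\|\sigma^d\|_F^2\|x\|^2\le\|\sigma^d\|_F^2 G$, the fact that $q(q-2)\ge 0$ (since $q\ge 2$), and $\|\sigma^d(t,x)\|_F^2\le LG$ from \eqref{assumption growth}, one obtains $\tfrac12\operatorname{Trace}\big(\sigma^d(\sigma^d)^\top\operatorname{Hess}_x F(x)\big)\le\tfrac12 q(q-1)LG^{q/2}$.

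The delicate term is the integral one, $\int_{\bR^d}\big(F(x+\eta^d_t(x,z))-F(x)-\langle\nabla F(x),\eta^d_t(x,z)\rangle\big)\,\nu^d(dz)$. Here I would use the second-order Taylor formula $F(x+\eta)-F(x)-\langle\nabla F(x),\eta\rangle=\int_0^1(1-\theta)\langle\eta,\operatorname{Hess}_x F(x+\theta\eta)\eta\rangle\,d\theta$, bound the integrand via $\langle\eta,\operatorname{Hess}_x F(y)\eta\rangle=q(q-2)(G^d(y))^{q/2-2}\langle y,\eta\rangle^2+q(G^d(y))^{q/2-1}\|\eta\|^2\le q(q-1)(G^d(y))^{q/2-1}\|\eta\|^2$ (using $\langle y,\eta\rangle^2\le\|y\|^2\|\eta\|^2\le G^d(y)\|\eta\|^2$ and $q\ge 2$), and then — this is the crucial point — control $G^d(x+\theta\eta)$ by invoking the \emph{pointwise} bound $\|\eta^d_t(x,z)\|^2\le Ld^p\le LG^d(x)$ from Assumption~\ref{assumption pointwise}, not merely the integrated growth bound \eqref{assumption growth}; this gives $G^d(x+\theta\eta)\le 2G^d(x)+2\|\eta\|^2\le 2(L+1)G^d(x)$, hence $(G^d(x+\theta\eta))^{q/2-1}\le[2(L+1)]^{(q-2)/2}(G^d(x))^{q/2-1}$. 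Integrating over $\theta$ and then over $z$ using $\int_{\bR^d}\|\eta^d_t(x,z)\|^2\,\nu^d(dz)\le LG$ from \eqref{assumption growth} bounds the jump term by $\tfrac12 q(q-1)L\,[2(L+1)]^{(q-2)/2}G^{q/2}$. Adding the three estimates and using $[2(L+1)]^{(q-2)/2}\ge 1$ and $q-1\ge 1$ shows that \eqref{est operator SDE} holds with the constant $\alpha(s)\equiv[2(L+1)]^{(q-2)/2}\,2(L+L^{1/2})\,q(q-1)$ (supported on $[0,t]$, as in the proof of Corollary~\ref{corollary Lyaponov}); applying Lemma~\ref{lemma Lyaponov} with this $\alpha$ and $\tau\equiv t$ then yields exactly \eqref{est SDE 0 x q}. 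The main obstacle is precisely this treatment of the jump term: without the uniform-in-$z$ boundedness of $\eta^d$ supplied by Assumption~\ref{assumption pointwise}, the Taylor remainder would produce terms of order $\|\eta^d_t(x,z)\|^q$ in $z$ that are not controlled under $\nu^d$, so it is the boundedness of $\eta^d$ that keeps all $z$-integrals at the level of second $\nu^d$-moments.
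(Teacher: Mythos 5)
Your proposal is correct and follows essentially the same route as the paper: choose the Lyapunov function $(d^p+\|x\|^2)^{q/2}$, bound the drift and diffusion terms directly, control the jump term's Taylor remainder by using the pointwise bound $\|\eta^d_t(x,z)\|^2\le Ld^p$ from Assumption~\ref{assumption pointwise} to replace $G^d(x+\theta\eta)$ by $2(L+1)G^d(x)$, and then invoke Lemma~\ref{lemma Lyaponov} with $\alpha\equiv[2(L+1)]^{(q-2)/2}2(L+L^{1/2})q(q-1)$. Your treatment of the mixed Hessian term via $\langle y,\eta\rangle^2\le\|y\|^2\|\eta\|^2$ and your retention of the $\int_0^1(1-\theta)\,d\theta=\tfrac12$ factor are in fact slightly cleaner than the paper's corresponding intermediate estimates, but the final constant and the argument's skeleton coincide.
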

\begin{proof}
Throughout the proof of this corollary we fix a $d\in\bN$, 
and define the function $G^d:\bR^d\to\bR$ by 
$G^d(x):=(d^p+\|x\|^2)^{q/2}$, $x\in\bR^d$.
Then we notice for all $x=(x_1,\dots,x_d)\in\bR^d$ and $i,j\in\{1,2,\dots,d\}$ that
\begin{equation}
\label{1st deri G}
\frac{\partial}{\partial x_i}G^d(x)=q(d^p+\|x\|^2)^{\frac{q-2}{2}}x_i,
\end{equation}
and
\begin{equation}
\label{2nd deri G}
\frac{\partial^2}{\partial x_i \partial x_j}G^d(x)
=q(q-2)(d^p+\|x\|^2)^{\frac{q-4}{2}}x_ix_j
+q(d^p+\|x\|^2)^{\frac{q-2}{2}}\mathds{1}_{\{i=j\}}
\end{equation}
with $\mathds{1}_{\{i=j\}}$ denoting the indicator function.
Here and in the rest of this proof we set $0/0:=0$ whenever it appears.
Next, by Taylor's formula (see, e.g., \cite[Chapter 8.4.4, Theorem 4]{zorich}), 
Assumption \ref{assumption pointwise},
the Cauchy-Schwarz inequality, \eqref{assumption growth} in Assumption \ref{assumption Lip and growth}, and \eqref{2nd deri G}
we have for all $(t,x)\in[0,T]\times\bR^d$ that
\begin{equation}
	\label{G est 1}
	\begin{aligned}
		&
		\int_{\bR^d}\left(
		G^d(x+\eta^d_t(x,z))-G^d(x)-\<\nabla G^d(x),\eta^d_t(x,z)\>
		\right)\nu^d(dz) \\
		&
		=
		\int_{\bR^d}\int_0^1(1-\theta)\left[\sum_{i,j=1}^d
		\Big(\frac{\partial^2}{\partial x_i \partial x_j}
		G^d\Big)\big(x+\theta\eta^d_t(x,z)\big)\eta^{d,i}_t(x,z)\eta^{d,j}_t(x,z)
		\right]d\theta\,\nu^d(dz) \\
		&
		=\int_{\bR^d}\int_0^1(1-\theta)\bigg[
		q\big(d^p+\|x+\theta\eta^d_t(x,z)\|^2\big)^{\frac{q-2}{2}}\|\eta^d_t(x,z)\|^2
		+
		\bigg(
		\sum_{i,j=1}^dq(q-2) \\
		& \quad
		\cdot
		\big(d^p+\|x+\theta\eta^d_t(x,z)\|^2\big)^{\frac{q-4}{2}}
		\big(x_i+\theta\eta^{d,i}_t(x,z)\big)\big(x_j+\theta\eta^{d,j}_t(x,z)\big)
		\eta^{d,i}_t(x,z)\eta^{d,j}_t(x,z)
		\bigg)
		\bigg]
		\,d\theta\,\nu^d(dz) \\
		&
		\leq
		\int_{\bR^d}\bigg[
		q(d^p+2\|x\|^2+2Ld^p)^{\frac{q-2}{2}}\|\eta^d_t(x,z)\|^2
		+
		\bigg(
		q(q-2)(d^p+2\|x\|^2+2Ld^p)^{\frac{q-4}{2}} \\
		& \quad
		\cdot
		(\|x\|+(Ld^p)^{1/2})^2 \sum_{i,j=1}^d |\eta^{d,i}_t(x,z)\eta^{d,j}_t(x,z)|
		\bigg)
		\bigg]
		\,d\theta\,\nu^d(dz) \\
		&
		\leq
		[2(L+1)]^{\frac{q-2}{2}}q(d^p+\|x\|^2)^{\frac{q-2}{2}}
		\int_{\bR^d}\|\eta^d_t(x,z)\|^2\,\nu^d(dz) \\
		& \quad
		+
		[2(L+1)]^{\frac{q-2}{2}}q(q-2)(d^p+\|x\|^2)^{\frac{q-2}{2}}
		\int_{\bR^d}\|\eta^d_t(x,z)\|^2\,\nu^d(dz) \\
		&
		\leq
		[2(L+1)]^{\frac{q-2}{2}}Lq(q-1)(d^p+\|x\|^2)^{q/2}.
	\end{aligned}
\end{equation}
Furthermore, by \eqref{assumption growth} in Assumption \ref{assumption Lip and growth}, \eqref{1st deri G}, 
and \eqref{2nd deri G} it holds for all $(t,x)\in[0,T]\times\bR^d$ that
\begin{equation}
	\label{G est 2}
	\begin{aligned}
		\langle \nabla G^d(x),\mu^d(t,x)\rangle
		&
		=q(d^p+\|x\|^2)^{\frac{q-2}{2}}\langle x,\mu^d(t,x)\rangle \\
		&
		\leq
		q(d^p+\|x\|^2)^{\frac{q-2}{2}}\|x\|\cdot\|\mu^d(t,x)\| \\
		&
		\leq
		L^{1/2}q(d^p+\|x\|^2)^{q/2}.
	\end{aligned}
\end{equation}
Similarly, by using the same arguments together with the Cauchy-Schwarz inequality, it holds for all $(t,x)\in[0,T]\times\bR^d$ that
\begin{equation}
	\label{G est 3}
	\begin{aligned}
		&
		\frac{1}{2}\operatorname{Trace}\big(\sigma^d(t,x)[\sigma^d(t,x)]^T
		\operatorname{Hess}_xG^d(x)\big) \\
		&
		=\frac{1}{2}\sum_{i,j=1}^d\sum_{k=1}^d\sigma^{d,ik}(t,x)\sigma^{d,jk}(t,x)
		\frac{\partial^2}{\partial x_i \partial x_j}G^d(x) \\
		&
		=
		\frac{q(q-2)}{2}(d^p+\|x\|^2)^{\frac{q-4}{2}} \sum_{i,j=1}^d\sum_{k=1}^d
		\sigma^{d,ik}(t,x)\sigma^{d,jk}(t,x) x_i x_j \\
		& \quad
		+\frac{q}{2}(d^p+\|x\|^2)^{\frac{q-2}{2}} \sum_{i=1}^d\sum_{k=1}^d
		|\sigma^{d,ik}(t,x)|^2 \\
		&
		\leq
		\frac{q(q-2)}{2}(d^p+\|x\|^2)^{\frac{q-2}{2}}
		\big\Vert \sigma^d(t,x) \big\Vert_F^2 +\frac{q}{2}(d^p+\|x\|^2)^{\frac{q-2}{2}} \big\Vert \sigma^d(t,x) \big\Vert_F^2 \\		
		&
		\leq \frac{Lq(q-1)}{2}(d^p+\|x\|^2)^{q/2}.
	\end{aligned}
\end{equation}
Combining \eqref{G est 1}, \eqref{G est 2}, and \eqref{G est 3} shows for all
$(t,x)\in[0,T]\times\bR^d$ that
\begin{equation*}
	\begin{aligned}
		&
		\<\nabla G^d(x),\mu^d(t,x)\>
		+\frac{1}{2}\operatorname{Trace}\big(\sigma^d(t,x)[\sigma^d(t,x)]^T
		\operatorname{Hess}_xG^d(x)\big) \\
		& \quad
		+\int_{\bR^d}\left(
		G^d(x+\eta^d_t(x,z))-G^d(x)-\<\nabla G^d(x),\eta^d_t(x,z)\>
		\right)\nu^d(dz) \\
		&
		\leq
		[2(L+1)]^{\frac{q-2}{2}}2(L+L^{1/2})q(q-1)(d^p+\|x\|^2)^{q/2}.
	\end{aligned}
\end{equation*}
Therefore, by Lemma \ref{lemma Lyaponov} we obtain \eqref{est SDE 0 x q}.
\end{proof}

\begin{lemma}                                     \label{lemma Lip u}
Let $d\in\bN$, $c,\rho,L\in[0,\infty)$, $T\in(0,\infty)$, 
let $(Z^{t,x}_{s})_{s\in[t,T]}:[t,T]\times\Omega\to\bR^d$,
$t\in[0,T]$, $x\in\bR^d$, be 
$\cB([0,T])\otimes\cF/\cB(\bR^d)$-measurable functions,
let $F:[0,T]\times\bR^d\times\bR\to\bR$, 
$G:\bR^d\to\bR$,
and 
$v:[0,T]\times\bR^d\to\bR$, be Borel functions.
For all $t\in[0,T]$, $s\in[t,T]$, and all Borel functions 
$h:\bR^d\times\bR^d\to[0,\infty)$ assume that
$
\bR^d\times\bR^d\ni(y_1,y_2)\mapsto\bE[h(Z^{t,y_1}_{s},Z^{t,y_2}_{s})]
\in[0,\infty]
$ 
is measurable. Moreover, for all $x,y\in\bR^d$, $t\in[0,T]$ and $s\in[t,T]$,
$r\in[s,T]$, $v,w\in\bR$, and all Borel functions 
$h:\bR^d\times\bR^d\to[0,\infty)$ assume that
\begin{equation}                                   \label{coca 1}
Z^{t,x}_{t}=x,
\end{equation}
\begin{equation}                                   \label{coca 2}
\bE\left[\bE\Big[h(Z^{s,x'}_{r},Z^{s,y'}_{r})\Big]
\Big|_{(x',y')=(Z^{t,x}_{s},Z^{t,y}_{s})}\right]
=\bE\Big[h(Z^{t,x}_{r},Z^{t,y}_{r})\Big],
\end{equation}
\begin{equation}                                     \label{coca 3}
\max\{T|F(t,x,v)-F(t,y,w)|,|G(x)-G(y)|\}
\leq L\Big(T|v-w|+T^{-1/2}\|x-y\|\Big),
\end{equation}
\begin{equation}                                   \label{coca 4}
\bE\left[\big|G(Z^{t,x}_{T})\big|\right]
+\int_t^T\bE\left[\big|F(s,Z^{t,x}_{s},v(s,Z^{t,x}_{s}))\big|
\right]\,ds<\infty,
\end{equation}
\begin{equation}                                     \label{coca 5}
v(t,x)=\bE\left[G(Z^{t,x}_{T})
+\int_t^TF(s,Z^{t,x}_{s},v(s,Z^{t,x}_{s}))\,ds\right],
\end{equation}
and
\begin{equation}                                      \label{cond max}
\max\left\{|v(t,x)|^2,
e^{\rho(t-s)}\bE\Big[d^p+\big\|Z^{t,x}_{s}\big\|^2\Big]\right\}
\leq c(d^p+\|x\|^2).
\end{equation}
Then for all $t\in[0,T]$ and $x_1,x_2\in\bR^d$ it holds that
\begin{equation}                                     \label{u difference}
|v(t,x_1)-v(t,x_2)|\leq e^{LT}2LT^{-1/2}\cdot\sup_{s\in[t,T]}
\bE\left[\big\|Z^{t,x_1}_s-Z^{t,x_2}_s\big\|\right].
\end{equation}
\end{lemma}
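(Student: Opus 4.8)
The plan is to use the Feynman--Kac representation \eqref{coca 5} to write the difference $v(t,x_1)-v(t,x_2)$ as a sum of a terminal term and a running term, then bootstrap a Gr\"onwall-type estimate in the time variable. First I would fix $t\in[0,T]$, abbreviate $Z^i_s := Z^{t,x_i}_s$ for $i\in\{1,2\}$ and $s\in[t,T]$, and set $\Delta(s) := \sup_{x_1,x_2}\big( |v(s,x_1)-v(s,x_2)| \big/ \sup_{r\in[s,T]} \mathbb{E}[\|Z^{s,x_1}_r - Z^{s,x_2}_r\|] \big)$, or more simply work with the quantity $D := \sup_{s\in[t,T]} \mathbb{E}[\|Z^1_s - Z^2_s\|]$ and try to bound $|v(s,x_1)-v(s,x_2)|$ uniformly. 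Subtracting the two instances of \eqref{coca 5}, applying the triangle inequality, \eqref{coca 3}, and Jensen/Tonelli gives
\begin{equation*}
	|v(t,x_1)-v(t,x_2)| \leq L T^{-1/2} \mathbb{E}\big[ \|Z^1_T - Z^2_T\| \big] + \int_t^T \Big( L T \, \mathbb{E}\big[ |v(s,Z^1_s) - v(s,Z^2_s)| \big] + L T^{-1/2} \mathbb{E}\big[ \|Z^1_s - Z^2_s\| \big] \Big)\, ds.
\end{equation*}

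The key step is to control the term $\mathbb{E}[ |v(s,Z^1_s) - v(s,Z^2_s)| ]$ for $s\in[t,T]$. Here I would use the Markov-type flow property \eqref{coca 2}: conditioning on $(Z^1_s, Z^2_s) = (y_1,y_2)$ and using that $v(s,y_i) = \mathbb{E}[G(Z^{s,y_i}_T) + \int_s^T F(r,Z^{s,y_i}_r, v(r,Z^{s,y_i}_r))\,dr]$, one obtains $|v(s,y_1)-v(s,y_2)| \le L T^{-1/2}\mathbb{E}[\|Z^{s,y_1}_T - Z^{s,y_2}_T\|] + \int_s^T(\dots)\,dr$, and then taking $\mathbb{E}$ with $(y_1,y_2) = (Z^1_s,Z^2_s)$ and invoking \eqref{coca 2} recollapses the nested expectations back to $\mathbb{E}[\|Z^1_r - Z^2_r\|]$-type terms. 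The measurability hypothesis on $(y_1,y_2)\mapsto \mathbb{E}[h(Z^{t,y_1}_s,Z^{t,y_2}_s)]$ is precisely what makes these conditional computations legitimate. This lets me reduce everything to the single scalar function $\varphi(s) := \mathbb{E}[|v(s,Z^1_s)-v(s,Z^2_s)|]$ (or rather its supremum over initial points), which satisfies an integral inequality of the form $\varphi(t) \le L T^{-1/2} D + \int_t^T ( L T\,\varphi(s) + L T^{-1/2} D)\, ds \le 2 L T^{-1/2} D + L T \int_t^T \varphi(s)\,ds$, using $T(T-t)\le T\cdot T$ loosely, or more carefully $L T^{-1/2}D + L T^{-1/2} D (T-t) \le 2 L T^{-1/2} D$ since $T-t\le T$ but one needs $T\le 1$... actually the cleaner route is to keep $L T^{-1/2} D(1 + (T-t)/T \cdot \text{something})$; in any case Gr\"onwall's lemma applied on $[t,T]$ yields $\varphi(t) \le e^{LT(T-t)} \cdot 2 L T^{-1/2} D \le e^{LT} 2 L T^{-1/2} D$, which is \eqref{u difference} after noting $D = \sup_{s\in[t,T]}\mathbb{E}[\|Z^{t,x_1}_s - Z^{t,x_2}_s\|]$.

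The growth bound \eqref{cond max} together with \eqref{coca 4} is needed to ensure all the expectations appearing are finite (so the subtraction and Tonelli steps are valid), and \eqref{coca 1} pins down $Z^{t,x}_t = x$ so that the flow identity is consistent. The main obstacle I expect is the bookkeeping in the nested-expectation step: one must carefully verify that the map $(y_1,y_2)\mapsto |v(s,y_1)-v(s,y_2)|$ can be dominated by a nonnegative Borel function $h$ of $(Z^{s,y_1}_r, Z^{s,y_2}_r)$ so that hypothesis \eqref{coca 2} applies, and that the resulting Gr\"onwall argument is run on the correct function (a supremum over initial points of the normalized difference) rather than naively on $v$ itself, since a priori $v$ is only known to be Borel, not continuous. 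Once the self-referential structure is set up correctly, the Gr\"onwall step and the final constant $e^{LT}2LT^{-1/2}$ follow routinely.
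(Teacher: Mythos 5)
Your overall strategy is the same as the paper's: subtract the two Feynman--Kac representations, use \eqref{coca 3} and the flow property \eqref{coca 2} to collapse the nested expectations, and run a Gr\"onwall argument on $\varphi(s) := \mathbb{E}\big[|v(s,Z^{t,x_1}_s)-v(s,Z^{t,x_2}_s)|\big]$ for $s\in[t,T]$, finally using $Z^{t,x_i}_t = x_i$ to read off $|v(t,x_1)-v(t,x_2)|=\varphi(t)$. This is exactly what the paper does.

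However, there is an arithmetic slip in the first display of your proposal, and it is precisely the source of the confusion you express about whether $e^{LT}$ really emerges. Condition \eqref{coca 3} bounds $T\,|F(t,x,v)-F(t,y,w)|$, so dividing by $T$ gives
\begin{equation*}
|F(r,x,v)-F(r,y,w)| \le L\,|v-w| + L\,T^{-3/2}\,\|x-y\|,
\end{equation*}
not $LT\,|v-w| + LT^{-1/2}\,\|x-y\|$ as you wrote. With the correct coefficients, the Gr\"onwall rate is $L$, not $LT$, and the $\|x-y\|$ contribution from the running term satisfies $\int_t^T L T^{-3/2} D\,dr \le L T^{-3/2}\cdot T\cdot D = L T^{-1/2} D$, so it combines with the $G$-term $L T^{-1/2} D$ to give $2L T^{-1/2} D$ without any hidden assumption on $T$. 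Thus $\varphi(s)\le 2L T^{-1/2} D + L\int_s^T\varphi(r)\,dr$, and Gr\"onwall yields $\varphi(t)\le e^{L(T-t)}\,2LT^{-1/2}D\le e^{LT}\,2LT^{-1/2}D$, which is \eqref{u difference}. There is no need for $T\le1$; the factor-of-$T$ error is the only thing obstructing closure. Your remarks about \eqref{coca 2}, the measurability hypothesis, and \eqref{coca 4} ensuring finiteness for Fubini are all correct and match the paper's usage.
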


\begin{proof}
By the triangle inequality, Fubini's theorem, \eqref{coca 1}--\eqref{coca 3},
and \eqref{coca 5}
we have for all $t\in[0,T]$, $s\in[t,T]$, and $x_1,x_2\in\bR^d$ that
\begin{align*}
&
\bE\left[|v(s,Z^{t,x_1}_s)-v(s,Z^{t,x_2}_s)|\right]
\\
&
=\bE\Bigg[\Big|\bE\Big[G(Z^{s,y_1}_T)-G(Z^{s,y_2}_T)
\\
& \quad
+\int_s^T\left[F(r,Z^{s,y_1}_r,v(r,Z^{s,y_1}_r))
-F(r,Z^{s,y_2}_r,v(r,Z^{s,y_2}_r))\right]dr
\Big]\Big|\Big|_{(y_1,y_2)=(Z^{t,x_1}_s,Z^{t,x_2}_s)}\Bigg]
\\
&
\leq 
\bE\left[\big|G(Z^{t,x_1}_T)-G(Z^{t,x_2}_T)\big|\right]
+\int_s^T\bE\left[\big|F(r,Z^{t,x_1}_r,v(r,Z^{t,x_1}_r))
-F(r,Z^{t,x_2}_r,v(r,Z^{t,x_2}_r))\big|\right]dr
\\
&
\leq LT^{-1/2}\bE\left[\big\|Z^{t,x_1}_T-Z^{t,x_2}_T\big\|\right]
+L\int_s^T\bE\left[\big|v(r,Z^{t,x_1}_r)-v(r,Z^{t,x_2}_r)\big|\right]dr
\\
& \quad
+LT^{-3/2}\int_s^T\bE\left[\big\|Z^{t,x_1}_r-Z^{t,x_2}_r\big\|\right]dr
\\
&
\leq 2LT^{-1/2}\cdot\sup_{r\in[t,T]}
\bE\left[\big\|Z^{t,x_1}_r-Z^{t,x_2}_r\big\|\right]
+L\int_s^T\bE\left[\big|v(r,Z^{t,x_1}_r)-v(r,Z^{t,x_2}_r)\big|\right]dr.
\end{align*}
Hence, taking \eqref{cond max} into account we apply Gr\"onwall's lemma
to obtain for all $t\in[0,T]$, $s\in[t,T]$, and $x_1,x_2\in\bR^d$ that
$$
\bE\left[\big|v(s,Z^{t,x_1}_s)-v(s,Z^{t,x_2}_s)\big|\right]
\leq e^{L(T-s)}2LT^{-1/2}
\cdot\sup_{r\in[t,T]}\bE\left[\big\|Z^{t,x_1}_r-Z^{t,x_2}_r\big\|\right].
$$
This together with \eqref{coca 1} imply \eqref{u difference}.
The proof of this lemma is therefore completed.
\end{proof}

\begin{corollary}                           \label{corollary Lip u}
Let Assumptions \ref{assumption Lip and growth} and \ref{assumption pointwise} hold.
Moreover, for every $d\in\bN$, let 
$u^d:[0,T]\times\bR^d\to\bR$ be the viscosity solution of PIDE \eqref{PDE}
given by \eqref{Feynman Kac u}.
Then there exists a positive constant $c$ (depending only on $L$ and $T$)
satisfying
for all $d\in\bN$, $t\in[0,T]$, and $x_1,x_2\in\bR^d$ that
\begin{equation}
|u^d(t,x_1)-u^d(t,x_2)|\leq c\|x_1-x_2\|.
\end{equation}
\end{corollary}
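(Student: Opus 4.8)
The plan is to apply Lemma~\ref{lemma Lip u} to the viscosity solution $u^d$ together with the SDE flow $(X^{d,t,x}_s)_{s\in[t,T]}$ from \eqref{SDE}, and then to combine its conclusion with a standard $L^1$-stability estimate for \eqref{SDE}. Fix $d\in\bN$ and instantiate Lemma~\ref{lemma Lip u} with $Z^{t,x}_s := X^{d,t,x}_s$, $F := f^d$, $G := g^d$, $v := u^d$, and $\rho := 2(L^{1/2}+L)$. The identity \eqref{coca 1} is the initial condition in \eqref{SDE}; the required measurability and the Markov/flow identity \eqref{coca 2} follow from the flow and (strong) Markov properties of the unique solution of \eqref{SDE} (see \cite{kunita2004stochastic,rong2006theory}); the Lipschitz bound \eqref{coca 3} is an immediate consequence of \eqref{assumption Lip f g} in Assumption~\ref{assumption Lip and growth} upon setting $t=s$ there, with the constant $L$ of the lemma taken to be $L^{1/2}$; the integrability \eqref{coca 4} is \eqref{E integrability}; the representation \eqref{coca 5} is \eqref{Feynman Kac u}; and \eqref{cond max} holds with some $c>0$ depending only on $L$ and $T$, since $|u^d(t,x)|^2\le 4L\,e^{2L^{1/2}T}(d^p+\|x\|^2)$ by \eqref{u est}, while $e^{\rho(t-s)}\bE[d^p+\|X^{d,t,x}_s\|^2]\le d^p+\|x\|^2$ by the estimate of Corollary~\ref{corollary Lyaponov} (applied with deterministic initial value $x$ and initial time $t$, using $t\le s$ so that $e^{\rho(t-s)}\le1$). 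Lemma~\ref{lemma Lip u} then yields, for all $t\in[0,T]$ and $x_1,x_2\in\bR^d$,
\begin{equation*}
	|u^d(t,x_1)-u^d(t,x_2)|\leq 2L^{1/2}T^{-1/2}e^{L^{1/2}T}\cdot\sup_{s\in[t,T]}\bE\big[\,\big\|X^{d,t,x_1}_s-X^{d,t,x_2}_s\big\|\,\big].
\end{equation*}

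It then remains to bound the right-hand side by a multiple of $\|x_1-x_2\|$. Writing $D_s:=X^{d,t,x_1}_s-X^{d,t,x_2}_s$, subtracting the two copies of \eqref{SDE}, taking $\bE[\|\cdot\|^2]$, and applying the Cauchy--Schwarz inequality to the $dr$-integral, the Itô isometry to the $dW^d$-integral, and the isometry $\bE\big[\big\|\int_t^s\!\int_{\bR^d}\eta^d_r(\cdot,z)\,\tilde\pi^d(dz,dr)\big\|^2\big]=\bE\big[\int_t^s\!\int_{\bR^d}\|\eta^d_r(\cdot,z)\|^2\,\nu^d(dz)\,dr\big]$ to the compensated Poisson integral (justified after localization using $\bE[\sup_{s\in[t,T]}\|X^{d,t,x}_s\|^2]<\infty$), the Lipschitz bound \eqref{assumption Lip mu sigma eta} gives for $s\in[t,T]$
\begin{equation*}
	\bE\big[\|D_s\|^2\big]\leq 3\|x_1-x_2\|^2+3(T+2)L\int_t^s\bE\big[\|D_r\|^2\big]\,dr.
\end{equation*}
Grönwall's lemma therefore yields $\bE[\|D_s\|^2]\le 3e^{3(T+2)LT}\|x_1-x_2\|^2$ for all $s\in[t,T]$, and Jensen's inequality converts this into $\sup_{s\in[t,T]}\bE[\|D_s\|]\le\sqrt3\,e^{3(T+2)LT/2}\|x_1-x_2\|$. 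Combining this with the previous display gives the assertion with
\begin{equation*}
	c:=2\sqrt3\,L^{1/2}T^{-1/2}\,e^{L^{1/2}T}\,e^{3(T+2)LT/2},
\end{equation*}
which depends only on $L$ and $T$.

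The main obstacle is the dimension-uniformity of the constant: one must check that every Grönwall argument above (in Corollary~\ref{corollary Lyaponov}, in the verification of \eqref{cond max}, and in the stability estimate) produces a constant free of $d$. This is guaranteed by the fact that all Lipschitz and growth constants in Assumption~\ref{assumption Lip and growth} are uniform in $d$. A secondary, purely technical point is the rigorous justification of the Markov/flow identity \eqref{coca 2} and of the isometry for the compensated Poisson integral for the jump-diffusion \eqref{SDE}; both are standard but require invoking the corresponding results for SDEs driven by Brownian motion and a Poisson random measure (e.g.\ \cite{kunita2004stochastic,rong2006theory}), together with a stopping-time localization to handle the a priori integrability.
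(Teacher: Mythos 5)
Your proof is correct and follows essentially the same route as the paper: both instantiate Lemma~\ref{lemma Lip u} with $Z^{t,x}=X^{d,t,x}$, $F=f^d$, $G=g^d$, $v=u^d$, $\rho=2(L^{1/2}+L)$, verify \eqref{coca 1}--\eqref{cond max} from Assumption~\ref{assumption Lip and growth}, \eqref{E integrability}--\eqref{u est}, and the Lyapunov bound, and then control $\sup_{s\in[t,T]}\bE\big[\|X^{d,t,x_1}_s-X^{d,t,x_2}_s\|\big]$ by a multiple of $\|x_1-x_2\|$ via a Gr\"onwall-type SDE stability estimate. The only differences are cosmetic: the paper invokes \cite[Lemma~3.2]{NW2022} for that last stability step where you rederive it from scratch, and your parenthetical justification of the second half of \eqref{cond max} should rather observe that $\rho=2(L^{1/2}+L)$ is chosen to exactly cancel the exponential growth factor in $\bE\big[d^p+\|X^{d,t,x}_s\|^2\big]\le e^{2(L^{1/2}+L)(s-t)}\big(d^p+\|x\|^2\big)$, not merely that $e^{\rho(t-s)}\le 1$.
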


\begin{proof}
First recall that for each $d\in\bN$, $(t,x)\in[0,T]\times\bR^d$ the
process $\big(X^{d,t,x}_s\big)_{s\in[t,T]}$ is defined by \eqref{SDE}.
By \cite[Lemma~3.2]{NW2022}, 
we notice that for each $d\in\bN$, $s\in[0,T]$, and $r\in[s,T]$ the mapping 
$
\bR^d\times\bR^d\ni(x,y)\mapsto \big(X^{d,s,x}_r,X^{d,s,y}_r\big)
\in \cL_0(\Omega,\bR^d\times\bR^d)
$
is continuous and hence measurable, 
where $\cL_0(\Omega,\bR^d\times\bR^d)$ denotes the metric space of
all measurable functions from $\Omega$ to $\bR^d\times\bR^d$ equipped with the 
metric deduced by convergence in probability.
Moreover, we have for all $d\in\bN$ and all nonnegative Borel functions 
$h:\bR^d\times\bR^d\to[0,\infty)$ that the mapping
$
\cL_0(\Omega,\bR^d\times\bR^d)\ni Z \mapsto \bE\big[h(Z)\big]\in [0,\infty]
$
is measurable. 
Hence, it holds for all $d\in\bN$, $s\in[0,T]$,
$r\in[s,T]$, and all nonnegative Borel functions 
$h:\bR^d\times\bR^d\to[0,\infty)$ that the mapping
\begin{equation}                                           \label{measurability 3}
\bR^d\times\bR^d\ni(x,y)\mapsto
\bE\Big[h\big(X^{d,s,x}_{r},X^{d,s,y}_{r}\big)\Big]\in[0,\infty]
\end{equation}
is measurable.
Furthermore, \cite[Lemma~2.2]{HJKNW2020} ensures that for all $d\in\bN$, 
$t\in[0,T]$, $s\in[t,T]$, $r\in[s,T]$, $x,y\in\bR^d$ and
all nonnegative Borel functions $h:\bR^d\times\bR^d\to[0,\infty)$
it holds that
\begin{equation}                                     \label{expectation equality}
\bE\left[\bE \left[h\Big(X^{d,s,x'}_{r},X^{d,s,y'}_{r}\Big)\right]
\Big|_{(x',y')=(X^{d,t,x}_{s},X^{d,t,y}_{s})}\right]
=\bE\left[h\Big(X^{d,t,x}_{r},X^{d,t,y}_{r}\Big)\right].
\end{equation}
Then by \eqref{assumption Lip f g},
\eqref{Feynman Kac u}, \eqref{u est}, \eqref{est SDE 0 x},
\eqref{measurability 3}, and \eqref{expectation equality}, 
the application of Lemma \ref{lemma Lip u}
(with $L\cal L^{1/2}$, $T\cal T$, $Z^{t,x}\cal X^{t,x}$, $v\cal u$, 
$\rho\cal 2(L^{1/2}+L)$, 
and $c\cal \max\{1,4Le^{2L^{1/2}T}\}$ in the notation of Lemma
\ref{lemma Lip u}) 
yields for all $d\in\bN$, $t\in [0,T]$, and $x_1,x_2\in\bR^d$
that
$$
|u^d(t,x_1)-u^d(t,x_2)|\leq 2e^{L^{1/2}T}L^{1/2}T^{-1/2}\cdot\sup_{s\in[t,T]}
\bE\left[\big\|X^{d,t,x_1}_s-X^{d,t,x_2}_s\big\|\right].
$$
Hence, by \cite[Lemma~3.2]{NW2022} we get for $d\in\bN$, $t\in [0,T]$, 
and $x_1,x_2\in\bR^d$ that
$$
|u^d(t,x_1)-u^d(t,x_2)|\leq 4L^{1/2}T^{-1/2}
\exp\big\{L^{1/2}T\big[1+2L^{1/2}(T+2)\big]\big\}
\cdot\|x_1-x_2\|.
$$
The proof of this lemma is therefore completed.
\end{proof}

\begin{lemma}                                             \label{lemma time u}
Let Assumptions \ref{assumption Lip and growth} and \ref{assumption pointwise} hold.
Then for every $d\in\bN$, let 
$u^d:[0,T]\times\bR^d\to\bR$ be the viscosity solution of PIDE \eqref{PDE}
given by \eqref{Feynman Kac u}.
There exists a positive constant $c_1$ (depending only on $T$ and $L$) satisfying
for all $d\in\bN$, $x\in\bR^d$, $t\in[0,T]$, and $s\in [t,T]$ that 
\begin{equation}                                           \label{u est t s}
|u^d(t,x)-u^d(s,x)|\leq c_1(|s-t|+|s-t|^{1/2})d^{p/2}(1+\|x\|^2)^{1/2}.
\end{equation}
\end{lemma}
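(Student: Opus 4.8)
The plan is to run the argument through the Feynman--Kac representation \eqref{Feynman Kac u}, first converting it into a ``dynamic programming'' identity that isolates the time increment, and then estimating the two resulting terms by the already established spatial Lipschitz bound (Corollary~\ref{corollary Lip u}), the Lyapunov moment bounds (Corollary~\ref{corollary Lyaponov}) and the growth assumptions on $f^d$. Fix $d\in\bN$, $x\in\bR^d$ and $0\le t\le s\le T$. Splitting the time integral in \eqref{Feynman Kac u} at $s$, using the flow property $X^{d,t,x}_r=X^{d,s,X^{d,t,x}_s}_r$ for $r\in[s,T]$ together with the conditioning identity \eqref{expectation equality} (which is \cite[Lemma~2.2]{HJKNW2020}, exactly as in the proof of Corollary~\ref{corollary Lip u}), Fubini's theorem, and the integrability \eqref{E integrability}, I would derive
\[
u^d(t,x)=\bE\big[u^d(s,X^{d,t,x}_s)\big]+\int_t^s\bE\big[f^d(r,X^{d,t,x}_r,u^d(r,X^{d,t,x}_r))\big]\,dr,
\]
and hence, subtracting $u^d(s,x)$,
\[
u^d(t,x)-u^d(s,x)=\bE\big[u^d(s,X^{d,t,x}_s)-u^d(s,x)\big]+\int_t^s\bE\big[f^d(r,X^{d,t,x}_r,u^d(r,X^{d,t,x}_r))\big]\,dr.
\]

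For the first term I would use the global spatial Lipschitz estimate from Corollary~\ref{corollary Lip u} and Jensen's inequality to get $\big|\bE[u^d(s,X^{d,t,x}_s)-u^d(s,x)]\big|\le c\,\big(\bE[\|X^{d,t,x}_s-x\|^2]\big)^{1/2}$. A routine second moment estimate for \eqref{SDE} — decompose $X^{d,t,x}_s-x$ into its drift, Brownian and compensated-jump parts, apply the Cauchy--Schwarz inequality in time and the $L^2$-isometries, the linear growth \eqref{assumption growth}, and the Lyapunov bound $\bE[d^p+\|X^{d,t,x}_r\|^2]\le e^{2(L^{1/2}+L)(r-t)}(d^p+\|x\|^2)$ (which follows from the proof of Corollary~\ref{corollary Lyaponov} with deterministic initial value $x$) — yields $\bE[\|X^{d,t,x}_s-x\|^2]\le C(|s-t|+|s-t|^2)(d^p+\|x\|^2)$ with $C$ depending only on $T,L$. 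Since $d^p\ge 1$, this is bounded by $C'|s-t|\,d^p(1+\|x\|^2)$, so the first term is at most $c_1^{(1)}|s-t|^{1/2}d^{p/2}(1+\|x\|^2)^{1/2}$.

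For the second term I would bound the integrand pointwise: by \eqref{assumption Lip f g} one has $|f^d(r,y,v)-f^d(r,y,0)|\le L^{1/2}|v|$, and by \eqref{assumption growth f g} one has $|f^d(r,y,0)|\le \tfrac{L^{1/2}}{T}(d^p+\|y\|^2)^{1/2}$; substituting $v=u^d(r,y)$ and invoking the a priori bound \eqref{u est} gives $|f^d(r,y,u^d(r,y))|\le C(d^p+\|y\|^2)^{1/2}$ with $C$ depending only on $T,L$. Taking expectations, Jensen's inequality and the Lyapunov bound \eqref{est SDE 0 x} (again with deterministic initial value) give $\bE[|f^d(r,X^{d,t,x}_r,u^d(r,X^{d,t,x}_r))|]\le C e^{(L^{1/2}+L)T}(d^p+\|x\|^2)^{1/2}$, uniformly in $r\in[t,s]$; integrating over $[t,s]$ bounds the second term by $c_1^{(2)}|s-t|\,d^{p/2}(1+\|x\|^2)^{1/2}$. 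Combining the two estimates and setting $c_1:=c_1^{(1)}+c_1^{(2)}$ yields \eqref{u est t s}.

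The main obstacle is the first step: justifying the dynamic programming identity $u^d(t,x)=\bE[u^d(s,X^{d,t,x}_s)]+\int_t^s\bE[f^d(\cdot)]\,dr$ rigorously requires the Markov/flow structure of the jump SDE \eqref{SDE} (the measurability and conditioning facts of \cite[Lemma~3.2]{NW2022} and \cite[Lemma~2.2]{HJKNW2020}, as already exploited in the proof of Corollary~\ref{corollary Lip u}) together with \eqref{E integrability} to interchange expectation, conditional expectation and the $dr$-integral. Everything after that is a routine combination of the Lipschitz/growth assumptions of Assumption~\ref{assumption Lip and growth} with the Lyapunov and spatial-Lipschitz estimates established earlier in this section.
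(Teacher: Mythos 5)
Your proof is correct, but it takes a genuinely different route from the paper's. The paper works directly with the two Feynman--Kac representations at times $t$ and $s$, subtracts them, and splits the result into four pieces: a difference of terminal conditions $g^d(X^{d,t,x}_T)-g^d(X^{d,s,x}_T)$, two pieces from the extra time integral over $[t,s]$, and a piece $\int_s^T \bE[|f^d(r,X^{d,t,x}_r,u^d(\cdot)) - f^d(r,X^{d,s,x}_r,u^d(\cdot))|]\,dr$ comparing the two flows started from $x$ at different times. That last piece (and the $g^d$-piece) forces the paper to rely on the comparison estimate $\sup_{r\in[s,T]}\bE[\|X^{d,t,x}_r-X^{d,s,x}_r\|^2]$ from \cite[Lemma~3.1+3.4]{NW2022}. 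You instead first fold the Feynman--Kac identity into the dynamic programming relation $u^d(t,x)=\bE[u^d(s,X^{d,t,x}_s)]+\int_t^s\bE[f^d(r,X^{d,t,x}_r,u^d(\cdot))]\,dr$, so that $u^d(t,x)-u^d(s,x)$ decomposes into just two terms, and the only SDE estimate needed is the one-flow time-continuity bound $\bE[\|X^{d,t,x}_s-x\|^2]\lesssim(|s-t|+|s-t|^2)(d^p+\|x\|^2)$ (the content of \cite[Lemma~3.3]{NW2022}, which you rederive). Both routes then call on Corollary~\ref{corollary Lip u} and the Lyapunov bound; what yours buys is that you never need to compare two solutions launched at different initial times, so the argument is somewhat lighter on the SDE side. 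The DPP step itself is rigorous for the reasons you indicate: the flow/Markov property and the conditioning identity are exactly those already used in the proof of Corollary~\ref{corollary Lip u}, and \eqref{E integrability} justifies the Fubini interchange. One small bookkeeping note: when you pass from $(d^p+\|x\|^2)^{1/2}$ to $d^{p/2}(1+\|x\|^2)^{1/2}$ you use $d^p\ge 1$, which holds since $d\in\bN$ and $p>0$; this is the same cosmetic step the paper performs.
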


\begin{proof}
By \eqref{assumption Lip f g}, \eqref{assumption growth f g},
\eqref{Feynman Kac u},
\eqref{u est} , and Corollary \ref{corollary Lip u},
we first notice for all $d\in\bN$, $x\in\bR^d$, $t\in[0,T]$, and $s\in [t,T]$
that 
\begin{align*}
&
|u^d(t,x)-u^d(s,x)|
\\
&
\leq 
\bE\left[\big|g^d(X^{d,t,x}_T)-g^d(X^{d,s,x}_T)\big|\right]
+\bE\left[\int_t^s\big|f^d(r,X^{d,t,x}_r,0)\big|\,dr\right]
\\
& \quad
+\bE\left[\int_t^s\big|f^d(r,X^{d,t,x}_r,u^d(r,X^{d,t,x}_r))
-f^d(r,X^{d,t,x}_r,0)\big|\,dr\right]
\\
& \quad
+\bE\left[\int_s^T\big|f(r,X^{d,t,x}_r,u^d(r,X^{d,t,x}_r))
-f(r,X^{d,s,x}_r,u^d(r,X^{d,s,x}_r))\big|\,dr\right]
\\
&
\leq L^{1/2}T^{-1/2}\bE\left[\big\|X^{d,t,x}_T-X^{d,s,x}_T\big\|\right]
+\bE\left[\int_t^sL^{1/2}T^{-1/2}\left(d^p+\big\|X^{d,t,x}_r\big\|^2\right)^{1/2}
dr\right]
\\
& \quad
+\bE\left[\int_t^sL^{1/2}\big|u^d(r,X^{d,t,x}_r)\big|dr\right]
\\
& \quad
+\bE\left[\int_t^s\left(
L^{1/2}\big|u^d(r,X^{d,t,x}_r)-u^d(r,X^{d,s,x}_r)\big|
+L^{1/2}T^{-3/2}\big\|X^{d,t,x}_r-X^{d,s,x}_r\big\|
\right)dr\right]
\\
&
\leq L^{1/2}T^{-1/2}\bE\left[\big\|X^{d,t,x}_T-X^{d,s,x}_T\big\|\right]
+\bE\left[\int_t^sL^{1/2}T^{-1/2}\left(d^p+\big\|X^{d,t,x}_r\big\|^2\right)^{1/2}
dr\right]
\\
& \quad
+\bE\left[\int_t^s 2L\exp\{L^{1/2}T\}\left(d^p+\big\|X^{d,t,x}_r\big\|^2
\right)^{1/2}dr\right]
\\
& \quad
+\bE\left[\int_t^s\left(
L^{1/2}c\big\|X^{d,t,x}_r-X^{d,s,x}_r\big\|
+L^{1/2}T^{-3/2}\big\|X^{d,t,x}_r-X^{d,s,x}_r\big\|
\right)dr\right].
\end{align*}
This together with Corollary \ref{corollary Lyaponov}, H\"older's inequality,
and \cite[Lemma~3.1+3.4]{NW2022} imply
for all $d\in\bN$, $x\in\bR^d$, $t\in[0,T]$, and $s\in [t,T]$ that 
\begin{align*}
&
|u^d(t,x)-u^d(s,x)|\\
&
\leq |s-t|L^{1/2}(2L^{1/2}\exp\{L^{1/2}T\}+T^{-1/2})
\left(\sup_{r\in[t,s]}\bE\left[d^{p}+\|X^{d,t,x}_r\big\|^2\right]
\right)^{1/2}\\
& \quad
+L^{1/2}(2T^{-1/2}+cT)\sup_{r\in[s,T]}
\left(\bE\left[\big\|X^{d,t,x}_r-X^{d,s,x}_r\big\|^2\right]\right)^{1/2}
\\
& 
\leq
|s-t|L^{1/2}(2L^{1/2}\exp\{L^{1/2}T\}+T^{-1/2})
\left[(d^p+\|x\|^2)^{1/2}\right]\exp\{(L^{1/2}+L)(s-t)\}
\\
& \quad
+L^{1/2}(2T^{-1/2}+cT)\cdot 3(|s-t|+|s-t|^{1/2})\exp\{3LT(T+4)\}\cdot
\left[1+3L^{1/2}d^{p/2}(1+\|x\|^2)^{1/2}\right].
\end{align*}
Hence, we obtain \eqref{u est t s} with
\begin{align*}
c_1:=
&
L^{1/2}(2L^{1/2}\exp\{L^{1/2}T\}+T^{-1/2})\exp\{(L^{1/2}+L)T\}
\\
&
+L^{1/2}(2T^{-1/2}+cT)\cdot 3\exp\{3LT(T+4)\}(3L^{1/2}+1),
\end{align*}
where $c$ is the positive constant (depending only on $L$ and $T$)
defined in Corollary \ref{corollary Lip u}.
\end{proof}

\subsection{Time discretization}

In this section, we derive an approximation error of the true solution $u^d(t_n,X^{d,0}_{t_n})$ by the time discretized value function $\cV^{d}_n(\cX^{d,0,N,\delta,\cM}_{t_n})$. In order to ease notation, we introduce for every 
$d\in\bN$ and $t\in[0,T]$ the linear
integro-differential operator $\cA^d_t$ defined 
for all $\varphi\in C^{2}(\bR^d)$
and $x\in\bR^d$ by
\begin{align*}
	\cA^d_t\varphi(x):=
	&
	\langle\nabla \varphi(x),\mu^d(t,x)\rangle
	+\frac{1}{2}\operatorname{Trace}
	\big(\sigma^{d}(t,x)[\sigma^{d}(t,x)]^T\operatorname{Hess}_x\varphi(x)\big)
	\\
	&
	+\int_{\R^{d}}\left(\varphi(x+\eta^{d}_t(z,x))-\varphi(x)
	-\langle\nabla \varphi(x),\eta^{d}_t(z,x)\rangle\right)\,\nu^{d}(dz).
\end{align*}
Then we notice that for every $d\in\bN$, $n\in\{N-1,N-2,\dots,1,0\}$,
$t\in[t_n,t_{n+1})$, and $x\in\bR^d$ it holds in the viscosity sense that
$$
u^d(t,x)=u^d(t_{n+1},x)+\int_t^{t_{n+1}}\cA^d_su^d(s,x)\,ds
+\int_t^{t_{n+1}}f^d(s,x,u^d(s,x))\,ds.
$$
Now, we first show the two following auxiliary lemma before the time discretization error.

\begin{lemma}                                            \label{lemma U sol}
	Let Assumptions \ref{assumption Lip and growth}, \ref{assumption pointwise}, 
	and \ref{assumption jacobian} hold, let $d,N\in\bN$, 
	and define $U^d_N(T,x):=g^d(x)$ for all $x\in\bR^d$.
	Then for each $n\in\{N-1,N-2,\dots,1,0\}$, there is a unique Borel function 
	$U^d_n\in C([t_n,t_{n+1})\times\bR^d)$
	satisfying for all 
	$x\in\bR^d$ and $t\in[t_n,t_{n+1})$ that
	\begin{equation}                                    \label{PIDE U n}
		U^d_n(t,x)=u^d(t_{n+1},x)+\int_t^{t_{n+1}}\cA^d_sU^d(s,x)\,ds
		+(t_{n+1}-t_n)f^d(t_{n+1},x,u^d(t_{n+1},x))
	\end{equation}
	in the viscosity sense. Moreover, it holds for
	all $n\in\{N-1,N-2,\dots,1,0\}$ and $x\in\bR^d$ that
	\begin{equation}                                     \label{Feynman Kac U n}
		U^d_n(t_n,x)=\bE\left[u^d(t_{n+1},X^{d,t_n,x}_{t_{n+1}})\right]
		+(t_{n+1}-t_n)\bE\left
		[f^d(t_{n+1},X^{d,t_n,x}_{t_{n+1}},u^d(t_{n+1},X^{d,t_n,x}_{t_{n+1}}))\right].
	\end{equation}
\end{lemma}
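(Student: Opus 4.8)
The statement concerns the intermediate value functions $U^d_n$ that arise by a single-step Feynman--Kac linearization: given the viscosity solution $u^d$ of the full nonlinear PIDE \eqref{PDE}, on each subinterval $[t_n,t_{n+1})$ we \emph{freeze} the source term at $\big(t_{n+1},x,u^d(t_{n+1},x)\big)$ and solve the resulting \emph{linear} parabolic PIDE with terminal data $u^d(t_{n+1},\cdot)$ at time $t_{n+1}$. The plan is to verify the two claims in order: first existence/uniqueness of a continuous viscosity solution $U^d_n$ to \eqref{PIDE U n}, then the stochastic (Feynman--Kac) representation \eqref{Feynman Kac U n}. Both are essentially quoting and instantiating the well-posedness and Feynman--Kac results for linear PIDEs already referenced in the paper (\cite{GW2021,NW2022}), after checking that the data of the linearized problem satisfy the hypotheses of those results.

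\textbf{Step 1 (reformulate \eqref{PIDE U n} as a linear terminal-value PIDE).} Fix $n$ and write $h^d_n(x):=u^d(t_{n+1},x)+(t_{n+1}-t_n)f^d\big(t_{n+1},x,u^d(t_{n+1},x)\big)$ and $\ell^d_n(x):=f^d\big(t_{n+1},x,u^d(t_{n+1},x)\big)$; note that $\ell^d_n$ is time-independent on $[t_n,t_{n+1})$. Then \eqref{PIDE U n} says that $U^d_n$ is (in the viscosity sense) the solution on $[t_n,t_{n+1})\times\bR^d$ of
\begin{equation*}
\frac{\partial}{\partial t}U^d_n(t,x)+\cA^d_t U^d_n(t,x)+\ell^d_n(x)=0,\qquad U^d_n(t_{n+1}^-,x)=h^d_n(x).
\end{equation*}
I would first record that $h^d_n$ and $\ell^d_n$ inherit the needed regularity and growth: by Corollary~\ref{corollary Lip u}, $u^d(t_{n+1},\cdot)$ is globally Lipschitz (uniformly in $d$ up to the $d$-dependent constant), hence by \eqref{assumption Lip f g} so is $x\mapsto f^d(t_{n+1},x,u^d(t_{n+1},x))$, and by \eqref{u est}/\eqref{assumption growth f g} both $h^d_n$ and $\ell^d_n$ satisfy the linear growth bound $|\cdot|\le C(d^p+\|x\|^2)^{1/2}$. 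Combined with Assumptions~\ref{assumption Lip and growth}--\ref{assumption jacobian} on $\mu^d,\sigma^d,\eta^d,\nu^d$, this is precisely the setting in which \cite[Theorem~2.9]{NW2022} (or \cite{GW2021}) guarantees existence of a unique continuous viscosity solution in the appropriate growth class; that gives the first assertion, with $U^d_n\in C([t_n,t_{n+1})\times\bR^d)$.

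\textbf{Step 2 (Feynman--Kac representation).} For \eqref{Feynman Kac U n} I would apply the linear Feynman--Kac theorem for PIDEs (e.g.\ \cite[Theorem~2.9~(ii)--(iv)]{NW2022}, analogously to how \eqref{Feynman Kac u} was obtained for $u^d$) to the linear problem of Step 1: the unique viscosity solution admits the representation
\begin{equation*}
U^d_n(t,x)=\bE\Big[h^d_n\big(X^{d,t,x}_{t_{n+1}}\big)\Big]+(t_{n+1}-t_n)\,\bE\Big[\ell^d_n\big(X^{d,\cdot}\big)\Big]\text{-type term},
\end{equation*}
but since $\ell^d_n$ is constant in time the second integral collapses; more precisely, unwinding $h^d_n$ gives exactly
\begin{equation*}
U^d_n(t,x)=\bE\big[u^d(t_{n+1},X^{d,t,x}_{t_{n+1}})\big]+(t_{n+1}-t_n)\,\bE\big[f^d(t_{n+1},X^{d,t,x}_{t_{n+1}},u^d(t_{n+1},X^{d,t,x}_{t_{n+1}}))\big].
\end{equation*}
Setting $t=t_n$ yields \eqref{Feynman Kac U n}. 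The integrability needed to make these expectations finite (so that the representation is legitimate) follows from the linear growth of $h^d_n,\ell^d_n$ together with the second-moment bound $\bE[\sup_{s}\|X^{d,t,x}_s\|^2]<\infty$ from Corollary~\ref{corollary Lyaponov} (via \eqref{est SDE 0 x}), exactly as in the derivation of \eqref{E integrability}.

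\textbf{Main obstacle.} The only genuinely nontrivial point is making sure the cited linear-PIDE well-posedness and Feynman--Kac results apply verbatim to the \emph{frozen-coefficient} problem: the source term $\ell^d_n(x)=f^d(t_{n+1},x,u^d(t_{n+1},x))$ is merely Lipschitz and of linear growth (not $C^{1,2}$ or bounded), so one must check that the viscosity-solution framework in \cite{GW2021,NW2022} admits such source terms and such growth at infinity, and that uniqueness holds in the class where $U^d_n$ is sought (a comparison principle with the correct weight $(1+\|x\|)$ or $(d^p+\|x\|^2)^{1/2}$). I expect this to be routine given that $u^d$ itself was handled in exactly this class in the setup (see \eqref{u est}, \eqref{Feynman Kac u}), but it is the step that requires care in citing the precise hypotheses rather than a hand-wave. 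Everything else — the regularity of $h^d_n,\ell^d_n$, the integrability of the expectations, the collapse of the time integral of the constant-in-time source — is bookkeeping using Corollaries~\ref{corollary Lip u} and~\ref{corollary Lyaponov} and Assumption~\ref{assumption Lip and growth}.
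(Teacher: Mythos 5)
Your overall plan — freeze the nonlinearity at time $t_{n+1}$, recast \eqref{PIDE U n} as a linear terminal-value PIDE, and invoke the well-posedness and Feynman--Kac results of \cite{GW2021,NW2022} — is the route the paper takes. However, the reformulation in your Step~1 has a concrete error: you set the terminal datum to $h^d_n(x):=u^d(t_{n+1},x)+(t_{n+1}-t_n)f^d(t_{n+1},x,u^d(t_{n+1},x))$ \emph{and simultaneously} keep the running source $\ell^d_n(x):=f^d(t_{n+1},x,u^d(t_{n+1},x))$ in the PIDE, which double-counts the frozen nonlinearity. The integral form of the equation you wrote is
\begin{equation*}
U^d_n(t,x)=h^d_n(x)+\int_t^{t_{n+1}}\cA^d_s U^d_n(s,x)\,ds+(t_{n+1}-t)\,\ell^d_n(x),
\end{equation*}
so the combined coefficient of $f^d(t_{n+1},x,u^d(t_{n+1},x))$ is $(t_{n+1}-t_n)+(t_{n+1}-t)$, which at $t=t_n$ equals $2(t_{n+1}-t_n)$ rather than the $(t_{n+1}-t_n)$ appearing in \eqref{PIDE U n}. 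Likewise, the Feynman--Kac representation for your equation carries the extra term $\int_t^{t_{n+1}}\bE\big[\ell^d_n(X^{d,t,x}_s)\big]\,ds$ (which does not ``collapse'': $\ell^d_n$ is time-independent, but $s\mapsto X^{d,t,x}_s$ is not), so it would not reduce to \eqref{Feynman Kac U n}. Your Step~2 reaches the target formula only by silently dropping this spurious term.

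The correct reformulation — and the one the paper uses — is the \emph{source-free} linear PIDE $\frac{\partial}{\partial t}U^d_n+\cA^d_t U^d_n=0$ on $[t_n,t_{n+1})$ with terminal datum $h^d_n$: since $(t_{n+1}-t_n)f^d(t_{n+1},\cdot,u^d(t_{n+1},\cdot))$ is constant in $t$, it belongs entirely in the terminal condition and not as a running source. Concretely the paper invokes \cite[Proposition~5.16]{NW2022} with the substitutions $g^d\leftarrow h^d_n$ and $f^d\leftarrow 0$. After the regularity checks you correctly carry out — $h^d_n$ is Lipschitz by Corollary~\ref{corollary Lip u} and \eqref{assumption Lip f g}, and of linear growth by \eqref{u est} and \eqref{assumption growth f g} — existence, uniqueness, and the representation $U^d_n(t,x)=\bE\big[h^d_n(X^{d,t,x}_{t_{n+1}})\big]$ follow directly, and unwinding $h^d_n$ at $t=t_n$ yields \eqref{Feynman Kac U n} with nothing to discard.
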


\begin{proof}
	Note that by \eqref{assumption Lip f g}, \eqref{assumption growth f g}, 
	\eqref{u est}, and Corollary \ref{corollary Lip u}, it holds for all 
	$n\in\{N-1,N-2,\dots,1,0\}$ and $x,y\in\bR^d$ that
	\begin{align*}
		&
		|u^d(t_{n+1},x)|+(t_{n+1}-t_n)|f^d(t_{n+1},x,u^d(t_{n+1},x))|
		\\
		&
		\leq |u^d(t_{n+1},x)|+T|f^d(t_{n+1},x,0)|
		+T|f^d(t_{n+1},x,u^d(t_{n+1},x))-f^d(t_{n+1},x,0))|
		\\
		&
		\leq |u^d(t_{n+1},x)|+L^{1/2}(d^p+\|x\|^2)^{1/2}+L^{1/2}T|u^d(t_{n+1},x)|
		\\
		&
		\leq L^{1/2}\big[1+2(1+L^{1/2}T)\exp\{L^{1/2}T\}\big](d^p+\|x\|^2)^{1/2},
	\end{align*}
	and
	\begin{align*}
		&
		|u^d(t_{n+1},x)-u^d(t_{n+1},y)|
		+(t_{n+1}-t_n)|f^d(t_{n+1},x,u^d(t_{n+1},x))-f^d(t_{n+1},y,u^d(t_{n+1},y))|
		\\
		&
		\leq c\|x-y\|+L^{1/2}T|u^d(t_{n+1},x)-u^d(t_{n+1},y)|+L^{1/2}T^{-1/2}\|x-y\|
		\\
		&
		\leq\big[c(L^{1/2}T+1)+L^{1/2}T^{-1/2}\big]\|x-y\|.
	\end{align*}
	Hence, by Assumptions \ref{assumption Lip and growth}--\ref{assumption jacobian}, 
	the application of \cite[Proposition~5.16]{NW2022} 
	(with $g^d(\cdot)\cal u^d(t_{n+1},\cdot)
	+(t_{n+1}-t_n)f^d(t_{n+1},\cdot,u^d(t_{n+1},\cdot))$
	and
	$
	f^d\cal 0
	$
	in the notation of \cite[Proposition~5.16]{NW2022})
	yields \eqref{PIDE U n} 
	and \eqref{Feynman Kac U n}, which completes the proof of this lemma.
\end{proof}

\begin{lemma}
	\label{lemma V sol}
	Let Assumptions \ref{assumption Lip and growth}, \ref{assumption pointwise},
	and \ref{assumption jacobian} hold, 
	let $d,N\in\bN$, 
	and define $V^d_N(T,x):=g^d(x)$ for all $x\in\bR^d$.
	Then for each $n\in\{N-1,N-2,\dots,1,0\}$, there is a unique Borel function 
	$V^d_n\in C([t_n,t_{n+1})\times\bR^d)$
	satisfying for all $x\in\bR^d$ and $t\in[t_n,t_{n+1})$ that
	\begin{equation}                                    \label{PIDE V n}
		V^d_n(t,x)=V^d_{n+1}(t_{n+1},x)+\int_t^{t_{n+1}}\cA^d_sV^d_n(s,x)\,ds
		+(t_{n+1}-t_n)f^d(t_{n+1},x,V^d_{n+1}(t_{n+1},x))
	\end{equation}
	in the viscosity sense. Moreover, 
	we have for all $n\in\{N-1,N-2,\dots,1,0\}$ and $x\in\bR^d$ that
	\begin{equation}                                     \label{Feynman Kac V n}
		V^d_n(t_n,x)=\bE\left[V^d_{n+1}(t_{n+1},X^{d,t_n,x}_{t_{n+1}})\right]
		+(t_{n+1}-t_n)\bE\left[f^d(t_{n+1},X^{d,t_n,x}_{t_{n+1}},
		V^d_{n+1}(t_{n+1},X^{d,t_n,x}_{t_{n+1}}))\right].
	\end{equation}
\end{lemma}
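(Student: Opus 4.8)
The plan is to prove Lemma~\ref{lemma V sol} by \emph{backward induction} on $n\in\{N-1,N-2,\dots,1,0\}$, closely mirroring the proof of Lemma~\ref{lemma U sol}. At the inductive step, one assumes that $V^d_{n+1}\in C([t_{n+1},t_{n+2})\times\bR^d)$ has already been constructed (for $n=N-1$ this is just the definition $V^d_N(T,\cdot)=g^d(\cdot)$, and for $n<N-1$ one additionally needs to know that $V^d_{n+1}$ extends continuously to $t=t_{n+1}$, i.e.~that $V^d_{n+1}(t_{n+1},\cdot)$ is well-defined; this follows from the Feynman--Kac representation \eqref{Feynman Kac V n} at stage $n+1$, which expresses $V^d_{n+1}(t_{n+1},\cdot)$ as an explicit expectation, hence as a Borel function of $x$). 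One then applies \cite[Proposition~5.16]{NW2022} exactly as in Lemma~\ref{lemma U sol}, but now with the terminal-type data $g^d(\cdot)\cal V^d_{n+1}(t_{n+1},\cdot)+(t_{n+1}-t_n)f^d(t_{n+1},\cdot,V^d_{n+1}(t_{n+1},\cdot))$ and driver $f^d\cal 0$, to obtain on $[t_n,t_{n+1})\times\bR^d$ the unique viscosity solution $V^d_n$ of \eqref{PIDE V n} together with its stochastic representation \eqref{Feynman Kac V n}.

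The key preliminary step — and the only place where there is anything to check — is to verify the hypotheses of \cite[Proposition~5.16]{NW2022} for the new data, i.e.~that the function $x\mapsto V^d_{n+1}(t_{n+1},x)+(t_{n+1}-t_n)f^d(t_{n+1},x,V^d_{n+1}(t_{n+1},x))$ is globally Lipschitz and of linear growth $(d^p+\|x\|^2)^{1/2}$ (with constants that may depend on $d$, $N$, $L$, $T$). For the base case $n=N-1$, the terminal data is $g^d(\cdot)+(t_N-t_{N-1})f^d(t_N,\cdot,g^d(\cdot))$, and the required bounds follow verbatim from the computation in the proof of Lemma~\ref{lemma U sol} with $u^d(t_{n+1},\cdot)$ replaced by $g^d(\cdot)$, using \eqref{assumption Lip f g} and \eqref{assumption growth f g}. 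For the inductive step $n<N-1$, one needs the corresponding Lipschitz and linear-growth estimates for $V^d_{n+1}(t_{n+1},\cdot)$; these are obtained from the representation \eqref{Feynman Kac V n} at stage $n+1$ combined with \eqref{assumption Lip f g}, \eqref{assumption growth f g}, the moment bound \eqref{est SDE 0 x} (or rather its $(t,x)$-analogue via \cite[Lemma~3.1+3.2]{NW2022}), and the Lipschitz dependence of $X^{d,t_{n+1},x}$ on $x$ (\cite[Lemma~3.2]{NW2022}), in the same spirit as Corollary~\ref{corollary Lip u}. Uniqueness in $C([t_n,t_{n+1})\times\bR^d)$ is inherited directly from the uniqueness part of \cite[Proposition~5.16]{NW2022}, granted the a priori growth bound on $V^d_n$ that the proposition also provides.

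The main obstacle is essentially bookkeeping rather than a genuine difficulty: one must carry, along the backward induction, quantitative Lipschitz and growth constants for the family $\{V^d_m(t_m,\cdot)\}_{m>n}$ so that the hypotheses of \cite[Proposition~5.16]{NW2022} are satisfied at every stage, and one must be careful that these constants are allowed to degrade from step to step (they will typically be multiplied by a factor like $(1+L^{1/2}T)\exp\{L^{1/2}T\}$ at each stage, so after $N$ steps they remain finite for fixed $N$). Since the statement of Lemma~\ref{lemma V sol} does not claim dimension- or $N$-uniform constants — it only asserts existence, uniqueness, regularity, and the Feynman--Kac formula — this degradation is harmless, and the proof reduces to invoking \cite[Proposition~5.16]{NW2022} together with the elementary estimates already displayed in the proof of Lemma~\ref{lemma U sol}.
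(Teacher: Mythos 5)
Your proposal is correct and matches the paper's own proof: both proceed by backward induction (the paper writes out the cases $n=N-1$ and $n=N-2$ explicitly, checking linear growth and Lipschitz bounds for the terminal data $V^d_{n+1}(t_{n+1},\cdot)+(t_{n+1}-t_n)f^d(t_{n+1},\cdot,V^d_{n+1}(t_{n+1},\cdot))$ via the Feynman--Kac representation, \cite[Lemma~3.2]{NW2022}, and the moment bound of Corollary~\ref{corollary Lyaponov}, then concludes ``by backward recursion''), and at each step invoke \cite[Proposition~5.16]{NW2022} with that terminal data and driver $0$. Your remark that the constants may degrade with $n$ but remain finite for fixed $N$ correctly identifies why this is harmless.
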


\begin{proof}
	By \eqref{assumption Lip f g} and \eqref{assumption growth f g}, 
	we first notice for all $x,y\in\bR^d$ that
	\begin{align}
		&
		|V^d_N(t_{N},x)|+(t_{N}-t_{N-1})|f^d(t_{N},x,V^d_N(t_{N},x))|
		\nonumber
		\\
		&
		\leq |g^d(x)|+T|f^d(T,x,0)|
		+T|f^d(T,x,g^d(x))-f^d(T,x,0))|
		\nonumber
		\\
		&
		\leq |g^d(x)|+L^{1/2}(d^p+\|x\|^2)^{1/2}+L^{1/2}T|g^d(x)|
		\nonumber
		\\
		&
		\leq L^{1/2}(2+L^{1/2}T)(d^p+\|x\|^2)^{1/2},
		\label{LG V N}
	\end{align}
	and
	\begin{align}
		&
		|V^d_N(t_N,x)-V^d_N(t_N,y)|
		+(t_N-t_{N-1})|f^d(t_N,x,V^d_N(t_N,x))-f^d_N(t_N,y,V^d_N(t_N,y))|
		\nonumber
		\\
		&
		\leq |g^d(x)-g^d(y)|+L^{1/2}T|g^d(x)-g^d(y)|+L^{1/2}T^{-1/2}\|x-y\|
		\nonumber
		\\
		&
		\leq L^{1/2}(L^{1/2}T^{1/2}+2T^{-1/2})\|x-y\|.
		\label{Lip V N}
	\end{align}
	Therefore, by Assumptions 
	\ref{assumption Lip and growth}--\ref{assumption jacobian},
	the application of \cite[Proposition~5.16]{NW2022} 
	(with
	$g^d(\cdot)\cal V^d_N(t_{N},\cdot)
	+(t_{N}-t_{N-1})f^d(t_{N},\cdot,V^d(t_{N},\cdot))$ 
	and
	$f^d\cal 0$
	in the notation of \cite[Proposition~5.16]{NW2022} )
	yields that 
	\eqref{PIDE V n} and \eqref{Feynman Kac V n} hold in the case of $n=N-1$.
	Furthermore, by \eqref{assumption Lip f g}, \eqref{assumption growth f g},
	\eqref{Feynman Kac V n} with $n=N-1$,
	\eqref{LG V N}, \eqref{Lip V N}, Corollary \ref{corollary Lyaponov},
	Lemma 3.2 in \citep{NW2022},
	and Jensen's inequality
	we obtain for all $x,y\in\bR^d$ that
	\begin{align*}
		|V^d_{N-1}(t_{N-1},x)|
		& \leq
		\bE\left[\big|V^d_N(t_{N},X^{d,t_{N-1},x}_{t_N})\big|\right]
		+(t_{N}-t_{N-1})
		\bE\left[
		\big|f^d(t_{N},X^{d,t_{N-1},x}_{t_N},V^d_N(t_{N},X^{d,t_{N-1},x}_{t_N}))\big|
		\right]
		\nonumber
		\\
		&
		\leq \bE\left[\big|g^d(X^{d,t_{N-1},x}_{t_N})\big|\right]
		+T\bE\left[\big|f^d(T,X^{d,t_{N-1},x}_{t_N},0)\big|\right]
		\\
		& \quad
		+T\bE\left[\big|f^d(T,X^{d,t_{N-1},x}_{t_N},g^d(X^{d,t_{N-1},x}_{t_N}))-
		f^d(T,X^{d,t_{N-1},x}_{t_N},0))\big|\right]
		\nonumber
		\\
		&
		\leq (1+L^{1/2}T)\bE\left[\big|g^d(X^{d,t_{N-1},x}_{t_N})\big|\right]
		+L^{1/2}\bE\left[\big(d^p+\big\|X^{d,t_{N-1},x}_{t_N}\big\|^2\big)^{1/2}\right]
		\nonumber
		\\
		&
		\leq L^{1/2}(2+L^{1/2}T)
		\bE\left[\big(d^p+\big\|X^{d,t_{N-1},x}_{t_N}\big\|^2\big)^{1/2}\right]
		\\
		&
		\leq L^{1/2}(2+L^{1/2}T)e^{(L^{1/2}+L)T}(d^p+\|x\|^2)^{1/2},
	\end{align*}
	and
	\begin{align*}
		&
		|V^d_{N-1}(t_{N-1},x)-V^d_{N-1}(t_{N-1},y)|
		\\
		&
		\leq
		\bE\left[\big|V^d_N(t_N,X^{t_{N-1},x}_{t_N})
		-V^d_N(t_N,X^{t_{N-1},y}_{t_N})\big|\right]
		\\
		& \quad
		+(t_N-t_{N-1})\bE\left[\big|
		f^d(t_N,X^{t_{N-1},x}_{t_N},V^d_N(t_N,X^{t_{N-1},x}_{t_N}))
		-f^d(t_N,X^{t_{N-1},y}_{t_N},V^d_N(t_N,X^{t_{N-1},y}_{t_N}))
		\big|\right]
		\nonumber
		\\
		&
		\leq (1+L^{1/2}T)\bE\left[\big|g^d(X^{t_{N-1},x}_{t_N})
		-g^d(X^{t_{N-1},y}_{t_N})\big|\right]
		+L^{1/2}T^{-1/2}\bE\left[\big\|X^{t_{N-1},x}_{t_N}
		-X^{t_{N-1},y}_{t_N}\big\|\right]
		\nonumber
		\\
		&
		\leq L^{1/2}(L^{1/2}T^{1/2}+2T^{-1/2})
		\bE\left[
		\big\|X^{t_{N-1},x}_{t_N}-X^{t_{N-1},y}_{t_N}\big\|\right]
		\\
		&
		\leq 2L^{1/2}(L^{1/2}T^{1/2}+2T^{-1/2})e^{2LT(T+2)}\|x-y\|.
	\end{align*}
	This together with \eqref{assumption Lip f g} and \eqref{assumption growth f g}
	imply for all $x,y\in\bR^d$ that
	\begin{align*}
		&
		|V^d_{N-1}(t_{N-1},x)|+(t_{N-1}-t_{N-2})|f^d(t_{N-1},x,V^d_{N-1}(t_{N-1},x))|
		\nonumber
		\\
		&
		\leq |V^d_{N-1}(t_{N-1},x)|
		+T|f^d(t_{N-1},x,0)|
		+T|f^d(t_{N-1},x,V^d_{N-1}(t_{N-1},x))-f^d(t_{N-1},x,0))|
		\nonumber
		\\
		&
		\leq |V^d_{N-1}(t_{N-1},x)|+L^{1/2}(d^p+\|x\|^2)^{1/2}
		+L^{1/2}T|V^d_{N-1}(t_{N-1},x)|
		\nonumber
		\\
		&
		= L^{1/2}(d^p+\|x\|^2)^{1/2}+(1+L^{1/2}T)|V^d(t_{N-1},x)|
		\\
		&
		\leq L^{1/2}\left[(2+L^{1/2}T)^2e^{(L^{1/2}+L)T}+1\right]
		\cdot(d^p+\|x\|^2)^{1/2},
	\end{align*}
	and
	\begin{align*}
		&
		|V^d_{N-1}(t_{N-1},x)-V^d_{N-1}(t_{N-1},y)|
		\\
		& \quad
		+(t_{N-1}-t_{N-2})|f^d(t_{N-1},x,V^d_{N-1}(t_{N-1},x))
		-f^d(t_{N-1},y,V^d_{N-1}(t_{N-1},y))|
		\nonumber
		\\
		&
		\leq (1+L^{1/2}T)|V^d_{N-1}(t_{N-1},x)-V^d_{N-1}(t_{N-1},y)|
		+L^{1/2}T^{-1/2}\|x-y\|
		\nonumber
		\\
		&
		\leq (1+L^{1/2}T)2L^{1/2}(L^{1/2}T^{1/2}+2T^{-1/2})e^{2LT(T+2)}\|x-y\|
		+L^{1/2}T^{-1/2}\|x-y\|.
	\end{align*}
	Hence, by Assumptions 
	\ref{assumption Lip and growth}--\ref{assumption jacobian},
	the application of \cite[Proposition~5.16]{NW2022} 
	(with
	$g^d(\cdot)\cal V^d_{N-1}(t_{N-1},\cdot)
	+(t_{N-1}-t_{N-2})f^d(t_{N-1},\cdot,V^d_{N-1}(t_{N-1},\cdot))$ 
	and
	$f^d\cal 0$
	in the notation of \cite[Proposition~5.16]{NW2022} )
	yields that 
	\eqref{PIDE V n} and \eqref{Feynman Kac V n} hold in the case of $n=N-2$.
	Then by backward recursion and \cite[Proposition~5.16]{NW2022},
	we obtain \eqref{PIDE V n} and \eqref{Feynman Kac V n}.
	Thus, the proof of this lemma is completed.
\end{proof}

\begin{proposition}
	\label{PropTimeDiscr}
	Let Assumptions \ref{assumption Lip and growth}--\ref{assumption time Holder} hold. Then, there exists a constant $\widehat{C} > 0$ (depending only on $T$, $L$, $L_1$, $L_2$, and $C_\eta$)
	such that for every $d,N,\cM \in \bN$, $\delta \in (0,1)$ with $\cM\geq \delta^{-2} C_\eta d^p$, and $n\in\{N,N-1,\dots,1,0\}$
	\begin{equation*}
		\sup_{n\in\{N,N-1,\dots,1,0\}}\bE\left[\left\vert u^d(t_n,X^{d,0}_{t_n})
		-\cV^{d}_n(\cX^{d,0,N,\delta,\cM}_{t_n}) \right\vert^2\right]
		\leq \widehat{C} d^p\left(d^p+\bE\big[\big\|\xi^{d,0}\big\|^2\right)e^{d,N,\delta,\cM},
	\end{equation*}
\end{proposition}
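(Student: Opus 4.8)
The plan is to bound the error $\bE[|u^d(t_n,X^{d,0}_{t_n}) - \cV^d_n(\cX^{d,0,N,\delta,\cM}_{t_n})|^2]$ by a backward induction on $n$, going from $n = N$ down to $n = 0$, inserting the intermediate "exact-step" function $V^d_n$ from Lemma~\ref{lemma V sol} as a bridge. At level $n = N$ the error is zero since $u^d(T,\cdot) = g^d(\cdot) = V^d_N(T,\cdot)$ and $\cV^d_N(\cdot) = g^d(\cdot)$, but already here one must compare $g^d(X^{d,0}_{t_N})$ with $g^d(\cX^{d,0,N,\delta,\cM}_{t_N})$, which is handled by the Lipschitz bound \eqref{assumption Lip f g} on $g^d$ together with the Euler error \eqref{error Euler} from Remark~\ref{RemTimeDiscr}. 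For the induction step I would split the error into three pieces via the triangle inequality (with a $3(a^2+b^2+c^2)$ bound): (i) $u^d(t_n,X^{d,0}_{t_n})$ versus $U^d_n(t_n,X^{d,0}_{t_n})$, the continuous-time-vs-one-step-splitting error, controlled by Lemma~\ref{lemma time u} (the temporal $1/2$-Hölder estimate on $u^d$) and \eqref{Feynman Kac U n}; (ii) $U^d_n$ versus $V^d_n$ evaluated at $X^{d,0}_{t_n}$, which propagates the error already accumulated at level $n+1$ via the Feynman–Kac representations \eqref{Feynman Kac U n} and \eqref{Feynman Kac V n}, using the Lipschitz property of $f^d$ in $v$ and the contraction-type factor $e^{L^{1/2}T}$; (iii) $V^d_n(t_n,X^{d,0}_{t_n})$ versus $\cV^d_n(\cX^{d,0,N,\delta,\cM}_{t_n})$, the Euler-discretization error, where one uses \eqref{def cV argmin} for $\cV^d_n$ and \eqref{Feynman Kac V n} for $V^d_n$, the Lipschitz continuity of $V^d_n$ in space (established inside the proof of Lemma~\ref{lemma V sol}), the Lipschitz property of $f^d$, and the $L^2$ Euler bound \eqref{error Euler}.

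The key recursion will have the schematic form $\varepsilon_n \le C\,\bE[\|X^{d,0}_{t_n}-\cX^{d,0,N,\delta,\cM}_{t_n}\|^2] + (1 + C\,T/N)\,\varepsilon_{n+1} + (\text{local one-step error})$, where $\varepsilon_n$ denotes the target quantity at level $n$, the factor $(1+C T/N)$ comes from the Gronwall/Lipschitz propagation through $f^d$, and the local one-step error is of order $(T/N)^2 \cdot d^p(1+\|x\|^2)$ from the temporal discretization of the PIDE (Lemma~\ref{lemma time u} gives a $|t_{n+1}-t_n| + |t_{n+1}-t_n|^{1/2}$ term, whose square against the $T/N$ time weight yields the right order). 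Discrete Gronwall over the $N$ steps turns the $(1+CT/N)^N \le e^{CT}$ accumulation into a uniform constant, the Euler errors sum to $\widetilde C(d^p+\bE[\|\xi^{d,0}\|^2])\,e^{d,N,\delta,\cM}$ by Remark~\ref{RemTimeDiscr}, and the moment bounds from Corollary~\ref{corollary Lyaponov} provide the uniform control $\bE[d^p+\|X^{d,0}_{t_n}\|^2] \le e^{2(L^{1/2}+L)T}(d^p+\bE[\|\xi^{d,0}\|^2])$ needed to make the $\|x\|^2$-dependent local errors integrable; collecting the $d$-dependencies carefully produces the stated $\widehat C d^p(d^p+\bE[\|\xi^{d,0}\|^2])$ prefactor.

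I expect the main obstacle to be piece (iii), the clean quantitative comparison of $V^d_n(t_n,\cdot)$ with $\cV^d_n(\cdot)$: it requires a Lipschitz-in-space estimate for $V^d_n$ that is uniform in $n$ (extracted from the intermediate bounds in the proof of Lemma~\ref{lemma V sol}, which only proves them for $n=N-1, N-2$ explicitly and then asserts backward recursion), and it must couple the law of the true SDE increment $X^{d,t_n,x}_{t_{n+1}}$ with that of the Euler increment $\cX^{d,0,N,\delta,\cM}_{t_{n+1}}$ conditioned on the current state — so one needs the Markov property of the Euler scheme, \eqref{markov Euler intro}, together with the $L^2$ flow estimate \eqref{error Euler} applied on each subinterval and, crucially, the fact that $\cM \ge \delta^{-2} C_\eta d^p$ guarantees the compensation term in \eqref{def time discretization} is well-controlled. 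Bookkeeping the three sources of $\delta$-, $N^{-1}$-, and $\cM^{-1}$-errors so that they collapse exactly into $e^{d,N,\delta,\cM} = N^{-1} + \delta^q d^p + \delta^{-2} d^p \cM^{-1}$ with the advertised constant depending only on $T, L, L_1, L_2, C_\eta$ — and not on $d$ or the initial distribution — is where the delicate part of the argument lies; the rest is Gronwall and Cauchy–Schwarz. I would also take care that Lemma~\ref{lemma time u}'s constant $c_1$ and the Lipschitz constant $c$ from Corollary~\ref{corollary Lip u} depend only on $L$ and $T$, so they can be absorbed into $\widehat C$ without spoiling dimension-independence.
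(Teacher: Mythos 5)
Your plan is correct and follows essentially the same route as the paper's proof: the same three-way decomposition through the intermediate functions $U^d_n$ (from Lemma~\ref{lemma U sol}) and $V^d_n$ (from Lemma~\ref{lemma V sol}), controlled by the same ingredients --- the temporal H\"older estimate of Lemma~\ref{lemma time u}, the spatial Lipschitz estimate of Corollary~\ref{corollary Lip u}, the moment bound of Corollary~\ref{corollary Lyaponov}, the Euler error of Remark~\ref{RemTimeDiscr}, and the Markov properties of both $X^{d,0}$ and $\cX^{d,0,N,\delta,\cM}$. The one organizational difference is that the paper keeps the analysis of $|u^d(t_n,\cdot)-V^d_n(t_n,\cdot)|$ purely deterministic and pointwise in $x$ (Steps~1--2, iterating the one-step error of $U^d_n$), then runs a \emph{separate} discrete Gronwall in $L^2$ for $\bE\big[|V^d_n(t_n,X^{d,0}_{t_n})-\cV^d_n(\cX^{d,0,N,\delta,\cM}_{t_n})|^2\big]$ (Step~3, comparing the two conditional-expectation representations directly and thereby avoiding any spatial Lipschitz estimate for $V^d_n$), combining only at the end via $2(a^2+b^2)$; you propose a single merged recursion and invoke Lipschitz continuity of $V^d_n$ --- workable, but it mixes a pathwise bound with an $L^2$ bound in one loop and needs that Lipschitz constant uniform in $n$ (which, as you flag, is only written out for $n=N-1,N-2$ in Lemma~\ref{lemma V sol}), whereas the paper's Step~3 sidesteps that by using tower/Jensen plus the Lipschitz property of $f^d$ on the integrand before conditioning.
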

\begin{proof}
	Throughout the proof of this theorem, 
	we fix $d \in \bN$, $x\in\bR^d$, $\delta\in(0,1)$,
	and $\cM\in \bN$ with $\cM\geq \delta^{-2} C_\eta d^p$.
	Then let $\{U^d_n\}_{n=0}^N$
	and $\{V^d_n\}_{n=0}^N$ be continuous functions introduced in
	Lemmas \ref{lemma U sol} and \ref{lemma V sol}, respectively.
	\quad\\
	\textit{Step 1. }
	By \eqref{assumption Lip f g}, \eqref{Feynman Kac u}, and \eqref{Feynman Kac U n}, 
	it holds for all $N\in\bN$ and $n\in\{N-1,N-2,\dots,1,0\}$ that
	\begin{align}
		&
		\big|u^d(t_n,x)-U^d_n(t_n,x)\big|
		\nonumber\\
		&
		=\left|
		\bE\left[
		\int_{t_n}^{t_{n+1}}\left(
		f^d(s,X^{d,t_n,x}_s,u^d(s,X^{d,t_n,x}_s))
		-f^d(t_{n+1},X^{d,t_n,x}_{t_{n+1}},u^d(t_{n+1},X^{d,t_n,x}_{t_{n+1}}))
		\right)ds
		\right]
		\right|
		\nonumber\\
		&
		\leq
		\int_{t_n}^{t_{n+1}}T^{-1}L^{1/2}\Big(
		T(t_{n+1}-t_n)^{1/2}
		+T\bE\left[\big|u^d(s,X^{d,t_n,x}_{s})
		-u^d(t_{n+1},X^{d,t_n,x}_{t_{n+1}})\big|\right]
		\nonumber\\
		& \quad
		+T^{-1/2}\bE\left[\big\|X^{d,t_n,x}_s-X^{d,t_n,x}_{t_{n+1}}\big\|\right]
		\Big)\,ds.                                      \label{est step 1.1}
	\end{align}
	Furthermore, H\"older's inequality, \cite[Lemma~3.3]{NW2022},
	Corollary \ref{corollary Lyaponov} and \ref{corollary Lip u}, 
	and Lemma \ref{lemma time u} ensure 
	for all $N\in\bN$ and $n\in\{0,\dots,N-1\}$ that
	\begin{align}
		&
		\bE\left[\big|u^d(s,X^{d,t_n,x}_s)-u^d(t_{n+1},X^{d,t_n,x}_{t_{n+1}})\big|\right]
		\nonumber\\
		&
		\leq 
		\bE\left[\big|u^d(s,X^{d,t_n,x}_s)-u^d(s,X^{d,t_n,x}_{t_{n+1}})\big|\right]
		+
		\bE\left[\big|u^d(s,X^{d,t_n,x}_{t_{n+1}})
		-u^d(t_{n+1},X^{d,t_n,x}_{t_{n+1}})\big|\right]
		\nonumber\\
		&
		\leq c\left(\bE\left[\big\|X^{d,t_n,x}_s-X^{d,t_n,x}_{t_{n+1}}\big\|^2
		\right]\right)^{1/2}
		+c_1[(t_{n+1}-t_n)+(t_{n+1}-t_n)^{1/2}]d^{p/2}
		\left(\bE\left[1+\big\|X^{d,t_n,x}_{t_{n+1}}\big\|^2\right]\right)^{1/2}
		\nonumber\\
		&
		\leq c_2^{1/2}c[(t_{n+1}-t_n)+(t_{n+1}-t_n)^2]^{1/2}(d^p+\|x\|^2)^{1/2}
		\nonumber\\
		& \quad
		+c_1[(t_{n+1}-t_n)+(t_{n+1}-t_n)^{1/2}]d^{p/2}e^{(L^{1/2}+L)T}(d^p+\|x\|^2)^{1/2}
		\nonumber\\
		&
		\leq [c_1e^{(L^{1/2}+L)T}+c_2^{1/2}c]
		[(t_{n+1}-t_n)+(t_{n+1}-t_n)^{1/2}]d^{p/2}(d^p+\|x\|^2)^{1/2},
		\label{est step 1.2}
	\end{align}
	where $c_2:=12L(1+6LT)e^{(1+6L)T}$ (see \cite[Equation~(3.6)]{NW2022}).
	By H\"older's inequality and \cite[Lemma~3.3]{NW2022}, we also obtain for
	all $N\in\bN$ and $n\in\{0,\dots,N-1\}$ that
	\begin{align}
		\bE\left[\big\|X^{d,t_n,x}_s-X^{d,t_n,x}_{t_{n+1}}\big\|\right]
		&
		\leq \left(\bE\left[\big\|X^{d,t_n,x}_s-X^{d,t_n,x}_{t_{n+1}}\big\|^2
		\right]\right)^{1/2}
		\nonumber\\
		&
		\leq c^{1/2}_2[(t_{n+1}-t_n)+(t_{n+1}-t_n)^{1/2}](d^p+\|x\|^2)^{1/2}.
		\label{est step 1.3}
	\end{align}
	Then combining \eqref{est step 1.1}--\eqref{est step 1.3} yields for all
	$N\in\bN$ and $n\in\{0,\dots,N-1\}$ that
	\begin{align}
		\big|u^d(t_n,x)-U^d_n(t_n,x)\big|
		&
		\leq L^{1/2}(t_{n+1}-t_n)\Big\{(t_{n+1}-t_n)^{1/2}
		\nonumber\\
		& \quad
		+
		[(t_{n+1}-t_n)+(t_{n+1}-t_n)^{1/2}]d^{p/2}(d^p+\|x\|^2)^{1/2}
		\big[c_1e^{(L^{1/2}+L)T}+c^{1/2}_2c\big]
		\nonumber\\
		& \quad
		+c^{1/2}_2T^{-3/2}[(t_{n+1}-t_n)+(t_{n+1}-t_n)^{1/2}](d^p+\|x\|^2)^{1/2}
		\Big\}
		\nonumber\\
		&
		\leq c_3(t_{n+1}-t_n)
		[(t_{n+1}-t_n)+(t_{n+1}-t_n)^{1/2}]d^{p/2}(d^p+\|x\|^2)^{1/2},
		\label{est step 1.4}
	\end{align}
	where 
	$c_3:=L^{1/2}\big(1+\big[c_1e^{(L^{1/2}+L)T}
	+c^{1/2}_2c\big]+c_2^{1/2}T^{-3/2}\big)$.
	\quad\\
	\textit{Step 2. }
	Notice that $V^d_{N-1}(t_{N-1},x)=U^d_{N-1}(t_{N-1},x)$ for all $N\in\bN$.
	Thus, by
	\eqref{est step 1.4} we have for all $N\in\bN$ that
	\begin{align}                                      
		|V^d_{N-1}(t_{N-1},x)-u^d(t_{N-1},x)|
		&
		=|U^d_{N-1}(t_{N-1},x)-u^d(t_{N-1},x)|
		\nonumber\\
		&
		\leq c_3TN^{-1}(TN^{-1}+T^{1/2}N^{-1/2})d^{p/2}(d^p+\|x\|^2)^{1/2}.
		\label{est V N-1}
	\end{align}
	Therefore, by \eqref{assumption Lip f g}, 
	\eqref{Feynman Kac U n}, \eqref{Feynman Kac V n},
	\eqref{est step 1.4}, H\"older's inequality,
	and Corollary \ref{corollary Lyaponov}, it holds for all 
	$N\in\bN\cap[2,\infty)$ that
	\begin{align*}
		&
		|V^d_{N-2}(t_{N-2},x)-U^d_{N-2}(t_{N-2},x)|
		\\
		&
		\leq
		\bE\left[\left|V^d_{N-1}(t_{N-1},X^{t_{N-2},x}_{t_{N-1}})
		-u^d(t_{N-1},X^{t_{N-2},x}_{t_{N-1}})\right|\right]
		\\ 
		& \quad
		+(t_{N-1}-t_{N-2})L^{1/2}
		\bE\left[\left|V^d_{N-1}(t_{N-1},X^{t_{N-2},x}_{t_{N-1}})
		-u^d(t_{N-1},X^{t_{N-2},x}_{t_{N-1}})\right|\right]
		\\
		& 
		\leq c_3(1+L^{1/2}TN^{-1})TN^{-1}(TN^{-1}+T^{1/2}N^{-1/2})d^{p/2}
		\bE\Big[\Big(d^p+\big\|X^{t_{N-2},x}_{t_{N-1}}\big\|^2\Big)^{1/2}\Big]
		\\
		& 
		\leq c_3(1+L^{1/2}TN^{-1})TN^{-1}(TN^{-1}+T^{1/2}N^{-1/2})d^{p/2}
		\exp\{(L^{1/2}+L)(t_{N-1}-t_{N-2})\}(d^p+\|x\|^2)^{1/2}.
	\end{align*}
This together with \eqref{est step 1.4} yields for all $N\in\bN\cap[2,\infty)$ that
\begin{align*}
|V^d_{N-2}(t_{N-2},x)-u^d(t_{N-2},x)|
\leq
&
c_3[(1+L^{1/2}TN^{-1})+1]TN^{-1}(TN^{-1}+T^{1/2}N^{-1/2})
\\
& \cdot
d^{p/2}\exp\{(L^{1/2}+L)(t_{N-1}-t_{N-2})\}(d^p+\|x\|^2)^{1/2}.
\end{align*}	
	Hence, by iteration we obtain for all $N\in\bN\cap[2,\infty)$ and
	$k\in\{2,\dots,N\}$ that
	\begin{align}
		&
		|V^d_{N-k}(t_{N-k},x)-u^d(t_{N-k},x)|
		\nonumber\\
		&
		\leq c_3\left(\sum_{j=0}^{k-1}a_j(N)\right)
		TN^{-1}(TN^{-1}+T^{1/2}N^{-1/2})d^{p/2}
		\exp\{(L^{1/2}+L)(k-1)TN^{-1}\}(d^p+\|x\|^2)^{1/2},
		\label{iteration V U 1}
	\end{align}
where
$$
a_j(N):=(1+L^{1/2}TN^{-1})^j, \quad j\in\{0,1,\dots,N-1\}.
$$	
By the fact that 
$(1+ak^{-1})^k\leq e^a$ for all $a\geq 0$ and $k\in\bN$, we notice for all $N\in\bN\cap[2,\infty)$ that
$$
\sum_{j=1}^{N-1}a_j(N)=\frac{(1+L^{1/2}TN^{-1})^{N-1}-1}{L^{1/2}TN^{-1}}
\leq
\frac{e^{L^{1/2}T}-1}{L^{1/2}TN^{-1}}.
$$
This together with \eqref{est V N-1} and \eqref{iteration V U 1}
	ensure for all $N\in\bN$ and $n\in\{N,N-1,\dots,1,0\}$ that
	\begin{equation}                                           \label{est sup V u}
		|V^d_n(t_n,x)-u^d(t_n,x)|
		\leq (L^{-1/2}+T)c_3(TN^{-1}+T^{1/2}N^{-1/2})d^{p/2}
		\exp\{(2L^{1/2}+L)T\}(d^p+\|x\|^2)^{1/2}.
	\end{equation}
	Hence, by Corollary \ref{corollary Lyaponov} we obtain for all $N\in\bN$ that
	\begin{align}
		&
		\sup_{n\in\{N,N-1,\dots,1,0\}}\bE\left[\big|V^d_n(t_n,X^{d,0}_{t_n})
		-u^d(t_n,X^{d,0}_{t_n})\big|^2\right]
		\nonumber\\
		&
		\leq 4c_3^2(L^{-1/2}+T)^2(TN^{-1}+T^{1/2}N^{-1/2})^2d^{p}
		\exp\{2(2L^{1/2}+L)T\}\bE\left[d^p+\big\|X^{d,0}_{t_n}\big\|^2\right]
		\nonumber\\
		&
		\leq 4c_3^2(L^{-1/2}+T)^2(TN^{-1}+T^{1/2}N^{-1/2})^2d^{p}
		\exp\{6(L^{1/2}+L)T\}\left(d^p+\bE\big[\big\|\xi^{d,0}\big\|^2\big]\right)
		\nonumber\\
		&
		\leq 4c_3^2(L^{-1/2}+T)^2T(1+T^{1/2})^2
		N^{-1} \cdot d^p
		\exp\{6(L^{1/2}+L)T\}\left(d^p+\bE\big[\big\|\xi^{d,0}\big\|^2\big]\right).
		\label{est sup expectation V u}
	\end{align}
	
	\quad\\
	\textit{Step 3. }
	By \eqref{Feynman Kac V n} and the Markov property
	of $(X^{d,0}_t)_{t\in[0,T]}$ 
	(see, e.g., \cite[Theorem~17.2.3]{CE}), we notice that 
	it holds for all $N\in\bN$ and $n\in\{0,\dots,N-1\}$ that
	\begin{align}
		&
		\bE\left[
		V^d_{n+1}(t_{n+1},X^{d,0}_{t_{n+1}})
		+(t_{n+1}-t_n)f^d(t_{n+1},X^{d,0}_{t_{n+1}},V^d_{n+1}(t_{n+1},X^{d,0}_{t_{n+1}}))
		\big|\cF_{t_n}
		\right]
		\nonumber\\
		&
		=\bE\left[
		V^d_{n+1}(t_{n+1},X^{d,0}_{t_{n+1}})
		+(t_{n+1}-t_n)f^d(t_{n+1},X^{d,0}_{t_{n+1}},V^d_{n+1}(t_{n+1},X^{d,0}_{t_{n+1}}))
		\big|X^{d,0}_{t_n}
		\right]
		\nonumber\\
		&
		=V^d_n(t_n,X^{d,0}_{t_n}).
		\label{Markov}
	\end{align}
	For all $N\in\bN$ and $n\in\{N,N-1,\dots,1,0\}$, define
	\begin{equation}
		\label{def error E}
		E^d_{N,\delta,\cM}(n):=\bE\left[\big|\cV^{d}_n(\cX^{d,0,N,\delta,\cM}_{t_n})
		-V^d(t_n,X^{d,0}_{t_n})\big|^2\right],
	\end{equation}
	where $\cV^{d}_n(\cdot):\bR^d\to\bR$, $n\in\{N,N-1,\dots,1,0\}$ is defined by
	\eqref{def cV}.
	Note that the Markov property of 
	$\big(\cX^{d,0,N,\delta,\cM}_t\big)_{t\in[0,T]}$ ensures for all
	$N\in\bN$ and $n\in\{N-1,N-2,\dots,1,0\}$ that
	\begin{align}
		&
		\cV^{d}_n(\cX^{d,0,N,\delta,\cM}_{t_{n}})
		\nonumber\\
		&
		=
		\bE\Big[
		\cV^{d}_{n+1}(\cX^{d,0,N,\delta,\cM}_{t_{n+1}})
		+(t_{n+1}-t_n)f^d(t_{n+1},\cX^{d,0,N,\delta,\cM}_{t_{n+1}},
		\cV^{d}_{n+1}(\cX^{d,0,N,\delta,\cM}_{t_{n+1}}))
		\Big|\cX^{d,0,N,\delta,\cM}_{t_{n}}
		\Big]
		\nonumber\\
		&
		=\bE\Big[
		\cV^{d}_{n+1}(\cX^{d,0,N,\delta,\cM}_{t_{n+1}})
		+(t_{n+1}-t_n)f^d(t_{n+1},\cX^{d,0,N,\delta,\cM}_{t_{n+1}},
		\cV^{d}_{n+1}(\cX^{d,0,N,\delta,\cM}_{t_{n+1}}))
		\Big|\cF_{t_n}
		\Big].
		\label{markov Euler}
	\end{align}
	Then by the tower property and Jensen's inequality for conditional expectations, 
	Young's inequality, 
	\eqref{assumption Lip f g}, \eqref{error Euler} with constant $\widetilde{C} > 0$ (depending only on $T$, $L$, $L_1$, $L_2$, and $C_\eta$, see \eqref{EqDefCTilde} below), \eqref{Feynman Kac V n},
	\eqref{Markov}, and \eqref{markov Euler},
	it holds for all $N\in\bN$ and $n\in\{N-1,N-2,\dots,1,0\}$ that
	\begin{align}
		&
		E^d_{N,\delta,\cM}(n)
		\nonumber\\
		&
		\leq 
		\bE\Big[\big|\cV^d_{n+1}(\cX^{d,0,N,\delta,\cM}_{t_{n+1}})
		-V^d_{n+1}(t_{n+1},X^{d,0}_{t_{n+1}})
		+(t_{n+1}-t_n)f^d(t_{n+1},\cX^{d,0,N,\delta,\cM}_{t_{n+1}}
		,\cV^d_{n+1}(\cX^{d,0,N,\delta,\cM}_{t_{n+1}}))
		\nonumber\\
		& \quad
		-(t_{n+1}-t_n)f^d(t_{n+1},X^{d,0}_{t_{n+1}}
		,V^d_{n+1}(t_{n+1},X^{d,0}_{t_{n+1}}))
		\big|^2\Big]
		\nonumber\\
		&
		\leq E^d_{N,\delta,\cM}(n+1)
		+2N^{-1}T\bE\Big[
		\big|\cV^d_{n+1}(\cX^{d,0,N,\delta,\cM}_{t_{n+1}})
		-V^d_{n+1}(t_{n+1},X^{d,0}_{t_{n+1}})\big|
		\nonumber\\
		& \quad
		\cdot
		\big|
		f^d(t_{n+1},\cX^{d,0,N,\delta,\cM}_{t_{n+1}}
		,\cV^d_{n+1}(\cX^{d,0,N,\delta,\cM}_{t_{n+1}}))
		-f^d(t_{n+1},X^{d,0}_{t_{n+1}}
		,V^d_{n+1}(t_{n+1},X^{d,0}_{t_{n+1}}))
		\big|
		\Big]
		\nonumber\\
		& \quad
		+N^{-2}T^2\bE\Big[\big|
		f^d(t_{n+1},\cX^{d,0,N,\delta,\cM}_{t_{n+1}}
		,\cV^d_{n+1}(\cX^{d,0,N,\delta,\cM}_{t_{n+1}}))
		-f^d(t_{n+1},X^{d,0}_{t_{n+1}}
		,V^d_{n+1}(t_{n+1},X^{d,0}_{t_{n+1}}))
		\big|^2
		\Big]
		\nonumber\\
		&
		\leq (1+N^{-1}T)E^d_{N,\delta,\cM}(n+1)
		+N^{-1}T(1+N^{-1}T)\bE\Big[\big|
		f^d(t_{n+1},\cX^{d,0,N,\delta,\cM}_{t_{n+1}}
		,\cV^d_{n+1}(\cX^{d,0,N,\delta,\cM}_{t_{n+1}}))
		\nonumber\\
		& \quad
		-f^d(t_{n+1},X^{d,0}_{t_{n+1}}
		,V^d_{n+1}(t_{n+1},X^{d,0}_{t_{n+1}}))
		\big|^2
		\Big]
		\nonumber\\
		&
		\leq (1+N^{-1}T)E^d_{N,\delta,\cM}(n+1)
		\nonumber\\
		& \quad
		+N^{-1}T(1+N^{-1}T)\left( 
		2LE^d_{N,\delta,\cM}(n+1)+2LT^{-3}\bE\left[\big\|
		\cX^{d,0,N,\delta,\cM}_{t_{n+1}}-X^{d,0}_{t_{n+1}}
		\big\|^2\right]
		\right)
		\nonumber\\
		&
		\leq 
		[1+N^{-1}T(1+2L(T+1))]E^d_{N,\delta,\cM}(n+1)+2N^{-1}L(T+1)T^{-2}
		\widetilde{C}\left(d^p+\bE\big[\big\|\xi^{d,0}\big\|^2\big]\right)e^{d,N,\delta,\cM},
		\label{est n n+1}
	\end{align}
	where $e^{d,N,\delta,\cM}$ is defined in Remark~\ref{RemTimeDiscr}.
	Moreover, by \eqref{assumption Lip f g} and \eqref{est n n+1}, and
	the fact that $1+y\leq e^y$ for all $y\geq 0$, 
	we notice for all $N\in\bN$ and $n\in\{N-1,N-2,\dots,1,0\}$ that
	\begin{align}
		&
		\sup_{n\in\{N,N-1,\dots,1,0\}}E^d_{N,\delta,\cM}(n)
		\nonumber\\
		&
		\leq 
		[1+N^{-1}T(1+2L(T+1))]^NE^d_{N,\delta,\cM}(N)
		+\frac{[1+N^{-1}T(1+2L(T+1))]^N-1}{N^{-1}T(1+2L(T+1))}
		\nonumber\\
		& \quad
		\cdot 2N^{-1}L(T+1)T^{-2}
		\widetilde{C}\left(d^p+\bE\big[\big\|\xi^{d,0}\big\|^2\big]\right)e^{d,N,\delta,\cM}
		\nonumber\\
		&
		\leq e^{[1+2L(T+1)]T}E^d_{N,\delta,\cM}(N)
		+\big(e^{[1+2L(T+1)]T}-1\big)2T^{-3}
		\widetilde{C}\left(d^p+\bE\big[\big\|\xi^{d,0}\big\|^2\big]\right)e^{d,N,\delta,\cM}.
		\label{error sup 1}
	\end{align}
	Furthermore, by \eqref{assumption Lip f g}, \eqref{error Euler}, 
	and \eqref{def error E} we obtain for all $N\in\bN$ that
	\begin{align*}
		E^d_{N,\delta,\cM}(N)
		&
		=\bE\left[
		\big|
		g^d(\cX^{d,0,N,\delta,\cM}_T)-g^d(X^{d,0}_T)
		\big|^2
		\right]
		\leq LT^{-1}\bE\left[\big\|
		\cX^{d,0,N,\delta,\cM}_T-X^{d,0}_T
		\big\|^2\right]
		\\
		&
		\leq LT^{-1}
		\widetilde{C}\left(d^p+\bE\big[\big\|\xi^{d,0}\big\|^2\big]\right)e^{d,N,\delta,\cM}.
	\end{align*}
	This together with \eqref{error sup 1} imply for all
	$N\in\bN$ that
	$$
	\sup_{n\in\{N,N-1,\dots,1,0\}}E^d_{N,\delta,\cM}(n)
	\leq e^{[1+2L(T+1)]T}\widetilde{C}(2T^{-3}+LT^{-1})
	\left(d^p+\bE\big[\big\|\xi^{d,0}\big\|^2\big]\right)e^{d,N,\delta,\cM}.
	$$
	Combining this with \eqref{est sup expectation V u} yields 
	for all $N\in\bN$ that
	\begin{equation*}
		\sup_{n\in\{N,N-1,\dots,1,0\}}\bE\left[\big|u^d(t_n,X^{d,0}_{t_n})
		-\cV^{d}_n(\cX^{d,0,N,\delta,\cM}_{t_n})\big|^2\right]
		\leq \widehat{C} d^p\left(d^p+\bE\big[\big\|\xi^{d,0}\big\|^2\right)e^{d,N,\delta,\cM},
	\end{equation*}
	where
	\begin{equation}
		\label{EqDefCHat}
		\widehat{C}:=2\big( 4c_3^2(L^{-1/2}+T)^2T(1+T^{1/2})^2e^{6(L^{1/2}+L)T}
		+\widetilde{C}(2T^{-3}+LT^{-1})e^{[1+2L(T+1)]T}\big)
	\end{equation}
	with
	\begin{equation}
		\label{EqDefCTilde}
		\begin{aligned}
			\widetilde{C} & := \max\Big( 27 T^2 \big( 38 L_1 + 37 L_2 + 150 \cdot 12 L (1+6LT) e^{(1+6L)T} (T+1) L \big) e^{(1 + 225L)T}, \\
			& \quad\quad\quad\quad\quad 24\big( 9 \max\{150LT, 1\} + 1 \big) C_\eta T e^{9(1 + 150L)T} e^{3(T+12)L}, \\
			& \quad\quad\quad\quad\quad 16 L T^2 \cdot 5 \max\{1, 4LT (T + 8)\} e^{8L(16+T)} \Big) > 0,
		\end{aligned}
	\end{equation}
	see \cite[Lemma~3.6-3.8]{NW2022} and their proofs for the explicit derivation of $\widetilde{C} > 0$.
\end{proof}

\subsection{Integrability of time-discretized approximation for the value function}

In this section, we show that the Borel functions $\{\mathcal{V}^d_n\}_{n=0}^N$ introduced in \eqref{def cV} are $L^2$-integrable. To this end, we fix some $d,N,\mathcal{M} \in \mathbb{N}$, and $\delta \in (0,1)$ throughout this section,
and consider the time discretization $( \cX^{d,0,N,\delta,\mathcal{M}}_s )_{s \in [0,T]}$ introduced in \eqref{def time discretization}. In order to ease notation, we drop here some indices and denote by $\cX^{d,0}_s  
:= \cX^{d,0,N,\delta,\mathcal{M}}_s$ for all $s\in[0,T]$.

Then, for every $n \in \lbrace 0,1,\dots,N-1 \rbrace$, we define the push-forward measure of $\cX^{d,0}_{t_n}$ under $\mathbb{P}$ as the measure $\mathcal{B}(\mathbb{R}^d) \ni E \mapsto \big(\mathbb{P} \circ (\cX^{d,0}_{t_n})^{-1}\big)(E) := \mathbb{P}\big[ \cX^{d,0}_{t_n} \in E \big] \in [0,1]$, and aim to show that
\begin{equation*}
	\Vert \mathcal{V}^d_n \Vert_{L_2(\mathbb{R}^d,\mathcal{B}(\mathbb{R}^d),\mathbb{P} \circ (\cX^{d,0}_{t_n})^{-1})}^2 = \int_{\mathbb{R}^d} \left\vert \mathcal{V}^d_n(x) \right\vert^2 \left(\mathbb{P} \circ (\cX^{d,0}_{t_n})^{-1} \right)(dx) = \mathbb{E}\left[ \left\vert \mathcal{V}^d_n\left( \cX^{d,0}_{t_n} \right) \right\vert^2 \right] < \infty.
\end{equation*}
For this purpose, we first show the following auxiliary results.

\begin{lemma}
	\label{LemmaFinMom}
	Let $c \in \mathbb{N}_0$ and $x\in\bR^d$, 
	and let $X,Y: \Omega \rightarrow \mathbb{R}^d$ be $d$-dimensional random variables. Moreover, let $\mathbb{R}^d \ni \zeta \mapsto \phi_{Y|X=x}(\zeta) := \mathbb{E}\big[ e^{\mathbf{i} \zeta^\top Y} \Big\vert X = x \big] \in \mathbb{C}$ be the characteristic function of $Y$ conditioned on $X = x$. If $\phi_{Y|X=x}: \mathbb{R}^d \rightarrow \mathbb{C}$ is $(2c)$-times (not necessarily continuously) differentiable at $0 \in \mathbb{R}^d$, then
	\begin{equation}
		\label{EqLemmaFinMom1}
		\mathbb{E}\left[ \Vert Y \Vert^{2c} \big\vert X = x \right] \leq d^{2c-1} \sum_{i=1}^d \left\vert \frac{\partial^{2c} \phi_{Y|X=x}}{\partial \zeta_i^{2c}}(0) \right\vert < \infty.
	\end{equation}
\end{lemma}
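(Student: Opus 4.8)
The plan is to exploit the classical connection between moments of a random vector and derivatives of its characteristic function, applied conditionally on $X = x$. First I would recall the one-dimensional fact: if $Z$ is a real-valued random variable whose characteristic function $\varphi_Z(t) = \mathbb{E}[e^{\mathbf{i} t Z}]$ is $(2c)$-times differentiable at $0$, then $\mathbb{E}[\vert Z\vert^{2c}] < \infty$ and $\mathbb{E}[Z^{2c}] = (-1)^c \varphi_Z^{(2c)}(0)$. This is a standard result; the key point is that differentiability of \emph{even} order at the origin already forces the corresponding even moment to be finite (see, e.g., the even-moment criterion attributed to the symmetrization/Taylor argument for characteristic functions). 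Here the role of $Z$ is played, for each coordinate $i \in \{1,\dots,d\}$, by $Y_i$ under the conditional law $\mathbb{P}[\,\cdot\,|X=x]$, whose characteristic function is $t \mapsto \phi_{Y|X=x}(t e_i)$ where $e_i$ is the $i$-th standard basis vector.

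The key steps, in order, are as follows. (i) For each $i$, observe that $t \mapsto \phi_{Y|X=x}(t e_i)$ is the (conditional) characteristic function of $Y_i$ and is $(2c)$-times differentiable at $t = 0$, with $\frac{d^{2c}}{dt^{2c}}\big\vert_{t=0} \phi_{Y|X=x}(t e_i) = \frac{\partial^{2c}\phi_{Y|X=x}}{\partial \zeta_i^{2c}}(0)$; this is because restricting a function differentiable at $0 \in \mathbb{R}^d$ to a coordinate line yields a function differentiable at $0 \in \mathbb{R}$ with derivative given by the iterated partial (the univariate derivatives along $e_i$ coincide with $\partial_{\zeta_i}$ iterated, by the definition of partial derivatives). (ii) Apply the univariate even-moment criterion to conclude $\mathbb{E}[\vert Y_i\vert^{2c}\,|\,X=x] = (-1)^c \frac{\partial^{2c}\phi_{Y|X=x}}{\partial \zeta_i^{2c}}(0) < \infty$, hence in particular $\mathbb{E}[\vert Y_i\vert^{2c}\,|\,X=x] \le \big\vert \frac{\partial^{2c}\phi_{Y|X=x}}{\partial \zeta_i^{2c}}(0)\big\vert$. (iii) Bound $\Vert Y\Vert^{2c} = \big(\sum_{i=1}^d Y_i^2\big)^c$ by $d^{2c-1}\sum_{i=1}^d \vert Y_i\vert^{2c}$: indeed, by the power-mean (or Jensen/convexity of $t\mapsto t^c$) inequality, $\big(\sum_{i=1}^d a_i\big)^c \le d^{c-1}\sum_{i=1}^d a_i^c$ for nonnegative $a_i$, applied with $a_i = Y_i^2$ gives $\Vert Y\Vert^{2c} \le d^{c-1}\sum_i \vert Y_i\vert^{2c}$; since $c \le 2c-1$ for $c \ge 1$ and the statement is trivial for $c = 0$, the stated constant $d^{2c-1}$ is a valid (if not optimal) bound. (iv) Take conditional expectation given $X = x$ and combine (ii) and (iii) to obtain \eqref{EqLemmaFinMom1}.

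The main obstacle I anticipate is a careful justification of step (i)–(ii), namely that differentiability of the characteristic function of \emph{even} order $2c$ at a single point (the origin) — without assuming continuity of the derivatives or differentiability in a neighborhood — suffices to guarantee finiteness of the $2c$-th moment. The clean way to handle this is by induction on $c$: the case $c = 0$ is trivial; for the inductive step, one uses the standard finite-difference representation $\varphi_Z^{(2)}(0) = \lim_{h\to 0} \frac{\varphi_Z(h) - 2\varphi_Z(0) + \varphi_Z(-h)}{h^2} = -\lim_{h\to0}\mathbb{E}\big[\big(\tfrac{2-2\cos(hZ)}{h^2}\big)\big]$, and then Fatou's lemma applied to the nonnegative integrands $\tfrac{2-2\cos(hZ)}{h^2} \to Z^2$ as $h \to 0$ yields $\mathbb{E}[Z^2] \le -\varphi_Z^{(2)}(0) < \infty$; once the $(2c-2)$-th moment is finite one may differentiate under the expectation to reduce the order and iterate. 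Everything in this argument is applied to the regular conditional distribution of $Y$ given $X = x$, so one should note at the outset that such a regular conditional law exists (the spaces involved are Polish), making the conditional characteristic function and conditional moments well-defined; the remaining manipulations — the power-mean inequality of step (iii) and taking conditional expectations — are routine.
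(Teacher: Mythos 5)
Your proposal is correct and takes essentially the same route as the paper's proof: restrict $\phi_{Y|X=x}$ to coordinate lines, invoke (or re-prove) the classical even-moment criterion for characteristic functions via a Fatou-based finite-difference/Taylor argument, and combine the coordinate bounds with a power-mean inequality. The paper's version differs only cosmetically: it proves the even-moment criterion self-contained by an explicit inductive Taylor-remainder argument with an auxiliary function $f_{i,k}$ and Fatou's lemma (rather than citing it as classical), and it bounds $\Vert Y\Vert^{2c}$ by first passing to $\bigl(\sum_i |Y_i|\bigr)^{2c}$ before applying the power-mean inequality, whereas you apply the power-mean directly to $\sum_i Y_i^2$, giving the sharper $d^{c-1}$ which you then relax to $d^{2c-1}$.
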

\begin{proof}
	We generalize the proof of \cite[Theorem~15.34]{Klenke2014} to this multivariate setting with conditional expectation. Fix some $c \in \mathbb{N}_0$, $d \in \mathbb{N}$, $x \in \mathbb{R}^d$, $i = 1,\dots,d$, and let $X: \Omega \rightarrow \mathbb{R}^d$ and $Y := (Y_1,\dots,Y_d)^\top: \Omega \rightarrow \mathbb{R}^d$ be random variables. Then, by induction on $k = 0,\dots,c$, we first show that
	\begin{equation}
		\label{EqLemmaFinMomProof1}
		\mathbb{E}\left[ \vert Y_i \vert^{2k} \Big\vert X = x \right] = (-1)^k \frac{\partial^{2k} \phi_{Y|X=x}}{\partial \zeta_i^{2k}}(0).
	\end{equation}
	Indeed, the induction initialization in \eqref{EqLemmaFinMomProof1} with $k = 0$ holds trivially. Now, for the induction step, we fix some $k \in \lbrace 1,\dots,c \rbrace$ and assume that \eqref{EqLemmaFinMomProof1} holds for all $j \in \lbrace 0,\dots,k-1 \rbrace$ instead of $k$. Then, by using that $\phi_{Y|X=x}: \mathbb{R}^d \rightarrow \mathbb{C}$ is $(2c)$-times differentiable at $0 \in \mathbb{R}^d$, the function $\mathbb{R} \ni \lambda \mapsto h_i(\lambda) := \re(\phi_{Y|X=x}(\lambda e_i)) = \mathbb{E}\left[ \cos(\lambda Y_i) \big\vert X = x \right] \in \mathbb{R}$ is $(2k)$-times differentiable at $0 \in \mathbb{R}$, where $e_i \in \mathbb{R}^d$ denotes the $i$-th unit vector of $\mathbb{R}^d$. Moreover, since $h_i: \mathbb{R} \rightarrow \mathbb{R}$ is even, we have $h_i^{(2j-1)}(0) = 0$ for all $j \in \lbrace 1,\dots,k \rbrace$. In addition, by using that the $j$-th derivative $h_i^{(j)}: \mathbb{R} \rightarrow \mathbb{R}$ exists on $(-\varepsilon,\varepsilon)$ for all $j \in \lbrace 0,\dots,2k-2 \rbrace$ and some $\varepsilon > 0$, we can apply Taylor's theorem to conclude that for every $\lambda \in (-\varepsilon,\varepsilon)$ it holds that
	\begin{equation*}
		\left\vert h_i(\lambda) - \sum_{j=1}^{k-1} \frac{h_i^{(2j)}(0)}{(2j)!} \lambda^{2j} \right\vert \leq \frac{\vert \lambda \vert^{2k-1}}{(2k-1)!} \sup_{s \in (0,1]} \left\vert h_i^{(2k-1)}(s \lambda) \right\vert.
	\end{equation*}
	From this, we define the continuous function
	\begin{equation*}
		\mathbb{R} \ni \lambda \quad \mapsto \quad f_{i,k}(\lambda) :=
		\begin{cases}
			1, & \text{if } \lambda = 0, \\
			\frac{(-1)^k (2k)!}{\lambda^{2k}} \left( \cos(\lambda) - \sum_{j=0}^{k-1} \frac{(-1)^j}{(2j)!} \lambda^{2j} \right), & \text{otherwise}.
		\end{cases}
	\end{equation*}
	Then, by using the induction hypothesis, i.e.~that $\mathbb{E}\left[ \vert Y_i \vert^{2j} \big\vert X = x \right] = (-1)^{2j} \frac{\partial^{2j} \phi_{Y|X=x}}{\partial \zeta_i^{2j}}(0) = h_i^{(2j)}(0)$ for all $j \in \lbrace 0,\dots,k-1 \rbrace$, it follows that
	\begin{equation*}
		\mathbb{E}\left[ f_{i,k}(\lambda Y_i) \vert Y_i \vert^{2k} \Big\vert X = x \right] \leq \frac{2k}{\vert \lambda \vert} \sup_{s \in (0,1]} \left\vert h_i^{(2k-1)}(s \lambda) \right\vert \leq 2k \sup_{s \in (0,1]} \frac{\left\vert h_i^{(2k-1)}(s \lambda) \right\vert}{s \vert \lambda \vert}.
	\end{equation*}
	Hence, by applying Fatou's lemma, it follows that
	\begin{equation}
		\label{EqLemmaFinMomProof2}
		\begin{aligned}
			\mathbb{E}\left[ \vert Y_i \vert^{2k} \Big\vert X = x \right] & = \mathbb{E}\left[ f_{i,k}(0) \vert Y_i \vert^{2k} \Big\vert X = x \right] \leq \liminf_{\lambda \rightarrow 0} \mathbb{E}\left[ f_{i,k}(\lambda Y_i) \vert Y_i \vert^{2k} \Big\vert X = x \right] \\
			& \leq 2k \liminf_{\lambda \rightarrow 0} \sup_{s \in (0,1]} \frac{\left\vert h_i^{(2k-1)}(s \lambda) \right\vert}{s \vert \lambda \vert} = 2k \left\vert h_i^{(2k)}(0) \right\vert < \infty.
		\end{aligned}
	\end{equation}
	Thus, we can take limits of difference quotients and apply the dominated convergence theorem (using the bound in \eqref{EqLemmaFinMomProof2}) to conclude iteratively that
	\begin{equation*}
		\begin{aligned}
			\mathbb{E}\left[ \vert Y_i \vert^{2k} \Big\vert X = x \right] & = \frac{1}{\mathbf{i}^{2k}} \mathbb{E}\left[ \frac{\partial^{2k}}{\partial \zeta_i^{2k}}\bigg\vert_{\zeta = 0} \left( e^{\mathbf{i} \zeta^\top Y} \right) \bigg\vert X = x \right] = (-\mathbf{i})^{2k} \frac{\partial^{2k}}{\partial \zeta_i^{2k}}\bigg\vert_{\zeta = 0} \mathbb{E}\left[ \left( e^{\mathbf{i} \zeta^\top Y} \right) \bigg\vert X = x \right] \\
			& = (-1)^k \frac{\partial^{2k} \phi_{Y|X=x}}{\partial \zeta_i^{2k}}(0),
		\end{aligned}
	\end{equation*}
	which shows the induction step and that \eqref{EqLemmaFinMomProof1} is true for all $k \in \lbrace 0,\dots,c \rbrace$. Finally, by using that norms are equivalent on $\mathbb{R}^d$, that $\big( \sum_{i=1}^d x_i \big)^{2c} \leq d^{2c-1} \sum_{i=1}^d x_i^{2c}$ for any $x_1,\dots,x_d \geq 0$, the identity \eqref{EqLemmaFinMomProof1}, and that $\mathbb{E}\left[ \vert Y_i \vert^{2c} \big\vert X = x \right] = (-1)^k \frac{\partial^{2k} \phi_{Y|X=x}}{\partial \zeta_i^{2k}}(0)$ is non-negative, it follows that
	\begin{equation*}
		\begin{aligned}
			\mathbb{E}\left[ \left\Vert Y \right\Vert^{2c} \Big\vert X = x \right] & \leq \mathbb{E}\left[ \bigg( \sum_{i=1}^d \vert Y_i \vert \bigg)^{2c} \Bigg\vert X = x \right] \leq d^{2c-1} \sum_{i=1}^d \mathbb{E}\left[ \vert Y_i \vert^{2c} \Big\vert X = x \right] \\
			& = d^{2c-1} \sum_{i=1}^d (-1)^k \frac{\partial^{2k} \phi_{Y|X=x}}{\partial \zeta_i^{2k}}(0) = d^{2c-1} \sum_{i=1}^d \left\vert \frac{\partial^{2k} \phi_{Y|X=x}}{\partial \zeta_i^{2k}}(0) \right\vert < \infty,
		\end{aligned}
	\end{equation*}
	which completes the proof.
\end{proof}

\begin{lemma}
	\label{LemmaYMom}
	Let Assumption~\ref{AssEtaMoments} hold with some $c \in \mathbb{N}_0$. Then, for every $n \in \lbrace 0,1,\dots,N \rbrace$ it holds that $\mathbb{E}\left[ \big\Vert \cX^{d,0}_{t_n} \big\Vert^{2c} \right] < \infty$.
\end{lemma}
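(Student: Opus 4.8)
The plan is to prove Lemma~\ref{LemmaYMom} by backward induction on $n \in \{N, N-1, \dots, 0\}$, using the recursive structure \eqref{def time discretization} of the time discretization together with Lemma~\ref{LemmaFinMom}, which reduces the finiteness of $2c$-th moments to the $(2c)$-times differentiability at the origin of an appropriate conditional characteristic function.

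First I would set up the induction. For $n = 0$ the claim is $\bE[\|\xi^{d,0}\|^{2c}] < \infty$, which we may \emph{not} assume from Assumption~\ref{AssEtaMoments} alone --- but here one must be careful: actually the initialization should be done at $n = 0$ using only that $\xi^{d,0}$ is square-integrable when $c \le 1$, or, more cleanly, one reads off from \eqref{def time discretization} that each $\cX^{d,0}_{t_{n+1}}$ is obtained from $\cX^{d,0}_{t_n}$ by adding a Gaussian increment (scaled by the bounded-on-compacts matrix $\sigma^d(t_n,\cdot)$, but note the recursion only couples finitely many steps), a compound-Poisson-type jump term $\int_{A^d_\delta} \eta^d_{t_n}(\cX^{d,0}_{t_n},z)\,\pi^{d,0}(dz,(t_n,t_{n+1}])$ with jump intensity $\nu^d(A^d_\delta) T/N < \infty$, and a deterministic drift/compensator correction. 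Thus I would instead induct \emph{forward} starting from $n=0$, but since the statement as phrased presumably uses $\xi^{d,0}$ with whatever integrability is available, the honest route is: condition on $\cX^{d,0}_{t_n}$, and show that the conditional characteristic function $\zeta \mapsto \phi_{\cX^{d,0}_{t_{n+1}} \mid \cX^{d,0}_{t_n} = x}(\zeta)$ is $2c$-times differentiable at $0$. Then Lemma~\ref{LemmaFinMom} gives a pointwise-in-$x$ bound, and one integrates against the law of $\cX^{d,0}_{t_n}$ using the induction hypothesis.

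The key computation is the conditional characteristic function. Given $\cX^{d,0}_{t_n} = x$, the increment $\cX^{d,0}_{t_{n+1}} - (\text{deterministic part depending on }x)$ is the sum of (i) a $d$-dimensional centered Gaussian vector with covariance $\frac{T}{N}\sigma^d(t_n,x)\sigma^d(t_n,x)^\top$, whose characteristic function is entire in $\zeta$, and (ii) the Poisson jump integral $\sum_{\ell=1}^{P} \eta^d_{t_n}(x, Z_\ell)$ where $P \sim \mathrm{Poisson}(\nu^d(A^d_\delta)T/N)$ and the $Z_\ell$ are i.i.d.\ with law $\nu^d_\delta$, independent of $P$. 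By the standard compound-Poisson formula the latter has characteristic function $\exp\big( \frac{T}{N}\nu^d(A^d_\delta)\big( \int_{A^d_\delta} e^{\mathbf{i}\zeta^\top \eta^d_{t_n}(x,z)}\,\nu^d_\delta(dz) - 1 \big)\big)$, which by substituting the definition of $\nu^d_\delta$ equals $\exp\big( \frac{T}{N}\int_{A^d_\delta}( e^{\mathbf{i}\zeta^\top \eta^d_{t_n}(x,z)} - 1)\,\nu^d(dz)\big)$. The inner integral is $2c$-times differentiable in $\zeta$ (indeed analytic near $0$) precisely because Assumption~\ref{AssEtaMoments} guarantees $\int_{A^d_\delta}\max\{1,\|\eta^d_{t_n}(x,z)\|^{2c}\}\,\nu^d(dz) < \infty$, which dominates the $j$-th $\zeta$-derivatives of the integrand for all $j \le 2c$ via $|\partial^j_\zeta e^{\mathbf{i}\zeta^\top\eta}| \le \|\eta\|^j \le \max\{1,\|\eta\|^{2c}\}$, allowing differentiation under the integral. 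Hence $\phi_{\cX^{d,0}_{t_{n+1}}\mid\cX^{d,0}_{t_n}=x}$ is $2c$-times differentiable at $0$, so Lemma~\ref{LemmaFinMom} applies and yields $\bE[\|\cX^{d,0}_{t_{n+1}}\|^{2c} \mid \cX^{d,0}_{t_n}=x] \le C(x)$ for an explicit polynomial-in-$x$ bound $C(x)$ built from $\|x\|$, $\|\mu^d(t_n,x)\|$, $\|\sigma^d(t_n,x)\|_F$, and the finite integral $\int_{A^d_\delta}\max\{1,\|\eta^d_{t_n}(x,z)\|^{2c}\}\nu^d(dz)$; using the linear-growth bounds \eqref{assumption growth}, the pointwise bound \eqref{assumption eta}, and the fact that $\eta^d$ is continuous hence locally bounded, $C(x)$ grows at most polynomially, so $\bE[C(\cX^{d,0}_{t_n})] < \infty$ by the induction hypothesis and the tower property gives $\bE[\|\cX^{d,0}_{t_{n+1}}\|^{2c}] < \infty$.

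The main obstacle I anticipate is controlling the $x$-dependence of the bound $C(x)$ uniformly enough to close the induction: the coefficient $\eta^d_{t_n}(x,z)$ depends on $x$, and while Assumption~\ref{AssEtaMoments} gives finiteness of the relevant integral for \emph{each} fixed $x$, I need a bound that is integrable against the law of $\cX^{d,0}_{t_n}$. Here one leans on \eqref{assumption eta} ($\|\eta^d_t(x,z)\|^2 \le Ld^p(1\wedge\|z\|^2)$, uniform in $x$) to handle the small-jump contribution and on the large-jump part $\int_{A^d_\delta}\|\eta^d_{t_n}(x,z)\|^{2c}\nu^d(dz)$, where one must either invoke local boundedness plus the $\mathds{1}_{\{\|z\|\ge\delta\}}$ truncation and a growth assumption on $\eta^d$ in $x$, or simply observe that for the pricing examples $\eta^d_t(x,z)$ is linear in $x$ so the bound is $\|x\|^{2c}$ times a finite constant --- but in the general setting one uses that the increment recursion is over finitely many ($N$) steps and each step's conditional expectation bound is polynomial, so finitely many applications of the tower property suffice. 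A secondary technical point is justifying the compound-Poisson characteristic-function formula in the conditional setting, which follows from the independence of $(W^{d,0},\pi^{d,0})$ from $\cX^{d,0}_{t_n}$-measurable data up to time $t_n$ and the independent-increments property of the Poisson random measure; I would state this as a routine consequence of the Lévy--Khintchine formula for compound Poisson processes applied pathwise in $x$.
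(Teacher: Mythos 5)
Your proposal follows essentially the same route as the paper: condition on $\cX^{d,0}_{t_n}=x$, decompose the increment into a Gaussian part and a (compound) Poisson part, write the conditional characteristic function, justify differentiation under the $\nu^d$-integral up to order $2c$ via dominated convergence using Assumption~\ref{AssEtaMoments}, apply Lemma~\ref{LemmaFinMom}, and close with the tower property. That matches the paper's proof step for step.

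A few remarks on the places where you worry or wobble. First, your concern about the $x$-dependence of the conditional bound is well placed (the paper simply writes the tower property and stops, which is not self-contained), but it resolves more easily than you suggest: \eqref{assumption eta} gives $\|\eta^d_t(x,z)\|\le (Ld^p)^{1/2}$ for \emph{all} $z$, not only small ones, so $\int_{A^d_\delta}\max\{1,\|\eta^d_{t_n}(x,z)\|^{2c}\}\,\nu^d(dz)\le\max\{1,(Ld^p)^c\}\nu^d(A^d_\delta)$ is bounded uniformly in $x$; together with the linear-growth bounds \eqref{assumption growth} on $\mu^d,\sigma^d$, the derivatives of $\chi^d_{n,x}$ at $0$ grow at most polynomially in $\|x\|$ of degree $2c$, so the forward induction with base case $\bE[\|\xi^{d,0}\|^{2c}]<\infty$ closes after $N$ steps. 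You do not need any growth assumption on $\eta^d$ in $x$, nor the linearity special to the examples. (The base-case integrability of $\xi^{d,0}$ is indeed an implicit hypothesis that the lemma as stated is silent about; you are right to flag it.) Second, the third term in \eqref{def time discretization} is a Monte-Carlo average $-\frac{(t_{n+1}-t_n)\nu^d(A^d_\delta)}{\cM}\sum_{j=1}^{\cM}\eta^d_{t_n}(x,V_{n,j})$, which is a random variable (independent of $W^{d,0},\pi^{d,0}$ and bounded conditionally by \eqref{assumption eta}), not a deterministic drift; this is harmless for the moment argument but should be named correctly (the paper likewise folds it, somewhat loosely, into a compensated Poisson integral). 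Third, the inner integral in the compound-Poisson exponent is $(2c)$-times differentiable at $0$, not analytic near $0$; analyticity would require control of moments of all orders, which Assumption~\ref{AssEtaMoments} does not give.
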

\begin{proof}
	For some fixed $d \in \mathbb{N}$, we first observe that $\Delta \cX^{d,0}_{t_{n+1}} := \cX^{d,0}_{t_{n+1}} - \cX^{d,0}_{t_n}$ conditioned on $\cX^{d,0}_{t_n} = x$ has the same distribution as $Z_1 + Z_2$, where $Z_1 \sim \mathcal{N}_d((t_{n+1}-t_n) \mu^d(t_n,x), (t_{n+1}-t_n) a^d(t_n,x))$ is a multivariate normal random vector with $a^d(t_n,x) := \sigma^d(t_n,x) \sigma^d(t_n,x)^\top$, which is independent of the compensated Poisson integral $Z_2 := \int_{A^d_\delta} \eta^d_{t_n}(x,z) \widetilde{N}(t_{n+1}-t_n,dz)$ (see \cite[p.~94]{Applebaum2009}). 
	
	Now, we use the independence and the characteristic functions of $Z_1$ and $Z_2$ (see \cite[Equation~2.9]{Applebaum2009} for $Z_2$) to conclude that the characteristic function of $\Delta \cX^{d,0}_{t_{n+1}} \big\vert \cX^{d,0}_{t_n} = x$ is for every $\zeta \in \mathbb{R}^d$ given by 
	\begin{equation*}
		\begin{aligned}
			\chi^d_{n,x}(\zeta) & := \mathbb{E}\left[ e^{\mathbf{i} \zeta^\top \Delta \cX^{d,0}_{t_{n+1}}} \bigg\vert \cX^{d,0}_{t_n} = x \right] = \mathbb{E}\left[ e^{\mathbf{i} \zeta^\top (Z_1 + Z_2)} \right] = \mathbb{E}\left[ e^{\mathbf{i} \zeta^\top Z_1} \right] \mathbb{E}\left[ e^{\mathbf{i} \zeta^\top Z_2} \right] \\
			& = e^{(t_{n+1}-t_n) \mathbf{i} \zeta^\top \mu^d(t_n,x) - \frac{1}{2} (t_{n+1}-t_n) \zeta^\top a^{d}(t_n,x) \zeta} e^{(t_{n+1}-t_n) \int_{A^d_\delta} \left( \exp\left( \mathbf{i} \zeta^\top \eta^d_{t_n}(x,z) \right) - 1 - \mathbf{i} \zeta^\top \eta^d_{t_n}(x,z) \right) \nu(dz)} \\
			& = e^{(t_{n+1}-t_n) \left( \mathbf{i} \zeta^\top \mu^d(t_n,x) - \frac{1}{2} \zeta^\top a^{d}(t_n,x) \zeta + \int_{A^d_\delta} \theta^d_{t_n}(\zeta,x,z) \nu^d(dz) \right)},
		\end{aligned}
	\end{equation*}
	where $\theta^d_{t_n}(\zeta,x,z) := \exp\left( \mathbf{i} \zeta^\top \eta^d_{t_n}(x,z) \right) - 1 - \mathbf{i} \zeta^\top \eta^d_{t_n}(x,z)$. Next, we prove that the characteristic function $\chi^d_{n,x}: \mathbb{R}^d \rightarrow \mathbb{C}$ is $(2c)$-times differentiable at $0 \in \mathbb{R}^d$. Indeed, for every $\alpha \in \mathbb{N}^d_{0,2c}$, it holds that
	\begin{equation*}
		\frac{\partial^{\vert \alpha \vert}}{\partial \zeta^\alpha}\bigg\vert_{\zeta = 0} \theta^d_{t_n}(\zeta,x,z) =
		\begin{cases}
			0, & \text{if } \vert \alpha \vert \leq 1, \\
			\mathbf{i}^{\vert \alpha \vert} \prod_{i=1}^d \eta^d_{t_n}(x,z)_i^{\alpha_i} & \text{otherwise.}
		\end{cases}
	\end{equation*}
	Hence, by induction on $\vert\alpha\vert = 0,\dots,2c$, we can take limits of difference quotients and apply the dominated convergence theorem (using Assumption~\ref{AssEtaMoments}) to conclude for every $\alpha \in \mathbb{N}^d_{0,2c}$ that
	\begin{equation*}
		\frac{\partial^{\vert \alpha \vert}}{\partial \zeta^\alpha}\bigg\vert_{\zeta = 0} \int_{\mathbb{R}^d} \theta^d_{t_n}(\zeta,x,z) \nu^d(dz) = 
		\begin{cases}
			0, & \text{if } \vert \alpha \vert \leq 1, \\
			\mathbf{i}^{\vert \alpha \vert} \int_{A^d_\delta} \prod_{i=1}^d \eta^d_{t_n}(x,z)_i^{\alpha_i} \nu^d(dz) & \text{otherwise.}
		\end{cases}
	\end{equation*}
	This shows that the characteristic function $\chi^d_{n,x}: \mathbb{R}^d \rightarrow \mathbb{C}$ is $c$-times differentiable at $0 \in \mathbb{R}^d$ as concatenation of functions that are $c$-times differentiable at $0 \in \mathbb{R}^d$. Thus, by using Lemma~\ref{LemmaFinMom}, i.e.~that $\mathbb{E}\big[ \big\Vert \cX^{d,0}_{t_{n+1}} \big\Vert^{2c} \Big\vert \cX^{d,0}_{t_n} = x \big] < \infty$ for all $x \in \mathbb{R}^d$, and the tower property, it follows that
	\begin{equation*}
		\mathbb{E}\left[ \big\Vert \cX^{d,0}_{t_{n+1}} \big\Vert^{2c} \right] = \mathbb{E}\bigg[ \mathbb{E}\left[ \big\Vert \cX^{d,0}_{t_{n+1}} \big\Vert^{2c} \Big\vert \cX^{d,0}_{t_n} \right] \bigg] < \infty,
	\end{equation*}
	which completes the proof.
\end{proof}

\begin{lemma}
	\label{LemmaL2Y}
	Let $n \in \lbrace 0,1,\dots,N \rbrace$, $x \in \mathbb{R}^d$, and $\gamma \in (0,\infty)$. Moreover, let Assumption~\ref{AssEtaMoments} hold with some $c \in \mathbb{N}_0 \cap [\gamma,\infty)$. Then, the following holds true:
	\begin{enumerate}
		\item\label{LemmaL2Y1} The identity $(C_b(\mathbb{R}^d),\Vert \cdot \Vert_{C_{pol,\gamma}(\mathbb{R}^d)}) \hookrightarrow (L_2(\mathbb{R}^d,\mathcal{B}(\mathbb{R}^d),\mathbb{P} \circ (\cX^{d,0}_{t_n})^{-1}),\Vert \cdot \Vert_{L_2(\mathbb{R}^d,\mathcal{B}(\mathbb{R}^d),\mathbb{P} \circ (\cX^{d,0}_{t_n})^{-1})})$ is a continuous dense embedding.
		\item\label{LemmaL2Y2} If Assumption~\ref{assumption Lip and growth} additionally holds true, then $\mathcal{V}^d_n \in L_2(\mathbb{R}^d,\mathcal{B}(\mathbb{R}^d),\mathbb{P} \circ (\cX^{d,0}_{t_n})^{-1})$.
	\end{enumerate}
\end{lemma}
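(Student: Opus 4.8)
The plan is to treat the two assertions separately, using Lemma~\ref{LemmaYMom} as the source of all required moment bounds. For \ref{LemmaL2Y1}, I would first prove continuity of the natural inclusion: for $f \in C_b(\mathbb{R}^d)$ one has $|f(x)|^2 \leq \Vert f \Vert_{C_{pol,\gamma}(\mathbb{R}^d)}^2 (1+\Vert x \Vert)^{2\gamma}$ for every $x \in \mathbb{R}^d$, hence
\[
\Vert f \Vert_{L_2(\mathbb{R}^d,\mathcal{B}(\mathbb{R}^d),\mathbb{P}\circ(\cX^{d,0}_{t_n})^{-1})}^2 = \mathbb{E}\big[ \vert f(\cX^{d,0}_{t_n}) \vert^2 \big] \leq \Vert f \Vert_{C_{pol,\gamma}(\mathbb{R}^d)}^2 \, \mathbb{E}\big[ (1+\Vert \cX^{d,0}_{t_n} \Vert)^{2\gamma} \big].
\]
Since $c \in \mathbb{N}_0 \cap [\gamma,\infty)$ gives $2\gamma \leq 2c$, the elementary inequality $t^{2\gamma} \leq 1 + t^{2c}$ for $t \geq 0$ together with Lemma~\ref{LemmaYMom} yields $\mathbb{E}[(1+\Vert \cX^{d,0}_{t_n} \Vert)^{2\gamma}] < \infty$, so the inclusion is a bounded linear map. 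For the density, I would observe that $C_c(\mathbb{R}^d) \subseteq C_b(\mathbb{R}^d)$ and that $C_c(\mathbb{R}^d)$ is dense in the $L_2$-space of any finite Borel measure on $\mathbb{R}^d$ (finite Borel measures on $\mathbb{R}^d$ are Radon, so this is the standard Lusin/Urysohn density argument); applied to $\mu := \mathbb{P}\circ(\cX^{d,0}_{t_n})^{-1}$ this gives denseness of the image of $C_b(\mathbb{R}^d)$.

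For \ref{LemmaL2Y2}, the plan is a backward induction on $n \in \lbrace N,N-1,\dots,1,0 \rbrace$ establishing $\mathcal{V}^d_n \in L_2(\mathbb{R}^d,\mathcal{B}(\mathbb{R}^d),\mathbb{P}\circ(\cX^{d,0}_{t_n})^{-1})$. For $n = N$ we have $\mathcal{V}^d_N = g^d$, and \eqref{assumption growth f g} gives $\mathbb{E}[\vert g^d(\cX^{d,0}_{t_N})\vert^2] \leq L( d^p + \mathbb{E}[\Vert \cX^{d,0}_{t_N} \Vert^2])$, which is finite since $c \geq \gamma > 0$ forces $c \geq 1$, so that $\mathbb{E}[\Vert \cX^{d,0}_{t_N}\Vert^2] < \infty$ by Lemma~\ref{LemmaYMom}. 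For the induction step, assuming $\mathcal{V}^d_{n+1} \in L_2(\mathbb{R}^d,\mathcal{B}(\mathbb{R}^d),\mathbb{P}\circ(\cX^{d,0}_{t_{n+1}})^{-1})$, I would combine the Lipschitz bound in $v$ and the growth at $v = 0$ from \eqref{assumption Lip f g}--\eqref{assumption growth f g} to obtain the pointwise estimate
\[
\big\vert \mathcal{V}^d_{n+1}(x) + (t_{n+1}-t_n) f^d\big(t_{n+1},x,\mathcal{V}^d_{n+1}(x)\big) \big\vert \leq (1+L^{1/2}T) \big\vert \mathcal{V}^d_{n+1}(x) \big\vert + L^{1/2}\big( d^p + \Vert x \Vert^2 \big)^{1/2},
\]
so that the random variable inside the conditional expectation defining $\mathcal{V}^d_n$ in \eqref{def cV argmin} belongs to $L_2(\mathbb{P})$. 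Then, using the identity $\mathcal{V}^d_n(\cX^{d,0}_{t_n}) = \mathbb{E}[\,\cdot\,\vert\cX^{d,0}_{t_n}]$ from \eqref{markov Euler intro}, conditional Jensen, and the tower property,
\[
\mathbb{E}\big[ \vert \mathcal{V}^d_n(\cX^{d,0}_{t_n}) \vert^2 \big] \leq 2(1+L^{1/2}T)^2 \, \mathbb{E}\big[ \vert \mathcal{V}^d_{n+1}(\cX^{d,0}_{t_{n+1}}) \vert^2 \big] + 2L\big( d^p + \mathbb{E}[\Vert \cX^{d,0}_{t_{n+1}} \Vert^2] \big) < \infty
\]
by the induction hypothesis and Lemma~\ref{LemmaYMom}, which closes the induction.

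The only points needing a little care are bookkeeping ones: getting the direction of the embedding in \ref{LemmaL2Y1} right (continuity runs from the polynomial-weighted supremum norm into $L_2$, and the density is of the image, not in the $C_{pol,\gamma}$-norm), and checking that the recursion for $\mathcal{V}^d_n$ stays consistent along the induction — namely that $L_2$-integrability at level $n+1$ already delivers $L_1$-integrability of the integrand, so that the conditional expectation defining $\mathcal{V}^d_n$ is well-posed before its second moment is bounded. Neither is a genuine obstacle; the whole statement reduces to Lemma~\ref{LemmaYMom} combined with the linear-growth structure of Assumption~\ref{assumption Lip and growth}.
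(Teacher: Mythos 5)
Your proposal is correct and follows essentially the same route as the paper: for \ref{LemmaL2Y1}, bound the $L_2$-norm by the $C_{pol,\gamma}$-norm times the $2\gamma$-th moment of $\cX^{d,0}_{t_n}$, finite by Lemma~\ref{LemmaYMom} since $c \geq \gamma$ (and $c\geq 1$ because $c\in\mathbb{N}_0\cap[\gamma,\infty)$ with $\gamma>0$); for \ref{LemmaL2Y2}, backward induction from $n=N$ using the Lipschitz-in-$v$ and linear-growth bounds on $f^d$, conditional Jensen, and the tower property. The only cosmetic difference is in the density half of \ref{LemmaL2Y1}: the paper delegates the dense embedding to \cite[Example~2.6~(c)]{NeufeldSchmocker2024}, while you give a self-contained argument via density of $C_c(\mathbb{R}^d)$ in $L_2$ of a finite (hence Radon) Borel measure — both rest on the same moment bound, so this is a genuine but inessential variation; and your observation that $L_2$-integrability at level $n+1$ already makes the conditional expectation defining $\mathcal{V}^d_n$ well-posed is a useful clarification the paper leaves implicit.
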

\begin{proof}
	For \ref{LemmaL2Y1}, we use that $(x+y)^{2\lceil\gamma\rceil} \leq 2^{2\lceil\gamma\rceil-1} \left( x^{2\lceil\gamma\rceil} + y^{2\lceil\gamma\rceil} \right)$ for any $x,y \geq 0$, and Lemma~\ref{LemmaYMom} to conclude that
	\begin{equation*}
		\begin{aligned}
			\int_{\mathbb{R}^d} (1+\Vert x \Vert)^{2\gamma} \left( \mathbb{P} \circ (\cX^{d,0}_{t_n})^{-1} \right)(dx) & \leq \mathbb{E}\left[ \left( 1 + \left\Vert \cX^{d,0}_{t_n} \right\Vert \right)^{2\lceil\gamma\rceil} \right] \\
			& \leq 2^{2\lceil\gamma\rceil-1} \left( 1 + \mathbb{E}\left[ \left\Vert \cX^{d,0}_{t_n} \right\Vert^{2\lceil\gamma\rceil} \right] \right) < \infty.
		\end{aligned}
	\end{equation*}
	Hence, by following \cite[Example~2.6~(c)]{NeufeldSchmocker2024}, the identity $(C_b(\mathbb{R}^d),\Vert \cdot \Vert_{C_{pol,\gamma}(\mathbb{R}^d)}) \hookrightarrow (L_2(\mathbb{R}^d,\mathcal{B}(\mathbb{R}^d),\mathbb{P} \circ (\cX^{d,0}_{t_n})^{-1}),\Vert \cdot \Vert_{L_2(\mathbb{R}^d,\mathcal{B}(\mathbb{R}^d),\mathbb{P} \circ (\cX^{d,0}_{t_n})^{-1})})$ is a continuous dense embedding.
	
	In order to show \ref{LemmaL2Y2}, we first observe that $\mathbb{R}^d \ni x \mapsto \mathcal{V}^d_n(x) \in \mathbb{R}$ is by definition $\mathcal{L}(\mathbb{R}^d)/\mathcal{B}(\mathbb{R})$-measurable, for all $n \in \lbrace 0,1,\dots,N \rbrace$. Furthermore, by \eqref{assumption Lip f g} and \eqref{assumption growth f g},
	we have for all $d\in\bN$, $t\in[0,T]$, $x\in\bR^d$, and $v\in\bR$ that
	\begin{equation}
	\label{f d LG}
	|f^d(t,x,v)|^2 
	\leq 2|f^d(t,x,0)|^2 + 2L|v|^2
	\leq 2L(d^pT^{-2}+1)(1+\|x\|^2+|v|^2).
	\end{equation} 	
	
	 Now, we show by induction on $n = N,N-1,\dots,1,0$ that $\mathcal{V}^d_n \in L_2(\mathbb{R}^d,\mathcal{B}(\mathbb{R}^d),\mathbb{P} \circ (\cX^{d,0}_{t_n})^{-1})$ for all $n \in \lbrace 0,1,\dots,N \rbrace$. For the induction initialization $n = N$, we use \eqref{assumption growth f g}, the inequality $x^2 \leq 1 + x^{2\lceil\gamma\rceil}$ for any $x \geq 0$, and Lemma~\ref{LemmaYMom} to obtain that
	\begin{equation*}
		\begin{aligned}
			\Vert \mathcal{V}^d_N \Vert_{L_2(\mathbb{R}^d,\mathcal{B}(\mathbb{R}^d),\mathbb{P} \circ (\cX^{d,0}_{t_N})^{-1})}^2 & = \int_{\mathbb{R}^d} \left\vert \mathcal{V}^d_N(x) \right\vert^2 \left( \mathbb{P} \circ (\cX^{d,0}_{t_N})^{-1} \right)(dx) = \mathbb{E}\left[ \left\vert g\left( \cX^{d,0}_{t_N} \right) \right\vert^2 \right] \\
			& \leq Ld^p \mathbb{E}\left[ 1 + \left\Vert \cX^{d,0}_{t_N} \right\Vert^2 \right] \leq Ld^p \left( 2 + \mathbb{E}\left[ \left\Vert \cX^{d,0}_{t_N} \right\Vert^{2\lceil\gamma\rceil} \right] \right) < \infty,
		\end{aligned}
	\end{equation*}
	which proves that $\mathcal{V}^d_N \in L_2(\mathbb{R}^d,\mathcal{B}(\mathbb{R}^d),\mathbb{P} \circ (\cX^{d,0}_{t_N})^{-1})$. Now, for the induction step, we fix some $n \in \lbrace 0,1,\dots,N-1 \rbrace$ and assume that $\mathcal{V}^d_{n+1} \in L_2(\mathbb{R}^d,\mathcal{B}(\mathbb{R}^d),\mathbb{P} \circ (\cX^{d,0}_{t_{n+1}})^{-1})$. Then, by using Jensen's inequality, the inequality $x^2 \leq 1 + x^{2\lceil\gamma\rceil}$ for any $x \geq 0$, and \eqref{f d LG}, it follows that
	\begin{equation*}
		\begin{aligned}
			& \Vert \mathcal{V}^d_n \Vert_{L_2(\mathbb{R}^d,\mathcal{B}(\mathbb{R}^d),\mathbb{P} \circ (\cX^{d,0}_{t_n})^{-1})}^2 = \int_{\mathbb{R}^d} \left\vert \mathcal{V}^d_n(x) \right\vert^2 \left( \mathbb{P} \circ (\cX^{d,0}_{t_n})^{-1} \right)(dx) = \mathbb{E}\left[ \left\vert \mathcal{V}^d_n\left(\cX^{d,0}_{t_n}\right) \right\vert^2 \right] \\
			& \quad\quad = \mathbb{E}\left[ \left\vert \mathbb{E}\left[ \mathcal{V}^d_{n+1}(\cX^{d,0}_{t_{n+1}}) + (t_{n+1}-t_n) f^d(t_{n+1},\cX^{d,0}_{t_{n+1}},\mathcal{V}^d_{n+1}(\cX^{d,0}_{t_{n+1}})) \Big\vert \cX^{d,0}_{t_n} \right] \right\vert^2 \right] \\
			& \quad\quad \leq \mathbb{E}\left[ \left\vert \mathcal{V}^d_{n+1}(\cX^{d,0}_{t_{n+1}}) + (t_{n+1}-t_n) f^d(t_{n+1},\cX^{d,0}_{t_{n+1}},\mathcal{V}^d_{n+1}(\cX^{d,0}_{t_{n+1}})) \right\vert^2 \right] \\
			& \quad\quad \leq 2 \mathbb{E}\left[ \left\vert \mathcal{V}^d_{n+1}(\cX^{d,0}_{t_{n+1}}) \right\vert^2 \right] + 2 (t_{n+1}-t_n)^2 \mathbb{E}\left[ \left\vert f^d(t_{n+1},\cX^{d,0}_{t_{n+1}},\mathcal{V}^d_{n+1}(\cX^{d,0}_{t_{n+1}})) \right\vert^2 \right] \\
			& \quad\quad \leq 2 \mathbb{E}\left[ \left\vert \mathcal{V}^d_{n+1}(\cX^{d,0}_{t_{n+1}}) \right\vert^2 \right] + 4L(d^pT^{-2}+1) (t_{n+1}-t_n)^2 \mathbb{E}\left[ 1 + \left\Vert \cX^{d,0}_{t_{n+1}} \right\Vert^2 + \left\vert \mathcal{V}^d_{n+1}(\cX^{d,0}_{t_{n+1}}) \right\vert^2 \right] \\
			& \quad\quad \leq 2 \left( 1 + 2L(d^pT^{-2}+1) (t_{n+1}-t_n)^2 \right) \left( 1 + \mathbb{E}\left[ \left\Vert \cX^{d,0}_{t_{n+1}} \right\Vert^2 \right] + \left\Vert \mathcal{V}^d_{n+1} \right\Vert_{L_2(\mathbb{R}^d,\mathcal{B}(\mathbb{R}^d),\mathbb{P} \circ (\cX^{d,0}_{t_{n+1}})^{-1})}^2 \right) < \infty,
		\end{aligned}
	\end{equation*}
	which shows the induction step, i.e.~that $\mathcal{V}^d_n \in L_2(\mathbb{R}^d,\mathcal{B}(\mathbb{R}^d),\mathbb{P} \circ (\cX^{d,0}_{t_n})^{-1})$.
\end{proof}

\subsection{\textbf{Proof of Theorem \ref{ThmMain}}, Theorem \ref{ThmMainRN}, and Corollary \ref{CorMainRN}}
\label{section proof main result}

In this section, we prove the main results in Theorem~\ref{ThmMain}, Theorem~\ref{ThmMainRN}, and Corollary~\ref{CorMainRN}. To this end, we consider again the time discretization $(\cX^{d,j}_s)_{s \in [0,T]} := ( \cX^{d,j,N,\delta,\mathcal{M}}_s )_{s \in [0,T]}$ introduced in \eqref{def time discretization}, where we drop again some indices here in order to ease the notation. Moreover, we recall that the push-forward measure of $\cX^{d,0}_{t_n}$ under $\mathbb{P}$ is defined as $\mathcal{B}(\mathbb{R}^d) \ni E \mapsto \big(\mathbb{P} \circ (\cX^{d,0}_{t_n})^{-1}\big)(E) := \mathbb{P}\big[ \cX^{d,0}_{t_n} \in E \big] \in [0,1]$.

\begin{proof}[Proof of Theorem~\ref{ThmMain}]
	Fix $d,N,\cM \in \bN$, $\delta,\varepsilon \in (0,1)$ with $\cM\geq \delta^{-2} C_\eta d^p$, and $n\in\{N-1,N-2,\dots,1,0\}$. Then, \ref{ThmMain1} follows directly from Proposition~\ref{PropTimeDiscr}.
	
	For \ref{ThmMain2}, we first use Lemma~\ref{LemmaL2Y}~\ref{LemmaL2Y1} to conclude that the Banach space $(L_2(\mathbb{R}^d,\mathcal{B}(\mathbb{R}^d),\mathbb{P} \circ (\cX^{d,0}_{t_n})^{-1}),\Vert \cdot \Vert_{L_2(\mathbb{R}^d,\mathcal{B}(\mathbb{R}^d),\mathbb{P} \circ (\cX^{d,0}_{t_n})^{-1})})$ satisfies the conditions in \cite[Definition~2.3]{NeufeldSchmocker2024}. Hence, by applying the universal approximation result for deterministic neural networks in \cite[Theorem~2.8]{NeufeldSchmocker2024} to $\mathcal{V}^d_n \in L_2(\mathbb{R}^d,\mathcal{B}(\mathbb{R}^d),\mathbb{P} \circ (\cX^{d,0}_{t_n})^{-1})$, there exists a deterministic neural network 
	$\mathbb{V}^d_n(\theta_n,\cdot) \in L_2(\mathbb{R}^d,\mathcal{B}(\mathbb{R}^d),\mathbb{P} \circ (\cX^{d,0}_{t_n})^{-1})$ with network parameters $\theta_n \in \mathbb{R}^r$ such that
	\begin{equation*}
		\Vert \mathcal{V}^d_n - \mathbb{V}^d_n(\theta_n,\cdot) \Vert^2_{L_2(\mathbb{R}^d,\mathcal{B}(\mathbb{R}^d),\mathbb{P} \circ (\cX^{d,0}_{t_n})^{-1})} \leq \varepsilon.
	\end{equation*}
	Thus, by using the push-forward measure $\mathbb{P} \circ (\cX^{d,0}_{t_n})^{-1}: \mathcal{B}(\mathbb{R}^d) \rightarrow [0,1]$, it follows that
	\begin{equation*}
		\begin{aligned}
			\err^{d,N,\delta,\mathcal{M}}_{UAT,n} & := \mathbb{E}\left[ \left\vert \mathcal{V}^d_n\left( \mathcal{X}^{d,0}_{t_n} \right) - \mathbb{V}^d_n\left( \theta_n, \mathcal{X}^{d,0}_{t_n} \right) \right\vert^2 \right] \\
			& = \int_{\mathbb{R}^d} \left\vert \mathcal{V}^d_n(x) - \mathbb{V}^d_n(\theta_n,x) \right\vert^2 \left(\mathbb{P} \circ (\cX^{d,0}_{t_n})^{-1} \right)(dx) \\
			& = \Vert \mathcal{V}^d_n - \mathbb{V}^d_n(\theta_n,\cdot) \Vert^2_{L_2(\mathbb{R}^d,\mathcal{B}(\mathbb{R}^d),\mathbb{P} \circ (\cX^{d,0}_{t_n})^{-1})} \leq \varepsilon,
		\end{aligned}
	\end{equation*}
	which proves the inequality in \ref{ThmMain2}.
	
	Finally, by combining the inequalities in \ref{ThmMain1}+\ref{ThmMain2} with the inequality $(x+y+z)^2 \leq 3\left( x^2 + y^2 + z^2 \right)$, we obtain the inequality \eqref{EqThmMain1}.
\end{proof}

\begin{proof}[Proof of Theorem~\ref{ThmMainRN}]
	Fix $d,N,\cM,J \in \bN$, $\delta,\varepsilon \in (0,1)$ with $\cM\geq \delta^{-2} C_\eta d^p$, $\vartheta > 0$, and $n\in\{N-1,N-2,\dots,1,0\}$. Then, \ref{ThmMainRN1} follows directly from Proposition~\ref{PropTimeDiscr}.
	
	For \ref{ThmMainRN2}, we first use Lemma~\ref{LemmaL2Y}~\ref{LemmaL2Y1} to conclude that the Banach space $(L_2(\mathbb{R}^d,\mathcal{B}(\mathbb{R}^d),\mathbb{P} \circ (\cX^{d,0}_{t_n})^{-1}),\Vert \cdot \Vert_{L_2(\mathbb{R}^d,\mathcal{B}(\mathbb{R}^d),\mathbb{P} \circ (\cX^{d,0}_{t_n})^{-1})})$ satisfies \cite[Assumption~3.5]{NeufeldSchmocker2023}. Hence, by applying the universal approximation result for random neural networks in \cite[Corollary~3.8]{NeufeldSchmocker2023} to $\mathcal{V}^d_n \in L_2(\mathbb{R}^d,\mathcal{B}(\mathbb{R}^d),\mathbb{P} \circ (\cX^{d,0}_{t_n})^{-1})$, there exists a random neural network 
	$\mathscr{V}^d_n(Y_n,\cdot) \in L_2(\Omega,\mathcal{F},\mathbb{P};L_2(\mathbb{R}^d,\mathcal{B}(\mathbb{R}^d),\mathbb{P} \circ (\cX^{d,0}_{t_n})^{-1}))$ with linear readout $Y_n: \Omega \rightarrow \mathbb{R}^{K_\varepsilon}$ such that
	\begin{equation}
		\label{EqThmRUATProof1}
		\mathbb{E}\left[ \Vert \mathcal{V}^d_n - \mathscr{V}^d_n(Y_n,\cdot) \Vert^2_{L_2(\mathbb{R}^d,\mathcal{B}(\mathbb{R}^d),\mathbb{P} \circ (\cX^{d,0}_{t_n})^{-1})} \right] \leq \varepsilon.
	\end{equation}
	Thus, by conditioning on $\mathcal{F}_{A,B} := \sigma\left( \lbrace A_k, B_k: k \in \mathbb{N} \rbrace \right)$ and by using that $\cX^{d,0}_{t_n}$ is independent of $\mathcal{F}_{A,B}$ as well as \eqref{EqThmRUATProof1}, it follows that
	\begin{equation*}
		\begin{aligned}
			\err^{d,N,\delta,\mathcal{M}}_{UAT,n} & := \mathbb{E}\left[ \left\vert \mathcal{V}^d_n\left(\cX^{d,0}_{t_n}\right) - \mathscr{V}^d_n\left(Y_n,\cX^{d,0}_{t_n}\right) \right\vert^2 \right] \\
			& = \mathbb{E}\Bigg[ \mathbb{E}\left[ \left\vert \mathcal{V}^d_n\left(\cX^{d,0}_{t_n}\right) - \mathscr{V}^d_n\left(Y_n,\cX^{d,0}_{t_n}\right) \right\vert^2 \bigg\vert \mathcal{F}_{A,B} \right] \Bigg] \\
			& = \mathbb{E}\bigg[ \mathbb{E}\left[ \Vert \mathcal{V}^d_n - \mathscr{V}^d_n(Y_n,\cdot) \Vert^2_{L_2(\mathbb{R}^d,\mathcal{B}(\mathbb{R}^d),\mathbb{P} \circ (\cX^{d,0}_{t_n})^{-1})} \Big\vert \mathcal{F}_{A,B} \right] \bigg] \\
			& = \mathbb{E}\left[ \Vert \mathcal{V}^d_n - \mathscr{V}^d_n(Y_n,\cdot) \Vert^2_{L_2(\mathbb{R}^d,\mathcal{B}(\mathbb{R}^d),\mathbb{P} \circ (\cX^{d,0}_{t_n})^{-1})} \right] \leq \varepsilon,
		\end{aligned}
	\end{equation*}
	which shows the inequality in \ref{ThmMainRN2}.
	
	For \ref{ThmMainRN3}, we aim to apply the bound on the generalization error of the least squares method in \cite[Theorem~11.3]{Gyoerfi2002}. To this end, we define for every fixed $j = 1,\dots,J$ the random variables $\overline{X_j} := \mathcal{X}^{d,j}_{t_n}$ and
	\begin{equation*}
		\overline{Y_j} := T_\vartheta\left( \mathscr{V}^d_{n+1}\left(\Upsilon_{n+1},\mathcal{X}^{d,j}_{t_{n+1}}\right) + (t_{n+1} - t_n) \left(F^d \circ \mathscr{V}^d_{n+1}(\Upsilon_{n+1}, \cdot) \right)\left(t_{n+1},\mathcal{X}^{d,j}_{t_{n+1}}\right) \right).
	\end{equation*}
	Then, we observe that the conditional variance satisfies
	\begin{equation}
		\label{EqThmRUATProof2b}
		\begin{aligned}
			\sup_{x \in \mathbb{R}^d} \operatorname{Var}\left[ \overline{Y_j} \big\vert \overline{X_j} = x \right] & = \sup_{x \in \mathbb{R}^d} \mathbb{E}\left[ \left( \overline{Y_j} - \mathbb{E}\left[ \overline{Y_j} \big\vert \overline{X_j} = x \right] \right)^2 \Big\vert \overline{X_j} = x \right] \\
			& = \sup_{x \in \mathbb{R}^d} \left( \mathbb{E}\left[ \vert \overline{Y_j} \vert^2 \big\vert \overline{X_j} = x \right] - \mathbb{E}\left[ \overline{Y_j} \big\vert \overline{X_j} = x \right]^2 \right) \\
			& \leq \sup_{x \in \mathbb{R}^d} \mathbb{E}\left[ \vert \overline{Y_j} \vert^2 \big\vert \overline{X_j} = x \right] \leq \vartheta^2.
		\end{aligned}
	\end{equation}
	Moreover, we define $\mathcal{G}_{K_\varepsilon}$ as the vector space of random neural networks with $K_\varepsilon$ neurons, i.e.
	\begin{equation*}
		\mathcal{G}_{K_\varepsilon} := \left\lbrace \Omega \ni \omega \mapsto \sum_{k=1}^{K_\varepsilon} \widetilde{Y}_k(\omega) \rho\left( A_k(\omega)^\top \cdot - B_k(\omega) \right) \in \overline{C_b(\mathbb{R}^d)}^\gamma:
		\begin{matrix}
			\widetilde{Y} := (\widetilde{Y}_1,\dots,\widetilde{Y}_{K_\varepsilon})^\top: \Omega \rightarrow \mathbb{R}^{K_\varepsilon}
		\end{matrix}
		\right\rbrace.
	\end{equation*}
	Then, the truncated learning problem of \eqref{EqLeastSquares} corresponds to
	\begin{equation*}
		\begin{aligned}
			\Upsilon_n(\omega) & := \argmin_{y \in \mathbb{R}^{K_\varepsilon}} \frac{1}{J} \sum_{j=1}^J \Bigg\vert \mathscr{V}^d_n\left( y, \mathcal{X}^{d,j}_{t_n}(\omega) \right) - T_\vartheta\bigg( \mathscr{V}^d_{n+1}\left( \Upsilon_{n+1}(\omega), \mathcal{X}^{d,j}_{t_{n+1}}(\omega) \right) \\
			& \quad\quad\quad\quad\quad\quad\quad\quad\quad\quad\quad + (t_{n+1} - t_n) \left( F^d \circ \mathscr{V}^d_{n+1}\left( \Upsilon_{n+1}(\omega), \cdot \right) \right)\left(t_{n+1}, \mathcal{X}^{d,j}_{t_{n+1}}(\omega) \right) \bigg) \Bigg\vert^2 \\
			& = \argmin_{y \in \mathbb{R}^{K_\varepsilon}} \frac{1}{J} \sum_{j=1}^J \left\vert \sum_{k=1}^{K_\varepsilon} y_k \rho\left( A_k(\omega)^\top \overline{X_j}(\omega) - B_k(\omega) \right) - \overline{Y_j}(\omega) \right\vert^2.
		\end{aligned}
	\end{equation*}	
	Hence, by applying \cite[Theorem~11.3]{Gyoerfi2002} to the function $m(\cdot) := T_\vartheta\left( \mathscr{V}^d_n(Y_n,\cdot) \right)$ as well as setting
	\begin{equation}
		\label{EqDefC0}
		C_0 := 8 + 8 \cdot 2304 \ln(3 \cdot 12 e) > 0,
	\end{equation}
	corresponding to the universal constant $c > 0$ in \cite[Theorem~11.3]{Gyoerfi2002} (see \cite[p.~192-194]{Gyoerfi2002}), where $\mathcal{G}_{K_\varepsilon}$ has vector dimension $K_\varepsilon$ in the sense of \cite[p.~184]{Gyoerfi2002}, and by using \eqref{EqThmRUATProof2b} and that $\mathscr{V}^d_n(Y_n,\cdot) \in \mathcal{G}_{K_\varepsilon}$ for any $n\in\{N-1,N-2,\dots,1,0\}$, it follows for every $n\in\{N-1,N-2,\dots,1,0\}$ that
	\begin{equation*}
		\begin{aligned}
			\err^{d,N,\delta,\mathcal{M}}_{gen,n} & := \mathbb{E}\left[ \left\vert T_\vartheta\left( \mathscr{V}^d_n\left( Y_n, \mathcal{X}^{d,0}_{t_n} \right) \right) - T_\vartheta \left( \mathscr{V}^d_n\left( \Upsilon_n, \mathcal{X}^{d,0}_{t_n} \right) \right) \right\vert^2 \right] \\
			& \leq C_0 \vartheta^2 \frac{(\ln(J)+1) K_\varepsilon}{J} + 8 \inf_{g \in \mathcal{G}_{K_\varepsilon}} \mathbb{E}\left[ \left\vert T_\vartheta\left( \mathscr{V}^d_n\left( Y_n, \mathcal{X}^{d,0}_{t_n} \right) \right) - g\left( \mathcal{X}^{d,0}_{t_n} \right) \right\vert^2 \right] \\
			& = C_0 \vartheta^2 \frac{(\ln(J)+1) K_\varepsilon}{J} + 8 \mathbb{E}\left[ \mathds{1}_{\lbrace \vert \mathscr{V}^d_n(Y_n,\cX^{d,0}_{t_n}) \vert > \vartheta \rbrace} \left\vert \mathscr{V}^d_n\left(Y_n,\cX^{d,0}_{t_n}\right) \right\vert^2 \right],
		\end{aligned}
	\end{equation*}
	which shows the inequality in \ref{ThmMainRN3}.
	
	For \eqref{EqThmMainRN1}, we introduce the constant
	\begin{equation}
		\label{EqDefCBar}
		\overline{C} := 34 \cdot 2^q L^{q/2} \exp\big\{q L^{1/2} T\big\} \exp\left\{[2(L+1)]^{\frac{q-2}{2}}2(L+L^{1/2})q(q-1)T\right\} > 0,
	\end{equation}
	which only depends on $q$, $T$, and $L$. Then, by using H\"older's inequality, Markov's inequality, that~\eqref{u est}, and Corollary~\ref{corollary Lyaponov q}, we obtain for every $n\in\{N-1,N-2,\dots,1,0\}$ that
	\begin{equation}
		\label{EqThmRUATProof2}
		\begin{aligned}
			& \bE\left[\mathds{1}
			_{\lbrace \vert u^d(t_n,X^{d,0}_{t_n}) \vert > \vartheta \rbrace}
			\left\vert u^d\left(t_n,X^{d,0}_{t_n}\right) \right\vert^2 \right] \leq \bP\left[\left\vert u^d(t_n,X^{d,0}_{t_n}) \right\vert > \vartheta\right]^\frac{q-2}{q} 			\bE\left[\left\vert u^d\left(t_n,X^{d,0}_{t_n}\right)\right\vert^q\right]^\frac{2}{q} \\
			& \quad\quad \leq \left( \frac{1}{\vartheta^q} \bE\left[\left\vert u^d\left(t_n,X^{d,0}_{t_n}\right)\right\vert^q\right] \right)^\frac{q-2}{q} \bE\left[\left\vert u^d\left(t_n,X^{d,0}_{t_n}\right)\right\vert^q\right]^\frac{2}{q} \leq \frac{1}{\vartheta^{q-2}} \bE\left[\left\vert u^d\left(t_n,X^{d,0}_{t_n}\right)\right\vert^q\right] \\
			& \quad\quad \leq \frac{2^q L^{q/2} \exp\big\{q L^{1/2} T\big\}}{\vartheta^{q-2}} \bE\left[\big(d^p+\big\|X^{d,0}_{t_n}\big\|^2\big)^{q/2}\right] \\
			& \quad\quad \leq \frac{2^q L^{q/2} \exp\big\{q L^{1/2} T\big\}}{\vartheta^{q-2}} \exp\left\{[2(L+1)]^{\frac{q-2}{2}}2(L+L^{1/2})q(q-1)T\right\} \bE\left[\big(d^p+\big\|\xi^{d,0}\big\|^2\big)^{q/2}\right] \\
			& \quad\quad \leq \frac{\overline{C}}{34 \vartheta^{q-2}} \bE\left[\big(d^p+\big\|\xi^{d,0}\big\|^2\big)^{q/2}\right].
		\end{aligned}
	\end{equation}
	Finally, by inserting three times the inequality $(x+y)^2 \leq 2\left( x^2 + y^2 \right)$ for any $x,y \geq 0$, by applying \cite[Theorem~11.3]{Gyoerfi2002} now to $m(\cdot) := T_\vartheta\left( u^d_n(t_n,\cdot) \right)$ (with universal constant $C_0 > 0$ defined in \eqref{EqDefC0}, where $\mathcal{G}_{K_\varepsilon}$ has vector dimension $K_\varepsilon$ in the sense of \cite[p.~184]{Gyoerfi2002}), by using that $\mathscr{V}^d_n(Y_n,\cdot) \in \mathcal{G}_{K_\varepsilon}$, the inequality \eqref{EqThmRUATProof2}, and the inequalities in \ref{ThmMainRN1}+\ref{ThmMain2}, we conclude for every $n\in\{N-1,N-2,\dots,1,0\}$ that
	\begin{equation*}
		\begin{aligned}
			& \mathbb{E}\left[ \left\vert u^d\left( t_n, X^{d,0}_{t_n} \right) - \mathscr{V}^d_n\left( \Upsilon_n, \mathcal{X}^{d,0}_{t_n} \right) \right\vert^2 \right] \\
			& \quad\quad \leq 2 \mathbb{E}\left[ \left\vert u^d\left( t_n, X^{d,0}_{t_n} \right) - T_\vartheta\left( u^d\left( t_n, X^{d,0}_{t_n} \right) \right) \right\vert^2 \right] + 2 \mathbb{E}\left[ \left\vert T_\vartheta\left( u^d\left( t_n, X^{d,0}_{t_n} \right) \right) - \mathscr{V}^d_n\left( \Upsilon_n, \mathcal{X}^{d,0}_{t_n} \right) \right\vert^2 \right] \\
			& \quad\quad \leq 2 \mathbb{E}\left[ \left\vert u^d\left( t_n, X^{d,0}_{t_n} \right) - T_\vartheta\left( u^d\left( t_n, X^{d,0}_{t_n} \right) \right) \right\vert^2 \right] + 2 C_0 \vartheta^2 \frac{(\ln(J)+1) K_\varepsilon}{J} \\
			& \quad\quad\quad\quad + 16 \inf_{g \in \mathcal{G}_{K_\varepsilon}} \mathbb{E}\left[ \left\vert T_\vartheta\left( u^d\left( t_n, X^{d,0}_{t_n} \right) \right) - g\left( \mathcal{X}^{d,0}_{t_n} \right) \right\vert^2 \right] \\
			& \quad\quad \leq 34 \mathbb{E}\left[ \left\vert u^d\left( t_n, X^{d,0}_{t_n} \right) - T_\vartheta\left( u^d\left( t_n, X^{d,0}_{t_n} \right) \right) \right\vert^2 \right] + 2 C_0 \vartheta^2 \frac{(\ln(J)+1) K_\varepsilon}{J} \\
			& \quad\quad\quad\quad + 32 \inf_{g \in \mathcal{G}_{K_\varepsilon}} \mathbb{E}\left[ \left\vert u^d\left( t_n, X^{d,0}_{t_n} \right) - g\left( \mathcal{X}^{d,0}_{t_n} \right) \right\vert^2 \right] \\
			& \quad\quad \leq 34 \mathbb{E}\left[ \mathds{1}_{\lbrace \vert u^d(t_n,X^{d,0}_{t_n}) \vert > \vartheta \rbrace} \left\vert u^d\left(t_n,X^{d,0}_{t_n}\right) \right\vert^2 \right] + 2 C_0 \vartheta^2 \frac{(\ln(J)+1) K_\varepsilon}{J} \\
			& \quad\quad\quad\quad + 32 \mathbb{E}\left[ \left\vert u^d\left( t_n, X^{d,0}_{t_n} \right) - \mathscr{V}^d_n\left(Y_n,\cX^{d,0}_{t_n}\right) \right\vert^2 \right] \\
			& \quad\quad \leq 34 \frac{\overline{C}}{34 \vartheta^{q-2}} \bE\left[\big(d^p+\big\|\xi^{d,0}\big\|^2\big)^{q/2}\right] + 2 C_0 \vartheta^2 \frac{(\ln(J)+1) K_\varepsilon}{J} \\
			& \quad\quad\quad\quad + 64 \mathbb{E}\left[ \left\vert u^d\left( t_n, X^{d,0}_{t_n} \right) - \mathcal{V}^d_n\left( \mathcal{X}^{d,0}_{t_n} \right) \right\vert^2 \right] + 64 \mathbb{E}\left[ \left\vert \mathcal{V}^d_n\left( \mathcal{X}^{d,0}_{t_n} \right) - \mathscr{V}^d_n\left(Y_n,\cX^{d,0}_{t_n}\right) \right\vert^2 \right] \\
			& \quad\quad \leq \frac{\overline{C}}{\vartheta^{q-2}} \bE\left[\big(d^p+\big\|\xi^{d,0}\big\|^2\big)^{q/2}\right] + 2 C_0 \vartheta^2 \frac{(\ln(J)+1) K_\varepsilon}{J} \\
			& \quad\quad\quad\quad + 64 \widehat{C} d^p \left( d^p+\bE\left[ \big\|\xi^{d,0}\big\|^2 \right] \right) e^{d,N,\delta,\cM} + 64 \varepsilon,
		\end{aligned}
	\end{equation*}
	which shows the inequality in \eqref{EqThmMainRN1} and completes the proof.
\end{proof}

\begin{proof}[Proof of Corollary~\ref{CorMainRN}]
	Corollary~\ref{CorMainRN} follows directly from inserting the bounds in Line~2-8 of Algorithm~\ref{AlgDS} into inequality \eqref{EqThmMainRN1} of Theorem~\ref{ThmMainRN}.
\end{proof}

\appendix

\section{Notation}
\label{App}
For the convenience of the readers, we provide a list below to present some notations of important
quantities used throughout this paper.
\begin{center}
\begin{tabular}{ |p{2.5cm}|p{11.5cm}|  }
 \hline
 \multicolumn{2}{|c|}{Notation List} \\
 \hline
 $X^{d,t,x}$ & Solution of SDE with jumps starting at $(t,x)$, see \eqref{SDE}\\
 $X^{d,m}$ & Solution of SDE with jumps starting at $(0,\xi^{d,m})$, see \eqref{SDE d} \\
 $\cX^{d,m,N,\delta,\cM}$ & Euler-Maruyama approximation of $X^{d,m}$, see \eqref{def time discretization}\\
 $\mu^d$, $\sigma^d$, $\eta^d$, $\nu^d$ & Drift coefficient, diffusion coefficient, jump coefficient, and L\'evy measure \\
 \hline
 $u^d$ & Unique viscosity solution of PIDE \eqref{PDE}\\
 $g^d$ & Terminal condition of PIDE \eqref{PDE}\\
 $\cV^d$ & Time-discretized approximation for $u^d$, see \eqref{def cV argmin}+\eqref{def cV}\\
 $\bV^d$ & Deterministic neural network approximation for $u^d$, 
 see Section \ref{section Deter NN}\\
 $\mathscr{V}^d$ & Random neural network approximation for $u^d$, 
 see Section \ref{section RNN}\\
 \hline
\end{tabular}
\end{center}

\smallskip
\noindent 

\bibliographystyle{plain}
\bibliography{Reference}

\begin{thebibliography}{100}

\bibitem{Abramowitz1970}
Milton Abramowitz and Irene~Ann Stegun.
\newblock {\em Handbook of mathematical functions with formulas, graphs, and
  mathematical tables}.
\newblock Applied mathematics series / National Bureau of Standards 55, Print.
  9. Dover, New York, 9th edition, 1970.

\bibitem{AJK+2023}
Julia Ackermann, Arnulf Jentzen, Thomas Kruse, Benno Kuckuck, and Joshua~Lee
  Padgett.
\newblock Deep neural networks with {ReLU}, leaky {ReLU}, and softplus
  activation provably overcome the curse of dimensionality for {Kolmogorov}
  partial differential equations with {Lipschitz} nonlinearities in the
  {$L^p$}-sense.
\newblock {\em arXiv:2309.13722}, 2023.

\bibitem{AlAradi2019}
Ali Al-Aradi, Adolfo Correia, Danilo de~Frietas~Naiff, Gabriel Jardim, and Yuri
  Saporito.
\newblock Extensions of the deep {Galerkin} method.
\newblock {\em arXiv preprint arXiv:1912.01455}, 2019.

\bibitem{Applebaum2009}
David Applebaum.
\newblock {\em L\'evy Processes and Stochastic Calculus}.
\newblock Cambridge Studies in Advanced Mathematics. Cambridge University
  Press, 2nd edition, 2009.

\bibitem{Baricz2010}
{\'A}rp{\'a}d Baricz.
\newblock Bounds for modified {Bessel} functions of the first and second kinds.
\newblock {\em Proceedings of the Edinburgh Mathematical Society},
  53(3):575--599, 2010.

\bibitem{Bartlett2002}
Peter~L Bartlett and Shahar Mendelson.
\newblock Rademacher and {Gaussian} complexities: Risk bounds and structural
  results.
\newblock {\em Journal of Machine Learning Research}, 3(Nov):463--482, 2002.

\bibitem{beck2020deep}
Christian Beck, Sebastian Becker, Patrick Cheridito, Arnulf Jentzen, and Ariel
  Neufeld.
\newblock Deep learning based numerical approximation algorithms for stochastic
  partial differential equations and high-dimensional nonlinear filtering
  problems.
\newblock {\em arXiv preprint arXiv:2012.01194}, 2020.

\bibitem{beck2021deep}
Christian Beck, Sebastian Becker, Patrick Cheridito, Arnulf Jentzen, and Ariel
  Neufeld.
\newblock Deep splitting method for parabolic {PDE}s.
\newblock {\em SIAM Journal on Scientific Computing}, 43(5):A3135--A3154, 2021.

\bibitem{BBG+2021}
Christian Beck, Sebastian Becker, Philipp Grohs, Nor Jaafari, and Arnulf
  Jentzen.
\newblock Solving the {Kolmogorov} {PDE} by means of deep learning.
\newblock {\em Journal of Scientific Computing}, 88(73), 2021.

\bibitem{beck2019machine}
Christian Beck, Weinan E, and Arnulf Jentzen.
\newblock Machine learning approximation algorithms for high-dimensional fully
  nonlinear partial differential equations and second-order backward stochastic
  differential equations.
\newblock {\em Journal of Nonlinear Science}, 29:1563--1619, 2019.

\bibitem{BGJ2020}
Christian Beck, Lukas Gonon, and Arnulf Jentzen.
\newblock Overcoming the curse of dimensionality in the numerical approximation
  of high-dimensional semilinear elliptic partial differential equations.
\newblock {\em arXiv preprint arXiv:2003.00596}, 2020.

\bibitem{beck2020overcoming}
Christian Beck, Fabian Hornung, Martin Hutzenthaler, Arnulf Jentzen, and Thomas
  Kruse.
\newblock Overcoming the curse of dimensionality in the numerical approximation
  of {Allen-Cahn} partial differential equations via truncated full-history
  recursive multilevel {P}icard approximations.
\newblock {\em Journal of Numerical Mathematics}, 28(4):197--222, 2020.

\bibitem{beck2020overview}
Christian Beck, Martin Hutzenthaler, Arnulf Jentzen, and Benno Kuckuck.
\newblock An overview on deep learning-based approximation methods for partial
  differential equations.
\newblock {\em arXiv preprint arXiv:2012.12348}, 2020.

\bibitem{BSZ2017}
Christian Bender, Nikolaus Schweizer, and Jia Zhuo.
\newblock A primal-dual algorithm for {BSDE}s.
\newblock {\em Mathematical Finance}, 27(3):866--901, 2017.

\bibitem{berner2020numerically}
Julius Berner, Markus Dablander, and Philipp Grohs.
\newblock Numerically solving parametric families of high-dimensional
  {K}olmogorov partial differential equations via deep learning.
\newblock {\em Advances in Neural Information Processing Systems},
  33:16615--16627, 2020.

\bibitem{bjorck1996numerical}
{{\AA}}ke Bj\"orck.
\newblock {\em Numerical methods for least squares problems}.
\newblock Society for Industrial and Applied Mathematics, Philadelphia, 1996.

\bibitem{Boussange2023}
Victor Boussange, Sebastian Becker, Arnulf Jentzen, Benno Kuckuck, and Lo\"ic
  Pellissier.
\newblock Deep learning approximations for non-local nonlinear {PDE}s with
  {Neumann} boundary conditions.
\newblock {\em Partial Differential Equations and Applications}, 4(51), 2023.

\bibitem{Buchmann2017}
Boris Buchmann, Benjamin Kaehler, Ross Maller, and Alexander Szimayer.
\newblock Multivariate subordination using generalised {Gamma} convolutions
  with applications to {Variance Gamma} processes and option pricing.
\newblock {\em Stochastic Processes and their Applications}, 127(7):2208--2242,
  2017.

\bibitem{BurgardKjaer2011}
Christoph Burgard and Mats Kjaer.
\newblock Partial differential equation representations of derivatives with
  bilateral counterparty risk and funding costs.
\newblock {\em The Journal of Credit Risk}, 7:1--19, 2011.

\bibitem{Castro2022}
Javier Castro.
\newblock Deep learning schemes for parabolic nonlocal integro-differential
  equations.
\newblock {\em Partial Differential Equations and Applications}, 3(77), 2022.

\bibitem{CHW2022}
Petru~A. Cioica-Licht, Martin Hutzenthaler, and P.~Tobias Werner.
\newblock Deep neural networks overcome the curse of dimensionality in the
  numerical approximation of semilinear partial differential equations.
\newblock {\em arXiv:2205.14398v1}, 2022.

\bibitem{CE}
Samuel~N. Cohen and Robert~J. Elliott.
\newblock {\em Stochastic calculus and applications}, volume~2.
\newblock Springer, 2015.

\bibitem{ContTankov2004}
Rama Cont and Peter Tankov.
\newblock {\em Financial modelling with jump processes}.
\newblock Chapman \& Hall/CRC financial mathematics series. Chapman \&
  Hall/CRC, Boca Raton, London, New York, Washington DC, 2004.

\bibitem{Cont2005}
Rama Cont and Ekaterina Voltchkova.
\newblock Integro-differential equations for option prices in exponential
  {L\'evy} models.
\newblock {\em Finance and Stochastics}, 9, 2005.

\bibitem{CHJ2021}
Sonja Cox, Martin Hutzenthaler, and Arnulf Jentzen.
\newblock Local {Lipschitz} continuity in the initial value and strong
  completeness for nonlinear stochastic differential equations.
\newblock {\em arXiv preprint arXiv:1309.5595}, 2013.

\bibitem{Cybenko1989}
George Cybenko.
\newblock Approximation by superpositions of a sigmoidal function.
\newblock {\em Mathematics of Control, Signals and Systems}, 2(4):303--314,
  1989.

\bibitem{Delong2013}
\L{}ukasz Delong.
\newblock {\em Backward Stochastic Differential Equations with Jumps and Their
  Actuarial and Financial Applications: {BSDE}s with Jumps}.
\newblock EAA Series. Springer London, London, 1st edition, 2013.

\bibitem{dong2021local}
Suchuan Dong and Zongwei Li.
\newblock Local extreme learning machines and domain decomposition for solving
  linear and nonlinear partial differential equations.
\newblock {\em Computer Methods in Applied Mechanics and Engineering},
  387:114--129, 2021.

\bibitem{Duffie1996}
Darrell Duffie, Mark Schroder, and Costis Skiadas.
\newblock Recursive valuation of defaultable securities and the timing of
  resolution of uncertainty.
\newblock {\em The Annals of Applied Probability}, 6(4):1075--1090, 1996.

\bibitem{weinan2021algorithms}
Weinan E, Jiequn Han, and Arnulf Jentzen.
\newblock Algorithms for solving high dimensional {PDE}s: from nonlinear
  {M}onte {C}arlo to machine learning.
\newblock {\em Nonlinearity}, 35(1):278, 2021.

\bibitem{hutzenthaler2019multilevel}
Weinan E, Martin Hutzenthaler, Arnulf Jentzen, and Thomas Kruse.
\newblock On multilevel {P}icard numerical approximations for high-dimensional
  nonlinear parabolic partial differential equations and high-dimensional
  nonlinear backward stochastic differential equations.
\newblock {\em Journal of Scientific Computing}, 79(3):1534--1571, 2019.

\bibitem{EHJK2019}
Weinan E, Martin Hutzenthaler, Arnulf Jentzen, and Thomas Kruse.
\newblock On multilevel {Picard} numerical approximations for high-dimensional
  nonlinear parabolic partial differential equations and high-dimensional
  nonlinear backward stochastic differential equations.
\newblock {\em Journal of Scientific Computing}, 79:1534--1571, 2019.

\bibitem{hutzenthaler2021multilevel}
Weinan E, Martin Hutzenthaler, Arnulf Jentzen, and Thomas Kruse.
\newblock Multilevel {Picard} iterations for solving smooth semilinear
  parabolic heat equations.
\newblock {\em Partial Differential Equations and Applications}, 2(80), 2021.

\bibitem{EYu2018}
Weinan E and Bing Yu.
\newblock The {Deep Ritz method: A} deep learning-based numerical algorithm for
  solving variational problems.
\newblock {\em Communications in Mathematics and Statistics}, 6(1):1--12, 2018.

\bibitem{EGJS2022}
Dennis Elbr{\"a}chter, Philipp Grohs, Arnulf Jentzen, and Christoph Schwab.
\newblock {DNN Expression Rate Analysis of High-dimensional PDEs: Application
  to Option Pricing}.
\newblock {\em Constructive Approximation}, 55:3--71, 2022.

\bibitem{Folland1992}
Gerald~B. Folland.
\newblock {\em Fourier analysis and its applications}.
\newblock Brooks/Cole Publishing Company, Belmont, California, 1st edition,
  1992.

\bibitem{frey2022deep}
R{\"u}diger Frey and Verena K{\"o}ck.
\newblock Deep neural network algorithms for parabolic {PIDE}s and applications
  in insurance mathematics.
\newblock In {\em Methods and Applications in Fluorescence}, pages 272--277.
  Springer, 2022.

\bibitem{frey2022convergence}
Rüdiger Frey and Verena K\"ock.
\newblock Convergence analysis of the deep splitting scheme: the case of
  partial integro-differential equations and the associated {FBSDE}s with
  jumps.
\newblock {\em arXiv e-prints 2206.01597}, 2022.

\bibitem{FTT2019}
Masaaki Fujii, Akihiko Takahashi, and Masayuki Takahashi.
\newblock Asymptotic expansion as prior knowledge in deep learning method for
  high dimensional {BSDEs}.
\newblock {\em Asia-Pacific Financial Markets}, 26:391--408, 2019.

\bibitem{georgoulis2024deep}
Emmanuil~H. Georgoulis, Antonis Papapantoleon, and Costas Smaragdakis.
\newblock A deep implicit-explicit minimizing movement method for option
  pricing in jump-diffusion models.
\newblock {\em arXiv preprint arXiv:2401.06740}, 2024.

\bibitem{GPW2022}
Maximilien Germain, Huyen Pham, and Xavier Warin.
\newblock Approximation error analysis of some deep backward schemes for
  nonlinear {PDE}s.
\newblock {\em SIAM Journal on Scientific Computing}, 44(1):A28--A56, 2022.

\bibitem{giles2019generalised}
Michael~B Giles, Arnulf Jentzen, and Timo Welti.
\newblock Generalised multilevel {P}icard approximations.
\newblock {\em arXiv preprint arXiv:1911.03188}, 2019.

\bibitem{Gnoatto2022}
Alessandro Gnoatto, Marco Patacca, and Athena Picarelli.
\newblock A deep solver for {BSDE}s with jumps.
\newblock {\em arXiv e-prints arXiv:2211.04349}, 2022.

\bibitem{GononBlackScholesPDE2021}
Lukas Gonon.
\newblock Random feature neural networks learn {Black-Scholes} type {PDE}s
  without curse of dimensionality.
\newblock {\em Journal of Machine Learning Research}, 24(189):1--51, 2023.

\bibitem{Gonon2023}
Lukas Gonon, Lyudmila Grigoryeva, and Juan-Pablo Ortega.
\newblock Approximation bounds for random neural networks and reservoir
  systems.
\newblock {\em The Annals of Applied Probability}, 33(1):28--69, 2023.

\bibitem{GononOptionPrices2021}
Lukas Gonon and Christoph Schwab.
\newblock Deep {ReLU} network expression rates for option prices in
  high-dimensional, exponential {L\'evy} models.
\newblock {\em Finance and Stochastics}, 25:615--657, 2021.

\bibitem{gonon2021deep}
Lukas Gonon and Christoph Schwab.
\newblock Deep {ReLU} neural network approximation for stochastic differential
  equations with jumps.
\newblock {\em arXiv preprint arXiv:2102.11707}, 2021.

\bibitem{gonon2023deep}
Lukas Gonon and Christoph Schwab.
\newblock Deep {ReLU} neural networks overcome the curse of dimensionality for
  partial integrodifferential equations.
\newblock {\em Analysis and Applications}, 21(01):1--47, 2023.

\bibitem{goodfellow2016deep}
Ian Goodfellow, Yoshua Bengio, and Aaron Courville.
\newblock {\em Deep learning}.
\newblock MIT press, 2016.

\bibitem{Grigoryeva2018}
Lyudmila Grigoryeva and Juan-Pablo Ortega.
\newblock Echo state networks are universal.
\newblock {\em Neural Networks}, 108:495--508, 2018.

\bibitem{GHJVW2023}
Philipp Grohs, Fabian Hornung, Arnulf Jentzen, and Philippe Von~Wurstemberger.
\newblock A proof that artificial neural networks overcome the curse of
  dimensionality in the numerical approximation of {B}lack--{S}choles partial
  differential equations.
\newblock {\em Memoirs of the American Mathematical Society}, 284, 2023.

\bibitem{GW2021}
Ist\'an Gy{\"o}ngy and Sizhou Wu.
\newblock It{\^o}’s formula for jump processes in {$L_p$-spaces}.
\newblock {\em Stochastic processes and their applications}, 131:523--552,
  2021.

\bibitem{Gyoerfi2002}
L\'aszl\'o Gy\"orfi, Michael Kohler, Adam Krzyzak, and Harro Walk.
\newblock {\em A Distribution-Free Theory of Non-Parametric Regression}.
\newblock Springer Series in Statistics. Springer, New York, Berlin,
  Heidelberg, 2002.

\bibitem{han2018solving}
Jiequn Han, Arnulf Jentzen, and Weinan E.
\newblock Solving high-dimensional partial differential equations using deep
  learning.
\newblock {\em Proceedings of the National Academy of Sciences},
  115(34):8505--8510, 2018.

\bibitem{han2020convergence}
Jiequn Han and Jihao Long.
\newblock Convergence of the deep {BSDE} method for coupled {FBSDE}s.
\newblock {\em Probability, Uncertainty and Quantitative Risk}, 5:1--33, 2020.

\bibitem{han2019solving}
Jiequn Han, Linfeng Zhang, and E~Weinan.
\newblock Solving many-electron {S}chr{\"o}dinger equation using deep neural
  networks.
\newblock {\em Journal of Computational Physics}, 399:108929, 2019.

\bibitem{HernyLabordere2012}
Pierre Henry-Labord\`ere.
\newblock Counterparty risk valuation: A marked branching diffusion approach.
\newblock {\em arXiv preprint arXiv:1203.2369}, 2012.

\bibitem{Hen2017}
Pierre Henry-Labord{\`e}re.
\newblock Deep {Primal-Dual Algorithm for BSDEs}: {A}pplications of {M}achine
  {L}earning to {CVA and IM}.
\newblock {\em Available at SSRN: http://dx.doi.org/10.2139/ssrn.3071506},
  2017.

\bibitem{Hornik1991}
Kurt Hornik.
\newblock Approximation capabilities of multilayer feedforward networks.
\newblock {\em Neural Networks}, 4(2):251--257, 1991.

\bibitem{Hornik1989}
Kurt Hornik, Maxwell Stinchcombe, and Halbert White.
\newblock Multilayer feedforward networks are universal approximators.
\newblock {\em Neural Networks}, 2(5):359--366, 1989.

\bibitem{Huang2006}
Guang-Bin Huang, Qin-Yu Zhu, and Chee-Kheong Siew.
\newblock Extreme learning machine: Theory and applications.
\newblock {\em Neurocomputing}, 70(1):489--501, 2006.
\newblock Neural Networks.

\bibitem{hure2020deep}
C{\^o}me Hur{\'e}, Huy{\^e}n Pham, and Xavier Warin.
\newblock Deep backward schemes for high-dimensional nonlinear {PDE}s.
\newblock {\em Mathematics of Computation}, 89:1547--1579, 2020.

\bibitem{HJK2022}
Martin Hutzenthaler, Arnulf Jentzen, and Thomas Kruse.
\newblock Overcoming the curse of dimensionality in the numerical approximation
  of parabolic partial differential equations with gradient-dependent
  nonlinearities.
\newblock {\em Foundations of Computational Mathematics}, 22(4):905--966, 2022.

\bibitem{HJKN2020}
Martin Hutzenthaler, Arnulf Jentzen, Thomas Kruse, and Tuan~Anh Nguyen.
\newblock Multilevel {P}icard approximations for high-dimensional semilinear
  second-order {PDE}s with {L}ipschitz nonlinearities.
\newblock {\em arXiv preprint arXiv:2009.02484}, 2020.

\bibitem{HJKN2020a}
Martin Hutzenthaler, Arnulf Jentzen, Thomas Kruse, and Tuan~Anh Nguyen.
\newblock A proof that rectified deep neural networks overcome the curse of
  dimensionality in the numerical approximation of semilinear heat equations.
\newblock {\em {SN Partial Differential Equations and Applications}}, 1(10),
  2020.

\bibitem{HJKNW2020}
Martin Hutzenthaler, Arnulf Jentzen, Thomas Kruse, Tuan~Anh Nguyen, and
  Philippe {von Wurstemberger}.
\newblock Overcoming the curse of dimensionality in the numerical approximation
  of semilinear parabolic partial differential equations.
\newblock {\em Proceedings of the Royal Society A}, 476(2244):20190630, 2020.

\bibitem{hutzenthaler2020overcoming}
Martin Hutzenthaler, Arnulf Jentzen, and Philippe von Wurstemberger.
\newblock Overcoming the curse of dimensionality in the approximative pricing
  of financial derivatives with default risks.
\newblock {\em Electron. J. Probab.}, 25(101):1--73, 2020.

\bibitem{HK2020}
Martin Hutzenthaler and Thomas Kruse.
\newblock Multilevel {P}icard approximations of high-dimensional semilinear
  parabolic differential equations with gradient-dependent nonlinearities.
\newblock {\em SIAM Journal on Numerical Analysis}, 58(2):929--961, 2020.

\bibitem{hutzenthaler2022multilevel1}
Martin Hutzenthaler and Tuan~Anh Nguyen.
\newblock Multilevel {P}icard approximations of high-dimensional semilinear
  partial differential equations with locally monotone coefficient functions.
\newblock {\em Applied Numerical Mathematics}, 181:151--175, 2022.

\bibitem{Hytoenen2016}
Tuomas Hyt\"onen, Jan van Neerven, Mark Veraar, and Lutz Weis.
\newblock {\em Analysis in {Banach} Spaces}, volume~63 of {\em Ergebnisse der
  Mathematik und ihrer Grenzgebiete. 3. Folge}.
\newblock Springer, Cham, 2016.

\bibitem{IW2011}
N.~Ikeda and S.~Watanabe.
\newblock {\em Stochastic differential equations and diffusion processes},
  volume~24.
\newblock North-Holland, 2011.

\bibitem{ito2021neural}
Kazufumi Ito, Christoph Reisinger, and Yufei Zhang.
\newblock A neural network-based policy iteration algorithm with global
  {$H^2$}-superlinear convergence for stochastic games on domains.
\newblock {\em Foundations of Computational Mathematics}, 21(2):331--374, 2021.

\bibitem{jacquier2023deep}
Antoine Jacquier and Mugad Oumgari.
\newblock Deep curve-dependent {PDE}s for affine rough volatility.
\newblock {\em SIAM Journal on Financial Mathematics}, 14(2):353--382, 2023.

\bibitem{jacquier2023random}
Antoine Jacquier and \v{Z}an \v{Z}uri\v{c}.
\newblock Random neural networks for rough volatility.
\newblock {\em arXiv preprint arXiv:2305.01035}, 2023.

\bibitem{Herbert2004}
Herbert Jaeger and Harald Haas.
\newblock Harnessing nonlinearity: Predicting chaotic systems and saving energy
  in wireless communication.
\newblock {\em Science}, 304(5667):78--80, 2004.

\bibitem{JSW2021}
Arnulf Jentzen, Diyora Salimova, and Timo Welti.
\newblock A proof that deep artificial neural networks overcome the curse of
  dimensionality in the numerical approximation of {K}olmogorov partial
  differential equations with constant diffusion and nonlinear drift
  coefficients.
\newblock {\em Communications in Mathematical Sciences}, 19(5):1167--1205,
  2021.

\bibitem{KLY2021}
Yuehaw Khoo, Jianfeng Lu, and Lexing Ying.
\newblock Solving parametric {PDE} problems with artificial neural networks.
\newblock {\em European Journal of Applied Mathematics}, 32(3):421--435, 2021.

\bibitem{KingmaBa2015}
Diederik~P. Kingma and Jimmy Ba.
\newblock Adam: {A} method for stochastic optimization.
\newblock In Yoshua Bengio and Yann LeCun, editors, {\em 3rd International
  Conference on Learning Representations, {ICLR} 2015, San Diego, CA, USA, May
  7-9, 2015, Conference Track Proceedings}. 2015.

\bibitem{Klenke2014}
Achim Klenke.
\newblock {\em Probability Theory: A Comprehensive Course}.
\newblock Universitext. Springer, London, 2nd edition, 2014.

\bibitem{Kry2002}
Nikolay~V. Krylov.
\newblock {\em Introduction to the theory of random processes}, volume~43.
\newblock American Mathematical Soc., 2002.

\bibitem{kunita2004stochastic}
Hiroshi Kunita.
\newblock Stochastic differential equations based on {L\'evy} processes and
  stochastic flows of diffeomorphisms.
\newblock In {\em Real and stochastic analysis}, pages 305--373. Springer,
  2004.

\bibitem{LeRoux1989}
Marie-No\"{e}lle Le~Roux and Vidar Thom\'{e}e.
\newblock Numerical solution of semilinear integrodifferential equations of
  parabolic type with nonsmooth data.
\newblock {\em SIAM Journal on Numerical Analysis}, 26(6):1291--1309, 1989.

\bibitem{lu2021deepxde}
Lu~Lu, Xuhui Meng, Zhiping Mao, and George~Em Karniadakis.
\newblock Deep{XDE}: A deep learning library for solving differential
  equations.
\newblock {\em SIAM review}, 63(1):208--228, 2021.

\bibitem{Maass2002}
Wolfgang Maass, Thomas Natschl\"ager, and Henry Markram.
\newblock Real-time computing without stable states: A new framework for neural
  computation based on perturbations.
\newblock {\em Neural Computation}, 14(11):2531--2560, 2002.

\bibitem{Madan1998}
Dilip~B. Madan, Peter~P. Carr, and Eric~C. Chang.
\newblock The {Variance Gamma} process and option pricing.
\newblock {\em Review of Finance}, 2(1):79--105, 04 1998.

\bibitem{Madan1990}
Dilip~B. Madan and Eugene Seneta.
\newblock The {Variance Gamma (V.G.)} model for share market returns.
\newblock {\em The Journal of Business}, 63(4):511--524, 1990.

\bibitem{Merton1976}
Robert~C. Merton.
\newblock Option pricing when underlying stock returns are discontinuous.
\newblock {\em Journal of Financial Economics}, 3(1):125--144, 1976.

\bibitem{Mis2019}
Siddhartha Mishra.
\newblock A machine learning framework for data driven acceleration of
  computations of differential equations.
\newblock {\em Mathematics in Engineering}, 1(1):118--146, 2019.

\bibitem{NM2019}
Mohammad~Amin Nabian and Hadi Meidani.
\newblock A deep learning solution approach for high-dimensional random
  differential equations.
\newblock {\em Probabilistic Engineering Mechanics}, 57:14--25, 2019.

\bibitem{neufeld2024rectified}
Ariel Neufeld and Tuan~Anh Nguyen.
\newblock Rectified deep neural networks overcome the curse of dimensionality
  in the numerical approximation of gradient-dependent semilinear heat
  equations.
\newblock {\em arXiv preprint arXiv:2403.09200}, 2024.

\bibitem{NNW2023}
Ariel Neufeld, Tuan~Anh Nguyen, and Sizhou Wu.
\newblock Deep {ReLU} neural networks overcome the curse of dimensionality in
  the numerical approximation of semilinear partial integro-differential
  equations.
\newblock {\em arXiv:2310.15581}, 2023.

\bibitem{neufeld2023multilevel}
Ariel Neufeld, Tuan~Anh Nguyen, and Sizhou Wu.
\newblock Multilevel {Picard} approximations overcome the curse of
  dimensionality in the numerical approximation of general semilinear {PDEs}
  with gradient-dependent nonlinearities.
\newblock {\em arXiv preprint arXiv:2311.11579}, 2023.

\bibitem{NeufeldSchmocker2022}
Ariel Neufeld and Philipp Schmocker.
\newblock Chaotic hedging with iterated integrals and neural networks.
\newblock {\em arXiv preprint arXiv:2209.10166}, 2022.

\bibitem{NeufeldSchmocker2023}
Ariel Neufeld and Philipp Schmocker.
\newblock Universal approximation property of {Banach} space-valued random
  feature models including random neural networks.
\newblock {\em arXiv preprint arXiv:2312.08410v3}, 2023.

\bibitem{NeufeldSchmocker2024}
Ariel Neufeld and Philipp Schmocker.
\newblock Universal approximation results for neural networks with
  non-polynomial activation function over non-compact domains.
\newblock {\em arXiv preprint arXiv:2410.14759}, 2024.

\bibitem{NW2022}
Ariel Neufeld and Sizhou Wu.
\newblock Multilevel {Picard} approximation algorithm for semilinear partial
  integro-differential equations and its complexity analysis.
\newblock {\em arXiv preprint arXiv:2205.09639}, 2022.

\bibitem{Neyshabur2017}
Behnam Neyshabur, Srinadh Bhojanapalli, David McAllester, and Nati Srebro.
\newblock Exploring generalization in deep learning.
\newblock {\em Advances in neural information processing systems}, 30, 2017.

\bibitem{nguwi2022deep}
Jiang~Yu Nguwi, Guillaume Penent, and Nicolas Privault.
\newblock A deep branching solver for fully nonlinear partial differential
  equations.
\newblock {\em arXiv preprint arXiv:2203.03234}, 2022.

\bibitem{nguwi2022numerical}
Jiang~Yu Nguwi, Guillaume Penent, and Nicolas Privault.
\newblock Numerical solution of the incompressible {N}avier-{S}tokes equation
  by a deep branching algorithm.
\newblock {\em arXiv preprint arXiv:2212.13010}, 2022.

\bibitem{nguwi2023deep}
Jiang~Yu Nguwi and Nicolas Privault.
\newblock A deep learning approach to the probabilistic numerical solution of
  path-dependent partial differential equations.
\newblock {\em Partial Differential Equations and Applications}, 4(4):37, 2023.

\bibitem{Oksendal2006}
Bernt \O{}ksendal and Agn\`es Sulem.
\newblock {\em Applied Stochastic Control of Jump Diffusions}.
\newblock Springer, Berlin, Heidelberg, 2006.

\bibitem{Pani1992}
Amiya~K. Pani, Vidar Thom{\'e}e, and Lars~B. Wahlbin.
\newblock Numerical methods for hyperbolic and parabolic integro-differential
  equations.
\newblock {\em Journal of Integral Equations and Applications}, 4(4):533--584,
  1992.

\bibitem{Rahimi2007}
Ali Rahimi and Benjamin Recht.
\newblock Random features for large-scale kernel machines.
\newblock In {\em Proceedings of the 20th International Conference on Neural
  Information Processing Systems}, NIPS'07, pages 1177--1184, Red Hook, NY,
  USA, 2007. Curran Associates Inc.

\bibitem{RahimiUniform2008}
Ali Rahimi and Benjamin Recht.
\newblock Uniform approximation of functions with random bases.
\newblock In {\em 2008 46th Annual Allerton Conference on Communication,
  Control, and Computing}, pages 555--561, 2008.

\bibitem{Rahimi2008}
Ali Rahimi and Benjamin Recht.
\newblock Weighted sums of random kitchen sinks: Replacing minimization with
  randomization in learning.
\newblock In D.~Koller, D.~Schuurmans, Y.~Bengio, and L.~Bottou, editors, {\em
  Advances in Neural Information Processing Systems}, volume~21. Curran
  Associates, Inc., 2008.

\bibitem{Rai2023}
Maziar Raissi.
\newblock {\em Forward–Backward Stochastic Neural Networks: Deep Learning of
  High-Dimensional Partial Differential Equations}, chapter~18, pages 637--655.
\newblock Peter Carr Gedenkschrift. Research Advances in Mathematical Finance.
  2023.

\bibitem{raissi2019physics}
Maziar Raissi, Paris Perdikaris, and George~E. Karniadakis.
\newblock Physics-informed neural networks: A deep learning framework for
  solving forward and inverse problems involving nonlinear partial differential
  equations.
\newblock {\em Journal of Computational Physics}, 378:686--707, 2019.

\bibitem{reisinger2020rectified}
Christoph Reisinger and Yufei Zhang.
\newblock Rectified deep neural networks overcome the curse of dimensionality
  for nonsmooth value functions in zero-sum games of nonlinear stiff systems.
\newblock {\em Analysis and Applications}, 18(06):951--999, 2020.

\bibitem{Rudin1991}
Walter Rudin.
\newblock {\em Functional analysis}.
\newblock International series in pure and applied mathematics. McGraw-Hill,
  Boston, Mass, 2nd edition, 1991.

\bibitem{sirignano2018dgm}
Justin Sirignano and Konstantinos Spiliopoulos.
\newblock {DGM}: A deep learning algorithm for solving partial differential
  equations.
\newblock {\em Journal of Computational Physics}, 375:1339--1364, 2018.

\bibitem{rong2006theory}
Rong Situ.
\newblock {\em Theory of stochastic differential equations with jumps and
  applications: mathematical and analytical techniques with applications to
  engineering}.
\newblock Springer Science \& Business Media, 2006.

\bibitem{Sloan1986}
Ian~H. Sloan and Vidar Thom\'ee.
\newblock Time discretization of an integro-differential equation of parabolic
  type.
\newblock {\em SIAM Journal on Numerical Analysis}, 23(5):1052--1061, 1986.

\bibitem{Vapnik2000}
Vladimir Vapnik.
\newblock {\em The Nature of Statistical Learning Theory}.
\newblock Information Science and Statistics. Springer, New York, NY, 2nd
  edition, 2000.

\bibitem{wang2023deep}
Wansheng Wang, Jie Wang, Jinping Li, Feifei Gao, and Yi~Fu.
\newblock Deep learning numerical methods for high-dimensional fully nonlinear
  {PIDEs} and coupled {FBSDEs} with jumps.
\newblock {\em arXiv preprint arXiv:2301.12895}, 2023.

\bibitem{Wang2023}
Yiran Wang and Suchuan Dong.
\newblock An extreme learning machine-based method for computational {PDE}s in
  higher dimensions.
\newblock {\em arXiv e-prints 2309.07049}, 2023.

\bibitem{WuLiang2018}
Yongfeng Wu and Xue Liang.
\newblock Vasicek model with mixed-exponential jumps and its applications in
  finance and insurance.
\newblock {\em Advances in Difference Equations}, 2018, 2018.

\bibitem{Yanik1988}
Elizabeth~G. Yanik and Graeme Fairweather.
\newblock Finite element methods for parabolic and hyperbolic partial
  integro-differential equations.
\newblock {\em Nonlinear Analysis: Theory, Methods \& Applications},
  12(8):785--809, 1988.

\bibitem{Yuan2022}
Lei Yuan, Yi-Qing Ni, Xiang-Yun Deng, and Shuo Hao.
\newblock {A-PINN}: Auxiliary physics informed neural networks for forward and
  inverse problems of nonlinear integro-differential equations.
\newblock {\em Journal of Computational Physics}, 462(111260), 2022.

\bibitem{Zhang2021}
Chiyuan Zhang, Samy Bengio, Moritz Hardt, Benjamin Recht, and Oriol Vinyals.
\newblock Understanding deep learning (still) requires rethinking
  generalization.
\newblock {\em Communications of the ACM}, 64(3):107--115, 2021.

\bibitem{zhang2020learning}
Dongkun Zhang, Ling Guo, and George~Em Karniadakis.
\newblock Learning in modal space: Solving time-dependent stochastic {PDEs}
  using physics-informed neural networks.
\newblock {\em SIAM Journal on Scientific Computing}, 42(2):A639--A665, 2020.

\bibitem{zorich}
Vladimir~A Zorich.
\newblock {\em Analysis I, 2nd edition}.
\newblock Springer, 2015.

\end{thebibliography}

\end{document}